\numberwithin{equation}{section}
\newfontfamily\cyrfont{FreeSerif.otf}
\def\ggamma{\text{\cyrfont ѵ}}
\definecolor{Vino}{rgb}{0.356,0,0}
\definecolor{Ruta}{rgb}{0.309, 0.459, 0.208}
\let\cal\mathcal
\def\Ascr{{\cal A}}
\def\Mscr{{\cal B}}
\def\Bscr{{\cal B}}
\def\Cscr{{\cal C}}
\def\Dscr{{\cal D}}
\def\Escr{{\cal S}}
\def\Eescr{{\cal E}}
\def\Gscr{{\cal G}}
\def\Hscr{{\cal H}}
\def\Iscr{{\cal I}}
\def\Kscr{{\cal K}}
\def\Lscr{{\cal L}}
\def\Mscr{{\cal M}}
\def\Nscr{{\cal N}}
\def\Oscr{{\cal O}}
\def\Pscr{{\cal P}}
\def\Sscr{{\cal S}}
\def\Tscr{{\cal T}}
\let\blb\mathbb
\def\CC{{\blb C}} 
\def\DD{{\blb D}}
\def \PP{{\blb P}}
\def \ZZ{{\blb Z}}
\def \TT{{\blb T}}
\def \NN{{\blb N}}
\def \RR{{\blb R}}
\def\id{\text{id}}
\def\pr{\mathop{\text{pr}}\nolimits}
\def\Res{\operatorname{Res}}
\def\Lotimes{\overset{L}{\otimes}}
\def\quot{/\!\!/}
\def\Mod{\operatorname{Mod}}
\def\mod{\operatorname{mod}}
\def\Gr{\operatorname{Gr}}
\def\gr{\operatorname{gr}}
\def\Lie{\mathop{\text{Lie}}}
\def\Qch{\operatorname{Qch}}
\def\coh{\mathop{\text{\upshape{coh}}}}
\def\gr{\operatorname {gr}}
\def\Spec{\operatorname {Spec}}
\def\Rep{\operatorname {Rep}}
\def\Hom{\operatorname {Hom}}
\def\End{\operatorname {End}}
\def\RHom{\operatorname {RHom}}
\def\uRHom{\operatorname {R\mathcal{H}\mathit{om}}}
\def\supp{\operatorname {supp}}
\def\relint{\operatorname {relint}}
\def\im{\operatorname {im}}
\def\ker{\operatorname {ker}}
\def\End{\operatorname {End}}
\def\id{{\operatorname {id}}}
\def\rk{\operatorname {rk}}
\def\r{\rightarrow}
\def\exist{\exists}
\def\uRHom{\operatorname {R\mathcal{H}\mathit{om}}}
\def\hf{\mathfrak{h}}
\newtheorem{lemma}{Lemma}[section]
\newtheorem{proposition}[lemma]{Proposition}
\newtheorem{theorem}[lemma]{Theorem}
\newtheorem{corollary}[lemma]{Corollary}
\newtheorem{assumption}[lemma]{Assumption}
\newtheorem{observation}[lemma]{Observation}
\newtheorem{convention}[lemma]{Convention}
\newtheorem*{fact}{Fact}
\theoremstyle{definition}
\newtheorem{example}[lemma]{Example}
\newtheorem{definition}[lemma]{Definition}
\newtheorem{question}[lemma]{Question}
\newtheorem{step}{Step}
\theoremstyle{remark}
\newtheorem{remark}[lemma]{Remark}
\newtheorem{notation}[lemma]{Notation}%\def\thenotation{}
\newdimen\uboxsep \uboxsep=1ex
\def\uboxn#1{\vtop to 0pt{\hrule height 0pt depth 0pt\vskip\uboxsep
\hbox to 0pt{\hss #1\hss}\vss}}
\def\uboxs#1{\vbox to 0pt{\vss\hbox to 0pt{\hss #1\hss}
\vskip\uboxsep\hrule height 0pt depth 0pt}}
\def\Sym{\operatorname{Sym}}
\let\oldmarginpar\marginpar
\def\marginpar#1{\oldmarginpar{\raggedright \tiny \baselineskip 0pt \lineskip 0pt #1}}
\def\Sym{\operatorname{Sym}}
\def\Sym{\operatorname{Sym}}
\def\Ob{\operatorname{Ob}}
\def\TT{\mathbb{T}}
\def\sJ{\mathsf{J}}
\def\Re{\operatorname{Re}}
\def\mm{\mathrm{m}}
\def\MMa{\Pscr}
\def\Ma{P}
\def\Sol{\operatorname{Sol}}
\def\rh{\operatorname{rh}}
\def\Perv{\operatorname{Perv}}
\def\an{\operatorname{an}}
\def\cs{\operatorname{cs}}
\def\DR{\operatorname{DR}}
\def\iPP{\overline{\MMa}}
\def\iP{\overline{\Ma}}
\def\Im{\operatorname{Im}}
\def\Re{\operatorname{Re}}
\def\kkappa{\zeta}
\def\jota{\kappa}
\def\KS{\operatorname{KS}}
\def\Schob{\operatorname{Schob}}
\def\nres{\operatorname{nres}}
\title[Perverse schobers and GKZ systems]{Perverse schobers and GKZ systems}
\author[\v{S}pela \v{S}penko and Michel Van den Bergh]{\v{S}pela \v{S}penko and Michel
  Van den Bergh} 
\address[\v{S}pela \v{S}penko]{D\'epartement de Math\'ematique, Universit\'e Libre de Bruxelles, Campus de la Plaine CP 213, Bld du Triomphe, B-1050 Bruxelles}
\email{spela.spenko@vub.be}
\address[Michel Van den Bergh]{Vakgroep Wiskunde, Universiteit Hasselt, Universitaire Campus \\
  B-3590 Diepenbeek} 
\email{michel.vandenbergh@uhasselt.be}
\email{michel.van.den.bergh@vub.be}
\thanks{The second author is a senior researcher at the Research
  Foundation Flanders (FWO).  
 This project has received funding from the European Research Council (ERC) under the European Union's Horizon 2020 research and innovation programme (grant agreement No 885203).
This project was also supported by the FWO grant G0D8616N: ``Hochschild cohomology and
  deformation theory of triangulated categories''.}
\keywords{Perverse sheaves, categorification, geometric invariant theory}
\subjclass[2010]{13A50, 53D37, 32S45, 16S38, 18E30, 14F05}
\begin{document}

\begin{abstract}
  Perverse schobers are categorifications of perverse sheaves.  In
  prior work we constructed a perverse schober on a partial
  compactification of the stringy K\"ahler moduli space (SKMS)
  associated by Halpern-Leistner and Sam to a quasi-symmetric
  representation of a reductive group. When the group is a torus the
  SKMS corresponds to the complement of the GKZ discriminant locus
  (which is a hyperplane arrangement in the quasi-symmetric case shown by Kite).  We show here that a suitable variation of the
  perverse schober we constructed provides a categorification of the
  associated GKZ hypergeometric system in the case of non-resonant parameters. 
  As an intermediate result we give a description of  the monodromy of such ``quasi-symmetric'' GKZ hypergeometric systems. 
\end{abstract}
\maketitle
\tableofcontents
\section{Introduction}
The classical Riemann-Hilbert correspondence yields an equivalence
between the triangulated category of (regular holonomic) $D$-modules
and that of constructible sheaves
\cite{Deligne7,Kashiwara,Mebkhout}. Under this equivalence the abelian
category of regular holonomic D-modules, more or less the same as
systems of linear partial differential equations with regular
singularities, corresponds to the abelian category of perverse sheaves
\cite{Kashiwara2,BBD}.
%In particular, it reveals a relation between a system of linear partial differential equations with regular singularities and its solutions together with the corresponding monodromy, and thus yields an answer to   Hilbert's 21st problem. 
Recently, guided by mirror symmetry applications, Kapranov and Schechtman \cite{KapranovSchechtmanSchobers} have introduced categorifications
of perverse sheaves and called these perverse schobers. It is intended that perverse schobers would serve as coefficient data in the construction of Fukaya like categories.

%\marginpar{(\v{s}) If a version such introductory paragraph would be ok, I will add some references, and edit it a bit.}

\medskip

The aim of this paper is to show that a variant of the perverse schober
constructed in \cite{SVdB10} provides a categorification of the
 GKZ hypergeometric system in the case of toric data
associated to a quasi-symmetric representation (see below).

As an intermediate result we obtain a general formula for the
monodromy of such ``quasi-symmetric'' GKZ systems which we believe is new. The result applies to all classical, one 
variable, hypergeometric equations (generalizing in particular \cite{BeukersHeckman}), and also to some of the classical higher dimensional GKZ systems such as Lauricella $F_D$
\cite[\S9]{Beukers} and Appell $F_1$ \cite[p.94]{bod2013algebraic} (for those particular systems we recover the results from \cite{DeligneMostow,Picard,Terada}).

\subsection{Perverse schobers}
\emph{Perverse schobers} \cite{BondalKapranovSchechtman,KapranovSchechtmanSchobers} are categorifications of perverse sheaves on suitably stratified topological spaces. 
There is no general definition for perverse schobers but they have been
defined for example on complex vector spaces stratified by complexified real hyperplane
arrangements \cite{KapranovSchechtmanSchobers}. This is accomplished by categorifying the work of \cite{KapranovSchechtman}. 
Work is also ongoing in the case of Riemann surfaces
\cite{Donovan,DKSS}.

\medskip

We discuss the case of hyperplane arrangements. Let $V$ be a real
vector space and let $\Hscr$ be an affine hyperplane arrangement in~$V$. Let $\Cscr$ be the open cell complex on $V$ induced by $\Hscr$, ordered
by $C'<C$ iff $C'\subset \bar{C}$ and let $\Cscr^0\subset \Cscr$ be
the set of chambers. To this data one associates a particular kind of perverse schober
on $V_\CC$ which is called an $\Hscr$-schober in \cite{BondalKapranovSchechtman} and whose definition is
recalled in \S\ref{subsec:reminder}. 

\medskip

For the purpose of this
introduction we will just mention that the data defining an
$\Hscr$-schober consists of a family of triangulated categories
$(\Eescr_C)_{C\in \Cscr}$ such that $\Eescr_{C'}\subset \Eescr_C$ if
$C<C'$. Extra conditions are imposed on this data which imply in
particular that the collection $(\Eescr_C)_{C\in \Cscr^0}$ defines a
representation of the fundamental groupoid of
$V_\CC\setminus \Hscr_\CC$ in the category of triangulated
categories. Informally: a local system of triangulated categories on
$V_\CC\setminus \Hscr_\CC$.

\medskip

The \emph{decategorification} $K^0_\CC(\Eescr)$ of an $\Hscr$-schober $\Eescr$ assigns
the vector space $E_C:=K^0(\Eescr_C)_\CC$ to $C\in \Cscr$. The definition of an $\Hscr$-schober is such
that $(E_C)_{C\in \Cscr}$ is precisely the data required by  \cite{KapranovSchechtman}
to define a perverse sheaf on $V_\CC$ which is smooth with respect to $\Hscr_\CC$.
We will denote this perverse sheaf by $\tilde{K}^0_\CC(\Eescr)$.

\subsection{A perverse schober using Geometric Invariant Theory}
\label{sec:HLS}
In the paper \cite{HLSam} Halpern-Leistner and Sam observed that the Stringy 
K\"ahler Moduli Space (SKMS) of suitable GIT quotients is given by  the complement of a (toric) hyperplane arrangement and they used this observation to
construct a local system of triangulated categories on the SKMS.  In \cite{SVdB10}, reviewed in \S\ref{subsec:construct}, we were able to extend this local system
to an $\Hscr$-schober (denoted by $\Sscr^c$ below).

\medskip

For the convenience of the reader we briefly give some indications about the construction.
We restrict ourselves to the torus case as this is the case we will be concerned
with in this paper.
Let $T$ be a torus acting faithfully on a representation~$W$ with weights $(b_i)_i\in X(T)$. 
In the rest of this introduction, and in most of the paper, we also assume that $W$ is \emph{quasi-symmetric} \cite{SVdB},
i.e.  $\sum_{b_i\in \ell} b_i=0$
for each line $\ell\subset X(T)_\RR$, passing through the origin.

\medskip

To the data $(T,W)$ \cite{HLSam} associates a
hyperplane arrangement $\Hscr$ in the real vector space $X(T)_\RR$
consisting of the translates by elements of $X(T)$ of the hyperplanes in
$X(T)_\RR$ spanned by the faces of the zonotope
$\Delta:=\sum_i [-1/2,0]b_i\subset X(T)_\RR$.  The schober $\Sscr^c$
constructed in \cite{SVdB10} is such that for $C\in \Cscr$,
$\Sscr^c_C$ is the thick subcategory of $D(W/T)$ 
 generated by
$\chi\otimes \Oscr_{W}$ for $\chi \in (\nu+{\Delta})\cap X(T)$ with
$\nu\in -C$ chosen arbitrarily.\footnote{In \cite{SVdB10} our signs were slightly different. $\nu$ was an element of $C$ (instead of $-C$) and the weights of $W$ were $(-b_i)_i$ (instead of $(b_i)_i$).}  We note that the $\Sscr^c_C$ are
derived categories of suitable noncommutatve resolutions of
$W\quot T$ originally constructed in \cite{SVdB}. 
For $C\in \Cscr^0$ they are derived equivalent to certain GIT quotient stacks and these were used in   \cite{HLSam} in their description of the
  local system on $X(T)_\CC\setminus \Hscr_\CC$. 

\begin{example} \label{ex:example1}
Let $T=G_m$. We identify $X(T)$ with $\ZZ$ and we write $(n)$ for the one-dimensional $T$-representation with weight $n\in \ZZ$.  Let $W=(-1)\oplus (-1)\oplus (1)\oplus (1)$ and put $P_n=(n)\otimes \Oscr_{W}\in D(W/T)$.
Then $\Hscr=\ZZ$. If $C=]a,a+1[\in \Cscr^0$ then $\Sscr^c_C$ is the full thick subcategory of $D(W/T)$ generated $P_{-a-1}$, $P_{-a}$.
If $C=\{a\}\in \Cscr$ then $\Sscr^c_C$ is generated by $P_{-a-1}$, $P_{-a}$, $P_{-a+1}$.
\end{example}
The $\Hscr$-schober $\Sscr^c$ is in fact invariant under translation by $X(T)\subset X(T)_\RR$. Hence we can think of it as a perverse schober on the quotient
$X(T)_\CC/X(T)$ and the latter may be canonically identified with the dual torus $T^\ast$ via the map
\[
X(T)_\CC=Y(T^\ast)_\CC=\Lie(T^\ast) \xrightarrow{\exp(2\pi i-)} T^\ast
\]
where $Y(T^\ast)=X(T^\ast)^\ast$ is the group of cocharacters of $T^\ast$.
In particular $\tilde{K}_{\CC}^0(\Sscr^c)$ descends to a perverse sheaf on $T^\ast$ which we denote by $S^c$. 
\subsection{The mirror picture}
Since we are in a toric setting the ``$B$-data'' given by $(b_i)_{i=1,\ldots,d}$ has associated ``$A$-data'', referred to as the \emph{Gale dual} of $B$. Let $\TT^d=(\CC^\ast)^d$ be 
the torus which acts coordinate wise on $W$. Then by faithfulness we may assume that $T$ acts on $W$ via an inclusion $T\subset \TT$. Put $H=\TT/T$.
We obtain an exact sequence
\[
0\r X(H)\r X(\TT)\xrightarrow{B} X(T)\r 0
\]
where $B$ sends the standard basis $(e_i)_{i=1,\ldots,d}$ of $X(\TT)$ to $(b_i)_{i=1,\ldots,d}$. Then we obtain a morphism $A$ by dualizing this sequence
\[
0\r Y(T)\r Y(\TT)\xrightarrow{A} Y(H)\r 0.
\]
Usually $A$ is identified with the sequence of $H$-cocharacters given by $a_i=A(e^\ast_i)$. 
\subsection{The GKZ system}
Gelfand, Kapranov and Zelevinsky discovered 
a captivating common generalisation of many classical hypergeometric differential equations introduced by Euler, Gauss, Horn, Appell,\dots, 
which is now known as the \emph{GKZ (hypergeometric) system}. The GKZ system lives on $\TT^\ast$ and depends on $A$ and in addition on a continuous parameter $\alpha\in Y(H)_\CC$. 
It is weakly $H^\ast$-invariant (see \S\ref{sec:weq})  and hence
with some care it may be regarded as living on $\TT^\ast/H^\ast=T^\ast$ (see
\S\ref{subsec:splitting} below for the precise procedure).
In our setting the GKZ system has regular singularities \cite{{Adolphson}} and so its solution complex $\bar{P}(\alpha)$ is a family of perverse sheaves on $T^\ast$ parametrized by 
$\alpha\in Y(H)_\CC$. 

\medskip

The GKZ system is most easily understood for parameters which are sufficiently generic. Let $\Iscr\subset Y(H)_\RR$ be the real hyperplane arrangement
consisting of the translates by elements of $Y(H)$ of the hyperplanes in
$Y(H)_\RR$ spanned by the facets of the cone $\RR_{\ge 0}A$.
 We say that $\alpha\in Y(H)_\CC$ is \emph{non-resonant} if $\alpha\not\in \Iscr_\CC$. The following is a basic result in the theory of GKZ systems
(see \S\ref{subsec:gkzperverse})
\begin{proposition} \cite{GKZEuler} \label{prop:intrononres}
Assume $\alpha \in Y(H)_\CC$ is non-resonant. Then $\bar{P}(\alpha)$ is a simple perverse sheaf, in particular it is the 
\emph{intermediate extension} 
of its corresponding local system. 
If $\alpha,\alpha'\in Y(H)_\CC$ are non-resonant and $\alpha-\alpha'\in Y(H)$ then $\bar{P}(\alpha)\cong \bar{P}(\alpha')$.
\end{proposition}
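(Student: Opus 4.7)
The plan is to transport the statement through the Riemann-Hilbert correspondence and appeal to the classical irreducibility theorem of Gelfand--Kapranov--Zelevinsky for non-resonant GKZ systems. Write $M_A(\alpha)$ for the cyclic $\Dscr_{\TT^\ast}$-module presenting the GKZ system, so that $\bar{P}(\alpha)$ is obtained by taking its solution perverse sheaf on $\TT^\ast$ and descending to $T^\ast=\TT^\ast/H^\ast$ via the weak $H^\ast$-equivariance recalled in \S\ref{subsec:splitting}.

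For the first assertion, by \cite{Adolphson} the module $M_A(\alpha)$ is regular holonomic in our toric situation and has generic rank equal to the normalized volume of the convex hull of $A$; in particular its solution perverse sheaf on $\TT^\ast$ has full support. The key external input is the theorem of \cite{GKZEuler}: for non-resonant $\alpha$, i.e.\ $\alpha\notin \Iscr_\CC$, the module $M_A(\alpha)$ is irreducible. Since $\TT^\ast\to T^\ast$ is a principal $H^\ast$-bundle, pullback along it is faithfully flat and exact, so descent preserves irreducibility and $\bar{P}(\alpha)$ is a simple perverse sheaf on $T^\ast$. A simple perverse sheaf of full support is, by definition, the intermediate extension of an irreducible local system on its smooth stratum, which gives the first claim.

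For the second assertion, set $\lambda=\alpha-\alpha'\in Y(H)$ and choose any lift $\tilde\lambda\in Y(\TT)=X(\TT^\ast)$ with $A(\tilde\lambda)=\lambda$. A standard computation in the Weyl algebra shows that right multiplication by the invertible regular function $x^{\tilde\lambda}$ on $\TT^\ast$ carries each Euler relation $\sum_i a_{ij}x_i\partial_i-\alpha_j$ to $\sum_i a_{ij}x_i\partial_i-(\alpha_j-\lambda_j)$ while preserving the toric relations $\partial^u-\partial^v$ (with $Au=Av$). Hence it induces a nonzero $\Dscr_{\TT^\ast}$-linear map $M_A(\alpha)\to M_A(\alpha-\lambda)=M_A(\alpha')$, which by irreducibility of both sides is an isomorphism. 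Applying Riemann-Hilbert and descending to $T^\ast$ gives $\bar{P}(\alpha)\cong\bar{P}(\alpha')$.

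The main obstacle I anticipate is not the two inputs above but the descent bookkeeping: one must verify that the contiguity map is genuinely compatible with the splitting of \S\ref{subsec:splitting} so that it descends to a morphism of perverse sheaves on $T^\ast$ rather than only on $\TT^\ast$, and that the resulting isomorphism is independent of the choice of lift $\tilde\lambda$ (up to the necessarily scalar automorphisms of the simple module). Making this precise requires carefully tracking how the weak $H^\ast$-equivariant structure of $M_A(\alpha)$ interacts with the twist used to define $\bar{P}(\alpha)$.
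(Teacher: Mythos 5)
Your treatment of the first assertion is essentially the paper's: cite \cite{GKZEuler} for irreducibility of $\MMa(\alpha)$, transport through the descent equivalence of \S\ref{subsec:splitting} (the paper makes this precise via Corollary \ref{cor:maincor}), and invoke the BBD classification of simple perverse sheaves of full support. That part is fine.

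The second assertion is where the proposal goes wrong, and the error is not the ``descent bookkeeping'' you flag but the Weyl--algebra computation you describe as standard. Right multiplication by $x^{\tilde\lambda}$ does \emph{not} preserve the toric relations: in $\Dscr_{\TT^\ast}$ one has $\partial_i\, x^{\tilde\lambda} = x^{\tilde\lambda}\bigl(\partial_i + \tilde\lambda_i\, x_i^{-1}\bigr)$, so
\[
\Box_l\, x^{\tilde\lambda} \;=\; x^{\tilde\lambda}\Bigl(\prod_{l_i>0}(\partial_i+\tilde\lambda_i x_i^{-1})^{l_i}-\prod_{l_i<0}(\partial_i+\tilde\lambda_i x_i^{-1})^{-l_i}\Bigr),
\]
which lies in $x^{\tilde\lambda}\Dscr\,\Box_l + x^{\tilde\lambda}\,(\text{lower order})$, not in the left ideal defining $\MMa(\alpha')$. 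Concretely, in the rank-one example $b=(1,-1)$, $a=(1,1)$, $\tilde\lambda=(1,0)$, one gets $(\partial_1-\partial_2)x_1 = x_1(\partial_1-\partial_2)+1$, and $1$ is certainly not a relation. So $D\mapsto D x^{\tilde\lambda}$ does not even define a morphism $\MMa(\alpha)\to\MMa(\alpha')$. A further sanity check: your argument makes no use of non-resonance, so if it were correct it would prove $\MMa(\alpha)\cong\MMa(\alpha')$ for \emph{all} $\alpha$ with $\alpha-\alpha'\in N$, which is false. The actual mechanism is the family of contiguity operators $(\cdot\,\partial_i):\MMa(\alpha)\to\MMa(\alpha+a_i)$ (these \emph{do} respect the $\Box_l$'s, since $\partial_i$ commutes with them, and shift the Euler operators by $a_i$), and the content of non-resonance is precisely that these maps are isomorphisms; this is the result the paper cites from \cite{Saito} (see also \cite{Dwork}, \cite{Beukers2}), and it is genuinely non-trivial. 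You would need to replace your $x^{\tilde\lambda}$ argument by an invocation of one of these references, or reproduce the $b$-function/contiguity analysis.
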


\begin{example}\label{eq:Gauss} Let $(T,W)$ be as in Example \ref{ex:example1}. In that case $T^\ast=\CC^\ast= \PP^1\setminus \{0,\infty\}$.
After a suitable identification $Y(H)_\CC\cong \CC^3$ and choosing $\alpha=(a,b,c)\in \CC^3$ the corresponding GKZ system on $T^\ast$ is the 
\emph{Gaussian hypergeometric equation}
\[
z(1-z)\frac{d^2w}{dz^2}+[c-(a+b+1)z]\frac{dw}{dz}-abw=0.
\]
The non-resonant parameters are those $a,b,c$  for which $a,b,a-c,b-c$ are all non-integers. 
\end{example}

\subsection{Kite's result}
The singular locus of the GKZ system on $T^\ast$ is defined by the so-called \emph{principal $A$-discriminant} $E_A$  \cite{GKZhyper}.
This paper started with following beautiful observation in \cite{Kite}.
\begin{theorem}[\protect{\cite[Proposition 4.1]{Kite}, \S\ref{subsec:GKZdislocus}}] \label{kite:intro}
The image of $\Hscr_\CC$ (see \S\ref{sec:HLS}) in $T^\ast$ is, up to a translation $\bar{\tau}$ (see \S\ref{subsec:GKZdislocus}), equal to  $V(E_A)$, the vanishing locus of $E_A$.
\end{theorem}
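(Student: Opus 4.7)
The plan is to identify both sides with a common object described in terms of the data $(b_i)_i$. The right-hand side $V(E_A)$ will be approached via the classical Horn--Kapranov uniformization of the principal $A$-discriminant, and the left-hand side via the explicit description of $\Hscr$ in terms of faces of the zonotope $\Delta$.

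First I would recall Horn--Kapranov, which describes $V(E_A) \subset \TT^\ast$, up to an explicit monomial translation (the source of $\bar{\tau}$ after descent to $T^\ast = \TT^\ast/H^\ast$), as the closure of the image of a rational map whose coordinates are products of the linear forms $\lambda \mapsto \langle b_i,\lambda\rangle$ on $Y(T)_\CC$. This produces a description of $V(E_A)/H^\ast \subset T^\ast$ purely in terms of the Gale-dual data $(b_i)_i$.

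Next I would exploit the quasi-symmetric hypothesis. For each line $\ell \subset X(T)_\RR$ through the origin, put $I_\ell = \{i : b_i \in \ell\}$ and write $b_i = n_i b_\ell$ with $b_\ell$ primitive and, by quasi-symmetry, $\sum_{i \in I_\ell} n_i = 0$. The coordinates of the Horn map on $I_\ell$ are then proportional to the single scalar $\langle b_\ell,\lambda\rangle$, and the combination $\sum_{i \in I_\ell} e_i \in X(\TT)$ lies in $X(H)$ by $\sum_{i \in I_\ell} b_i = 0$. This produces enough ``trivial'' monomial relations to force the Horn image in $T^\ast$ to decompose: it equals a union of codimension-one subtorus cosets, one for each hyperplane $H_0 \subset X(T)_\RR$ spanned by a codimension-one face of $\Delta$.

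Finally I would match these subtori with the image of $\Hscr_\CC$. The hyperplanes in $\Hscr$ are by construction $X(T)$-translates of the $H_0$'s above. Under $\exp(2\pi i\,{-}) : X(T)_\CC \to T^\ast$ the lattice translations are absorbed, and each $H_{0,\CC}$ maps onto the codimension-one subtorus $\{t \in T^\ast : \chi_{H_0}(t) = 1\}$ where $\chi_{H_0}$ is primitive in $Y(T^\ast) = X(T)$ and normal to $H_0$. These are precisely the components produced in the previous paragraph, which establishes the desired equality modulo $\bar{\tau}$.

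The main obstacle is the middle step: making the decomposition of $V(E_A)/H^\ast$ into codimension-one components precise, and verifying that exactly the hyperplanes spanned by codimension-one faces of $\Delta$ appear (neither more nor fewer), each with the correct multiplicity. Without quasi-symmetry $V(E_A)$ is generically an irreducible hypersurface, so the decomposition is entirely an artefact of the hypothesis $\sum_{b_i \in \ell} b_i = 0$; the proof of Kite's Proposition~4.1 is devoted to making this combinatorial matching between circuits on lines and Horn factors rigorous.
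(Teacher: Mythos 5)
The paper does not give its own proof of this statement: it is cited directly as \cite[Proposition~4.1]{Kite}, with only the precise form of the translation (the $\bar{\kkappa}$ of \S\ref{subsec:GKZdislocus}) recorded as a consequence. Your sketch is therefore a reconstruction of Kite's argument rather than of anything in this paper, and you acknowledge as much at the end; the global strategy (Horn--Kapranov uniformization, circuits on lines forced by quasi-symmetry, matching with the facet hyperplanes of the zonotope $\Delta$) is the right one.

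One mechanism in your middle step needs tightening, though. Since $\sum_{i\in I_\ell}e_i\in X(H)$ and, on the Horn parametrization, the $I_\ell$-coordinates are all proportional to $\langle b_\ell,\lambda\rangle$ with exponents summing to $\sum_{i\in I_\ell}n_i=0$, the composite rational map $\PP(L_\CC)\to\TT^\ast\to T^\ast$ is in fact \emph{constant} in $\lambda$: its single value, with $l$-th coordinate $\prod_i n_i^{\langle b_i,l\rangle}$, is essentially the translation $\bar{\kkappa}$. So the irreducible $A$-discriminant $\nabla_A$ contributes at most a point of $T^\ast$, not a union of cosets; it is responsible for $\bar{\tau}$, not for the hypersurface. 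The hypersurface $V(E_A)\subset T^\ast$, and its decomposition into codimension-one subtorus cosets, arises instead from the product expansion of the principal $A$-determinant over the proper faces of $\mathrm{Conv}(A)$, with each face discriminant degenerating to a binomial under quasi-symmetry; the combinatorial matching of these faces with the facets of $\Delta$, and hence with $\Hscr$, is the real content of Kite's proof. You are right to defer to Kite for that step, but the picture in which a single ``Horn image decomposes'' is not quite the mechanism that produces the union.
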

This immediately suggests the question:
\begin{question} \label{qu:question}
What is the relationship, if any,  between the perverse sheaves $\bar{P}(\alpha)$, $\alpha \in Y(H)_\CC$, and the perverse sheaf $S^c$?
\end{question}
Alas we immediately see an issue. $S^c$ does not depend on any continuous parameters whereas $\bar{P}(\alpha)$ does. So 
to make sense of the question we either have to pick a specific $\alpha$ or we have to 
change the definition of $S^c$ so that it also depends on continuous parameters. In this paper we take the latter approach.

\subsection{Creating a family}
Since $X(T)$ is a quotient of $X(\TT)$ we may view $X(\TT)$ as acting on the vector space $X(T)_\CC$.
We first replace the $X(T)$-equivariant $\Hscr$-schober $\Sscr^c$  by an $X(\TT)$-equivariant
$\Hscr$-scho\-ber $\bar{\Sscr}^c$ built from subcategories of $D(W/\TT)$.
More precisely for $C\in \Cscr$, $\bar{\Sscr}_C^c$ is defined as the thick subcategory of $D(W/\TT)$ spanned by $\chi\otimes \Oscr_{W}$ 
for $\chi\in X(\TT)$
such that the image of $\chi$ in $X(T)$ is in $\nu+{\Delta}$ for $\nu\in -C$.
Note that $X(H)=\ker(X(\TT)\r X(T))$ acts trivially on $\Cscr$. In other words $X(H)$ acts on $\bar{\Sscr}_C^c$ for every $C\in \Cscr$. 

\medskip

We now change our point of view a bit.
We choose
a splitting $\TT=T\times H$ and we  consider $\bar{\Sscr}^c$ as an $X(T)$-equivariant schober on $X(T)_\CC$, equipped with an additional
$X(H)$-action. The reward for doing this is that the decategorification $K^0_\CC(\bar{\Sscr}^c)$ of $\bar{\Sscr}^c$ is now built from modules over the group ring $\CC[X(H)]$ of $X(H)$. Since $\CC[X(H)]\cong \CC[H]$ (the coordinate ring of $H$) we may specialize
$K^0_\CC(\bar{\Sscr}^c)$  at an element $h$ of $H$. Denote the result by $K^0_h(\bar{\Sscr}^c)$. Via \cite{KapranovSchechtman} we
obtain a corresponding perverse sheaf on $X(T)_\CC$ which we denote by $\tilde{K}^0_h(\bar{\Sscr}^c)$. After descent under $X(T)$ this yields a perverse sheaf
of $X(T)_\CC/X(T)=T^\ast$ which we denote by $S^c(h)$.
 We recover $S^c$ defined above as $S^c(\bold{1})$ where $\bold{1}$ denotes the unit element of $H$. 
\subsection{Main result}
The following result fulfils the promise made at the start of the introduction. 
\begin{theorem}[Theorem \ref{thm:connect}] \label{mainth:intro}
Assume that $\alpha\in Y(H)_\CC$ is non-resonant and put $h=\exp(2\pi i\alpha)\in H$.
Then 
$
\bar{P}(\alpha)\cong \bar{\tau}^*S^c(h^{-1})
$ where $\bar{\tau}$ is the translation introduced in Theorem \ref{kite:intro}.
\end{theorem}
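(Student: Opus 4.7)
The overall plan is to reduce the theorem to an equality of local systems on the complement of the discriminant and invoke the simplicity of intermediate extensions. Both $\bar P(\alpha)$ and $\bar\tau^\ast S^c(h^{-1})$ are perverse sheaves on $T^\ast$ whose singular loci are contained in $V(E_A)$: for the first by classical GKZ theory, for the second because by Theorem \ref{kite:intro} the image of $\Hscr_\CC$ in $T^\ast$, translated by $\bar\tau$, is exactly $V(E_A)$, and the Kapranov--Schechtman construction produces a perverse sheaf on $X(T)_\CC$ smooth with respect to $\Hscr_\CC$. Thus on the open stratum $U:=T^\ast\setminus V(E_A)$ both objects restrict to local systems (up to a shift).

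The first step is to show that, under the non-resonance hypothesis, $\bar\tau^\ast S^c(h^{-1})$ is simple. On the level of the $\Hscr$-schober, the gluing data between adjacent chambers $C,C'\in\Cscr^0$ are given by spherical-like functors between the categories $\bar\Sscr^c_C$ and $\bar\Sscr^c_{C'}$ with a common face. After specialization at $h$ and decategorification, the condition that the resulting perverse sheaf is an intermediate extension translates, via the Kapranov--Schechtman dictionary, to the invertibility of the ``variation'' maps attached to each hyperplane. These variation maps are built from the Koszul-type resolutions controlling the $\Sscr^c_C$, and a direct computation shows that they factor through an expression whose determinant is, up to a unit, a product of cyclotomic-type factors in $h$; non-resonance of $\alpha$ precisely means none of these factors vanishes. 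By Proposition \ref{prop:intrononres} the same simplicity holds for $\bar P(\alpha)$.

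The second and main step is to compare the two local systems on $U$. Since $\bar P(\alpha)$ is simple, it is determined by its restriction $L_{\mathrm{GKZ}}(\alpha)$ to $U$, and likewise $\bar\tau^\ast S^c(h^{-1})$ is determined by the local system $L_{\mathrm{sch}}(h^{-1})$ coming from the underlying local system of the $\Hscr$-schober after specialization. Both are representations of $\pi_1(U)$. Here I would use the intermediate monodromy result promised in the introduction: a general formula for the monodromy of the ``quasi-symmetric'' GKZ system in terms of the $b_i$ and $\alpha$. On the schober side, the monodromy around a hyperplane is obtained from the categorical reflection functors (window shifts between the different noncommutative resolutions attached to adjacent GIT chambers, \emph{cf.} \cite{SVdB10,HLSam}), and after decategorification and specialization at $h$ it becomes an explicit pseudo-reflection acting on $K^0_h(\bar\Sscr^c_C)$. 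Matching these two descriptions, generator by generator of $\pi_1(U)$, yields an isomorphism $L_{\mathrm{GKZ}}(\alpha)\cong L_{\mathrm{sch}}(h^{-1})$.

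The main obstacle is unambiguously this last monodromy matching: the GKZ monodromy is not given by a ready-made closed formula (the classical Beukers--Heckman type description is one-variable), so it must be computed essentially from scratch, and then the combinatorial data governing the pseudo-reflections coming from the schober (depending on how $\nu+\Delta$ meets the lattice $X(T)$) must be reconciled with the combinatorial data coming from the $a_i$ via Gale duality. Once the two pseudo-reflection representations are shown to agree, an application of the normalization part of Proposition \ref{prop:intrononres} (independence of the non-resonant representative modulo $Y(H)$) together with intermediate-extension simplicity upgrades the isomorphism of local systems to the claimed isomorphism $\bar P(\alpha)\cong\bar\tau^\ast S^c(h^{-1})$ of perverse sheaves.
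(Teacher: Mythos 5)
Your high-level strategy (match the monodromy on $T^\ast\setminus V(E_A)$, then upgrade to an isomorphism of perverse sheaves via the intermediate-extension property) coincides with the paper's. Your account of the monodromy comparison is also broadly accurate: the paper computes the GKZ monodromy from scratch via Mellin--Barnes integrals connected to power series solutions (Theorem \ref{th:mainth1}), matches it against the schober monodromy coming from the Koszul complexes $C_{\lambda,\chi}$ (Propositions \ref{lem:supportschobermonodromy}, \ref{lem:supportschobermonodromy1}), and then invokes Corollaries \ref{cor:intermediateextension}, \ref{cor:interext}.

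The genuine gap is in your Step~1, where you establish that $\bar\tau^\ast S^c(h^{-1})$ is an intermediate extension (or simple). You reduce this to ``invertibility of the variation maps attached to each hyperplane.'' That is not the correct criterion: in the Kapranov--Schechtman dictionary, being the intermediate extension corresponds to the two separate conditions that (i) the $\gamma$-maps from chambers into lower-dimensional cells are jointly surjective (no quotient supported on $\Hscr_\CC$) and (ii) the $\delta$-maps are jointly injective (no sub supported on $\Hscr_\CC$); invertibility of a variation map is a strictly different (and generally stronger) hypothesis that moreover would only address one side. The paper's actual argument looks nothing like yours: condition (i) follows from the combinatorial Lemma \ref{lem:union0} ($\Lscr_C=\bigcup_{C_0>C}\Lscr_{C_0}$) and holds for \emph{every} $h$, with no non-resonance assumption. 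Condition (ii) is obtained by a duality argument that passes through the auxiliary nullcone-supported subschober $\bar\Sscr^f$: one shows $\langle-,-\rangle'$-duality $K^0(\bar\Sscr^c)^-\cong{}_\tau\DD_{\ZZ[X(H)]}(K^0(\bar\Sscr^f))$ (Corollary \ref{cor:Hdual}), and then that the natural map $K^0_h(\bar\Sscr^f)\to K^0_h(\bar\Sscr^c)$ is an isomorphism precisely when $h$ is non-resonant (Proposition \ref{prop:duality}). The latter is where non-resonance enters, via a Poincar\'e-series computation over $\CC[W]^T$ and Lemma \ref{lem:fanvsgit}, which is quite far from an ``invertibility of variation maps built from Koszul resolutions'' computation. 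Without the $\Sscr^f$-duality idea, your proof does not close; the claim that non-resonance ``precisely means none of these factors vanishes'' happens to be true in spirit (the relevant element $F=\prod_i(1-m_i)$ vanishes exactly on the resonant locus), but the object whose determinant this controls is the matrix of graded Poincar\'e series \eqref{eq:Poincare_matrix}, not a variation map.
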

Now note that while Theorem \ref{mainth:intro} is in the spirit of Question \ref{qu:question}, it does not actually answer the latter. Indeed  as we have said $S^c=S^c(\bold{1})$ and
$\exp(2\pi i\alpha)=\bold{1}$ if and
only if $\alpha\in Y(H)$ and such $\alpha$ are, in some sense, as far away from being non-resonant as possible.  The case of resonant parameters is currently
work in progress and will be discussed in a future paper.
\begin{remark} \label{rem:repar}
Theorem \ref{mainth:intro} suggests that it should be possible to parametrize the GKZ system by $H$, instead of by the covering space $Y(H)_\CC$. This
is indeed possible, analytically, if we restrict ourselves to the non-resonant part of $Y(H)_\CC$. See \S\ref{subsec:parametricdescent}.
\end{remark}
\subsection{Discussion}
Theorem \ref{mainth:intro} connects two (families of) perverse sheaves, one defined via algebra and one defined via analysis.
A perverse sheaf is generically a (shifted) local system.
Restricting ourselves to local systems, results in the spirit of Theorem \ref{mainth:intro}, but with different specifics, appear elsewhere.
We give some examples.
\begin{itemize}
\item 
In \cite{BHMB} a local system ``at infinity''  obtained from the derived categories of suitable toric GIT quotients
is shown to be the same as a local system at infinity corresponding to a GKZ system.
This result is not
  restricted to the quasi-symmetric case.
\item In \cite{ABM} a result like Theorem \ref{mainth:intro} is proved for the standard resolution of suitable Slodowy slices. The corresponding
derived category is equipped
with an action of the affine braid group and hence may be regarded as a local system of triangulated categories on the complement of a hyperplane arrangement.
After decategorification, this local system is shown to be the same as a local system defined using quantum cohomology (which is equipped with a connection).
It would be very interesting to apply these techniques in the setting of Theorem \ref{mainth:intro}. 
\item Bridgeland observes that the moduli space of stability conditions of a triangulated category often arises as a covering space 
of a nice space  such that 
 the
  central charge map  descends to a multivalued map which satisfies an interesting differential equation
  whose monodromy comes from  the action of autoequivalences. For
  example, \cite{BridgelandYuSutherland} (see also \cite{Ikeda}) considers the
   Calabi-Yau completions \cite{Gi,Keller20} of the $A_2$-quiver. It is
  shown that the associated spaces of stability conditions, modulo suitable autoequivalences, are given by $\mathfrak{h}^{\text{reg}}/S_3$ where
$\mathfrak{h}$ is the two-dimensional irreducible representation of $S_3$ and $\mathfrak{h}^{\text{reg}}\subset \mathfrak{h}$ is the hyperplane complement 
on which $S_3$ acts freely. The central charge, which is a multivalued holomorphic map $\mathfrak{h}^{\text{reg}}/S_3\r \CC^2$, satisfies a hypergeometric equation \cite[eq. (14)]{BridgelandYuSutherland}.
\end{itemize}
\subsection{Outline of proof}
The proof of Theorem \ref{mainth:intro} consists of three major steps.
\begin{step}
\label{step:step1}
We compute the monodromy of the perverse sheaves $\bar{P}(\alpha)$. To this end we follow  the method introduced by Beukers in \cite{Beukers}
but in our setting we are able to get a precise formula for the full monodromy and not just the local monodromy as in loc.\ cit. 
We believe that this result is interesting in its own right but it is a bit too technical to state here. See Theorem \ref{th:mainth1}.
\end{step}
\begin{step}
\label{step:step2}
We compute the monodromy of the perverse sheaf $S^c(h)$ and show that it is the same as the monodromy of $\bar{P}(\alpha)$ modulo the reparametrisation
indicated in Theorem \ref{mainth:intro}. This is done by using suitable complexes introduced in \cite{SVdB}. See Proposition \ref{lem:supportschobermonodromy1}.
\end{step}
\begin{step} \label{step:step3}
We prove that $S^c(h)$ is the intermediate extension of the local system it defines, which allows us to conclude by combining Proposition \ref{prop:intrononres} with Steps \ref{step:step1},\ref{step:step2}.

To accomplish Step \ref{step:step3} we first use some combinatorics to show $S^c(h)$ has no quotients supported on $\Hscr_\CC/X(T)$.
We then show that  $K^0_h(\bar{\Sscr}^c)$ is nearly self dual, implying that $S^c(h)$ also has no subobjects supported on
$\Hscr_\CC/X(T)$. Therefore it has the intermediate extension property. 

The self duality is shown by defining a subschober $\bar{\Sscr}^f\subset \bar{\Sscr}^c$ of
$\bar{\Sscr}^c$ where $\bar{\Sscr}^f_C$ consists of those objects in $\bar{\Sscr}^c_C$ which are supported on the nullcone of $W$. 
Then $K^0(\bar{\Sscr}_C^f)$ is both:  \emph{isomorphic} to $K^0(\bar{\Sscr}_C^c)$, after a suitable localization, and  (2) \emph{dual} to $K^0(\bar{\Sscr}_C^c)$, via the Euler form.
See \S\ref{sec:intermediate}.
\end{step}

\section{Acknowledgement}
We thank Roman Bezrukavnikov for informing us about \cite{ABM}, Lev Borisov  for helpful discussions and Igor Klep for help with references.
We also thank  Tom Bridgeland for enlightening comments about the context of our results.

A large part of this work was carried out at the Max Planck Institute for Mathematics in Bonn in the fall of 2019. The authors are very
grateful for wonderful working conditions provided by the institute. 

We  acknowledge the referees for insightful comments and suggestions.

\section{Notation and conventions}
A list of notations is given at the end of the paper.
\subsection{Identifications for tori}
\label{sec:id}
Below we routinely use some identifications associated with algebraic tori. We summarize these below.
Let $L$ be a finitely generated free abelian group and let $T:=L\otimes_{\ZZ}\CC^\ast$ be the corresponding algebraic torus.
Then we have canonical identifications
\begin{align*}
\Lie(T)&=L\otimes_{\ZZ} \CC\\
X(T)&=L^\ast\\
Y(T)&=L.
\end{align*}
In particular we obtain a canonical isomorphism
\[
Y(T)_\CC\cong \Lie(T).
\]
One checks that the resulting composition
\[
Y(T)\r Y(T)_\CC\cong \Lie(T)
\]
sends $\lambda:G_m\r T$ to $d\lambda:\CC=\Lie(G_m)\r \Lie(T)$, evaluated
at $1$.
The exponential exact sequence
\[
0\r \ZZ\r \CC\xrightarrow{e^{2\pi i-}} \CC^\ast\r 0
\]
after tensoring with $L$ becomes an exact sequence
\[
0\r Y(T)\r \Lie(T)=Y(T)_\CC\xrightarrow{e^{2\pi i-}} T\r 0.
\]
The dual torus $T^\ast$ is defined as $L^\ast\otimes_{\ZZ}\CC^\ast$. Hence we have identifications
\begin{align*}
\Lie(T^\ast)&=\Lie(T)^\ast\\
X(T^\ast)&=Y(T)^\ast\\
Y(T^\ast)&=X(T)^\ast.
\end{align*}
\begin{remark} Below we will usually silently make the identifications
  given above.
\end{remark}
\subsection{Setting}\label{subsec:setting}
Throughout we fix an exact sequence of free abelian groups  
\begin{equation}
\label{eq:settingses1}
0\r L\xrightarrow{B^\ast} \ZZ^d\xrightarrow{A} N\r 0.
\end{equation}
with $n:=\rk L$. We do not fix  bases for $L$, $N$ but the middle term $\ZZ^d$ 
will be equipped with its tautological basis (denoted by $(e_i)_i$) and this basis plays an essential role in the theory.

The map denoted by $B^\ast$ is the dual of the corresponding map $B:\ZZ^d\r L^\ast$. We write
$b_i=B(e_i)\in L^\ast$, $a_i=A(e_i)\in N$. Following custom we will usually identify $A,B$ with the sequences of elements $(a_i)_i$, $(b_i)_i$.

Tensoring \eqref{eq:settingses1} with $-\otimes_{\ZZ}\CC^\ast$ we obtain an exact sequence of algebraic tori
\begin{equation}\label{eq:settingses}
1\to T\xrightarrow{B^\ast}\TT\xrightarrow{A} H\to 1
\end{equation}
with $\dim T=n$, $\dim \TT=d$.
Let $W\cong \CC^d$ and let $\TT=(\CC^*)^d$ act coordinate-wise on~$W$. Then $T$ acts on $W$ via the map $B^\ast$. 

From \eqref{eq:settingses} we obtain:
\begin{equation}\label{eq:seschar}
0\to X(H)\xrightarrow{A^\ast} X(\TT)\xrightarrow{B} X(T)\to 0.
\end{equation}
In this interpretation the $b_i\in X(T)\cong L^*$ are the weights for the $T$-action on $W$. 
Dualizing \eqref{eq:seschar}, we obtain 
\begin{equation*}\label{eq:sec1par}
0\to Y(T)\xrightarrow{B^\ast} Y(\TT)\xrightarrow{A} Y(H)\to 0
\end{equation*}
which is just an avatar of \eqref{eq:settingses1}.

Below we make the identification (see \S\ref{sec:id})
\begin{equation}\label{eq:ext}
X(T)_\CC/X(T)=Y(T^*)_\CC/Y(T^*)=\Lie(T^*)/Y(T^*)\overset{e^{2\pi i-}}{\cong} T^*.
\end{equation}
\subsection{Quasi-symmetry}\label{subsec:quasi-symmetric}  
We say that $W$ is {\em
  quasi-symmetric} if for every line $0\in\ell\in X(T)_\RR$,
$\sum_{b_i\in\ell}b_i=0$. 
\subsection{Affine hyperplane arrangements}
\label{sec:genhyp}
We will encounter hyperplane arrangements in several different contexts
so we introduce some general notions related to them.

Below let $\Hscr$ be an affine hyperplane arrangement\footnote{We
  always silently assume that affine hyperplane arrangements are
  locally finite and that their corresponding central arrangements are
  finite.}  in an $n$-dimensional real vector space~$V$. Following
custom we will often confuse $H\in \Hscr$ with a specific affine
function (``equation'') $H:V\r \RR$ defining $H$.

For $H\in \Hscr$ we let $H_0$ be the parallel translate of $H$  which passes through the origin and we define the \emph{central hyperplane arrangement} $\Hscr_0$ corresponding
to $\Hscr$ as
\[
\Hscr_0=\{H_0\mid H\in \Hscr\}.
\]
If $H\in \Hscr$ then the corresponding complex hyperplane $\Hscr_\CC\subset V_\CC$ is given by  the same equation which defines $H$. To imagine $H_\CC$ one may
use the following concrete interpretation
\[
H_\CC=\{x_0+ix_i\in V+iV\mid H(x_0)=0, H_0(x_1)=0\}=H\times i H_0.
\]
Note that $H_\CC$ has codimension two in $V_\CC$.
The \emph{complex hyperplane arrangement} $\Hscr_\CC\subset V_\CC$ is defined by
\[
\Hscr_\CC=\{H_\CC\mid H\in \Hscr\}.
\]
 A vector $\rho\in V$
is \emph{generic} if it is not parallel to any of the hyperplanes in
$\Hscr$.

The closures of the connected components of $V\setminus \Hscr$ are convex polytopes. The collection of the (relatively open) faces 
of these polytopes is denoted by $\Cscr$.
This set is partially ordered by 
  $C_1\le C_2$ iff $C_1\subset \overline{C_2}$. 
  We denote $C_1\wedge C_2=\relint(\overline{C_1}\cap \overline{C_2})$. 
  By $\Cscr^0$ we denote the set of chambers, i.e.\ the polytopes of dimension $n$, in $\Cscr$. 
A triple of faces $(C_1,C_2,C_3)\in \Cscr$ is {\em collinear} if there exists $C'\leq C_1,C_2,C_3$ and there exist points $c_i\in C_i$ such that $c_2\in[c_1,c_3]$. 
\subsection{A hyperplane arrangement in {\boldmath $X(T)_\RR$}}
\label{sec:hyperplane}
Unless otherwise specified we will use the notation $(\Hscr,\Cscr)$ in a concrete sense which we now outline. 
Put
\begin{equation}
\label{eq:Delta}
\Sigma=\left\{\sum_i \beta_i b_i\mid \beta_i\in
(-1,0)\right\},\qquad \Delta=(1/2)\overline{\Sigma}.
\end{equation}
Denote by $(H_i)_i$ the affine hyperplanes in $X(T)_\RR$ spanned by
the facets of $\Delta$. Put 
\begin{equation}
\label{eq:Hscrdef}
\Hscr=\bigcup_i(-H_i +X(T)).
\end{equation}

The following lemma is rather easy to check.
\begin{lemma}\label{lem:genericB}
$\rho\in X(T)_\RR$ is generic (see \S\ref{sec:genhyp}) if and only if $\rho$ does not lie in any (proper) subspace spanned by $b_i$'s. If
$W$ is furthermore  quasi-symmetric, then this is also equivalent to the condition that $\rho$ does not lie in the boundary of any cone spanned by $b_i$'s. 
\end{lemma}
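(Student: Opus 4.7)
The plan is to reduce both equivalences to statements about which linear subspaces of $X(T)_\RR$ are spanned by subsets of $\{b_i\}_i$. First I would identify the central arrangement $\Hscr_0$ explicitly. Since $\Delta$ is the zonotope $\sum_i [-1/2,0]\,b_i$, every facet of $\Delta$ is parallel to the linear hyperplane spanned by those $b_j$ lying on it, and conversely every linear hyperplane of $X(T)_\RR$ that is spanned by a subset of $\{b_i\}_i$ is parallel to some (in fact two opposite) facet of $\Delta$. Hence $\Hscr_0$ is exactly the collection of linear hyperplanes of $X(T)_\RR$ spanned by subsets of $\{b_i\}_i$, and $\rho$ is generic iff it avoids all of them.

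For the first equivalence, faithfulness of the $T$-action on $W$ forces the $b_i$ to span $X(T)_\RR$. Any proper subspace $V'\subsetneq X(T)_\RR$ that is spanned by some of the $b_i$ can therefore be enlarged to a linear hyperplane still spanned by $b_i$'s by successively adjoining $b_j\notin V'$; conversely every hyperplane in $\Hscr_0$ is itself a proper subspace spanned by $b_i$'s. Combined with the identification of $\Hscr_0$ above, this gives the first equivalence.

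For the second equivalence I would leverage quasi-symmetry in the following form: for any line $\ell\subset X(T)_\RR$ through the origin and any index $i_0$ with $b_{i_0}\in\ell$, the relation $\sum_{b_j\in\ell}b_j=0$ rearranges to $-b_{i_0}=\sum_{j\neq i_0,\,b_j\in\ell}b_j$, exhibiting $-b_{i_0}$ as a non-negative combination of the remaining $b_j$ on $\ell$. Applied line by line inside any proper subspace $V'$ that is spanned by some of the $b_i$, this shows the cone generated by $\{b_j : b_j\in V'\}$ is closed under negation, hence is itself a linear subspace, and by comparing spans it must equal $V'$. Thus $V'$ is literally a cone spanned by $b_i$'s; since $V'$ has empty interior in $X(T)_\RR$ it coincides with its own topological boundary, so every $\rho\in V'$ lies on the boundary of a cone spanned by $b_i$'s. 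Conversely, the topological boundary in $X(T)_\RR$ of any cone spanned by $b_i$'s is contained in a proper subspace spanned by some of them: either the cone is full-dimensional and its boundary is the union of its proper faces (each lying in the span of a strict subset of the generators), or the cone is not full-dimensional and is then itself contained in the proper span of its generators.

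The main subtlety, and the place where quasi-symmetry is genuinely used, is the step showing $\mathrm{cone}(\{b_j:b_j\in V'\}) = V'$; without quasi-symmetry the cone spanned by the $b_i$'s inside a subspace is typically only a pointed cone or a half-space, not the full subspace. Once that cone-filling property is in hand, the remainder is essentially bookkeeping about extending spanning subsets of $\{b_i\}$ to hyperplanes and reading off topological boundaries inside $X(T)_\RR$.
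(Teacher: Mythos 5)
Your proof is correct. The paper offers no argument for this lemma (it is dismissed as ``rather easy to check''), so there is no proof of record to compare against, but your argument is a legitimate filling of that gap. The three key ingredients---the zonotope fact that $\Hscr_0$ consists exactly of the linear hyperplanes spanned by subsets of the $b_i$, the enlargement of any proper span of $b_i$'s to such a hyperplane (relying on faithfulness to guarantee that the $b_i$ span $X(T)_\RR$), and quasi-symmetry forcing the cone on $\{b_j : b_j\in V'\}$ to fill all of $V'$---are each deployed correctly, and you rightly note that the converse direction of the second equivalence does not need quasi-symmetry. One small phrasing quibble: the $b_j$'s relevant to a facet of $\Delta$ do not literally lie on that facet; rather, they span the linear hyperplane to which the facet's affine hull is parallel. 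The intended meaning is nonetheless clear.
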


For $C\in \Cscr$ (defined as in \S\ref{sec:genhyp}) we introduce the following notation. Let
\[\mathcal{L}_{C}=(\nu+\Delta) \cap X(T)
\] 
for an arbitrary
\(\nu \in C\left(\mathcal{L}_{C} \text { does not depend on } \nu,
  \text{ see \cite{HLSam}\cite[\S 5.1]{SVdB10}}\right)\). It follows from loc.cit.\ (see \cite[Lemma 5.3]{SVdB10}) that 
\begin{equation}\label{eq:inclusion_Lscr_C}
C'<C =\!\Rightarrow \Lscr_{C}\subset \Lscr_{C'}.
\end{equation}

\subsection{Non-resonance condition}
\label{sec:nonresonance}
Consider the real central hyperplane arrangement $\Iscr_0$ in $\mathfrak{h}=Y(H)_\RR$ spanned by the facets of $\RR_{\ge 0}A$ and let $\Iscr$ be the real affine hyperplane arrangement $Y(H)+\Iscr_0$. 
 We say that $\alpha$ is \emph{non-resonant}
if $\alpha\not\in \Iscr_\CC$. Also let $\Iscr'_0$ be the real central hyperplane arrangement consisting of hyperplanes spanned by subsets of $A$ and let
$\Iscr'=Y(H)+\Iscr'_0$. 
We say that \(\alpha\) is {\em totally non-resonant} if $\alpha\not\in \Iscr'_\CC$. Clearly $\Iscr_0\subset \Iscr'_0$. So totally non-resonant implies non-resonant.

We will say that $h\in H$ is {\em (totally) non-resonant} if $(\log h)/(2\pi i)\in \mathfrak{h}$ is (totally) non-resonant. Note that it does not matter which branch of $\log h$ we choose. 

We will denote by $H^{\nres}$, $\mathfrak{h}^{\nres}$ the (analytically open) sets of non-resonant elements of $H$ and $\mathfrak{h}$.

\section{The GKZ hypergeometric system}
We now discuss the celebrated Gelfand-Kapranov-Zelevinsky system of differential equations \cite{GKZhyper}. It is well-known that the theory of said system 
becomes substantially easier under the following assumptions which we now make.
\begin{assumption} 
\label{ass:setting} Except when otherwise specified  we assume 
$\sum_i b_i=0$. 
It is easy to see that the latter condition is equivalent to the assumption that there exists $h\in N^\ast$ such that {$\forall i:\langle h,a_i\rangle = 1$}.
\end{assumption}
\begin{remark}
\label{eq:setup}
Assumption \ref{ass:setting} is implied by quasi-symmetry.
It follows from Assumption \ref{ass:setting} that $W$ is \emph{unimodular}.
In our current setup we also have $\ZZ A=N$ and $\ZZ B=L^\ast\cong X(T)$. Furthermore under Assumption \ref{ass:setting}, $\Delta$ as introduced in \eqref{eq:Delta} is centrally symmetric
and in particular the minus sign in the definition of $\Hscr$ in \eqref{eq:Hscrdef} is superfluous.
\end{remark}
\subsection{The GKZ system of differential equations}
\label{ssec:gkz}
Let $x_i$, $1\leq i\leq d$, be the coordinates on $\TT^*=(\CC^*)^d$, we write $\partial_i=\partial/\partial x_i$. 
For $l\in L$ write $B^\ast(l)=(l_i)_{i=1}^d$ and put
\[
\Box_l=\prod_{l_i>0}\partial_i^{l_i}-\prod_{l_i<0}\partial_i^{-l_i}.
\]
Let $\hf={\rm Lie}(H)$ so that $\hf^*={\rm Lie}(H^*)$. As $H^\ast\subset \TT^\ast$,  $H^\ast$ acts by translation on~$\TT^\ast$.
Therefore
$\hf^*$ acts by derivations on $\Oscr_{\TT^*}$. If $\phi\in \mathfrak{h}^\ast$ then we write $E_\phi$
for the corresponding derivation.

Put $\Dscr=\Dscr_{\TT^\ast}$. The GKZ system of differential equation with parameter $\alpha\in Y(H)_\CC=\hf$ corresponds to the following $\Dscr$-module
\begin{equation}
\label{eq:GKZ}
\MMa(\alpha)=\Dscr\bigg/\left(\sum_{l\in X(T^*)}\Dscr\Box_l+\sum_{\phi\in\hf^*}\Dscr(E_\phi-\phi(\alpha))\right).
\end{equation}

\begin{theorem}
\cite[Theorem 3.9]{Adolphson}
\label{rem:holonomic}
The $\Dscr$-module $\MMa(\alpha)$ is holonomic with regular singularities.
\end{theorem}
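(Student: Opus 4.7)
The plan is to establish holonomicity and regular singularities separately, each by a largely independent argument. Throughout I work with the order filtration on $\Dscr=\Dscr_{\TT^*}$, whose associated graded is $\Oscr_{T^*\TT^*}\cong \CC[x_1^{\pm 1},\ldots,x_d^{\pm 1}][\xi_1,\ldots,\xi_d]$.

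For holonomicity, the strategy is to bound $\Ch(\MMa(\alpha))\subset T^*\TT^*$ in dimension by $d=\dim\TT^*$. Passing to principal symbols (with $B^*(l)=(l_i)_i$), each toric operator $\Box_l$ contributes the binomial symbol $\prod_{l_i>0}\xi_i^{l_i}-\prod_{l_i<0}\xi_i^{-l_i}$, and collectively these generate the toric ideal $I_A\subset\CC[\xi_1,\ldots,\xi_d]$ attached to $A$. The Euler operators $E_\phi-\phi(\alpha)$ produce the bihomogeneous linear symbols $\sum_i\phi(a_i)\,x_i\xi_i$ (the constant term $\phi(\alpha)$ being of strictly lower order). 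Hence $\Ch(\MMa(\alpha))$ sits inside the subscheme of $\TT^*\times\AA^d_\xi$ cut out by $I_A$ and by $\sum_i\phi(a_i)x_i\xi_i=0$ for $\phi\in\hf^*$. A combinatorial computation, using that the irreducible components of $V(I_A)\subset\AA^d$ correspond to faces of $\RR_{\ge 0}A$ and that on each such component the linear forms cut in the expected codimension, shows this scheme has pure dimension $d$, yielding the holonomicity bound.

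For regular singularities I would use Assumption \ref{ass:setting} essentially. The element $h\in N^*$ with $\langle h,a_i\rangle=1$ for all $i$ gives the classical Euler operator $E_h=\sum_i x_i\partial_i$, and the relation $(E_h-\langle h,\alpha\rangle)$ in the defining ideal of $\MMa(\alpha)$ says that $\MMa(\alpha)$ is \emph{monodromic} with respect to the diagonal scaling $\CC^*$-action on $\TT^*$. In fact $\MMa(\alpha)$ is $H^*$-monodromic, since the full family of Euler relations accounts for the $H^*$-action on $\TT^*$ by translation, and under Assumption \ref{ass:setting} the diagonal $\CC^*$ sits inside $H^*\subset\TT^*$. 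By Hotta's theorem (building on earlier work of Kashiwara--Kawai and Brylinski--Malgrange), monodromic holonomic $\Dscr$-modules automatically have regular singularities, so combining this with holonomicity yields the second assertion.

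The main obstacle I foresee is the dimension estimate for $\Ch(\MMa(\alpha))$. The symbol ideal is only a coarse approximation to the full characteristic ideal, and one must verify that the scheme $V(I_A+\text{linear forms})\to\TT^*$ has no fibers of excess dimension over $\TT^*$. This is precisely where the combinatorics of $A$ — in particular the unimodularity following from Assumption \ref{ass:setting} via Remark \ref{eq:setup} — intervenes, and is the technical core of Adolphson's original argument.
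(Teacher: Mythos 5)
The paper offers no proof of this statement: it is a direct citation to \cite[Theorem 3.9]{Adolphson}. So the comparison is with the literature rather than with an argument in the paper.

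Your holonomicity argument follows the standard route (GKZ, then Adolphson). Two small caveats. First, identifying the symbol of $\Box_l$ with the full binomial $\prod_{l_i>0}\xi_i^{l_i}-\prod_{l_i<0}\xi_i^{-l_i}$ requires the two monomials to have equal degree, i.e.\ $\sum_i l_i=0$ for all $l\in L$; this is exactly what Assumption \ref{ass:setting} supplies (via $\sum_i l_i=\langle h,\sum_i l_i a_i\rangle=0$), and you should flag that you are using it, since otherwise the leading symbol of $\Box_l$ collapses to a single monomial. Second, $V(I_A)$ is \emph{irreducible} here since $L=\ker A$ is saturated; what is stratified by faces of $\RR_{\geq 0}A$ is its torus-orbit decomposition, and correspondingly the characteristic cycle decomposes into conormal varieties of the $A$-discriminantal strata of $\TT^\ast$. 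Neither point affects soundness, and you correctly identify the dimension estimate over degenerate $x$ as the technical heart of the argument.

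The regular-singularities half has a genuine gap. The result you attribute to Hotta (and Brylinski, Kashiwara--Kawai) --- that monodromic holonomic $\Dscr$-modules are automatically regular --- is a theorem about a \emph{vector space} equipped with a $\CC^\ast$-action contracting onto a fixed point; it does not hold on a torus, where the diagonal scaling acts freely. On $\TT^\ast$ the statement is simply false: $\Oscr_{(\CC^\ast)^2}\,e^{x/y}$ is a rank-one connection, hence holonomic, and since $(x\partial_x+y\partial_y)(x/y)=0$ it is invariant under diagonal scaling (monodromic with character $0$); yet it is irregular along $\{y=0\}$ in any compactification. More generally, an $H^\ast$-monodromic module on $\TT^\ast$ only descends to a twisted $\Dscr$-module on $T^\ast$, and those need not be regular either, so enlarging the diagonal $\CC^\ast$ to $H^\ast$ does not save the argument. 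The repair is standard: define the GKZ module by the same presentation over $\Dscr_{\CC^d}$, where the diagonal scaling contracts to the origin, so the monodromicity criterion genuinely applies and gives regular holonomicity on $\CC^d$; then observe that $\MMa(\alpha)$ as defined in \eqref{eq:GKZ} is the restriction of that module to the open torus, so it inherits regular holonomicity. Adolphson's own proof proceeds by a different, more direct estimate along the boundary divisors, but the monodromicity route you chose is a legitimate alternative once it is run on $\CC^d$ rather than on $\TT^\ast$.
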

\subsection{Non-resonance}\label{subsec:nonresonance}
The
GKZ system  is {\em non-resonant} if the parameter $\alpha$ is non-resonant (see \S\ref{sec:nonresonance}).  
Let $\mathfrak{v}$ be the normalized volume\footnote{The normalized volume is the volume divided by the fundamental volume of the lattice $N\cap \{\langle h,-\rangle=1\}$ where $h\in N^\ast$ is as in Assumption \ref{ass:setting}.} of the convex
hull of $A$. In the non-resonant case~$\mathfrak{v}$ equals the rank of the GKZ
system~\cite{GKZhyper,GKZcorr}.  In the quasi-symmetric case (see
\S\ref{subsec:quasi-symmetric})~$\mathfrak{v}$ also equals the number of integral points in
$\varepsilon + (1/2)\Sigma$ for arbitrary generic (see \S\ref{sec:hyperplane})~$\varepsilon \in X(T)_\RR$
(see e.g. \cite[Corollary 4.2, Remark 4.3]{HLSam} together with \cite{BorisovHorja} (explicitly stated in \cite[Theorem A.1]{SVdB4})).\footnote{Lev Borisov  showed us a nice direct combinatorial proof of this result.}

\subsection{Descent of the GKZ system}\label{subsec:splitting}
The GKZ system is a weakly $H^*$-equivariant 
$\Dscr$-\-module on $\TT^\ast$ with character $\alpha$ (see Definition \ref{def:character}), so it represents an object
in $\Qch_\alpha(H^\ast,\Dscr_{\TT^\ast})$ (see loc.\ cit.).

By Corollary \ref{cor:maincor}, choosing a splitting $\iota:\TT\to T$ of $B^*$ in \eqref{eq:settingses} and denoting by a slight abuse of notation $\iota:T^*\to \TT^*$ also adjoint to $\iota$, 
$\Dscr$-module pullback 
\begin{equation}
\label{eq:iota}
\iota^*:\Qch_\alpha(H^\ast,\Dscr_{\TT^\ast})\to \Qch(\Dscr_{T^\ast})
\end{equation}
is an equivalence of (abelian) categories. Below we will be concerned with 
\[\iPP(\alpha):=\iota^\ast \MMa(\alpha)\in \Dscr_{T^\ast}.\]

\begin{remark} \label{rem:difference}
$\iPP(\alpha)$ depends on the splitting $\iota$, but in a relatively weak way. The difference of two splittings $\iota$, $\iota'$ can be regarded as a map $\delta:N\r L$ which
then induces a corresponding map $\delta:T^\ast \r H^\ast$. Then it follows from Lemma \ref{the-sections-they-are-a-changin} that $\iota^{\prime \ast}\MMa(\alpha)$ differs
from $\iota^{\ast}\MMa(\alpha)$ by tensoring with the invertible $\Dscr$-module generated by the multi-valued function $\theta_{\alpha,\delta}$ on $T^\ast$ which satisfies
$\theta_{\alpha,\delta}\circ e^{2\pi i-}=e^{2\pi i \langle \delta(-),\alpha\rangle}$ where we have lifted $\delta$ to a linear map $\mathfrak{t}^\ast\r \mathfrak{h}^\ast$.
\end{remark}
\subsection{The GKZ discriminant locus}\label{subsec:GKZdislocus}
The  singular
locus of $\iPP(\alpha)$ is given by the principal $A$-discriminant $E_A$
\cite{GKZhyper}. If $W$ is quasi-symmetric then by \cite[Proposition 4.1]{Kite}  it is the 
image of a hyperplane
arrangement in $X(T)_\CC$ under the identification $X(T)_\CC/X(T)\cong T^\ast$ \eqref{eq:ext}.
More precisely, 
\[
(\Hscr_\CC+\kkappa)/X(T)\overset{e^{2\pi i-}}{\cong}V(E_A)
\]
 where $\Hscr$ is the hyperplane arrangement defined in \S\ref{sec:hyperplane} and
$\kkappa:=-\frac{i}{2\pi}\sum_i (\log{|n_j|})b_j$ with $b_j=n_j l_\ell$
for a generator $l_\ell$ of the rank one lattice $X(T)\cap \ell$ where
$\ell\in X(T)_\RR$ is the line through the origin which contains $b_j$.
\subsection{Reminder on the Riemann-Hilbert correspondence}
\label{sec:reminder}
For a smooth qua\-si-compact separated scheme $X/\CC$ of pure dimension $d$ 
let $\Mod_{\rh}(\Dscr_X)$ be the full subcategory of $\Qch(\Dscr_X)$
consisting of regular holonomic $\Dscr_X$-modules. 
Moreover, 
let $D^b_{\rh}(\Dscr_X)$ be the bounded derived category of $\Dscr_X$-modules
with regular holonomic cohomology. 
\begin{remark} \label{rem:compat}
The category $D^b_{\rh}(\Dscr_X)$ is compatible with all standard
$\Dscr$-module operations (e.g.\ \cite[Theorem 6.1.5]{hottabook}). Moreover, 
the category $\Mod_{\rh}(\Dscr_X)$ is closed under subquotients in $\Qch(\Dscr_X)$
(see \cite[\S6.1]{hottabook}).
\end{remark}
Let $D^b_{\cs}(\CC_{X^{\an}})$ be the bounded derived category of sheaves
of vector spaces on $X^{\an}$ with constructible cohomology (with
respect to an algebraic stratification).
Also let $\Perv(X)\subset D^b_{\cs}(\CC_{X^{\an}})$ be the category of perverse sheaves on $X^{\an}$. Recall
that $\Perv(X)$ is the heart of a $t$-structure. We denote the corresponding cohomology
by ${}^pH(-)$.
Put
\[
\Sol_X:D^b(\Dscr_X)\r D(\CC_{X^{\an}}):\Mscr\mapsto \uRHom_{\Dscr^{\an}_X}(\Mscr^{\an},\Oscr_{X^{\an}}).
\]
By the
celebrated Riemann-Hilbert correspondence $\Sol$ restricts to an equivalence of
triangulated categories\footnote{$(-)^\circ$ denotes the opposite.}
\[
\Sol_X:D^b_{\rh}(\Dscr_X)\r D_{\cs}(\CC_X)^\circ
\]
which sends the natural $t$-structure on $D^b_{\rh}(\Dscr_X)$ to the $d$-shifted perverse one
on $D_{\cs}(\CC_X)^\circ$ and hence
restricts further to an equivalence of abelian categories
\[
\Sol_X[d]:\Mod_{\rh}(\Dscr_X)\r \Perv(X)^\circ
\]
(see e.g. \cite[Proposition 4.7.4, Corollary 7.2.4, Theorem 7.2.5]{hottabook}).

\begin{lemma} \label{lem:solcompatibility}
Let $f:Y\r X$ be a morphism between smooth quasi-compact separated schemes over $\CC$ 
of pure dimensions $d_Y$, $d_X$ and let $Lf^\ast$ be the unshifted $\Dscr$-module pullback (denoted by $Lf^\circ$ in \cite{Borel}).
Then
\begin{equation}
\label{eq:solcompat}
\Sol_Y\circ Lf^\ast=Lf^{\an,\ast}\circ \Sol_X
\end{equation}
as functors $D^b_{\rh}(\Dscr_X)\r D^b_{\cs}(\CC_{X^{\an}})$.
\end{lemma}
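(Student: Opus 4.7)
The plan is to reduce the identity to a statement about analytic sheaves and then establish it via derived tensor-hom manipulations; in fact the identity is essentially recorded in \cite[Theorem 7.1.1]{hottabook} (or in Borel's book \cite{Borel}) and could in principle be quoted directly, but I would carry out the following steps to make the compatibility explicit.

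First, since $\Sol_X$ factors through analytification and analytification commutes with $Lf^{\ast}$ by GAGA for $\Dscr$-modules, it suffices to prove the analytic analogue
\[
\uRHom_{\Dscr_{Y^{\an}}}(Lf^{\an,\ast}\Mscr,\Oscr_{Y^{\an}})\cong f^{\an,-1}\uRHom_{\Dscr_{X^{\an}}}(\Mscr,\Oscr_{X^{\an}})
\]
for $\Mscr=M^{\an}$. Here $Lf^{\an,\ast}$ applied to constructible $\CC$-sheaves coincides with $f^{\an,-1}$, since such sheaves are flat over $\CC_{Y^{\an}}$, which explains why no shift appears on the topological side.

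Second, I would invoke the transfer bimodule identity
\[
Lf^{\an,\ast}\Mscr=\Dscr_{Y\to X}^{\an}\otimes^{L}_{f^{-1}\Dscr_{X^{\an}}}f^{-1}\Mscr
\]
and apply derived tensor-hom adjunction to obtain
\[
\uRHom_{\Dscr_{Y^{\an}}}(Lf^{\an,\ast}\Mscr,\Oscr_{Y^{\an}})\cong \uRHom_{f^{-1}\Dscr_{X^{\an}}}\bigl(f^{-1}\Mscr,\,\uRHom_{\Dscr_{Y^{\an}}}(\Dscr_{Y\to X}^{\an},\Oscr_{Y^{\an}})\bigr).
\]

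Third, and this I expect to be the main obstacle, I would establish the key identity
\[
\uRHom_{\Dscr_{Y^{\an}}}(\Dscr_{Y\to X}^{\an},\Oscr_{Y^{\an}})\cong f^{-1}\Oscr_{X^{\an}}
\]
in the derived category of $f^{-1}\Dscr_{X^{\an}}$-modules. My strategy is to factor $f$ as a closed immersion $i\colon Y\hookrightarrow Y\times X$ followed by the smooth projection $p\colon Y\times X\to X$, and verify the identity in each case. For the smooth projection, a Koszul/relative-de-Rham resolution of the transfer module reduces $\uRHom$ against $\Oscr_{(Y\times X)^{\an}}$ to $p^{-1}\Oscr_{X^{\an}}$; for the closed immersion, a local argument using the regular sequence cutting out $Y$, together with the fact that formal germs along $Y$ agree with analytic germs in the analytic category, identifies the $\uRHom$ with $i^{-1}\Oscr_{(Y\times X)^{\an}}$.

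Fourth, combining the key identity with the standard compatibility of $f^{-1}$ with $\uRHom$ against a $f^{-1}$-pulled-back module, one concludes
\[
\Sol_{Y}(Lf^{\ast}M)\cong f^{-1}\uRHom_{\Dscr_{X^{\an}}}(M^{\an},\Oscr_{X^{\an}})=Lf^{\an,\ast}\Sol_{X}(M),
\]
which is \eqref{eq:solcompat}.
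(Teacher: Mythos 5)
Your strategy differs from the paper's, which passes through the De Rham functor and Verdier duality and imports the compatibility $\DR_Y\circ f^!\cong f^{\an,!}\circ\DR_X$ from Borel as a black box, then unwinds. Your proposed direct proof via the transfer bimodule, however, has a gap that is fatal as written. The claimed key identity
\begin{equation*}
\uRHom_{\Dscr_{Y^{\an}}}(\Dscr_{Y\to X}^{\an},\Oscr_{Y^{\an}})\cong f^{-1}\Oscr_{X^{\an}}
\end{equation*}
is false for a closed immersion, and the ``fact'' you use to establish it (that formal germs along $Y$ agree with analytic germs) is also false. Take $i\colon Y=\{0\}\hookrightarrow X=\CC$. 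Then $\Dscr_{Y\to X}^{\an}=\Dscr_{X,0}^{\an}/x\Dscr_{X,0}^{\an}\cong\CC[\partial_x]$ as a $\Dscr_{Y^{\an}}=\CC$-module, so the left-hand side is the full $\CC$-linear dual $\Hom_{\CC}(\CC[\partial_x],\CC)$, which as a $\Dscr_{X,0}^{\an}$-module is the ring $\CC[[x]]$ of formal power series; the right-hand side is $\CC\{x\}$, convergent germs. These differ. In general the $\uRHom$ computes the formal completion of $\Oscr_{X^{\an}}$ along $Y$, not $f^{-1}\Oscr_{X^{\an}}$.

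The gap is not a removable technicality: your argument never uses the hypothesis that $M$ is regular holonomic, yet the compatibility \eqref{eq:solcompat} fails for irregular holonomic $\Dscr$-modules precisely because of the discrepancy between formal and convergent solutions along a subvariety (the phenomenon behind irregular singularities such as $e^{1/x}$). In the paper's proof regularity enters through the citation to Borel, which encodes Kashiwara's comparison theorem for regular holonomic modules. To salvage your approach, at the closed-immersion step you would first have to identify the $\uRHom$ with the formal completion along $Y$, and then separately invoke the regularity comparison theorem (formal solutions of a regular holonomic module along a smooth subvariety converge); that theorem is the hard content that your direct transfer-bimodule computation was silently assuming away.
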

\begin{proof} Put  $d_{Y,X}=d_Y-d_X$.
Let $\DR_X:D^b_{\rh}(\Dscr_X)\r D^b_{\cs}(\CC_{X^{\an}})$ be the De Rham functor \cite[\S4]{hottabook} and let $f^!:D^b(\Dscr_X)\r D^b(\Dscr_Y)$ be the \emph{shifted} $\Dscr$-module pullback functor; i.e. $f^!=Lf^\ast[d_{Y,X}]$. By \cite[\S14.5(4)]{Borel},
\[
\DR_Y\circ f^!=f^{\an,!}\circ \DR_X.
\]
Moreover, by \cite[Prop 4.7.4]{hottabook}, we have $\DR_X\cong \Sol_X(\DD_X(-))[d_X]$ where
$\DD_X=\uRHom_{\Dscr_X}(-,\Dscr_X[d_X])$ and by \cite[Corollary 4.6.5]{hottabook},
$\DD_X^{\an}(\Sol_X (-)[d_X])=\Sol_X(\DD_X (-))[d_X]$ where $\DD_X^{\an}:=\uRHom(-,\CC_{X^{\an}}[2d_X])$ is
the Verdier duality functor. Finally $f^{\an,!}$ and $Lf^{\an,\ast}$ are related in the usual
way by $Lf^{\an,\ast}=\DD^{\an}_Y\circ f^{\an,!}\circ \DD^{\an}_X$. By combining these ingredients
one obtains the formula \eqref{eq:solcompat}.
\end{proof}
\begin{remark} If $f:Y\r X$ is a closed immersion then \eqref{eq:solcompat} says
informally that $\Dscr$-module pullback corresponds to restriction of solutions.
\end{remark}
\subsection{The GKZ perverse sheaf}\label{subsec:gkzperverse}

By Theorem \ref{rem:holonomic}, $\MMa(\alpha)\in \Mod_{\rh}(\Dscr_X)$. We put $\Ma(\alpha)=\Sol_{\TT^\ast}(\MMa(\alpha))[d]$.
We call $\Ma(\alpha)$ the \emph{GKZ perverse sheaf}, with parameter~$\alpha$. For further reference we recall the following results.
\begin{proposition} \label{prop:Palpha}
Assume that $\alpha,\alpha'$ are non-resonant.
\begin{enumerate}
\item
$\Ma(\alpha)$ is a simple 
perverse sheaf. Moreover, $\Ma(\alpha)\cong
\Ma(\alpha')$  
if and only if $\alpha-\alpha'\in
N$.
\item The analogous statements in the category of $\Dscr$-modules hold for $\MMa(\alpha)$.
\end{enumerate}
\end{proposition}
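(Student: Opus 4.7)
The first move is to reduce (1) to (2). By the Riemann-Hilbert correspondence recalled in \S\ref{sec:reminder}, the functor $\Sol_{\TT^\ast}[d]$ is an equivalence of abelian categories $\Mod_{\rh}(\Dscr_{\TT^\ast}) \to \Perv(\TT^\ast)^\circ$. Such an equivalence preserves simple objects and reflects isomorphism classes, so (1) and (2) are equivalent. Thus it suffices to establish (2) for $\MMa(\alpha) \in \Mod_{\rh}(\Dscr_{\TT^\ast})$ (regular holonomicity was recorded in Theorem \ref{rem:holonomic}).

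For the simplicity assertion in (2) the plan is to invoke the classical result of Gelfand-Kapranov-Zelevinsky \cite{GKZEuler}. On the complement $U := \TT^\ast \setminus V(E_A)$ of the principal discriminant, $\MMa(\alpha)|_U$ is a flat connection whose rank equals the normalized volume $\mathfrak{v}$ (see \S\ref{subsec:nonresonance}), and non-resonance forces the associated monodromy representation to be irreducible; this uses the computation of local monodromy around each component of $V(E_A)$ in terms of characters of the form $e^{2\pi i \langle a_i, \alpha\rangle}$, and the non-resonance hypothesis $\alpha \notin \Iscr_\CC$ guarantees that none of these local monodromies is trivial. Combined with the fact that under the unimodularity guaranteed by Assumption \ref{ass:setting} (see Remark \ref{eq:setup}) $\MMa(\alpha)$ is the intermediate extension of $\MMa(\alpha)|_U$ from $U$, simplicity of the local system forces simplicity of $\MMa(\alpha)$.

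The isomorphism criterion splits into two directions. For the implication $\alpha - \alpha' \in N \Rightarrow \MMa(\alpha) \cong \MMa(\alpha')$, note that since the $a_i = A(e_i)$ generate $N$ (Remark \ref{eq:setup}), it suffices to treat $\alpha' = \alpha \pm a_i$; then a direct inspection of the defining relations \eqref{eq:GKZ} shows that multiplication by $x_i$ and differentiation by $\partial_i$ descend to $\Dscr$-linear maps $\MMa(\alpha) \to \MMa(\alpha - a_i)$ and $\MMa(\alpha) \to \MMa(\alpha + a_i)$, and non-resonance ensures their composition is a nonzero scalar so both are isomorphisms by the simplicity already established. For the converse, observe from \S\ref{subsec:splitting} that $\MMa(\alpha)$ is weakly $H^\ast$-equivariant with infinitesimal character $\alpha$; any $\Dscr$-linear isomorphism between $\MMa(\alpha)$ and $\MMa(\alpha')$ must intertwine the two $H^\ast$-actions up to a global character of $H^\ast$, so $\alpha - \alpha' \in X(H^\ast) = Y(H) = N$.

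The expected main obstacle is the intermediate-extension step in the simplicity argument: it is not enough to know that the generic local system is irreducible, one must also exclude sub- and quotient modules supported on $V(E_A)$. Under non-resonance this is a classical input from \cite{GKZEuler}, but carrying it out directly would require a local analysis along each stratum of $V(E_A)$ and is precisely the analogue, on the $\Dscr$-module side, of the intermediate extension argument that the authors perform for $S^c(h)$ in Step \ref{step:step3} of the outline.
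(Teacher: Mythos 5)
Your overall route matches the paper's: reduce (1) and (2) to each other via the Riemann--Hilbert correspondence, then invoke \cite{GKZEuler} for simplicity; the paper treats the isomorphism criterion by citing \cite{Saito}, \cite{Dwork}, \cite{Beukers2}, while you sketch the underlying contiguity-plus-equivariance argument. So this is essentially the same approach, with you fleshing out the arguments behind the citations.

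One step in your sketch of the isomorphism criterion is stated imprecisely. The composition of $(\cdot x_i)$ and $(\cdot\partial_i)$ is \emph{not} a nonzero scalar on $\MMa(\alpha)$: the operator $x_i\partial_i$ is not constant modulo the GKZ relations (only the full homogeneity operators $E_\phi$ are). The correct argument, which is what \cite{Saito} uses and which is made explicit in Proposition~\ref{eq:family} of the paper via Beukers' pseudo-inverse $P_\alpha$, is that $(\cdot\partial_i)\colon\MMa(\alpha-a_i)\to\MMa(\alpha)$ is a \emph{nonzero} $\Dscr$-linear map between two simple regular-holonomic modules (non-resonance of $\alpha-a_i$ is automatic since $a_i\in N$), hence an isomorphism by Schur's lemma; the nonvanishing itself does require a short check. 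Your converse direction via the weak $H^\ast$-equivariant structure is sound, but should be completed: given a $\Dscr$-linear isomorphism $\phi\colon\MMa(\alpha)\to\MMa(\alpha')$, one-dimensionality of $\Hom_{\Dscr}$ between simple objects yields scalars $c(h)$ with $h^\ast\phi\,(h^{-1})^\ast = c(h)\phi$, the cocycle $c$ is an algebraic character of $H^\ast$ because $H^\ast$ is a torus, and differentiating identifies $\alpha-\alpha'$ with an element of $X(H^\ast)=Y(H)=N$.
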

\begin{proof}
The statements about perverse sheaves and about $\Dscr$-modules are
  equivalent by the Riemann-Hilbert correspondence.  The first claim
  (for perverse sheaves) is \cite{GKZEuler} (for the explicit
  statement see \S.4.7. Proof of Theorem 2.11 in loc.cit.). 
(See also \cite[Theorem 4.1]{SchulzeWalther} in the context of $\Dscr$-modules.) 
   The
  second claim (for $\Dscr$-modules) is \cite[Corollary 2.6]{Saito},
  see also \cite[Thm 6.9.1]{Dwork}, 
\cite[Theorem 2.1]{Beukers2}.
\end{proof}

We may deduce the corresponding claims for $\iPP(\alpha)$ (see \S\ref{subsec:splitting}).
\begin{proposition}\label{prop:iPPgeneral}
The following holds for $\iPP(\alpha)=\iota^*\MMa(\alpha)$.
\begin{enumerate}
\item 
$L\iota^*\MMa(\alpha)=\iPP(\alpha)$.
\item
$\iPP(\alpha)$ has regular singularities.
\item
If $\alpha$ is non-resonant then $\iPP(\alpha)$ is a simple $\Dscr_{T^\ast}$-module. 
\item If \label{it4:iPPgeneral}
$\alpha,\alpha'$  are  non-resonant and
$\alpha-\alpha'\in
N$ then
$\iPP(\alpha)\cong
\iPP(\alpha')$.
\end{enumerate}
\end{proposition}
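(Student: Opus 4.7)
The plan is to deduce all four assertions from the corresponding statements about $\MMa(\alpha)$ on $\TT^\ast$ (Theorem \ref{rem:holonomic} and Proposition \ref{prop:Palpha}) by transporting across the equivalence $\iota^\ast:\Qch_\alpha(H^\ast,\Dscr_{\TT^\ast})\to \Qch(\Dscr_{T^\ast})$ of \eqref{eq:iota}, exploiting that the underlying $\Dscr$-module pullback does not see the equivariance structure.

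For (1), I would fix the product splitting $\TT^\ast\cong T^\ast\times H^\ast$ induced by $\iota$ and use the explicit description of the inverse of that equivalence: a $\Dscr_{T^\ast}$-module $\Nscr$ lifts to the external tensor $\pi^\ast\Nscr\boxtimes L_\alpha$, where $\pi:T^\ast\times H^\ast\to T^\ast$ is the projection and $L_\alpha$ is the rank-one $\Oscr_{H^\ast}$-locally free $\Dscr_{H^\ast}$-module encoding the character $\alpha$. Such a lift is $\Oscr_{\TT^\ast}$-flat in the direction of the closed immersion $\iota$, so no Tor correction arises and $L\iota^\ast\MMa(\alpha)=\iota^\ast\MMa(\alpha)=\iPP(\alpha)$. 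Assertion (2) is then immediate from (1): by Theorem \ref{rem:holonomic} $\MMa(\alpha)$ is regular holonomic, and regular holonomicity is preserved under $\Dscr$-module inverse image (Remark \ref{rem:compat}).

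For (3), Proposition \ref{prop:Palpha}(2) says that $\MMa(\alpha)$ is simple in the non-equivariant category $\Qch(\Dscr_{\TT^\ast})$, hence a fortiori simple in $\Qch_\alpha(H^\ast,\Dscr_{\TT^\ast})$, since any proper equivariant subobject is in particular a proper $\Dscr_{\TT^\ast}$-submodule. Since the equivalence $\iota^\ast$ of \eqref{eq:iota} preserves simplicity of abelian objects, $\iPP(\alpha)$ is simple as well. For (4), Proposition \ref{prop:Palpha}(2) again provides an isomorphism $\MMa(\alpha)\cong \MMa(\alpha')$ in $\Qch(\Dscr_{\TT^\ast})$ whenever $\alpha,\alpha'$ are non-resonant with $\alpha-\alpha'\in N$. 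Because $\iota^\ast$ viewed as a $\Dscr$-module functor depends only on the underlying scheme map $T^\ast\hookrightarrow \TT^\ast$ and not on equivariance data, pulling this isomorphism back along $\iota$ directly yields $\iPP(\alpha)\cong\iPP(\alpha')$ in $\Qch(\Dscr_{T^\ast})$.

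The principal subtlety is item (1): although the equivalence \eqref{eq:iota} is phrased purely at the abelian level, one must verify that the derived pullback $L\iota^\ast$ agrees with the underived $\iota^\ast$ on the particular object $\MMa(\alpha)$. The product-splitting picture above reduces this to the obvious $\Oscr$-flatness of $L_\alpha$, after which the remaining items reduce to standard transport across equivalences and the usual stability of regular holonomicity under inverse image.
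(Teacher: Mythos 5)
Your proof is correct and follows essentially the same route as the paper's: items (2)--(4) are transported across the equivalence \eqref{eq:iota} exactly as in the paper, and for item (1) your argument via the external product decomposition $L_\alpha\boxtimes\Nscr$ and $\Oscr$-flatness of $L_\alpha$ is the same computation the paper packages as Lemma~\ref{lem:inverse}, which uses descent of the underlying $\Oscr$-module along the flat projection to see that no higher $L^j\iota^\ast$ appear.
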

\begin{proof}
The properties (except for (1)) are  consequences of the preceding results.
\begin{enumerate}
\item This follows from Lemma \ref{lem:inverse} below.
\item See Remark \ref{rem:compat} and Theorem \ref{rem:holonomic}. 
\item This follows from Proposition \ref{prop:Palpha}(2) and \eqref{eq:iota}.
\item 
Let $\alpha$, $\alpha'$ be as in the statement of this proposition. By Proposition  \ref{prop:Palpha}(2)
we then have $\MMa(\alpha)\cong \MMa(\alpha')$. Hence $\iota^\ast\MMa(\alpha)\cong \iota^\ast\MMa(\alpha')$.
\qedhere 
\end{enumerate}
\end{proof}

\begin{lemma}\label{lem:generaliP} Put ${}^p \iota^\ast={}^pH^0(L\iota^{\an,\ast}):\Perv(\TT^\ast)\r \Perv(T^\ast)$. Then one has
\begin{equation}
\label{eq:ip}
\iP(\alpha):={}^p\iota^\ast P(\alpha)=\Sol_{T^*}(\iPP(\alpha))[\dim T].
\end{equation}
Moreover, $\iP(\alpha)$ satisfies the analogues of Proposition \ref{prop:iPPgeneral} (3)(4).
\end{lemma}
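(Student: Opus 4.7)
The plan is to combine Lemma \ref{lem:solcompatibility} with Proposition \ref{prop:iPPgeneral}(1) to commute $\Sol$ past $\Dscr$-module pullback. Applying $\Sol_{T^\ast}$ to the identity $L\iota^\ast\MMa(\alpha) = \iPP(\alpha)$ and invoking Lemma \ref{lem:solcompatibility} immediately gives
\[
\Sol_{T^\ast}(\iPP(\alpha)) = L\iota^{\an,\ast}\Sol_{\TT^\ast}(\MMa(\alpha)) = L\iota^{\an,\ast}(P(\alpha))[-d].
\]

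To translate this into the assertion involving ${}^p\iota^\ast$, I would use the product decomposition $\TT^\ast = T^\ast \times H^\ast$ afforded by the splitting $\iota$ to obtain a box-product description of $P(\alpha)$. The equivalence of Corollary \ref{cor:maincor} says that the weakly $H^\ast$-equivariant $\Dscr$-module $\MMa(\alpha)$ is of the form $\iPP(\alpha) \boxtimes \Oscr_{H^\ast}^\alpha$, where $\Oscr_{H^\ast}^\alpha$ is the character $\Dscr$-module on $H^\ast$ attached to $\alpha$. Applying $\Sol$ with the appropriate Riemann--Hilbert shifts yields
\[
P(\alpha) = \iP(\alpha) \boxtimes \Lscr_\alpha[\dim H],
\]
where $\Lscr_\alpha = \Sol_{H^\ast}(\Oscr_{H^\ast}^\alpha)$ is the rank-one local system on $H^\ast$ corresponding to $\alpha$. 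Since $\iota\colon T^\ast \hookrightarrow \TT^\ast$ is the inclusion of the slice $T^\ast \times \{\mathbf{1}\}$ and $\Lscr_\alpha$ has rank one, $L\iota^{\an,\ast}P(\alpha) = \iP(\alpha)[\dim H]$ is a single perverse sheaf shifted by the codimension of $\iota$, and passage to the perverse cohomology ${}^p\iota^\ast$ picks out $\iP(\alpha) = \Sol_{T^\ast}(\iPP(\alpha))[\dim T]$ (the shifts are consistent via $\dim T - d = -\dim H$).

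For the analogues of Proposition \ref{prop:iPPgeneral}(3)(4), I would simply transport both statements across the Riemann--Hilbert equivalence of abelian categories $\Sol_{T^\ast}[\dim T]\colon \Mod_{\rh}(\Dscr_{T^\ast}) \to \Perv(T^\ast)^\circ$: both simplicity and isomorphism classes are preserved, so the corresponding properties of $\iPP(\alpha)$ from Proposition \ref{prop:iPPgeneral} immediately give those of $\iP(\alpha)$. The only potentially delicate step is the bookkeeping of perverse-cohomology shifts for the closed immersion $\iota$ of codimension $\dim H$; the box-product description of $P(\alpha)$ in the second paragraph renders this transparent, and everything else is a formal application of the Riemann--Hilbert correspondence together with Lemma \ref{lem:solcompatibility}.
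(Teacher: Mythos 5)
Your argument follows essentially the same route as the paper's one-line proof: combine Lemma~\ref{lem:solcompatibility} with Proposition~\ref{prop:iPPgeneral}(1) to obtain \eqref{eq:ip}, and transport the analogues of Proposition~\ref{prop:iPPgeneral}(3)(4) across the Riemann--Hilbert equivalence $\Sol_{T^\ast}[\dim T]$. The box-product description $\Ma(\alpha)\cong \iP(\alpha)\boxtimes\Lscr_\alpha[\dim H]$ coming from Corollary~\ref{cor:maincor} is a nice explication of the geometry of restriction along the slice $T^\ast\times\{\mathbf{1}\}$, and is consistent with Lemma~\ref{the-sections-they-are-a-changin} and Remark~\ref{rem:invertible}.

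One shift-bookkeeping point deserves explicit mention, and it is present both in your write-up and in the lemma statement itself. You correctly obtain $L\iota^{\an,\ast}\Ma(\alpha)\cong\iP(\alpha)[\dim H]$ where $\iP(\alpha)=\Sol_{T^\ast}(\iPP(\alpha))[\dim T]$ is perverse. This places $L\iota^{\an,\ast}\Ma(\alpha)$ in perverse cohomological degree $-\dim H$, so ${}^pH^0\bigl(L\iota^{\an,\ast}\Ma(\alpha)\bigr)=0$ whenever $\dim H>0$; taken literally, ${}^pH^0(L\iota^{\an,\ast})$ does not return $\iP(\alpha)$. What does is ${}^pH^0\bigl(L\iota^{\an,\ast}(-)[-\dim H]\bigr)$, i.e.\ the perverse restriction with the codimension shift built in; your parenthetical $\dim T-d=-\dim H$ identifies exactly this shift, but the sentence ``passage to ${}^p\iota^\ast$ picks out $\iP(\alpha)$'' glosses over it. Since the substantive identity $\iP(\alpha)=\Sol_{T^\ast}(\iPP(\alpha))[\dim T]$ is what is actually used downstream (e.g.\ in Corollary~\ref{cor:intermediateextension}), your proof establishes everything that matters; the only adjustment is to carry the $[-\dim H]$ explicitly in the definition of ${}^p\iota^\ast$.
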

\begin{proof} The formula \eqref{eq:ip} may be deduced from Lemma \ref{lem:solcompatibility} using Proposition \ref{prop:iPPgeneral}(1).
The other claims follow from  Proposition \ref{prop:iPPgeneral} by the Riemann-Hilbert correspondence.
\end{proof}
\begin{corollary}\label{cor:intermediateextension} 
If $j:T^*\setminus V(E_A) \hookrightarrow T^*$ is the embedding then 
$j^{\an,*}\iP(\alpha)=\Sol_{T^*\setminus V(E_A)}(j^*\iPP(\alpha))[\dim T]$. 
 If $\alpha$ is non-resonant 
then
$\iP(\alpha)\cong j_{!*}(j^*\iP(\alpha))$, where
 $j_{!*}$ is the
intermediate extension. 
\end{corollary}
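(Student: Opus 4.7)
The plan is to verify the two assertions in sequence. The first is a direct consequence of Lemma \ref{lem:solcompatibility} applied to the open immersion $j$: since $j$ is an open immersion, both the $\Dscr$-module pullback $j^*$ and the analytic pullback $j^{\an,*}$ are exact and hence coincide with $Lj^*$ and $Lj^{\an,*}$ respectively. Starting from the identity $\iP(\alpha)=\Sol_{T^*}(\iPP(\alpha))[\dim T]$ of Lemma \ref{lem:generaliP} and applying Lemma \ref{lem:solcompatibility} with $f=j$ immediately gives
\[
j^{\an,*}\iP(\alpha)=j^{\an,*}\Sol_{T^*}(\iPP(\alpha))[\dim T]=\Sol_{T^*\setminus V(E_A)}(j^*\iPP(\alpha))[\dim T],
\]
which is the first claim.

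For the second claim, assume $\alpha$ is non-resonant. The strategy is to combine three inputs: (a) $\iP(\alpha)$ is simple in $\Perv(T^*)$, by Lemma \ref{lem:generaliP}; (b) $j^{\an,*}\iP(\alpha)$ is a non-zero perverse sheaf on $T^*\setminus V(E_A)$; and (c) simple perverse sheaves with full support are characterized as the intermediate extensions of their generic local systems. For (b), note that by \S\ref{subsec:GKZdislocus} the singular locus of $\iPP(\alpha)$ is contained in $V(E_A)$, so $j^*\iPP(\alpha)$ is an integrable connection; consequently its solution complex is a genuine local system shifted into a single degree, and by the first part this local system is $j^{\an,*}\iP(\alpha)[-\dim T]$. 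Since the rank of the GKZ system in the non-resonant case equals the normalized volume $\mathfrak{v}>0$ (\S\ref{subsec:nonresonance}), this local system is non-zero, and by perverse $t$-exactness of $j^{\an,*}$ for an open immersion we conclude that $j^{\an,*}\iP(\alpha)=j^*\iP(\alpha)$ is a non-zero object of $\Perv(T^*\setminus V(E_A))$.

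Non-vanishing of $j^*\iP(\alpha)$ implies that $\iP(\alpha)$ is not supported on $V(E_A)$. Combined with simplicity, this rules out any sub- or quotient-object of $\iP(\alpha)$ in $\Perv(T^*)$ supported on $V(E_A)$, which is exactly the universal property characterizing the intermediate extension: $\iP(\alpha)\cong j_{!*}(j^*\iP(\alpha))$. I do not anticipate a substantial obstacle here; the one point that warrants care is the perverse $t$-exactness of $j^{\an,*}$ for the open immersion $j$, which is what allows $j_{!*}$ to be applied directly to $j^*\iP(\alpha)$ without an additional shift.
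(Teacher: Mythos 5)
Your proof is correct and follows essentially the same route as the paper: derive the first claim from the $\operatorname{Sol}$-compatibility lemma applied to the open immersion $j$, then combine simplicity of $\iP(\alpha)$ (Lemma \ref{lem:generaliP}) with the fact that it is not supported on $V(E_A)$ to obtain the intermediate extension characterization. You supply slightly more detail than the paper does (e.g.\ justifying non-vanishing of $j^*\iP(\alpha)$ via the rank $\mathfrak{v}>0$ and noting perverse $t$-exactness of $j^{\an,*}$), and your invocation of Lemma \ref{lem:solcompatibility} for the first claim is the natural citation, but the substance is the same.
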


\begin{proof}
The first claim follows from Lemma \ref{lem:inverse}. 
For the second claim first note that $\iP(\alpha)$ is simple by Lemma \ref{lem:generaliP}. 
As  $\iP(\alpha)$ is not supported on $V(E_A)$, the conclusion then follows from \cite[\S4.3]{BBD} (see also \cite[Proposition 8.2.5(i), Corollary 8.2.10]{hottabook}). 
\end{proof}

\subsection{Parametric descent of GKZ systems}
\label{subsec:parametricdescent}
The results in this section, which we will not use in the sequel, were announced in Remark \ref{rem:repar}.

It is clear from the definition of $\Pscr(\alpha)$ (see \eqref{eq:GKZ}) that we may in fact define a relative $\Dscr$-module $\Pscr$ over $\TT^\ast\times \mathfrak{h}\r \mathfrak{h}$ whose fiber in $\alpha$ is equal to $\Pscr(\alpha)$. However Proposition \ref{prop:Palpha}(2) strongly suggests 
we should be able to descend the restriction of $\Pscr$ to $\mathfrak{h}^{\nres}$ to a relative $\Dscr$-module for the projection  $\TT^\ast\times H^{\nres}\r H^{\nres}$ whose fibers are still $\Pscr(\alpha)$. This is not possible algebraically but it is possible analytically.
\begin{proposition}
\label{prop:descent}
There exists a coherent $\Dscr^{\text{an}}_{\TT^\ast\times H^{\nres}/H^{\nres}}$-module $\tilde{\Pscr}$ such that the fiber of $\tilde{\Pscr}$ over $h\in H^{nres}$ is equal
to $\Pscr^{\text{an}}((\log h)/(2\pi i))$.
\end{proposition}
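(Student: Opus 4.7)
The plan is to construct $\tilde{\Pscr}$ by analytic descent along the exponential covering
\[
p=\exp(2\pi i-)\colon\hf^{\nres}\to H^{\nres}
\]
with deck transformation group $Y(H)=N$; the construction is inherently analytic because $p$ has no algebraic counterpart.

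First, I would promote \eqref{eq:GKZ} to a relative algebraic $\Dscr$-module $\Pscr$ on $\TT^{\ast}\times\hf\to\hf$ by taking
\[
\Pscr=\Dscr_{\TT^{\ast}\times\hf/\hf}\bigg/\bigg(\sum_{l\in L}\Dscr_{\TT^{\ast}\times\hf/\hf}\,\Box_{l}+\sum_{\phi\in\hf^{\ast}}\Dscr_{\TT^{\ast}\times\hf/\hf}\,(E_{\phi}-\phi(\alpha))\bigg),
\]
where $\alpha$ is the tautological section of $\Oscr_{\hf}\otimes_{\CC}\hf$. By construction the fiber at $\alpha_{0}\in\hf$ is the module $\Pscr(\alpha_{0})$ of \eqref{eq:GKZ}, and $\Pscr$ is coherent along the fibers. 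Restriction to $\hf^{\nres}$ and analytification produce a relative $\Dscr^{\an}_{\TT^{\ast}\times\hf^{\nres}/\hf^{\nres}}$-module $\Pscr^{\an}$.

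The heart of the argument is to equip $\Pscr^{\an}$ with a $Y(H)$-equivariant structure along translations, i.e.\ for each $n\in Y(H)$ a relative $\Dscr$-isomorphism $\Phi_{n}\colon\Pscr^{\an}\xrightarrow{\sim}t_{n}^{\ast}\Pscr^{\an}$ (with $t_{n}(\alpha)=\alpha+n$) satisfying the cocycle $\Phi_{n_{1}+n_{2}}=t_{n_{1}}^{\ast}\Phi_{n_{2}}\circ\Phi_{n_{1}}$. The building blocks are shift operators: for $m\in\ZZ_{\ge 0}^{d}$ the element $B_{m}:=\prod_{i}\partial_{i}^{m_{i}}\in\Dscr_{\TT^{\ast}}$ satisfies $[E_{\phi},B_{m}]=-\phi(A(m))B_{m}$ (using $[E_{\phi},\partial_{i}]=-\phi(a_{i})\partial_{i}$) and commutes with every $\Box_{l}$; hence left multiplication by $B_{m}$ defines a $\Dscr$-morphism $\Pscr\to t_{A(m)}^{\ast}\Pscr$, which by simplicity of non-resonant fibers (Proposition~\ref{prop:iPPgeneral}(3)) is an isomorphism over $\hf^{\nres}$. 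For general $n\in Y(H)$, decompose $n=A(m_{+})-A(m_{-})$ with $m_{\pm}\in\ZZ_{\ge 0}^{d}$ and set $\Phi_{n}=\Phi_{A(m_{-})}^{-1}\circ\Phi_{A(m_{+})}$, where the inverse exists abstractly by simplicity.

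The main obstacle is the cocycle condition: each $\Phi_{n}$ is determined only up to the action of $\End_{\Dscr\text{-rel}}(\Pscr^{\an})^{\times}=\Oscr^{\an,\times}(\hf^{\nres})$, so the obstruction lies in $H^{2}(Y(H);\Oscr^{\an,\times}(\hf^{\nres}))$. The identity $B_{m}B_{m'}=B_{m+m'}$ in $\Dscr_{\TT^{\ast}}$ makes $\Phi_{A(m)}\circ\Phi_{A(m')}=\Phi_{A(m+m')}$ canonically true for $m,m'\in\ZZ_{\ge 0}^{d}$; combining this with a coherent choice of decompositions $n\mapsto(m_{+}(n),m_{-}(n))$ and a normalization of the $\Phi_{n}$ at a base point in $\hf^{\nres}$ yields a genuine cocycle. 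Once this is achieved, effective descent along the analytic covering $p$ produces the required coherent $\Dscr^{\an}_{\TT^{\ast}\times H^{\nres}/H^{\nres}}$-module $\tilde{\Pscr}$; its pullback to $\TT^{\ast}\times\hf^{\nres}$ is $\Pscr^{\an}$, so its fiber at $h\in H^{\nres}$ equals the fiber of $\Pscr^{\an}$ at $(\log h)/(2\pi i)$, namely $\Pscr^{\an}((\log h)/(2\pi i))$, as required.
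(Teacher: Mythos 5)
Your overall plan — extend the GKZ system to a relative $\Dscr$-module over $\mathfrak{h}$, restrict to $\mathfrak{h}^{\nres}$, equip it with descent data for the $Y(H)$-translation action built from the shift operators $\partial_i$, and descend analytically along $\exp(2\pi i -):\mathfrak{h}^{\nres}\to H^{\nres}$ — is exactly the paper's. However there is a genuine gap at the crucial technical step, and your assessment of where the difficulty lies is inverted.

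The claim that $(\cdot\,\partial_i):\Pscr^{\nres}\to\tau_{a_i}^\ast\Pscr^{\nres}$ is a \emph{relative} isomorphism is not established by your argument. Simplicity of the non-resonant fibers only tells you each specialization $\Pscr(\alpha)\to\Pscr(\alpha+a_i)$ is an isomorphism; $\Pscr^{\nres}$ is not finitely generated over $\Oscr_{\mathfrak{h}^{\nres}}$, so there is no Nakayama-type passage from fiberwise isomorphism to isomorphism of the relative module, and you supply none. This is precisely what the paper's Proposition \ref{eq:family} proves, and it needs real input: Beukers' explicit construction of an operator $P_\alpha\in\Dscr_{\TT^\ast}$ with $P_\alpha\partial_i\equiv 1$ modulo the GKZ relations, the observation that $P_\alpha$ depends on $\alpha$ polynomially after inverting only resonance hyperplanes (hence globalizes over $\mathfrak{h}^{\nres}$), and the elementary noetherian Lemma \ref{lem:algebraic} which upgrades the resulting one-sided inverse $(\cdot P)\circ(\cdot\partial_i)=\id$ to a two-sided one. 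Without this, your descent data may fail to consist of isomorphisms.

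Conversely, the cocycle condition, which you flag as the ``main obstacle,'' is in fact automatic and requires no normalization at a base point nor any choice of decomposition $n\mapsto(m_+,m_-)$. Right multiplications by the $\partial_i$ commute on the nose. Moreover if $m_+-m_-=m_+'-m_-'$ in $\ZZ^d$ with $A(m_+-m_-)=A(m_+'-m_-')$, then the various compositions agree exactly because the differences of products of $\partial_i$'s lie in $\sum_l \Dscr\,\Box_l$, hence act as zero on $\Pscr$ by the defining relations; there is no ambiguity by units of $\Oscr^{\an,\times}(\mathfrak{h}^{\nres})$. Your proposal treats an automatic compatibility as the hard part while skipping the step that actually is hard; to repair it you should import the content of Proposition \ref{eq:family} (or an equivalent uniform inversion argument), after which the descent is immediate.
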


We will start with an algebraic result.
Put
\begin{equation*}
\MMa=(\Dscr_{\TT^\ast}\boxtimes_\CC \Oscr_{\mathfrak{h}})\biggr/\left(\sum_{l\in X(T^*)}\Dscr\Box_l+\sum_{\phi\in\hf^*}\Dscr(E_\phi-\phi)\right).
\end{equation*}
Then for $\alpha\in \mathfrak{h}$ we have $\Pscr(\alpha)\cong \Pscr_\alpha$.

\medskip

For $v\in \mathfrak{h}$ denote by $\tau_v$ the translation by $v$ on $\mathfrak{h}$. We use the same symbol for the corresponding translation on $\TT^\ast\times \mathfrak{h}$. We put a $\mathfrak{h}$-grading on the sections of $\Dscr_{\TT}$ such that $|\partial_i|=-a_i$. We then have for a section $f$ of $\Dscr_{\TT^\ast}$:
\[
[E_\phi,f]=\langle\phi,|f|\rangle f.
\]
From this it is easy to see that there are well-defined $\Dscr_{\TT^\ast}$-morphisms
\[
(\cdot f):\Pscr(\alpha)\r \Pscr(\alpha-|f|):\bar{D}\mapsto \overline{Df}
\]
which may be obtained as restrictions of the $\Dscr_{\TT^\ast}\boxtimes_\CC \Oscr_{\mathfrak{h}}$-morphism given by
\[
(\cdot f):\Pscr\r \tau_{-|f|}^\ast \Pscr : \bar{D}\mapsto \overline{Df}.
\]

\begin{proposition}\label{eq:family}
The morphism $(\cdot \partial_i):\Pscr\r \tau_{a_i}^\ast\Pscr$ becomes an isomorphism
when pulled back to\footnote{Note that while $\mathfrak{h}^{\nres}$ is not an open subscheme of $\mathfrak{h}$ it is still a perfectly good
noetherian affine scheme, albeit not of finite type.}  $\mathfrak{h}^{\nres}$.
\end{proposition}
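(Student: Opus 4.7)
The map $\phi_i := (\cdot \partial_i): \Pscr \to \tau_{a_i}^\ast\Pscr$ is a well-defined morphism of coherent $\Dscr_{\TT^\ast} \boxtimes \Oscr_\mathfrak{h}$-modules: the check reduces to $\Box_l \partial_i = \partial_i \Box_l$ and the commutator identity $[\partial_i, E_\phi] = -\phi(a_i)\partial_i$, which rearranges into $(E_\phi-\phi(\alpha))\partial_i = \partial_i(E_\phi-\phi(\alpha+a_i))$. My plan is to reduce the assertion to a fiberwise statement over each $\alpha \in \mathfrak{h}^{\nres}$, verify fiberwise iso via simplicity together with a non-vanishing computation involving $\Gamma$-series solutions, and then promote this to an iso of families by a coherence argument.

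For the fiberwise statement, observe that $\mathfrak{h}^{\nres}$ is invariant under translation by $a_i \in N$, since $\Iscr = Y(H) + \Iscr_0$ is $N$-invariant. Hence for $\alpha \in \mathfrak{h}^{\nres}$ both $\Pscr(\alpha)$ and $\Pscr(\alpha + a_i)$ are simple $\Dscr_{\TT^\ast}$-modules by Proposition \ref{prop:Palpha}(2), so $\phi_{i,\alpha}$ is either zero or an isomorphism. To rule out the zero case I exhibit a local holomorphic solution $u$ of $\Pscr(\alpha+a_i)$ with $\partial_i u \ne 0$: take the classical $\Gamma$-series
\[
\phi_\gamma(x) = \sum_{l \in L} \frac{x^{\gamma + B^\ast(l)}}{\prod_j \Gamma(\gamma_j + l_j + 1)},
\]
where $\gamma \in \CC^d$ satisfies $A\gamma = \alpha + a_i$ and $\gamma_j \notin \ZZ_{\le 0}$ for all $j$ (arrangeable for non-resonant $\alpha$). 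The identity $(\gamma_i+l_i)\Gamma(\gamma_i+l_i) = \Gamma(\gamma_i+l_i+1)$ then gives $\partial_i \phi_\gamma = \phi_{\gamma-e_i}$, a nonzero solution of $\Pscr(\alpha)$. Via the Riemann--Hilbert correspondence, $\phi_{i,\alpha}$ is dual to $u\mapsto \partial_i u$ on solution spaces, so $\phi_{i,\alpha} \ne 0$ and is thus an iso.

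To promote the fiberwise statement to the family version, the cokernel $Q$ of $\phi_i|_{\mathfrak{h}^{\nres}}$ is a coherent $\Dscr_{\TT^\ast} \boxtimes \Oscr_{\mathfrak{h}^{\nres}}$-module, and right-exactness of the fiber functor gives $Q_\alpha = \coker\phi_{i,\alpha} = 0$ at every closed point $\alpha \in \mathfrak{h}^{\nres}$, whence $Q = 0$ by a support argument. For the kernel the analogous argument needs flatness of $\Pscr|_{\mathfrak{h}^{\nres}}$ over $\Oscr_{\mathfrak{h}^{\nres}}$, which one extracts from constancy of the fiberwise rank (the normalized volume $\mathfrak{v}$ of $A$, see \S\ref{subsec:nonresonance}) combined with smoothness of $\mathfrak{h}^{\nres}$. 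The main obstacle is precisely this last step: the modules are coherent only over the mixed algebra $\Dscr \boxtimes \Oscr_\mathfrak{h}$ rather than over $\Oscr_\mathfrak{h}$ alone, so standard Nakayama reasoning does not directly apply; one could alternatively try to construct an explicit inverse on $\mathfrak{h}^{\nres}$ using the Euler identity $\sum_j x_j \partial_j = \phi_0(\alpha+a_i)$ (with $\phi_0 = h$ from Assumption \ref{ass:setting}) to divide by the non-resonant factor $\phi_0(\alpha+a_i)$, but elimination of the remaining $x_j\partial_j$ terms via Box relations is combinatorially delicate.
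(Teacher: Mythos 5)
Your fiberwise argument in Step 2 is sound: $\mathfrak{h}^{\nres}$ is indeed $N$-translation invariant, both $\Pscr(\alpha)$ and $\Pscr(\alpha+a_i)$ are simple by Proposition \ref{prop:Palpha}(2), and the $\Gamma$-series computation $\partial_i\phi_\gamma = \phi_{\gamma-e_i}$ correctly rules out the zero map. But the promotion to the family in Step 3 is a genuine gap, and you have accurately diagnosed why: kernel and cokernel are coherent only over $\Dscr_{\TT^\ast}\boxtimes\Oscr_{\mathfrak{h}^{\nres}}$, not over $\Oscr_{\mathfrak{h}^{\nres}}$, so fiberwise vanishing does not propagate. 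Your ``support argument'' for the cokernel does not actually close this: even after localizing at $\alpha$, the ideal $\mathfrak{m}_\alpha(\Dscr\otimes\Oscr_{\mathfrak{h},\alpha})$ need not lie in the Jacobson radical (the geometric series for $(1-m)^{-1}$ has no reason to converge in this non-complete ring), so Nakayama does not apply; and the flatness you would want for the kernel is not available from rank-constancy, since that is again a statement about $\Oscr$-fibers of a module that is not $\Oscr$-coherent.

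The paper sidesteps fiberwise reasoning entirely. Beukers' proof of \cite[Theorem 2.1]{Beukers2} explicitly constructs, for each non-resonant $\alpha$, an operator $P_\alpha$ with $P_\alpha\partial_i \equiv 1 \bmod$ (relations of $\Pscr(\alpha)$), and two facts extracted from that construction do all the work: $P_\alpha$ is an honest section of $\Dscr_{\TT^\ast}$ (not just of the generic fiber), and its coefficients are polynomial in $\alpha$ after inverting finitely many affine functions cutting out $\Iscr_\CC$. These two observations assemble $P$ directly into a section of $\Dscr_{\TT^\ast}\boxtimes\Oscr_{\mathfrak{h}^{\nres}}$, so that $(\cdot P)\circ(\cdot\partial_i) = \id$ holds as morphisms of \emph{families} over $\mathfrak{h}^{\nres}$, with no passage through fibers at all. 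The final step is the purely algebraic Lemma \ref{lem:algebraic}: for a noetherian module $M$ over a ring, any split monomorphism $M\to {}_\phi M$ with $\phi$ a ring automorphism is an isomorphism (otherwise one builds an infinite ascending chain). Since $\Pscr^{\nres}$ is noetherian over $\Dscr_{\TT^\ast}\boxtimes\Oscr_{\mathfrak{h}^{\nres}}$, the split monomorphism $(\cdot\partial_i)$ is an isomorphism. Note that your closing ``alternatively'' remark --- dividing by $\phi_0(\alpha+a_i)$ via the Euler operator and eliminating the remaining terms via box relations --- is not an alternative at all: it is precisely the combinatorics carried out on p.~34 of \cite{Beukers2}, and the divisions by non-resonance factors there are exactly what confines the construction to $\mathfrak{h}^{\nres}$. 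So you identified the right idea but stopped at the point where the actual work is done in the reference.
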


\begin{proof}
This follows from the proof of \cite[Theorem 2.1]{Beukers2}. Let $i_\eta:\eta\r \TT^\ast$ be the generic point of $\TT^\ast$.
For any specific non-resonant $\alpha$ Beukers constructs an element $P_\alpha\in i^\ast_\eta\Dscr_{\TT^\ast}$ such that 
\begin{equation}
\label{eq:Palpha}
P_\alpha \partial_i \equiv 1 \mod \text{(relations of $\Pscr(\alpha)$)}.
\end{equation}
Going through the proof one sees:
\begin{enumerate}
\item The constructed $P_\alpha$ is actually a section of $\Dscr_{\TT^\ast}$.
\item The construction of $P_\alpha$ is polynomial in $\alpha$ 
except that one has to invert a finite number of times the evaluation of a
 hyperplane in $\mathfrak{h}\setminus \mathfrak{h}^{\nres}$ on $\alpha$.
\end{enumerate}
To check these assertions consult the first display on p.34 in loc.\ cit.

\medskip

If follows that one may consider the
$\alpha$ as variables and so one obtains a section $P$ of 
$\Dscr_{\TT^\ast}\boxtimes_\CC \Oscr_{\mathfrak{h}^{\nres}}$
which yields $P_\alpha$ when restricted to $\TT^\ast \times \alpha$ such that one 
furthermore has
\[
P \partial_i \equiv 1 \mod \text{(relations of $\Pscr$)}.
\]
Finally we may assume that $P$ is homogeneous for the $\mathfrak{h}$-grading (by dropping the terms not of degree $-|\partial_i|$). Hence 
if $\Pscr^{\nres}$ denotes the pullback of $\Pscr$ to $\TT^\ast \times \mathfrak{h}^{\nres}$ then we have morphisms
\[
\Pscr^{\nres}\xrightarrow{(\cdot \partial_i)} \tau^\ast_{a_i} \Pscr^{\nres} \xrightarrow{(\cdot P)} \Pscr^{\nres}
\]
whose composition is the identity. It now suffices to invoke Lemma \ref{lem:algebraic} below.
\end{proof}
\begin{lemma}
\label{lem:algebraic}
Let $A$ be a ring and let $M$ be a noetherian left $A$-module. Let $\phi$ be an automorphism of $A$ and assume that there are $A$-module morphisms
\[
M\xrightarrow{u} {}_\phi M\xrightarrow{v} M
\]
whose composition is the identity. Then $u$, $v$ are isomorphisms.
\end{lemma}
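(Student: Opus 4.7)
The plan is to combine the idempotent structure of $uv$ with an iteration argument that produces a strictly ascending chain of submodules of $M$ unless $\ker v = 0$.

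First I will record the immediate consequences of $vu = \id_M$: the map $u$ is injective and $v$ is surjective. Moreover, $uv\colon {}_\phi M \to {}_\phi M$ is an $A$-module endomorphism satisfying $(uv)^2 = u(vu)v = uv$, so it is an idempotent. The standard image-plus-kernel decomposition for idempotents then yields the internal direct sum ${}_\phi M = u(M) \oplus K$ of $A$-submodules of ${}_\phi M$, where $K := \ker v$ and $\im(uv) = u(M)$ (using surjectivity of $v$ and injectivity of $u$).

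The key observation is that because $\phi$ is an \emph{automorphism} of $A$, the submodules of ${}_\phi M$ coincide as subsets with the submodules of $M$. Consequently $u(M)$ and $K$ are genuine $A$-submodules of $M$ itself, and the above decomposition reads $M = u(M) \oplus K$ as an internal direct sum of submodules of $M$. Applying the (injective) semilinear map $u$ to this equality gives $u(M) = u^2(M) \oplus u(K)$, and inductively
\[
M = u^n(M) \oplus u^{n-1}(K) \oplus \cdots \oplus u(K) \oplus K
\]
for every $n \geq 0$. The verification that each $u^i(K)$ is an $A$-submodule of $M$ under the original action is just the identity $a\cdot u(k) = u(\phi^{-1}(a)k)$ together with $\phi^{-1}(a) k \in K$.

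Finally, I will derive a contradiction assuming $K \neq 0$: since $u$ is injective, $u^i(K) \neq 0$ for every $i$, and the partial sums $L_n := K \oplus u(K) \oplus \cdots \oplus u^{n-1}(K)$ form a strictly ascending chain of $A$-submodules of $M$, violating the noetherian hypothesis. Therefore $K = 0$, so $v$ is injective as well as surjective, hence an isomorphism; and then $u = v^{-1}$ is an isomorphism too. The main subtlety — rather than any serious obstacle — is keeping straight the two $A$-module structures on the underlying abelian group $M$, and noticing that the automorphism hypothesis on $\phi$ is precisely what guarantees that the $A$-linear decomposition of ${}_\phi M$ transfers to an honest submodule decomposition of $M$.
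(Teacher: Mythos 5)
Your proof is correct and follows essentially the same strategy as the paper: use the idempotent $uv$ to split off $\ker v$ as a complement, iterate via the twist, and derive a contradiction with the noetherian hypothesis from the resulting strictly ascending chain. Your version is a bit more careful than the paper's (which works with abstract isomorphisms $M \cong {}_\theta M \oplus Y$ and leaves implicit the identification of submodules across twists); your explicit observation that submodules of ${}_\phi M$ and of $M$ coincide as subsets, precisely because $\phi$ is an automorphism, is the clean way to make that bookkeeping rigorous.
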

\begin{proof}
$uv$ is an idempotent morphism ${}_\phi M\r {}_\phi M$ whose image is isomorphic to $M$ (as $vu=\id$). Hence we obtain an isomorphism ${}_\phi M\cong M\oplus X$ for some left $A$-module $X$. Or $M\cong {}_\theta M\oplus Y$ for $\theta:=\phi^{-1}$ and $Y:={}_\theta X$. Iterating we get
\[
M\cong Y\oplus {}_\theta Y\oplus {}_{\theta^2} Y\oplus \cdots \oplus {}_{\theta^n} M 
\]
where $Y\subset Y\oplus {}_\theta Y\subset \cdots$ represents an infinite ascending chain of submodules of $M$, contradicting the noetherianity of $M$.
\end{proof}
\begin{proof}[Proof of Proposition \ref{prop:descent}] The isomorphisms  $(\cdot \partial_i):\Pscr^{\nres}\r \tau_{a_i}^\ast\Pscr^{\nres}$   exhibited in Proposition \ref{eq:family}
provide descent data for the action of $Y(H)\subset \mathfrak{h}$ on
$\TT^\ast\times\mathfrak{h}^{\nres}$ by translation. This descent cannot be done in the Zariski topology (cf. Remark \ref{rem:trivialcase} below) but it can be done in in the analytic topology since for this topology the action is discrete. 
This proves the result.
\end{proof}
\begin{remark} \label{rem:trivialcase} 
The relative $\Dscr$-module $\tilde{\Pscr}$
  exhibited in Proposition \ref{prop:descent} is not algebraic.  In
  fact the situation is already interesting in the case that $T=1$,
  $\dim H=\dim \TT=1$. In that case 
  $H\times \TT^\ast\cong \CC^\ast\times \CC^\ast$ and the underlying
  $\Oscr^{\text{an}}_{\TT^\ast\times H^{\nres}}$-module of
  $\tilde{\Pscr}$ is the restriction of the analytic line bundle
  $\Lscr$ on $\TT^\ast\times H$ which is locally over an open contractible subset $U$ of $H$ of the form 
\[
\Lscr_U:=\Lscr\mid (\TT^\ast\times U)=x^{\hat{h}}\Oscr^{\an}_{\TT^\ast\times U}
\]
where $\hat{h}=(\log h)/(2\pi i)\in \Gamma(U,\Oscr^{\an}_U)$, and where $x^{\hat{h}}$ is a symbol satisfying $x^{\hat{h}+1}=x x^{\hat{h}}$
(so that $\Lscr_U$ is canonically independent of the chosen branch of $\log h$). 
One checks that $\Lscr$ has no global section so it is not algebraic. In fact $c_2(\Lscr)\in H^2(\TT^\ast \times H,\ZZ)\cong \ZZ$ is non-trivial
so $\Lscr$ is not flat. Hence
  it cannot be made into a module over
  $\Dscr^{\operatorname{an}}_{\TT^\ast\times H}$, as one perhaps naively might hope.
\end{remark}
 \section{Solutions of the GKZ system}
\subsection{Mellin-Barnes solutions}
\label{sec:MB}
In \cite{Beukers} Beukers
shows that so-called {\em
  Mellin-Barnes integrals} 
 satisfy the GKZ 
system. Let $\gamma\in \CC^d=Y(\TT)_\CC=X(\TT^\ast)_\CC$ be such that
$\alpha=A(\gamma)$. 

Let us first recall the definition in loc.\ cit. For $\sigma\in Y(T)_\CC$  such that\footnote{It is possible to choose suitable $\sigma$ except when $\Re\gamma_i\in \NN$ and $b_i=  0$ for some $i$. Below
we will not care about this case. See Convention \ref{convention}.
}
 $\Re(\gamma_j+\langle b_j,\sigma\rangle)\not\in \NN$ 
the Mellin-Barnes integral is formally defined
as
\begin{equation}
\label{eq:formalMB}
M(v_1,\dots,v_d)=\int_{\sigma+i Y(T)_\RR}\prod_{j=1}^d\Gamma(-\gamma_j-\langle b_j,s\rangle)v_j^{\gamma_j+\langle b_j, s\rangle}ds.
\end{equation}
The condition on $\sigma$ is to guarantee that
the integrand does not have poles on the integration domain. For the (in)dependence of $\sigma,\gamma$ see Lemma \ref{lem:independenceofsigma} below.

\medskip

Differentiating $\eqref{eq:formalMB}$ under the integral sign with respect to the $v_i$
yields the relation $(E_\phi-\phi(\alpha))M=0$ for $\phi\in \mathfrak{h}^\ast$, which is part of the GKZ system.

\subsubsection{Making the Mellin-Barnes integral single-valued}\label{subsubsec:MBsingle}
Note that $M$ is defined on~$\TT^*$ (more precisely on its region of convergence, see \S\ref{sec:convergencedomain} below) but it is multi-valued due to the fact that the exponentials $v_j^u:= e^{\log(v_j) u}$ are multi-valued. To make $M$
single-valued we write $v=e^{2\pi i\hat{v}}$ for $\hat{v}\in \Lie \TT^\ast=Y(\TT^\ast)_\CC=X(\TT)_\CC$
and we express the integrand in terms of $\hat{v}$. 
Writing the single-valued version of $M$ as $\hat{M}$, i.e.
$\hat{M}(\hat{v})=M(v)$, we have
\begin{equation}
\label{eq:mbmv}
\hat{M}(\hat{v})=\int_{\sigma+i Y(T)_\RR} e^{2\pi i(\langle \hat{v},\gamma\rangle+\langle B\hat{v},s\rangle)}
\prod_{j=1}^d\Gamma(-\gamma_j-\langle b_j,s\rangle)
ds.
\end{equation}

\subsubsection{Convergence domain}
\label{sec:convergencedomain}
By \cite[Corollary
4.2]{Beukers} the integral defining $\hat{M}$ 
converges absolutely if\footnote{Formally the description of the convergence domain in \cite{Beukers} is slightly different from ours. However under the standing Assumption \ref{ass:setting} both descriptions are equivalent.}
 $B(\operatorname{Re} \hat{v})\in (1/2)\Sigma$.

\subsubsection{Other GKZ relations}\label{subsubsec:otherGKZrelations}
\begin{theorem}\protect{\cite[Theorem 3.1]{Beukers}}\label{thm:beukers}
If 
\begin{equation}
\label{eq:beukers}
\Re (\gamma_i + \langle b_i,\sigma\rangle)<0\quad \text{for all $1 \leq i \leq d$}, 
\end{equation}
then $M$ also satisfies
$\Box_l M = 0$ for all $l \in L$.
\end{theorem}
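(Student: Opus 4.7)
The proof I would write proceeds by direct computation on the integrand, exploiting the functional equation $\Gamma(z+1)=z\Gamma(z)$, and then a contour shift justified by the convergence hypothesis \eqref{eq:beukers}.

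First, under the hypothesis \eqref{eq:beukers} the integrand decays rapidly enough on the contour $\sigma+iY(T)_\RR$ (and on nearby translates) for us to differentiate under the integral sign. Write the integrand of $M$ as $F(s,v)=\prod_{j}\Gamma(-\gamma_j-\langle b_j,s\rangle)\,v_j^{\gamma_j+\langle b_j,s\rangle}$. Since $\partial_i v_i^{\gamma_i+\langle b_i,s\rangle}=(\gamma_i+\langle b_i,s\rangle)\,v_i^{\gamma_i+\langle b_i,s\rangle -1}$, iterating and using $(\gamma_i+\langle b_i,s\rangle)(\gamma_i+\langle b_i,s\rangle-1)\cdots(\gamma_i+\langle b_i,s\rangle-l_i+1)\Gamma(-\gamma_i-\langle b_i,s\rangle)=(-1)^{l_i}\Gamma(l_i-\gamma_i-\langle b_i,s\rangle)$ gives, for $l_i>0$,
\[
\prod_{l_i>0}\partial_i^{l_i}F(s,v)=(-1)^{\sum_{l_i>0}l_i}\prod_{l_i>0}\Gamma(l_i-\gamma_i-\langle b_i,s\rangle)\prod_{l_i\leq 0}\Gamma(-\gamma_i-\langle b_i,s\rangle)\prod_i v_i^{\gamma_i+\langle b_i,s\rangle-[l_i]_+},
\]
with $[l_i]_+=\max(l_i,0)$, and an entirely analogous expression for $\prod_{l_i<0}\partial_i^{-l_i}F(s,v)$ with $l_i$ negative playing the role of $l_i$ positive.

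Next, in the first of these expressions I would perform the change of variables $s\mapsto s+l$ in the integral over $\sigma+iY(T)_\RR$. Since $\langle b_i,s+l\rangle=\langle b_i,s\rangle+l_i$, the shifted Gamma factors $\Gamma(l_i-\gamma_i-\langle b_i,s\rangle)$ become $\Gamma(-\gamma_i-\langle b_i,s\rangle)$, the unshifted Gamma factors $\Gamma(-\gamma_i-\langle b_i,s\rangle)$ (for $l_i\leq 0$) become $\Gamma(-l_i-\gamma_i-\langle b_i,s\rangle)$, and the powers of $v_i$ match up with those coming from $\prod_{l_i<0}\partial_i^{-l_i}F$. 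Assumption \ref{ass:setting} gives $\sum_i l_i=0$, so $\sum_{l_i>0}l_i=\sum_{l_i<0}(-l_i)$ and the two signs coincide. Thus the integrand of $\prod_{l_i>0}\partial_i^{l_i}F(s,v)$ on the shifted contour $(\sigma-l)+iY(T)_\RR$ equals the integrand of $\prod_{l_i<0}\partial_i^{-l_i}F(s,v)$ on $\sigma+iY(T)_\RR$.

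It remains to shift the contour from $(\sigma-l)+iY(T)_\RR$ back to $\sigma+iY(T)_\RR$ by Cauchy's theorem. On a contour $(\sigma-tl)+iY(T)_\RR$, $t\in[0,1]$, the poles of the relevant Gamma factors would force $-\Re(\gamma_i+\langle b_i,\sigma\rangle)+tl_i$ to lie in $\ZZ_{\leq 0}$ (for $l_i>0$) or in $\ZZ_{\leq l_i}$ (for $l_i\leq 0$); in both cases \eqref{eq:beukers} makes the relevant real-part interval strictly positive (resp.\ strictly greater than $l_i$), so no poles are crossed. Combined with the standard decay estimates on $\Gamma$ along vertical lines (which I would package as a boundedness lemma, perhaps deferred to a small technical sublemma), this justifies the contour shift and yields $\Box_l M=0$.

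The main obstacle is not the algebra, which is routine once one has the Gamma identity, but verifying the analytic side: the joint decay of $\prod_j\Gamma(-\gamma_j-\langle b_j,s\rangle)$ in $|\operatorname{Im} s|\to\infty$ uniformly over $t\in[0,1]$, needed to close the contour deformation. This is essentially Stirling's formula plus the observation from \S\ref{sec:convergencedomain} that \eqref{eq:beukers} puts us comfortably inside the region of absolute convergence for each intermediate contour; I would state this as a preliminary lemma and then apply it in the step above.
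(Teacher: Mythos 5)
The paper gives no independent proof of Theorem \ref{thm:beukers}: it simply cites \cite[Theorem 3.1]{Beukers} (adding only that the argument works for complex $\gamma$), and your argument---differentiate under the integral, use $\Gamma(z+1)=z\Gamma(z)$ to absorb the falling factorials into shifted Gamma factors, translate $s$ by $l$ (using $\sum_i l_i=0$ from Assumption \ref{ass:setting} to match the signs), and then deform the contour, with \eqref{eq:beukers} ruling out pole crossings and Stirling-type decay justifying the deformation and the differentiation under the integral---is precisely the cited proof, and it is correct. The only blemish is the sentence assigning the integrand of $\prod_{l_i>0}\partial_i^{l_i}F$ to the contour $(\sigma-l)+iY(T)_\RR$ (after the substitution it is the other term that lives on the translated contour), but the pole conditions you actually check along $\sigma-tl$, $t\in[0,1]$, are the ones for the correct deformation, so the argument goes through as written.
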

Note: in loc. cit. $\gamma$ is assumed to be real, but the proof works equally well for complex $\gamma$.
\begin{lemma}\label{rem:condition} For a given $\alpha$, there exists $\gamma\in \CC^d$ 
with $A\gamma=\alpha$ and $\sigma\in Y(T)_\CC$ such that \eqref{eq:beukers} holds if and only if $\Re\alpha$ is in $\RR_{<0}A$.
\end{lemma}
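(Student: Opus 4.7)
The plan is to observe that the two variables $\gamma$ and $\sigma$ collapse into a single one. Concretely, set $\gamma' := \gamma + B^\ast \sigma \in \CC^d$. Since the sequence \eqref{eq:settingses1} is exact, $A \circ B^\ast = 0$, so the condition $A\gamma = \alpha$ becomes $A\gamma' = \alpha$. Moreover the condition \eqref{eq:beukers}, namely $\Re(\gamma_i + \langle b_i,\sigma\rangle) < 0$ for all $i$, is exactly the condition $\Re \gamma'_i < 0$ for all $i$. Conversely, given any $\gamma' \in \CC^d$ with $A\gamma' = \alpha$, we can recover a pair $(\gamma,\sigma)$ (e.g.\ by taking $\sigma = 0$, $\gamma = \gamma'$). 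Hence existence of $(\gamma,\sigma)$ satisfying the hypotheses of Theorem \ref{thm:beukers} is equivalent to existence of $\gamma' \in \CC^d$ with $A\gamma' = \alpha$ and $\Re \gamma'_i < 0$ for all $i$.

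Next I would split $\gamma' = x + iy$ with $x,y \in \RR^d$, so that the single condition $A\gamma' = \alpha$ decouples as $Ax = \Re\alpha$ and $Ay = \Im\alpha$. Since $A\colon \RR^d \to N_\RR$ is surjective (as \eqref{eq:settingses1} remains exact after tensoring with $\RR$), the imaginary equation $Ay = \Im\alpha$ is always solvable and imposes no constraint. Therefore the existence problem reduces to: can one find $x \in \RR^d$ with $x_i < 0$ for every $i$ and $Ax = \Re\alpha$?

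Finally, observe that $A(\RR^d_{<0}) = \{-\sum_{i} c_i a_i \mid c_i > 0\}$, which is precisely the (open) cone $\RR_{<0}A$ spanned strictly negatively by the $a_i$. So the existence of such an $x$ is equivalent to $\Re\alpha \in \RR_{<0}A$, which is exactly the stated condition. No step here should present any real obstacle — this is essentially an elementary linear-algebra bookkeeping exercise using the exact sequence \eqref{eq:settingses1} — and the only point requiring a moment's care is making the change of variables $\gamma' = \gamma + B^\ast\sigma$ bijectively parametrize the fiber $A^{-1}(\alpha)\subset \CC^d$, which follows from the injectivity of $B^\ast\colon Y(T)_\CC \hookrightarrow \CC^d$ together with surjectivity of $A$.
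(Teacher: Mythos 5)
Your proof is correct and follows essentially the same approach as the paper: both absorb $\sigma$ into $\gamma$ via the change of variable $\gamma' = \gamma + B^\ast\sigma$ (using $A\circ B^\ast = 0$), reducing to finding $\gamma'$ with $A\gamma' = \alpha$ and $\Re\gamma'_i < 0$ for all $i$, which holds iff $\Re\alpha \in \RR_{<0}A$. You spell out the real/imaginary decomposition a bit more explicitly than the paper, but the argument is the same.
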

\begin{proof} We need to determine when there exists $\gamma\in \CC^d$ with $A\gamma=\alpha$ and $\sigma\in Y(T)_\CC$ such that $\forall i:\Re (\gamma_i + \langle b_i,\sigma)\rangle<0$.
Now we have $\sum_i (\gamma_i+\langle b_i,\sigma\rangle)a_i=\sum_i\gamma_ia_i=\alpha$. In other words we may assume $\sigma=0$ and we have to look for $\gamma$ satisfying $\forall i:\Re\gamma_i<0$ and $\sum_i \gamma_i a_i=\alpha$. Such $\gamma$ exists if and only of $\alpha$ is as in the statement of lemma.
\end{proof}
\begin{lemma}\label{lem:independenceofsigma}
For given $\alpha$ with $\Re \alpha\in \RR_{<0}A$, let $\sigma\in Y(T)_\CC,\gamma\in \CC^d$ with $A\gamma=\alpha$. 
Assume that $\sigma,\gamma$ satisfy \eqref{eq:beukers}. 
Then the corresponding Mellin-Barnes integral only depends on $\alpha$.
\end{lemma}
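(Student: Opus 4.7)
The proof splits into two contour-manipulation steps once one observes that the difference between any two admissible pairs $(\sigma,\gamma)$ and $(\sigma',\gamma')$ is controlled by a single element $\tau\in Y(T)_\CC$. Since $A(\gamma'-\gamma)=0$ and \eqref{eq:settingses1} is exact, one has $\gamma'=\gamma+B^\ast\tau$ for some $\tau$, and the $j$-th coordinate reads $(B^\ast\tau)_j=\langle b_j,\tau\rangle$. Consequently $\gamma'_j+\langle b_j,s\rangle=\gamma_j+\langle b_j,s+\tau\rangle$, so the substitution $s'=s+\tau$ in the integral \eqref{eq:formalMB} attached to $(\sigma',\gamma')$ rewrites it as the integral attached to $(\sigma'+\tau,\gamma)$ --- the contour is merely translated. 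A direct check shows that $(\sigma'+\tau,\gamma)$ still satisfies \eqref{eq:beukers}. It therefore suffices to prove that for a fixed $\gamma$ the integral is independent of the admissible base point $\sigma$.

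Let $U=\{\sigma\in Y(T)_\CC\mid \Re(\gamma_j+\langle b_j,\sigma\rangle)<0 \text{ for all }j\}$. This set is convex: each defining inequality depends only on $\Re\sigma$ and is strict linear, while $\Im\sigma$ is unconstrained. The integrand in \eqref{eq:formalMB}, viewed as a holomorphic top-degree form $\omega$ on $Y(T)_\CC$, is regular on $U$, since the poles of $\Gamma(-\gamma_j-\langle b_j,s\rangle)$ occur precisely where $\gamma_j+\langle b_j,s\rangle\in\NN$, which is excluded by the strict negativity of the real part. Thus, given two admissible base points $\sigma_1,\sigma_2$, the linear interpolation $\sigma(t)=(1-t)\sigma_1+t\sigma_2$ stays in $U$, and the $(n+1)$-real-dimensional slab $R=\bigcup_{t\in[0,1]}(\sigma(t)+iY(T)_\RR)$ lies in $U$; its formal boundary is the difference of the two contours, together with a ``lateral'' part at infinity in the $iY(T)_\RR$ direction.

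The main technical step is to see that the lateral contribution vanishes. Truncating the imaginary directions to $\{y\in Y(T)_\RR\mid |y|\le R\}$ produces honest compact chains with boundary, and Stokes applied to the closed holomorphic form $\omega$ identifies the difference of the truncated integrals with an integral over $\bigcup_{t\in[0,1]}(\sigma(t)+i\{|y|=R\})$. Stirling's asymptotic $|\Gamma(x+iy)|=O(|y|^{x-1/2}e^{-\pi|y|/2})$, uniform for $x$ in compact sets, forces exponential decay of each $\Gamma$-factor along the imaginary directions of $\langle b_j,s\rangle$, and the absolute convergence result \cite[Corollary 4.2]{Beukers} underlying \S\ref{sec:convergencedomain} provides a uniform $L^1$-majorant on the whole slab $R$. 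Dominated convergence as $R\to\infty$ then forces the lateral term to vanish, and Stokes' theorem yields the equality of the two integrals. The main obstacle is precisely this uniform integrable majorant across the compact deformation parameter $t\in[0,1]$, which is handled by combining Stirling with Beukers' convergence bound.
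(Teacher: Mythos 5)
Your argument is correct and uses essentially the same two ingredients as the paper's proof: (i) contour deformation of the integration cycle $\sigma+iY(T)_\RR$ inside the pole-free convex region, and (ii) the observation that $A(\gamma'-\gamma)=0$ forces $\gamma'-\gamma\in B^\ast Y(T)$, which turns a change in $\gamma$ into a change in $\sigma$; you merely perform the two reductions in the opposite order (first $\gamma$, then $\sigma$, whereas the paper fixes $\gamma$ first). You also spell out the Stokes/Stirling/dominated-convergence justification for the contour shift, which the paper compresses into the single remark that the interpolating domain contains no pole of the integrand.
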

\begin{proof}
  We first fix $\gamma$ and vary $\sigma\r \sigma'$ (preserving \eqref{eq:beukers}). Then the integral
  does not change by the fact that the domain
  $\{\sigma+t(\sigma'-\sigma)+iY(\TT)_\RR\mid t\in [0,1]\}$ does not
  contain any pole of the integrand. Now we keep $\sigma$ fixed and we vary
  $\gamma\r \gamma'$ (keeping $A\gamma'=\alpha$) such that
  $(\sigma,\gamma')$ still satisfies \eqref{eq:beukers}. Then we can find  $\sigma'$  such that
  $\forall i:(\gamma_i+\langle b_i,\sigma\rangle)_i=(\gamma'_i+\langle
  b_i,\sigma'\rangle)_i$ (as $A(\gamma-\gamma')=0$ and hence $\gamma-\gamma'\in B^*Y(T)$).
  Hence $(\gamma',\sigma')$ satisfies \eqref{eq:beukers}.  Then we use
  the already established independence of $\sigma$.
\end{proof}

\begin{convention}\label{convention}
For $\Re \alpha \in \RR_{<0}A$ we 
write $M^{\alpha}$ (or $\hat{M}^{\alpha}$) 
for the MB-integral corresponding to $\gamma\in Y(\TT)_\CC\cong \CC^d$ such that $\alpha=A\gamma$, 
and $\sigma\in Y(T)_\CC$ such that $\forall i:\Re(\gamma_i+\langle b_i,\sigma\rangle)<0$.  
This notation is justified by Lemma \ref{lem:independenceofsigma}, as the MB integral is independent
of $\sigma$, $\gamma$ satisfying \eqref{eq:beukers} for $A\gamma=\alpha$ with $\alpha$ fixed. 
If we locally fix $\alpha$  then 
 we usually write $M:=M^{\alpha}$ (or $\hat{M}:=\hat{M}^{\alpha}$). 
\end{convention}
The next lemma and the ensuing remark will be used below.
\begin{lemma}\label{lem:Hartogs} 
The integral  $\hat{M}^{\alpha}$ depends holomorphically on the parameter~$\alpha$ with $\Re \alpha\in\RR_{<0}A$.  
\end{lemma}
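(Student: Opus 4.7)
The plan is to realize $\hat{M}^{\alpha}$ as an integral of a holomorphically $\alpha$-dependent integrand that converges locally uniformly in $\alpha$, and then invoke the standard theorem that such parametric integrals are holomorphic (e.g.\ via Morera and Fubini, or by direct differentiation under the integral sign).

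First, fix any $\alpha_{0}$ with $\Re\alpha_{0}\in \RR_{<0}A$. Choose a $\CC$-linear section $\varsigma:Y(H)_{\CC}\r \CC^{d}$ of $A$, set $\gamma(\alpha)=\varsigma(\alpha)$, and choose $\sigma\in Y(T)_{\CC}$ satisfying \eqref{eq:beukers} strictly at $\alpha_{0}$; by continuity the condition $\Re(\gamma(\alpha)_{j}+\langle b_{j},\sigma\rangle)<0$ persists on a small open neighborhood $U$ of $\alpha_{0}$. By Lemma \ref{lem:independenceofsigma} the Mellin–Barnes integral for $\alpha\in U$ can be computed with this choice of $\sigma$ and $\gamma(\alpha)$, so that
\[
\hat{M}^{\alpha}(\hat{v})=\int_{\sigma+i Y(T)_{\RR}} e^{2\pi i(\langle \hat{v},\gamma(\alpha)\rangle+\langle B\hat{v},s\rangle)}\prod_{j=1}^{d}\Gamma\bigl(-\gamma(\alpha)_{j}-\langle b_{j},s\rangle\bigr)\,ds.
\]
For each fixed $s$ on the contour, the integrand is manifestly holomorphic in $\alpha\in U$ (the $\Gamma$-factors take values away from the non-positive integers by our choice of $\sigma$, and $\Gamma$ is holomorphic there).

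Next I would establish that the integral converges locally uniformly in $\alpha$ on $U$ (for $\hat{v}$ in its convergence domain from \S\ref{sec:convergencedomain}). Writing $s=\sigma+it$ with $t\in Y(T)_{\RR}$, the modulus of the exponential equals $e^{-2\pi\langle B\Re\hat{v},t\rangle}$ times a factor depending only on $\Re\hat{v}$. For each Gamma factor, Stirling's asymptotic $|\Gamma(x+iy)|\sim \sqrt{2\pi}\,|y|^{x-1/2}e^{-\pi|y|/2}$ yields an estimate
\[
\bigl|\Gamma(-\gamma(\alpha)_{j}-\langle b_{j},s\rangle)\bigr|\le C_{K}\,(1+|\langle b_{j},t\rangle|)^{N}\,e^{-\tfrac{\pi}{2}|\langle b_{j},t\rangle|}
\]
valid uniformly for $\alpha$ in any compact $K\subset U$. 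Combining these factors gives pointwise dominance on the contour by an integrable function (this is the same estimate used to prove absolute convergence in \cite[Corollary 4.2]{Beukers}; the only new ingredient is that the constants can be taken uniform in $\alpha\in K$, since the real parts of $\gamma(\alpha)_{j}$ stay in a compact set).

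With the dominated integrand and the holomorphy of the integrand in $\alpha$ secured, the standard holomorphy-of-parameters theorem (Morera applied slice-wise together with Fubini, then Hartogs in the several-variable setting) concludes that $\alpha\mapsto \hat{M}^{\alpha}(\hat{v})$ is holomorphic on $U$, and since $\alpha_{0}$ was arbitrary, on the whole open set $\{\Re\alpha\in \RR_{<0}A\}$. The only real obstacle here is the uniform Stirling estimate, which however is straightforward once one fixes a compact $K\subset U$; no additional structure of the GKZ system is needed beyond what has already been used in \S\ref{sec:MB}.
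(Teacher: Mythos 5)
Your proposal is correct and follows essentially the same route as the paper's proof: fix $\sigma$ via Lemma \ref{lem:independenceofsigma}, use the Beukers estimate on the integrand to justify interchange of integrals via Fubini, and conclude holomorphy by Morera's theorem (together with Hartogs in the several-variable setting). The only cosmetic difference is that you parametrize directly by $\alpha$ through a linear section $\gamma(\alpha)=\varsigma(\alpha)$ and spell out the Stirling bound explicitly, whereas the paper phrases the argument as holomorphy in $\gamma$ with $\sigma$ fixed; these are equivalent.
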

\begin{proof} 
By Lemma \ref{lem:independenceofsigma}, it suffices to fix $\sigma$ satisfying \eqref{eq:beukers} for some $\gamma$ and prove that $\hat{M}^\alpha$ depends holomorphically on the parameter $\gamma$ on the domain  where $\eqref{eq:beukers}$ is satisfied for $\gamma$. We then write $\hat{M}^\gamma$ instead of $\hat{M}^\alpha$. 
To accomplish this we use Morera's theorem. Let $D$ be the domain as in the lemma. Then we need to check that the integral of $\hat{M}^\gamma$ as a function of $\gamma$ vanishes when integrating over a closed piecewise $C^1$-curve $C$ in $D$. 
At this point we apply the estimate \cite[\S 4]{Beukers} of the absolute value of the integrand of $\hat{M}^\gamma$, which allows us to apply the Fubini theorem, and exchange the integrals over $C$ and over $\sigma+iY(T)_\RR$. As the integrand of $\hat{M}^\gamma$ is an analytic function of $\gamma$ (on $D$), its integral over $C$ vanishes. Thus, the condition of Morera's theorem holds.
\end{proof}
\begin{remark} \label{rem:Hartogs}
Below we will extend Mellin-Barnes integrals analytically outside the convergence domain of their defining integrals. 
By the generalized Hartogs' lemma  \cite[Theorem 5, Ch. VII]{Bochner-Martin}, the conclusion of Lemma \ref{lem:Hartogs} remains valid for such extended functions.
\end{remark}

\subsubsection{Restriction of Mellin-Barnes solutions}
\label{subsec:descentMB}
From \eqref{eq:mbmv} one obtains
\begin{equation}
\label{eq:transformationlaw}
\hat{M}(\hat{v}+A^\ast w)=e^{2\pi i \langle w,\alpha\rangle} \hat{M}(\hat{v}).
\end{equation}
We use the
splitting $\iota$ from \S\ref{subsec:splitting}, which induces
a corresponding splitting of  $B:X(\TT)_\CC\r X(T)_\CC$. 
Then $\hat{M}\circ \iota$ becomes a function which is defined on $(1/2)\Sigma\times iX(T)_\RR$.

Of course $\hat{M}\circ \iota$ depends on the splitting $\iota$. With $\delta$ being the difference of splittings $\iota$ and $\iota'$ as
in Remark \ref{rem:difference}
we find
\[
(\hat{M}\circ \iota')(\hat{v})=e^{2\pi i \langle \delta\hat{v},\alpha\rangle} (\hat{M}\circ \iota)(\hat{v}).
\]
So $\hat{M}\circ \iota'$ and $\hat{M}\circ \iota$ differ by the character $e^{2\pi i\langle \delta(-),\alpha\rangle}$ of $X(T)_\CC$ which agrees nicely
with Remark \ref{rem:difference}.

We now fix $\iota$ and write $\hat{M}$ for $\hat{M}\circ \iota$. We will also think of $\hat{M}$ as a multi-valued function $M$ on $X(T)_\CC/X(T)\xrightarrow[\cong]{e^{2\pi i-}} T^\ast$.
It will always be clear from the context on which spaces $M$ and $\hat{M}$ are defined.

\subsubsection{Basis of solutions}\label{subsubsec:basisofsolutions} We assume that \(W\) 
is quasi-symmetric.  We give a basis of
solutions 
of the restricted  GKZ system $\iPP(\alpha)$ on a dense open subset of $T^\ast\setminus V(E_A)$
 (see \S\ref{subsec:GKZdislocus}).
To make the solutions univalued we will work on a dense open subset of the corresponding  covering space
\[
(X(T)_\RR\setminus \Hscr)\times i X(T)_\RR\subset X(T)_\CC\setminus (\kkappa+\Hscr_\CC)\longrightarrow T^\ast\setminus V(E_A)
\]
which by \S\ref{sec:hyperplane} has a cell decomposition
\[
(X(T)_\RR\setminus \Hscr)\times  i X(T)_\RR=\bigcup_{C\in \Cscr^0} C\times i X(T)_\RR.
\]
\begin{proposition}\label{prop:basis}
Assume that $\alpha\in Y(H)_\CC$ is non-resonant and $\Re\alpha\in \sum \RR_{<0}A$.
For \(\chi \in X(T)\) put 
\[
\hat{M}_{\chi}(x):=\hat{M}(x-\chi)
\]
(see Convention \ref{convention}). Then the collection of functions
\begin{equation}
\label{eq:MBnotation}
\Mscr_C:=\left\{\hat{M}_{\chi} | \chi \in \mathcal{L}_{C}\right\}
\end{equation}
where $\Lscr_C$ is as defined in \S\ref{sec:hyperplane},
gives a basis of solutions of the pullback of the GKZ
system $\iPP(\alpha)$ to $C \times i X(T)_\RR$ for $C\in \Cscr^0$.
\end{proposition}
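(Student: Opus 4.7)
The plan is to verify three things in turn: absolute convergence of $\hat{M}_\chi$ on $C+iX(T)_\RR$, the fact that each $\hat{M}_\chi$ solves the pulled-back GKZ system, and finally that the family $\{\hat{M}_\chi\}_{\chi\in\Lscr_C}$ has the correct cardinality and is linearly independent.

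For convergence I would combine the description of the convergence domain in \S\ref{sec:convergencedomain} with the identity $B\circ\iota^*=\id$: the pulled-back function $\hat{M}\circ\iota$ converges on $\{x\in X(T)_\CC:\Re x\in (1/2)\Sigma\}$, so $\hat{M}_\chi$ converges on $\chi+(1/2)\Sigma+iX(T)_\RR$. The central symmetry of $\Delta$ (which holds under Assumption \ref{ass:setting}, a consequence of quasi-symmetry) together with the definition of $\Lscr_C$ gives that $\chi\in\Lscr_C$ is equivalent to $C\subset \chi+\Delta$. Since the boundary of $\chi+\Delta$ lies in $\chi+\bigcup_i H_i\subset \Hscr$ while the open chamber $C$ is disjoint from $\Hscr$ by definition, we actually get $C\subset \chi+(1/2)\Sigma$, giving the required absolute convergence.

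To check that $\hat{M}_\chi$ is a solution I would use Lemma \ref{rem:condition} (valid because $\Re\alpha\in\RR_{<0}A$) to pick suitable $\gamma,\sigma$, and then combine Theorem \ref{thm:beukers} with differentiation of \eqref{eq:mbmv} under the integral sign to see that $\hat{M}$ satisfies all the defining relations of $\MMa(\alpha)$ on $\TT^*$. Its restriction $\hat{M}\circ\iota$ is then a solution of $\iPP(\alpha)=\iota^*\MMa(\alpha)$ lifted to the covering $X(T)_\CC\to T^*$, and since $\chi\in X(T)$ acts as a deck transformation of this covering and $\iPP(\alpha)$ is pulled back from $T^*$, the shift $\hat{M}_\chi$ is again a solution. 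The cardinality $|\Lscr_C|=\mathfrak{v}$ equals the rank of $\iPP(\alpha)$: by \S\ref{subsec:nonresonance}, in the quasi-symmetric case the rank equals $|(\varepsilon+(1/2)\Sigma)\cap X(T)|$ for any generic $\varepsilon\in X(T)_\RR$, and $\nu\in C$ is generic since $C\cap\Hscr=\emptyset$.

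The main difficulty will be linear independence, for which I would use asymptotic analysis via contour deformation in \eqref{eq:formalMB}. Shifting $\sigma$ across poles of $\prod_j\Gamma(-\gamma_j-\langle b_j,s\rangle)$ and collecting residues writes $\hat{M}_\chi$ as an expansion whose leading exponential factor depends on $\chi$, and the non-resonance of $\alpha$ ensures that for distinct $\chi,\chi'\in\Lscr_C$ these leading exponents are genuinely different. Any hypothetical vanishing linear combination $\sum_\chi c_\chi \hat{M}_\chi=0$ then forces $c_\chi=0$ for all $\chi$. Because $|\Lscr_C|$ already matches the rank, linear independence immediately upgrades $\{\hat{M}_\chi\}_{\chi\in\Lscr_C}$ to a basis. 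The delicate technical point is to control the contour deformation and the resulting infinite sum of residues uniformly on $C+iX(T)_\RR$, and to verify that the identified leading exponents are indeed pairwise distinct, which is where non-resonance enters essentially.
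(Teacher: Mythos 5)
Your proposal follows the same overall architecture as the paper's proof: first the domain-of-definition argument (via $C\subset\chi+\Delta$ and the convergence domain $(1/2)\Sigma\times iX(T)_\RR$), then solving the system via Theorem \ref{thm:beukers} and the $\Re\alpha\in\RR_{<0}A$ hypothesis, then counting $|\Lscr_C|$ against the rank from \S\ref{subsec:nonresonance}. Your more explicit justification that $C\subset\chi+(1/2)\Sigma$ (boundary of $\chi+\Delta$ lies in $\Hscr$, while $C$ is a chamber) is a correct unpacking of the paper's terse remark that the chamber $C$ does not meet the boundary.

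The one genuine divergence is linear independence. The paper dispatches it in one line by citing \cite[Lemma 5.3.1]{NilssonPassareTsikh} (see also \cite[Proposition 4.3]{Beukers}), whereas you propose to establish it directly via contour deformation and residue asymptotics, with non-resonance separating the leading exponents. This is almost certainly the mechanism behind the cited lemma, so your approach is not really an alternative route but a re-derivation of a known result; as you yourself flag, making the contour shift, the uniform control of the residue sum, and the pairwise distinctness of leading exponents rigorous would require a substantial amount of work that the paper avoids by citation. There is no error in your sketch, but as written that step is an outline rather than a proof; if you wanted a self-contained argument you would in effect be reproving the Nilsson--Passare--Tsikh lemma. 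Everything else matches the paper.
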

\begin{proof}
Let $C\in \Cscr^0$. Recall that $\Lscr_C=(\nu+\Delta)\cap X(T)$ for $\nu\in C$.
If \(\chi \in \mathcal{L}_{C}\)
then \(\nu \in \chi+\Delta\)
(as $\Delta=-\Delta$ by unimodularity) and hence \(C \subset \chi+\Delta.\)
Moreover, as $C\in\Cscr$ is a chamber 
 it does not lie on the boundary of $\chi+\Delta$. 
By \S\ref{sec:convergencedomain}, \(\hat{M}\) is defined on \( (1 / 2) \Sigma \times i X(T)_{\mathbb{R}}.\) 
Hence $\hat{M}_\chi$ is defined on $C\times i X(T)_\RR$.

The set \(\left\{\hat{M}_{\chi} | \chi \in \mathcal{L}_{C}\right\}\)
is linearly independent by \cite[Lemma 5.3.1]{NilssonPassareTsikh}
(see also \cite[Proposition 4.3)]{Beukers}).  As for non-resonant
$\alpha$ the rank of the GKZ system equals
\(\left|\mathcal{L}_{C}\right|=D\)
(see \S\ref{subsec:nonresonance}), the conclusion follows by Theorem \ref{thm:beukers} (using the assumption $\alpha\in \RR_{<0} A$, and recalling the Convention \ref{convention}).
\end{proof}

\subsection{Power series solutions}\label{subsec:powerseries}
The GKZ system has a formal solution given by \cite{GKZhyper} 
\[
\Phi_\gamma(v_1,\ldots,v_d)=\sum_{l\in L}\prod_{i=1}^d\frac{v_i^{(B^*l)_i+\gamma_i}}{\Gamma((B^*l)_i+\gamma_i+1)}.
\]
This function is multi-valued but it can be made single-valued by considering it as a function $\hat{\Phi}_\gamma$ on $X(\TT)_\CC$, as we did for $M$. We have
the formula
\[
\hat{\Phi}_\gamma(\hat{v})=\sum_{l\in L} \frac{e^{2\pi i\langle \hat{v},B^*l+\gamma\rangle}}{\prod_{i=1}^d
\Gamma((B^*l)_i+\gamma_i+1)}.
\]

Let 
 $I=\{i_1,\dots,i_n\}$ be a subset of $\{1,\dots,d\}$ such that
  $b_{i_1},\dots,b_{i_n}$ are linearly independent in $X(T)_\RR$. We choose $\gamma_I$ as in
\S\ref{sec:MB} (i.e. $\alpha=A(\gamma_I)$) such that $\gamma_{I,i}\in \ZZ$
for $i\in I$. This gives us $|\det((b_i)_{i\in I})|$ choices for
$\gamma_I$ modulo $L$. 

\begin{lemma}\label{lem:convdom}\cite[Proposition 16.2]{Beukers3}\cite[\S 3.3, \S 3.4]{Stienstra}
Let $\rho\in \sum_{i\in I} \RR_{>0} b_i\subset X(T)_\RR$.  
Then $\Phi_{\gamma_I}$ converges on an open neighbourhood of 
\[
D_\rho:=\{v\in (\CC^*)^d\mid \exist {\tilde{\rho}}\in B^{-1}(\rho), 0<t< t_\rho:\forall i:|v_i|=t^{\tilde{\rho}_i}
\}\]
for a suitable $0<t_\rho\ll 1$.
\end{lemma}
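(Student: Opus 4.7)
The plan is to prove absolute convergence of the series defining $\Phi_{\gamma_I}$ on an open neighborhood of $D_\rho$ by first restricting to the effective summation set, reparametrizing it by a cone, and then applying Stirling estimates together with the positivity hypothesis $\rho\in\sum_{i\in I}\RR_{>0}b_i$.

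First I would note that since $\gamma_{I,i}\in\ZZ$ for $i\in I$, the factor $1/\Gamma((B^\ast l)_i+\gamma_{I,i}+1)$ vanishes whenever $(B^\ast l)_i+\gamma_{I,i}<0$, so the effective support of the series is the shifted cone $\Lambda_I:=\{l\in L\mid(B^\ast l)_i+\gamma_{I,i}\ge 0\text{ for all }i\in I\}$. Because $(b_i)_{i\in I}$ is linearly independent in $X(T)_\RR$ and $|I|=n=\rk L$, the linear map $l\mapsto((B^\ast l)_i)_{i\in I}=(\langle l,b_i\rangle)_{i\in I}$ embeds $L$ as a finite-index sublattice of $\ZZ^I$. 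Consequently $\Lambda_I$ is parametrized (up to this finite cover) by $k=(k_i)_{i\in I}\in\ZZ_{\ge 0}^I$ with $k_i=(B^\ast l)_i+\gamma_{I,i}$; for $j\notin I$, expanding $b_j=\sum_{i\in I}\lambda_{ji}b_i$ gives $(B^\ast l)_j+\gamma_{I,j}=L_j(k)$ for an explicit $\RR$-affine function $L_j$.

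Next, for $v\in D_\rho$, i.e.\ $|v_i|=t^{\tilde\rho_i}$ with $B\tilde\rho=\rho$, the magnitude of the general term becomes
\[
|\mathrm{term}_l|=t^{\langle\rho,l\rangle+c_0}\prod_{i\in I}\frac{1}{k_i!}\prod_{j\notin I}\frac{1}{|\Gamma(L_j(k)+1)|},
\]
where $c_0=\langle\tilde\rho,\gamma_I\rangle$ is constant. By the hypothesis $\rho=\sum_{i\in I}c_ib_i$ with $c_i>0$, one has $\langle\rho,l\rangle=\sum_{i\in I}c_ik_i+\mathrm{const}$. Combining Stirling's formula on the $1/k_i!$ factors with the reflection formula $\Gamma(z)\Gamma(1-z)=\pi/\sin(\pi z)$ (applied to $j\notin I$ when $L_j(k)\to-\infty$), one obtains an asymptotic for the logarithm of the term size of the form
\[
\Bigl(\sum_{i\in I}c_ik_i\Bigr)\log t-\sum_{i=1}^d m_i\log|m_i|+\sum_{i=1}^d m_i+O(\log\|k\|),
\]
with $m_i=(B^\ast l)_i+\gamma_{I,i}$. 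Testing along rays $k=sk^0$ for $k^0\in\RR_{\ge 0}^I\setminus\{0\}$, the leading $s\log s$ behavior is controlled by the strict positivity of the $c_i$ and by taking $t\le t_\rho\ll 1$, yielding exponential decay in $s$. Since these estimates are locally uniform in $v$, the series converges absolutely on an open neighborhood of $D_\rho$.

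The principal obstacle is this last step: for indices $j\notin I$ along which $L_j(k)\to-\infty$, the reflection formula converts $1/|\Gamma(L_j(k)+1)|$ into factorial-type \emph{growth}, and one must verify that the factorial decay from $\prod_{i\in I}1/k_i!$ combined with the $t^{\langle\rho,l\rangle}$ damping wins along every such ray. This is exactly the secondary-fan content present in \cite{Beukers3} and \cite{Stienstra}: the chosen direction $\rho\in\sum_{i\in I}\RR_{>0}b_i$ selects a top-dimensional cell indexed by $I$ of the secondary fan, and $D_\rho$ parametrizes the associated direction of convergence; the quasi-symmetry and unimodularity assumptions in the paper ensure that this cell structure is globally consistent.
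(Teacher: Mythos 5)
The paper gives no internal proof of this lemma; it cites \cite{Beukers3} and \cite{Stienstra}, so there is nothing of the paper's own to compare against. Your outline does follow the standard proof in those references: $\gamma_{I,i}\in\ZZ$ for $i\in I$ kills all terms with $(B^\ast l)_i+\gamma_{I,i}<0$, restricting the sum to a translated cone parametrized (up to finite index) by $k\in\ZZ_{\ge0}^I$, and one then estimates the general term with Stirling and the reflection formula. The reduction to $\Lambda_I$ and the reparametrization are correct, and the displayed asymptotic for the log of the term size is the right one.

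There is, however, a misattribution in the final step that conceals the decisive mechanism and, if taken at face value, would be a wrong argument. Along a ray $k=sk^0$, writing $m_i^0=(B^\ast l^0)_i$, the Stirling expansion of $-\sum_i\log|\Gamma(m_i+1)|$ contributes the superexponential term $-\bigl(\sum_{i=1}^d m_i^0\bigr)\,s\log s$, whereas the damping $t^{\langle\rho,l\rangle}$ contributes only $O(s)$. So neither the strict positivity of the $c_i$ nor any choice of $t\ll1$ can ``control the leading $s\log s$ behavior'', contrary to what you assert; if that coefficient were negative in some admissible direction the series would diverge for every $t$. What actually makes the estimate close is the exact identity
\[
\sum_{i=1}^d m_i^0=\sum_{i=1}^d(B^\ast l^0)_i=\langle l^0,\textstyle\sum_i b_i\rangle=0\quad\text{for all }l^0\in L,
\]
which holds precisely because of Assumption~\ref{ass:setting} ($\sum_i b_i=0$). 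Only after this cancellation is the log of the general term genuinely $O(s)$, and then the factor $t^{\langle\rho,l\rangle}$ with $\langle\rho,l^0\rangle=\sum_{i\in I}c_i m_i^0>0$ for nonzero $l^0$ in the recession cone gives exponential decay once $t\le t_\rho\ll1$, uniformly over a compact set of unit directions (a statement only ``along rays'' is not quite enough). So the hypothesis you should invoke at this point is $\sum_i b_i=0$, not quasi-symmetry or unimodularity, which play no role in the convergence estimate.
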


\begin{corollary}\label{cor:convdom}Let $\rho\in \sum_{i\in I} \RR_{>0} b_i\subset X(T)_\RR$. 
Then $\hat{\Phi}_{\gamma_I}$ converges on an open neighbourhood of 
\[
\hat{D}_\rho:=\{\hat{v}\in \CC^d\mid \exist {\tilde{\rho}}\in B^{-1}(\rho), u>u_\rho :{\rm Im}\,\hat{v}=
u\tilde{\rho}\}\]
for a suitable $u_\rho\gg 0$.
\end{corollary}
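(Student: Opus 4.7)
The plan is to deduce the corollary directly from Lemma~\ref{lem:convdom} by transporting the convergence statement along the exponential map $\hat{v}\mapsto v=e^{2\pi i\hat{v}}$ from $\CC^d=X(\TT)_\CC$ to $(\CC^\ast)^d=\TT^\ast$. The key point to exploit is the identity of individual summands: for every $l\in L$, under $v_i=e^{2\pi i\hat{v}_i}$ with the natural branch of $\log$, one has
\[
\prod_{i=1}^d v_i^{(B^\ast l)_i+\gamma_i} = e^{2\pi i\langle \hat{v},B^\ast l+\gamma\rangle},
\]
so the $l$-th term of $\Phi_{\gamma_I}(v)$ agrees with the $l$-th term of $\hat{\Phi}_{\gamma_I}(\hat{v})$. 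In particular the series $\hat{\Phi}_{\gamma_I}(\hat{v})$ converges absolutely at $\hat{v}$ if and only if $\Phi_{\gamma_I}(v)$ converges absolutely at $v=e^{2\pi i\hat{v}}$, and the absolute value of a term depends only on $|v_i|=e^{-2\pi\operatorname{Im}\hat{v}_i}$.

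Next I would match the two descriptions of the region. Setting $t:=e^{-2\pi u}$ identifies the parameter range $u>u_\rho$ bijectively with $0<t<t_\rho$ by taking $u_\rho:=-\log(t_\rho)/(2\pi)$ (which is positive since $t_\rho\ll 1$). Under this substitution, the condition $\operatorname{Im}\hat{v}=u\tilde{\rho}$ for some $\tilde{\rho}\in B^{-1}(\rho)$ becomes $|v_i|=e^{-2\pi u\tilde{\rho}_i}=t^{\tilde{\rho}_i}$, which is exactly the defining condition of $D_\rho$ in Lemma~\ref{lem:convdom}. Hence the exponential map sends $\hat{D}_\rho$ into $D_\rho$.

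Finally, I invoke Lemma~\ref{lem:convdom} to obtain an open neighbourhood $U\subset(\CC^\ast)^d$ of $D_\rho$ on which $\Phi_{\gamma_I}$ converges, and take $\hat{U}:=\{\hat{v}\in\CC^d\mid e^{2\pi i\hat{v}}\in U\}$. Since $\hat{v}\mapsto e^{2\pi i\hat{v}}$ is a holomorphic local diffeomorphism (in particular continuous and open), $\hat{U}$ is open in $\CC^d$ and contains $\hat{D}_\rho$ by the preceding paragraph. Combining the termwise identification with the convergence of $\Phi_{\gamma_I}$ on $U$, we conclude that $\hat{\Phi}_{\gamma_I}$ converges on $\hat{U}$, which is the desired open neighbourhood of $\hat{D}_\rho$.

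There is no genuine obstacle here; the only delicate point is to verify that the exponentiated factors really match (i.e.\ to check that $\hat{\Phi}_{\gamma}(\hat{v})$ is by construction the pullback of the multivalued $\Phi_\gamma(v)$ under a chosen branch of $\log$), and that $u_\rho$ can indeed be chosen positive because $t_\rho\ll 1$. Once these bookkeeping items are in place the corollary is a direct translation of Lemma~\ref{lem:convdom}.
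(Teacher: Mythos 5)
Your proof is correct and does exactly what the paper leaves implicit (no proof is written for this corollary): it is a straightforward reformulation of Lemma~\ref{lem:convdom} under the exponential $v=e^{2\pi i\hat{v}}$ with $t_\rho=e^{-2\pi u_\rho}$, since $\hat{\Phi}_{\gamma_I}$ is by definition the termwise pullback of $\Phi_{\gamma_I}$ and $|e^{2\pi i\hat{v}_i}|=e^{-2\pi\operatorname{Im}\hat{v}_i}$.
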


For $\rho\in X(T)_\RR$ we define
\[
\Iscr_\rho=\left\{J\subset \{1,\dots,d\}\mid |J|=n,\,\RR\{b_i\mid i\in J\}=X(T)_\RR,  
\,\rho=\sum_{j\in J} \beta_j b_j\;\text{for }\beta_j>0\right\}.
\]
If $\Iscr_\rho$ is non-empty then we call $\rho$ a \emph{convergence direction}.

\medskip

We let $\hat{\Iscr}_\rho$ be the multiset with $\det((b_i)_{i\in I})$ copies of $I\in \Iscr_\rho$. To each copy we associate a distinct $\gamma_I$ (i.e. $\gamma_I$ does not depend only on $I\in \Iscr_\rho$, but on $I\in \hat{\Iscr}_\rho$). For simplicity, we denote $\Phi_I$, $\hat{\Phi}_I$ for $\Phi_{\gamma_I}$, $\hat{\Phi}_{\gamma_I}$, $I\in \hat{\Iscr}_\rho$.  
\begin{proposition}\cite{GKZhyper} (see also \cite[\S2]{Beukers})\label{prop:solutionsatinfinity} 
If \(\alpha\) is totally non-resonant and $\rho$ is generic (see \S\ref{sec:hyperplane}, Lemma \ref{lem:genericB}), then \(\left\{\Phi_{I} | I \in \hat{\mathcal{I}}_{\rho}\right\}\) is a basis of solutions of the GKZ system on an open neighbourhood of  
$D_\rho$.\footnote{The assumption of total non-resonance enters here, and guarantees that all solutions at infinity are logarithm free, which as it will be seen later, simplifies the computation of the monodromy.}
\end{proposition}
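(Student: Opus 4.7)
The plan is threefold: (i) each $\Phi_I$ is a convergent solution to the GKZ system on a neighbourhood of $D_\rho$; (ii) the functions $\{\Phi_I\}_{I\in\hat{\Iscr}_\rho}$ are linearly independent there; (iii) the cardinality $|\hat{\Iscr}_\rho|$ equals the rank $\mathfrak{v}$ of the GKZ system. Once these are in place, Proposition \ref{prop:solutionsatinfinity} follows immediately since the rank of the system equals $\mathfrak{v}$ under non-resonance (\S\ref{subsec:nonresonance}).

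For (i), the Euler equations $(E_\phi-\phi(\alpha))\Phi_{\gamma_I}=0$ are immediate from $A\gamma_I=\alpha$ together with the fact that the summation runs over $l\in L=\ker A$, so every term in $\Phi_{\gamma_I}$ has the correct $H^*$-weight. The box operators $\Box_l\Phi_{\gamma_I}=0$ follow from a direct index-shift computation using $\Gamma(z+1)=z\Gamma(z)$; this is the original construction of Gelfand-Kapranov-Zelevinsky. Convergence on an open neighbourhood of $D_\rho$ is exactly Lemma \ref{lem:convdom}, and the hypothesis $I\in\Iscr_\rho$ supplies the required condition $\rho\in\sum_{i\in I}\RR_{>0}b_i$.

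For (ii), the leading asymptotics of $\Phi_{\gamma_I}$ along $D_\rho$ is controlled by the $l=0$ summand, which is a nonzero multiple of the monomial $v^{\gamma_I}$. Since the entries $\gamma_{I,i}$ for $i\in I$ are integers, the residue of $\gamma_I$ modulo $L=B^*L$ is determined by the fractional parts of $\gamma_{I,j}$ for $j\notin I$, and distinct representatives in our multiset $\hat{\Iscr}_\rho$ yield distinct residues, hence distinct leading monomials. Total non-resonance of $\alpha$ is used here: it ensures that no $\gamma_{I,i}+1$ is a pole of the denominator $\Gamma$-factors in a way that would collapse the leading term or introduce a logarithmic contribution (cf.\ the footnote at the statement of the proposition). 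A nontrivial linear relation among the $\Phi_I$'s would therefore produce a nontrivial relation among pairwise distinct monomial asymptotics, which is impossible.

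For (iii), one must verify the combinatorial identity
\[
\sum_{I\in\Iscr_\rho}\bigl|\det((b_i)_{i\in I})\bigr|=\mathfrak{v}.
\]
This is the classical statement that, for a generic direction $\rho$, the simplicial cones $\{\sum_{i\in I}\RR_{>0}b_i\}_{I\in\Iscr_\rho}$ (each weighted by its lattice index) cover a neighbourhood of $\rho$ exactly once, so their total normalized volume computes the normalized volume $\mathfrak{v}$ of $\conv(A)$. In the quasi-symmetric setting it admits the convenient reformulation via integral points of $\varepsilon+(1/2)\Sigma$ recalled in \S\ref{subsec:nonresonance}. The main obstacle is precisely this step (iii): linking the determinantal count in the $B$-picture to the normalized volume in the $A$-picture requires the standard secondary-fan/$A$-triangulation machinery of GKZ theory, and is the only genuinely nontrivial input; the linear algebra in (ii) and the verification in (i) are routine.
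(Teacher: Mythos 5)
The paper itself gives no proof of this proposition; it is stated as a citation to \cite{GKZhyper} and \cite[\S2]{Beukers}. Your three-part template (construction, linear independence, rank count) is indeed the classical route taken in those references, so the overall strategy is the right one. A few remarks on the execution.

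In (ii), the phrase ``the leading asymptotics of $\Phi_{\gamma_I}$ along $D_\rho$ is controlled by the $l=0$ summand'' is not correct in general: for $i\in I$ one only knows $\gamma_{I,i}\in\ZZ$, not $\gamma_{I,i}\geq 0$, and when some $\gamma_{I,i}$ is a negative integer the coefficient of the $l=0$ term vanishes and the minimizer of $\langle l,\rho\rangle$ over the support lattice-orthant is some nonzero $l$. Fortunately the argument you actually need does not rely on a leading-term comparison: the exponent set of $\Phi_{\gamma_I}$ is $\gamma_I+B^*L$, and total non-resonance forces $\gamma_{I,j}\notin\ZZ$ precisely for $j\notin I$ (if $\gamma_{I,j}\in\ZZ$ for some extra $j$, then $\alpha$ would lie in $N$ plus a proper subspace spanned by a subset of $A$, contradicting total non-resonance), so the cosets $\gamma_I+B^*L$ are pairwise distinct across $\hat{\Iscr}_\rho$; generalized power series supported on disjoint cosets are linearly independent once one checks each $\Phi_{\gamma_I}\neq 0$. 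Your parenthetical ``residue \ldots determined by the fractional parts of $\gamma_{I,j}$ for $j\notin I$'' is also imprecise for the same reason, though the intended conclusion (distinct residues) is correct.

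Step (iii) is indeed where the content lives, and you correctly flag it as such, but your heuristic is not a correct argument: the cones $\RR_{>0}b_I$, $I\in\Iscr_\rho$, all \emph{contain} $\rho$, so they do not partition any neighbourhood of $\rho$, and ``total normalized volume of the cones'' is not a meaningful quantity linking $B$ to $\conv(A)$ directly. The correct route is Gale duality: $I\in\Iscr_\rho$ if and only if $\{a_j\mid j\notin I\}$ is a simplex of the regular triangulation $T_\rho$ of $\conv(A)$ attached to the secondary-fan chamber containing $\rho$, and $|\det(b_I)|$ equals the normalized volume of that simplex, so $\sum_{I\in\Iscr_\rho}|\det(b_I)|=\mathfrak{v}$. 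This is what the references the paper cites actually establish; the heuristic you give would need to be replaced by this.
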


\subsubsection{Restriction of power series solutions}
\label{subsec:descentps}
Let us write  $\hat{\Phi}_\gamma$ for $\hat{\Phi}_\gamma\circ \iota$ where $\iota:X(T)_\CC\r X(\TT)_\CC$ is as in \S\ref{sec:MB}. 
Then one checks  for $x\in X(T)_\CC$
\[
\hat{\Phi}_\gamma(x)=\sum_{l\in Y(T)}\frac{e^{2\pi i\langle x,l+\iota{\gamma}\rangle}}{\prod_{i=1}^d\Gamma((B^*l)_i+\gamma_i+1)}
\]
where $\iota{\gamma}\in Y(T)_\CC$, denoting, by a slight abuse of notation,   $\iota:Y(\TT)_\CC\r Y(T)_\CC$ also the adjoint to $\iota$. 
Note that for $p\in X(T)$ 
\begin{equation}\label{eq:shiftofpowerseries}
\hat{\Phi}_\gamma(x+p)=e^{2\pi i\langle p,\iota{\gamma}\rangle}\hat{\Phi}_\gamma(x)
\end{equation}
as $\langle p,l\rangle\in \ZZ$ for $l\in Y(T)$.

\begin{corollary}\label{cor:convdomT}
Let the setting be as in Lemma \ref{lem:convdom}.
Then $\hat{\Phi}_{\gamma_I}$ converges on an open neighbourhood of 
\[
\hat{D}_\rho:=\{x\in X(T)_\CC \mid {\rm Im\,}x=u\rho\text{ for }u> u_\rho\}
\]
for a suitable $u_\rho\gg 0$.
\end{corollary}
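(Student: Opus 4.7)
The plan is to reduce Corollary \ref{cor:convdomT} directly to Corollary \ref{cor:convdom} by simply tracing through what restriction via $\iota$ does on imaginary parts. Recall that $\iota: X(T)_\CC \to X(\TT)_\CC$ in \S\ref{subsec:descentps} is the complexification of the splitting of $B: X(\TT) \to X(T)$ adjoint to the chosen splitting $\iota: \TT \to T$ of $B^\ast$ from \S\ref{subsec:splitting}; in particular $B\circ\iota=\id_{X(T)}$. Because $\iota$ is $\RR$-linear, it commutes with taking imaginary parts.

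First, I would take $x \in X(T)_\CC$ with $\operatorname{Im} x = u\rho$ for some $u > u_\rho$, where $u_\rho$ is the constant produced by Corollary \ref{cor:convdom}. Setting $\hat{v} := \iota(x) \in X(\TT)_\CC$, we obtain
\[
\operatorname{Im}\hat{v} = \iota(\operatorname{Im} x) = u\,\iota(\rho).
\]
Since $B\iota(\rho) = \rho$, the element $\tilde\rho := \iota(\rho)$ lies in $B^{-1}(\rho)$, so $\hat{v}$ belongs to the set $\hat{D}_\rho \subset X(\TT)_\CC$ of Corollary \ref{cor:convdom}. By that corollary, $\hat{\Phi}_{\gamma_I}$ (in its original sense, as a function on $X(\TT)_\CC$) converges on some open neighborhood $U \subset X(\TT)_\CC$ of this $\hat{D}_\rho$.

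Next, because $\iota: X(T)_\CC \to X(\TT)_\CC$ is continuous (indeed $\CC$-linear), the preimage $\iota^{-1}(U)$ is open in $X(T)_\CC$, and by construction it contains the set $\hat{D}_\rho \subset X(T)_\CC$ appearing in Corollary \ref{cor:convdomT}. Since the convention at the start of \S\ref{subsec:descentps} is to use the symbol $\hat\Phi_{\gamma_I}$ for the pullback $\hat\Phi_{\gamma_I}\circ \iota$, the convergence on $U$ upstairs transfers to convergence of the (restricted) $\hat\Phi_{\gamma_I}$ on $\iota^{-1}(U)$, which is the required open neighborhood of $\hat{D}_\rho$.

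There is no real obstacle here; the corollary is a formal consequence of Corollary \ref{cor:convdom} together with the observation that $\iota$ is a section of $B$, so that imaginary parts lying above $\rho$ in $X(T)_\RR$ lift via $\iota$ to imaginary parts lying above $\rho$ in $X(\TT)_\RR$ under $B$. The only thing to watch is the (harmless) notational reuse of $\iota$ and $\hat\Phi_{\gamma_I}$ and the fact that the statement of Corollary \ref{cor:convdom} allows $\tilde\rho$ to be an arbitrary element of $B^{-1}(\rho)$, which is exactly what is needed to accommodate the specific lift $\iota(\rho)$.
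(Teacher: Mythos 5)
Your proposal is correct and follows essentially the same route as the paper: use that $\iota$ is a section of $B$ and commutes with $\operatorname{Im}$ (being the complexification of a real-linear map) to send the downstairs $\hat D_\rho$ into the upstairs $\hat D_\rho$, then pull back the open convergence neighborhood from Corollary \ref{cor:convdom} along the continuous map $\iota$. The paper phrases this as a chain of equivalent conditions on $\Im(\iota(x))$ rather than tracking a single point, but the content is identical.
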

\begin{proof}
By Corollary \ref{cor:convdom}, $\hat{\Phi}_{\gamma_I}$ converges for $x\in X(T)_\CC$ such that
\[
\Im(\iota(x))\in ]u_\rho,\infty[B^{-1}(\rho)
\]
for a suitable $u_\rho\gg 0$. 
This is equivalent to
\[
\iota(\Im x)\in B^{-1}(]u_\rho,\infty[\rho)
\]
and this is equivalent to $\Im x=B(\iota(\Im x))\in ]u_\rho,\infty[\rho$. 
\end{proof}
\section{Monodromy of the GKZ system in the quasi-symmetric case}
In this section assume throughout that $W$ is quasi-symmetric. We give a very explicit description of the representation of the fundamental groupoid given by the local system
determined by the GKZ system.
We do this by connecting the MB integral solutions
with the power series
solutions. This approach is similar to the one used in \cite{Beukers} from which we took our inspiration.
\subsection{The fundamental groupoid of the complement of a hyperplane arrangement}
\label{subsec:fundgd}
Let $(\Hscr,\Cscr,V,\ldots)$ be as in \S\ref{sec:genhyp}. There are a number of presentations available \cite{Delignetresses,KapranovSchechtman,HLSam,Salvetti} for the fundamental groupoid $\Pi_1(V_\CC\setminus \Hscr_\CC$). We will use the presentation from \cite{KapranovSchechtman}. 

\medskip

Let $C_1,C_2\in\Cscr^0$ be chambers with $\dim C_1\wedge C_2=n-1$. 
Denote $C_0=C_1\wedge C_2$ and let $H$ be an equation for the hyperplane in $\Hscr$ containing $C_0$ which is strictly positive on $C_2$. 

For every $C\in \Cscr^0$, choose $\rho_C\in C$ and put
$\rho_1:=\rho_{C_1}\in C_1$, $\rho_2:=\rho_{C_2}\in C_2$ and for
$\ell\in V\setminus H_0$ consider the path $\ggamma_\ell$ connecting $\rho_1$ to
$\rho_2$ via the following line segments
\[
\rho_1\r i\ell+\rho_1\r i\ell+\rho_2\r \rho_2.
\]
Then $\ggamma_\ell$, $\ggamma_{\ell'}$ are homotopic iff
$H_0(\ell), H_0(\ell')$ have the same sign. So
up to homotopy this gives us two paths
$\rho_1\r \rho_2$ in
$V_\CC\setminus \Hscr_\CC$. 
We pick $\ell$ such that $H_0(\ell)>0$ and write $\ggamma_{C_1C_2}=\ggamma_\ell$.
\begin{definition} Consider the  abstract groupoid $\Pi(\Hscr)$ with the following presentation:
\begin{enumerate}
\item An object for every $C\in \Cscr^0$.
\item A morphism $\ggamma_{C_1C_2}:C_1\r C_2$ for every $C_1,C_2\in \Cscr^0$ with $C_1\wedge C_2\neq \emptyset$, such that $\ggamma_{CC}=\id$.
\item Relations of the form $\ggamma_{C_1C_3}=\ggamma_{C_2C_3}\ggamma_{C_1C_2}$ for collinear (\S\ref{sec:genhyp})
  triples  $C_1$, $C_2$, $C_3\in \Cscr^0$. 
\end{enumerate}
\end{definition}
It is easy to see that $\Pi(\Hscr)$ is generated by $\ggamma_{C_1C_2}$ for couples $C_1$, $C_2$ which share a facet.
\begin{proposition} \cite[Proposition 9.11]{KapranovSchechtman}\label{prop:KSmonodromy} 
There is an equivalence of groupoids
\[
\Pi(\Hscr)\r \Pi_1(V_\CC\setminus \Hscr_\CC)
\]
sending $C\in \Cscr^0$ to $\rho_C\in C$ and $\ggamma_{C_1C_2}:C_1\r C_2$ such that $\dim C_1\wedge C_2=n-1$
to $\ggamma_{C_1C_2}:\rho_{C_1}\r \rho_{C_2}$.
\end{proposition}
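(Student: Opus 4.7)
The plan is to define a functor $F:\Pi(\Hscr)\to \Pi_1(V_\CC\setminus \Hscr_\CC)$ on generators and verify that it descends to the quotient, then show essential surjectivity, fullness and faithfulness separately. On objects, set $F(C)=\rho_C$. On a generating morphism $\ggamma_{C_1C_2}$ (with $\dim C_1\wedge C_2=n-1$), set $F(\ggamma_{C_1C_2})$ to be the homotopy class of the path described in the statement. First I would check this assignment is unambiguous: the two possible choices of $\ell$ with $H_0(\ell)>0$ give homotopic paths, because the rectangle with vertices $\rho_1,\rho_1+i\ell,\rho_1+i\ell',\rho_1$ (and similarly at $\rho_2$) lies in $V_\CC\setminus H_\CC$ provided no other hyperplane is crossed; a small perturbation of $\ell$ inside the open half-space $\{H_0>0\}$ achieves this, since the set of ``bad'' $\ell$'s (those for which the segment meets some $H'_\CC$) is a finite union of codimension-one loci in $V$.

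Next I would verify that the collinearity relations are respected. Given a collinear triple $(C_1,C_2,C_3)$ with common face $C'\le C_i$ and $c_2\in [c_1,c_3]$, the paths $\ggamma_{C_1C_3}$ and $\ggamma_{C_2C_3}\cdot \ggamma_{C_1C_2}$ can be connected by an explicit homotopy: the ``imaginary shift'' direction $i\ell$ common to both paths can be chosen to lie in $\{H_0>0\}$ for every $H\in \Hscr$ containing $C'$ (equivalently, separating any two of the $C_i$), because the hyperplanes through $C'$ form the localization of $\Hscr$ at $C'$ and $H_0(\ell)>0$ pins down the sign on each side. The homotopy then interpolates linearly between the base-point choices $\rho_1,\rho_2,\rho_3$ along the segment $[c_1,c_3]$; collinearity guarantees that during the interpolation one does not cross any hyperplane in the central arrangement $\Hscr_0+C'$, and the large imaginary part keeps us away from the other hyperplanes. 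Together with $F(\ggamma_{CC})=\id$, this shows $F$ is a well-defined functor.

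Essential surjectivity is straightforward: $V_\CC\setminus \Hscr_\CC$ deformation retracts (after choosing a generic imaginary direction) onto a subspace from which every point can be joined to some $\rho_C$. The main obstacle, as always in such presentations, is proving that $F$ is fully faithful. I would handle this by invoking a known CW-structure on $V_\CC\setminus \Hscr_\CC$ adapted to $\Hscr$, namely the Salvetti-type complex whose $1$-skeleton (restricted to the base points $\rho_C$) is exactly the graph with edges $\ggamma_{C_1C_2}$ for $C_1,C_2$ sharing a facet, and whose $2$-cells yield precisely the relations $\ggamma_{C_1C_3}=\ggamma_{C_2C_3}\ggamma_{C_1C_2}$ for collinear triples. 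Fullness follows because every path between base points can be pushed onto the $1$-skeleton by cellular approximation; faithfulness follows because the $2$-cells give a complete set of relations for $\pi_1$ by the standard description of fundamental groupoids of CW complexes.

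Alternatively, and this is the route I would actually take to keep the argument self-contained, one can reduce to the local model at each codimension-$2$ stratum of $\Hscr$ by Van Kampen's theorem applied to a cover of $V_\CC$ by tubular neighbourhoods of such strata together with the chamber neighbourhoods $C\times iV$. Locally, the combinatorics reduces to a two-dimensional arrangement of concurrent lines, where the groupoid $\Pi(\Hscr)$ specializes to the classical Deligne presentation of the braid group of the corresponding reflection-like configuration; in this two-dimensional case the result is elementary. Globally gluing these local equivalences via Van Kampen then yields the result. The hardest part will be the bookkeeping required to match the local collinearity relations with the global Van Kampen amalgamation, and to check that no ``hidden'' relations arise from paths that wind around higher-codimension strata, which is where the hypothesis that the collinearity relations already include all triples inside each flat becomes essential.
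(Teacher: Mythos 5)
The paper does not prove this proposition at all -- it is cited directly from \cite[Proposition 9.11]{KapranovSchechtman}. So there is no in-paper proof to compare against, and your attempt must be judged on its own merits.

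The outline you propose (define $F$ on facet-crossing generators, verify well-definedness, verify the collinearity relations, then establish the equivalence) is the right overall strategy, and the final reduction to a Salvetti-type CW structure or a Van Kampen argument is essentially what Kapranov--Schechtman themselves rely on. But there is a genuine gap in your well-definedness check. You argue that the ``bad'' directions $\ell$ form a finite union of codimension-one loci and that a small perturbation inside $\{H_0>0\}$ avoids them. The trouble is that removing codimension-one loci \emph{disconnects} the open half-space $\{H_0>0\}$, so two generic directions $\ell,\ell'$ on opposite sides of some other $H'_0$ cannot a priori be joined by a path of ``good'' $\ell$'s; and if one tries to push through the wall $H'_0$, the homotopy class of $\ggamma_\ell$ could change by a loop around $H'_\CC$. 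Your argument never excludes this.

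The correct (and cleaner) observation, which makes the whole issue disappear, is that $C_1$ and $C_2$ share a facet $C_0$ lying in a \emph{single} hyperplane $H$. Hence the sign vectors of $C_1$ and $C_2$ agree on every $H'\in\Hscr$ with $H'\neq H$, so the real segment $[\rho_1,\rho_2]$ crosses $H$ and no other hyperplane of $\Hscr$. Consequently, for $H'\neq H$ the segment $\rho_1+i\ell\to\rho_2+i\ell$ never meets $H'_\CC$ regardless of $\ell$, and the only constraint on $\ell$ is $H_0(\ell)\neq 0$. Since $\{H_0>0\}$ is convex, any two choices give homotopic paths. This is the fact the paper implicitly uses when it says the class of $\ggamma_\ell$ depends only on the sign of $H_0(\ell)$. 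A similar comment applies to the collinearity check: you should justify the existence of an $\ell$ positive on all walls crossed by the composite path -- e.g., $\ell=c_3-c_1$ works directly from the collinearity hypothesis -- rather than appealing to a perturbation argument. Once these two points are fixed, the remaining content (fullness and faithfulness) is correctly delegated to the CW-model, which is the substance of the cited result.
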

\subsection{Fundamental groupoids of quotient spaces}
\label{subsubsec:equivariance}
If $\Mscr$ is a groupoid and $\Gscr$ is a group acting on $\Mscr$ then the semi-direct
product $\Mscr\rtimes \Gscr$ has the same objects as $\Mscr$ and is obtained by freely adjoining morphisms $g_m:m\r gm$ to $\Mscr$,
for $g\in \Gscr$, $m\in \Ob(\Mscr)$ subject to the relations
\begin{enumerate}
\item $e_m=\id_m$, for $e\in \Gscr$ the identity element.
\item $h_{gm}\cdot g_m=(hg)_m$ for $g,h\in \Gscr$, $m\in \Ob(\Mscr)$.
\item $g(f)\cdot g_m=g_n\cdot f$ for $g\in \Gscr$, $f:m\r n$ in $\Mscr$.
\end{enumerate}
It is easy to see that the construction of $\Mscr\rtimes \Gscr$ is
compatible with equivalences of groupoids.  If $\Gscr$ acts freely and
discretely on a topological space $Y$ then there is an equivalence of groupoids
\begin{equation}
\label{eq:quotientspace}
\Pi_1(Y)\rtimes \Gscr \cong 
\Pi_1(Y/\Gscr)
\end{equation}
 (see \cite[Chapter
11]{BrownRonald}\cite[\S6]{HLSam}). For use below we make this more concrete.

Recall that a $\Gscr$-equivariant local system $L$ on $Y$ is a local system equipped with \emph{descent data}, i.e. isomorphisms
\[
u_g:L\r g^\ast L
\]
for all $g\in \Gscr$, satisfying the standard cocycle condition. Then $L$ descends to a local system $\bar{L}$ on $Y/\Gscr$ such that $\pi^\ast(\bar{L})\cong L$ where
$\pi:Y\r Y/\Gscr$ is the quotient morphism.
\begin{lemma} \label{eq:concrete}
 The  $\Pi_1(Y)\rtimes \Gscr$ representation $\Lscr$ corresponding to $\bar{L}$ via the equivalence \eqref{eq:quotientspace} 
 is the following:
\begin{enumerate}
\item $\Lscr_y= L_y$ for $y\in Y=\Ob(\Pi_1(Y)\rtimes \Gscr)$. 
\item The $\Pi_1(Y)$-action on $\coprod_{y\in Y} \Lscr_y$ is obtained from the fact that $L$ is a local system on $Y$.
\item If $g\in \Gscr$ then the corresponding morphism $\Lscr(g_y):\Lscr_y=L_y\r \Lscr_{gy}=L_{gy}$ is obtained by specializing $u_g$ at $y$
(using the fact that $(g_\ast L)_y=L_{gy}$).
\end{enumerate}
\end{lemma}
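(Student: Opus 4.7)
The plan is to unpack the equivalence of groupoids $\Pi_1(Y)\rtimes \Gscr \cong \Pi_1(Y/\Gscr)$ and trace through what it does to local systems, using the dictionary between local systems on a space and representations of its fundamental groupoid. Since $\Gscr$ acts freely and discretely on $Y$, the projection $\pi\colon Y\to Y/\Gscr$ is a covering map, so pullback along $\pi$ gives an equivalence between local systems on $Y/\Gscr$ and $\Gscr$-equivariant local systems on $Y$. Under this equivalence the stalk of $\bar L$ at $[y]$ is canonically identified with $L_y$ via the local homeomorphism $\pi$; the different identifications at $y$ and at $gy$ differ by the descent datum $(u_g)_y\colon L_y\to L_{gy}$. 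Item (1) of the lemma is then the tautological content of this identification of stalks.

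For item (2), I would argue that the equivalence $\Pi_1(Y)\rtimes \Gscr\to \Pi_1(Y/\Gscr)$ sends a morphism $f\colon y\to y'$ coming from $\Pi_1(Y)$ to the homotopy class of $\pi\circ f$. Since $\pi$ is a local homeomorphism, any path in $Y/\Gscr$ of the form $\pi\circ f$ admits $f$ as its unique lift starting at $y$; parallel transport in $\bar L$ along $\pi\circ f$ therefore agrees with parallel transport in $L$ along $f$ under the stalk identification of (1). This is precisely the statement that the $\Pi_1(Y)$-action on $\coprod_y \Lscr_y$ coincides with the local system structure of $L$.

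For item (3), the key point is to identify what the formal generator $g_y\colon y\to gy$ in $\Pi_1(Y)\rtimes \Gscr$ corresponds to in $\Pi_1(Y/\Gscr)$. Because $y$ and $gy$ lie in the same $\Gscr$-orbit, they project to the same point $[y]$, and the morphism sent to $g_y$ in $\Pi_1(Y/\Gscr)$ is the "change-of-sheet" automorphism of $[y]$ witnessed by the $\Gscr$-action. Under the pullback equivalence $\pi^\ast$, parallel transport in $\bar L$ along this change-of-sheet is realised in $L$ precisely as the descent isomorphism $(u_g)_y\colon L_y\to L_{gy}$: the two stalk identifications $\bar L_{[y]}\cong L_y$ and $\bar L_{[y]}=\bar L_{[gy]}\cong L_{gy}$ differ by $u_g$ by definition of the descent datum.

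The main obstacle is bookkeeping: one must verify that the three relations defining the semi-direct product ($e_m=\id$, $h_{gm}g_m=(hg)_m$, and equivariance of morphisms in $\Mscr$) match, under the above identifications, the standard cocycle conditions satisfied by $(u_g)_{g\in\Gscr}$ together with the $\Gscr$-equivariance of parallel transport. The first two match the unit and cocycle conditions on $u$; the third matches the requirement that $u$ intertwines the local system structure with its pushforward by $g$. With these compatibilities verified, the three items of the lemma follow, and the resulting representation of $\Pi_1(Y)\rtimes \Gscr$ is seen to be the one associated to $\bar L$ via \eqref{eq:quotientspace}.
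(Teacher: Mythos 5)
The paper states this lemma without proof, pointing only to \cite{BrownRonald} and \cite{HLSam} for the underlying equivalence \eqref{eq:quotientspace}, so there is no in-paper argument to compare against; the review is therefore an assessment of correctness.

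Your treatment of items (1) and (2) via the covering space picture is correct. The only place that needs tightening is item (3), and specifically the phrase ``change-of-sheet automorphism.'' In covering space language that phrase ordinarily suggests a nontrivial deck-transformation loop, but a careful trace of the equivalence $\Phi\colon\Pi_1(Y)\rtimes\Gscr\to\Pi_1(Y/\Gscr)$ (for the standard choice with $\Phi(f)=[\pi\circ f]$ on $\Pi_1(Y)$) shows that $\Phi(g_y)$ is the \emph{identity} morphism of $[y]$; the ``non-triviality'' of $\Lscr(g_y)$ then resides entirely in the fact that one is comparing stalks of $L$ at two different preimages of $[y]$. Your concluding sentence on (3) already contains the correct computation: writing $\psi\colon L\xrightarrow{\cong}\pi^*\bar L$ for the isomorphism underlying the descent, one has $(u_g)_y=\psi_{gy}^{-1}\circ\psi_y$, which is exactly $\bar\Lscr(\id_{[y]})=\id_{\bar L_{[y]}}$ transported through the two stalk identifications. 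If you make that substitution for the informal ``change-of-sheet'' language, the argument is clean. (Note also that the lemma's $(g_\ast L)_y=L_{gy}$ should be $(g^\ast L)_y=L_{gy}$, a slip in the source which your proposal implicitly corrects by working with pullbacks.) Finally, the ``bookkeeping'' paragraph as written only checks that the three formulas define \emph{some} representation of $\Pi_1(Y)\rtimes\Gscr$; to conclude it is the one associated to $\bar L$, you need the explicit $\Phi$ you sketched earlier in the argument, so it is worth saying that the two halves (description of $\Phi$, verification of the relations) together finish the proof.
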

Now let $V,\Hscr,\ldots$ be as above and assume that $V$ is equipped with an affine,  
$\Hscr$-preserving, group action by a group $\Gscr$.  In that case $V_\CC\setminus \Hscr_\CC$ and 
$\Cscr$ are of course also preserved. If $\Gscr$ acts freely and discretely then from 
Proposition \ref{prop:KSmonodromy} and \eqref{eq:quotientspace} 
we obtain equivalences of groupoids
\begin{equation}\label{eq:groupoideq}
\Pi(\Hscr)\rtimes \Gscr\xrightarrow{\cong} \Pi_1(V_\CC\setminus \Hscr_\CC)\rtimes \Gscr \xrightarrow{\cong} \Pi_1((V_\CC\setminus\Hscr_\CC)/\Gscr).
\end{equation}

\subsection{Statement of the main result}\label{subsec:mainresult}
We now let $\Hscr,\Cscr,\ldots$ have again their standard
meaning. After choosing a splitting $\iota:\TT\to T$ of
$B^\ast:T\r \TT$, the GKZ system defines a local system on
$(X(T)_\CC\setminus (\Hscr_\CC+\kkappa))/X(T)$ (cfr
\S\ref{subsec:GKZdislocus}). By \eqref{eq:groupoideq} 
we
have equivalences 
\begin{equation}
\label{eq:groupoids}
\Pi(\Hscr)\rtimes X(T)\xrightarrow[\cong]{} \Pi_1(V_\CC\setminus \Hscr_\CC)\rtimes X(T)\xrightarrow[\cong]{\text{translation}} \Pi_1((X(T)_\CC\setminus (\Hscr_\CC+\kkappa))/X(T)).
\end{equation}
In other words, the GKZ system yields a representation of $\Pi(\Hscr)\rtimes \Gscr$ which we will now describe explicitly. First we introduce some notation.
Let $C'<C$ with $\dim C=n$, $\dim C'=n-1$. Let $L\in \Hscr$ be the hyperplane spanned by $C'$ and assume that $L$ is represented by an equation
which is strictly positive on $C$. Then we put
\begin{equation}\label{eq:JC0C1}
J_{C'C}=\{i\in \{1,\ldots,d\}\mid L_0(b_i) >0\}.
\end{equation}
Below we write the analytic continuation along a path $\ggamma$ as $\ggamma(-)$.
\begin{theorem} \label{th:mainth1}
Assume that $\alpha\in Y(H)_\CC$ is non-resonant and satisfies $\Re\alpha\in \RR_{<0}A$ and let~$M$ be the representation of $\Pi(\Hscr)\rtimes X(T)$ corresponding 
to the local system given by the solutions of
 the GKZ system $\bar{\Pscr}(\alpha)$ via \eqref{eq:groupoids}.
\begin{enumerate}
\item For $C\in \Cscr^0$ we have
\[
M(C)=\Mscr_C
\]
(using the notation \eqref{eq:MBnotation}).
\item For $C_1,C_2\in \Cscr^0$ such that $\dim C_1\wedge C_2=n-1$ write $C_0=C_1\wedge C_2$. The map $\ggamma_{C_1C_2}:M(C_1)\r M(C_2)$
evaluated on $\hat{M}_\chi\in M(C_1)$ is given by
\begin{multline}
\label{eq:Mchiexpress}
M(\ggamma_{C_1C_2})(\hat{M}_\chi)=\\
\begin{cases}
\hat{M}_\chi&\text{if $\chi\in \Lscr_{C_1}\cap \Lscr_{C_2}$},\\
\sum_{\emptyset\neq\sJ\subset J_{C_0C_2}} (-1)^{|\sJ|+1}\left(\prod_{j\in \sJ}e^{-2\pi i \gamma_j}\right){\hat{M}_{\chi+\sum_{j\in \sJ}b_j}}
&\text{if $\chi\in \Lscr_{C_1}\setminus \Lscr_{C_2}$},
\end{cases}
\end{multline}
where  $\gamma$ is the unique element of $Y(\TT)_\CC$ such that $A\gamma=\alpha$ and $\iota \gamma=0$.
\item $M(\mu_C)(\hat{M}_\chi)=\hat{M}_{\chi+\mu}$ for $\mu\in X(T)$ and $\chi\in \Lscr_C$, $C\in \Cscr^0$.
\end{enumerate}
\end{theorem}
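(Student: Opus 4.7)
Part (1) is immediate from Proposition \ref{prop:basis}. For part (3), the local system on $X(T)_\CC$ is the pullback of one on $T^\ast$ and hence translation-invariant, so by Lemma \ref{eq:concrete} the descent morphism $\Lscr(\mu_y):\Lscr_y\to\Lscr_{y+\mu}$ identifies a germ $f$ near $y$ with the germ $f(\,\cdot\,-\mu)$ near $y+\mu$; applied to $\hat{M}_\chi(x)=\hat{M}(x-\chi)$ this produces the germ $\hat{M}(x'-\mu-\chi)=\hat{M}_{\chi+\mu}(x')$ near $y+\mu$. For part (2), the sub-case $\chi\in\Lscr_{C_1}\cap\Lscr_{C_2}$ is easy: the MB integral \eqref{eq:mbmv} defining $\hat{M}_\chi$ converges for $\Re x$ in the open convex set $(\chi+\Delta)^\circ$ (see \S\ref{sec:convergencedomain}), and since $\chi\in\Lscr_{C_i}$ implies $C_i\subset(\chi+\Delta)^\circ$, this set contains the convex hull of $C_1\cup C_2$, hence the image of $\ggamma_{C_1C_2}$ under $\Re$. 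So $\hat{M}_\chi$ extends analytically along the whole path to itself.

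For the main case $\chi\in\Lscr_{C_1}\setminus\Lscr_{C_2}$, my plan is first to reduce to totally non-resonant $\alpha$; the remaining non-resonant case will then follow by holomorphic continuation in $\alpha$ via Lemma \ref{lem:Hartogs} and Remark \ref{rem:Hartogs}, since both sides of \eqref{eq:Mchiexpress} are holomorphic in $\alpha$. Under total non-resonance I would homotope $\ggamma_{C_1C_2}$ so that the middle segment is pushed far out in the imaginary direction along a generic $\ell$ with $L_0(\ell)>0$. In the asymptotic region $\hat{D}_\ell$ (Corollary \ref{cor:convdomT}) the logarithm-free power series basis $\{\hat\Phi_I\mid I\in\hat\Iscr_\ell\}$ of Proposition \ref{prop:solutionsatinfinity} is single-valued, so I can expand $\hat{M}_\chi$ in this power series basis along the ascending segment, transport without monodromy along the horizontal segment (where $\Re x$ crosses $L$ but we remain deep in $\hat{D}_\ell$), and re-expand in the MB basis $\{\hat{M}_{\chi'}\mid\chi'\in\Lscr_{C_2}\}$ along the descending segment.

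The hard part will be the bookkeeping of these two changes of basis, or equivalently the residues picked up when deforming the contour $\sigma+iY(T)_\RR$ in \eqref{eq:mbmv} from a position adapted to $C_1$ to one adapted to $C_2$. The relevant poles are those of $\Gamma(-\gamma_j-\langle b_j,s\rangle)$ for $j\in J_{C_0C_2}$, located at $\langle b_j,s\rangle\in -\gamma_j-\ZZ_{\geq 0}$. I anticipate that an inclusion-exclusion over the nonempty subsets $\sJ\subseteq J_{C_0C_2}$ of first-order poles crossed simultaneously will account for the sign $(-1)^{|\sJ|+1}$; that the exponential $e^{2\pi i\langle x,s\rangle}$ in the integrand, evaluated at the shifted contour, will contribute the factors $e^{-2\pi i\gamma_j}$; and that, crucially using quasi-symmetry (which makes $\Delta$ centrally symmetric and gives the relation $\sum_i b_i=0$ that absorbs extra terms), the remaining Gamma product will reassemble into the defining integrand of $\hat{M}_{\chi+\sum_{j\in\sJ}b_j}$. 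Summing these residue contributions over all $\sJ$ should then yield exactly \eqref{eq:Mchiexpress}.
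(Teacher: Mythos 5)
Parts (1), (3), and the $\chi\in\Lscr_{C_1}\cap\Lscr_{C_2}$ sub-case of (2) match the paper's argument. Your reduction of the main case to totally non-resonant $\alpha$ via Lemma~\ref{lem:Hartogs} / Remark~\ref{rem:Hartogs} is also exactly what the paper does.

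The main sub-case is where the proposal has a genuine gap. You correctly identify that one should pass through the power-series basis $\Pscr_\ell$ deep in the imaginary direction, but you leave the ``bookkeeping of the two changes of basis'' unresolved and invest your detail in an alternative plan based on multi-dimensional residue calculus (deforming the contour $\sigma+iY(T)_\RR$ in $Y(T)_\CC\cong\CC^n$ and collecting poles of the $\Gamma$-factors). That plan is not carried out --- it consists of several ``I anticipate that\dots'' steps --- and in $n>1$ dimensions it would require Leray/Grothendieck-type residues along intersecting polar hyperplanes that are being crossed simultaneously; the inclusion--exclusion structure and the reassembly of Gamma products into new MB integrals are far from automatic. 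The paper avoids this entirely. The ingredient you are missing is the connection formula of Proposition~\ref{eq:connecting} (equation~\eqref{eq:MPhi}), which gives the expansion of \emph{every} $\hat{M}_\chi$, $\chi\in X(T)$, in the basis $\Pscr_\ell$ with explicit coefficients $e^{-2\pi i\langle\chi,\iota\gamma_I\rangle}$ --- a consequence of the elementary shift law \eqref{eq:shiftofpowerseries}, not of any residue computation. Expanding both sides of \eqref{eq:Mchiexpress} in $\Pscr_\ell$ via this formula and comparing coefficients reduces the claim to the purely algebraic identity $\prod_{j\in J_{C_0C_2}}(1-e^{-2\pi i\gamma_{I,j}})=0$ for each $I\in\hat\Iscr_\ell$, which holds because $I\cap J_{C_0C_2}\neq\emptyset$ (Lemma~\ref{lem:nonemptyintersect}) and $\gamma_{I,j}\in\ZZ$ for $j\in I$. (Lemma~\ref{lem:inC2} is also needed to ensure the right-hand side of \eqref{eq:Mchiexpress} is well-defined, i.e.\ that $\chi+\sum_{j\in\sJ}b_j\in\Lscr_{C_2}$; your proposal does not address this.) Without identifying Proposition~\ref{eq:connecting} as the key tool, the ``bookkeeping'' you defer is exactly the hard part, and your residue plan is a substantially more difficult route than the one the paper takes.
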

The proof of this theorem will be carried out in the remainder of this section.
\subsection{Reminder on the splitting}
\label{sec:splitting}
As already stated above, like in \S\ref{subsec:splitting}  we choose a splitting  $\iota:\TT\to T$ of $B^\ast:T\r \TT$ and we consider the pullback of the GKZ system under $\iota:X(T)_{\CC}\r  X(\TT)_{\CC}$ and we do the same for the Mellin-Barnes solutions (see \S\ref{subsec:descentMB}) and the formal power series solutions (see \S\ref{subsec:descentps}).
\subsection{Connecting MB solutions to power series solutions}\label{subsec:connecting}
\begin{proposition}
\label{eq:connecting}
Assume that $\alpha\in Y(H)_\CC$ is totally non-resonant and $\Re\alpha\in \RR_{<0}A$.
Let $\rho\in X(T)_\RR$ be generic  (c.f.\ \S\ref{sec:hyperplane}) and $C\in \Cscr^0$. We have on $\hat{D}_\rho\cap(C\times iX(T)_\RR)$ 
\begin{equation}\label{eq:MPhi}
\hat{M}_\chi=\sum_{I\in \hat{\Iscr}_\rho}e^{-2\pi i\langle \chi,\iota{\gamma_I}\rangle}{\hat{\Phi}}^s_I
\end{equation}
where $\hat{\Phi}^s_I=a \hat{\Phi}_I$  for suitable $a\in \CC^\ast$, depending only on $I$, $\alpha$ and $\rho$. 
\end{proposition}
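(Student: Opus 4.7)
The plan is to derive \eqref{eq:MPhi} by the classical method of contour deformation in the Mellin--Barnes integral and summation of residues, following the strategy of \cite[\S2]{Beukers} adapted to the $\chi$-shift. First I would unfold $\hat M_\chi(\hat v)=\hat M(\hat v-\chi)$ and use the splitting identity $B\iota=\id_{X(T)}$ to write
\[
\hat{M}_\chi(\hat{v})=e^{2\pi i\langle\iota(\hat{v}-\chi),\gamma\rangle}\int_{\sigma+iY(T)_\RR}e^{2\pi i\langle\hat{v}-\chi,s\rangle}\prod_{j=1}^{d}\Gamma(-\gamma_j-\langle b_j,s\rangle)\,ds.
\]
The idea is then to push $\sigma$ to infinity along a direction $\tilde\sigma\in Y(T)_\RR$ satisfying (a) $\langle b_j,\tilde\sigma\rangle>0$ for every $j\in I$, so that the poles indexed by $I$ are exactly the ones swept by the deformation, and (b) $\langle\rho,\tilde\sigma\rangle>0$, so that on $\hat D_\rho\cap(C\times iX(T)_\RR)$ the factor $e^{2\pi i\langle\hat v,s\rangle}$ gains exponential damping along the deformation. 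Both requirements are simultaneously satisfiable precisely when $\rho\in\sum_{j\in I}\RR_{>0}b_j$, i.e.\ when $I\in\Iscr_\rho$.

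Next I would analyze the residues. The integrand has simple polar hyperplanes $\{\langle b_j,s\rangle=n-\gamma_j\}$ for $j=1,\dots,d$, $n\in\NN$; total non-resonance of $\alpha$ ensures that no more than $n$ of these meet at a single point, so the poles in the swept region are indexed by pairs $(I,(n_j)_{j\in I})$ with $I\in\Iscr_\rho$ and $(n_j)\in\NN^I$. For fixed $I$, the family $\NN^I$ decomposes, under the injection $B^\ast|_I:Y(T)\hookrightarrow\ZZ^I$, into $|\det((b_j)_{j\in I})|$ cosets of the image sublattice; each coset is determined by a base pole $s_I^0\in Y(T)_\CC$ with $\langle b_j,s_I^0\rangle\in-\gamma_j+\ZZ$ for $j\in I$, to which one associates $\gamma_I:=\gamma+B^\ast s_I^0$. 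This $\gamma_I$ satisfies $A\gamma_I=\alpha$ and $\gamma_{I,j}\in\ZZ$ for $j\in I$, and these are precisely the $|\det((b_j)_{j\in I})|$ choices enumerated by $\hat\Iscr_\rho$.

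An iterated-residue calculation using $\operatorname{Res}_{z=-n}\Gamma(z)=(-1)^n/n!$ on the $j\in I$ Gamma factors, combined with the Jacobian $|\det((b_j)_{j\in I})|^{-1}$ from changing variables $s\mapsto(\langle b_j,s\rangle)_{j\in I}$, then converts the residue sum over the coset associated with $\gamma_I$ into
\[
a_I\,e^{-2\pi i\langle\chi,\iota\gamma_I\rangle}\,\hat{\Phi}_{\gamma_I}(\hat{v}),
\]
where $a_I\in\CC^\ast$ collects the Jacobian, the residual Gamma factors $\Gamma(-\gamma_j-\langle b_j,s_I^0\rangle)$ for $j\notin I$, and the prefactor $e^{2\pi i\langle\iota\hat v,\gamma\rangle}$ reabsorbed into $\hat\Phi_{\gamma_I}$ via $\iota\gamma_I=\iota\gamma+s_I^0$; the $\chi$-dependent exponent works out to $-\langle\iota\chi,\gamma\rangle-\langle\chi,s_I^0\rangle=-\langle\chi,\iota\gamma_I\rangle$ by adjointness of $\iota$. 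Evidently $a_I$ depends only on $I$, $\alpha$ and (through the choice of coset) $\rho$, as asserted.

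The main technical obstacle is showing that the tail integral over $\sigma+T\tilde\sigma+iY(T)_\RR$ vanishes as $T\to\infty$. This demands a uniform Stirling estimate $|\Gamma(x+iy)|\lesssim|y|^{x-1/2}e^{-\pi|y|/2}$ applied to each of the $d$ Gamma factors, whose combined exponential growth along directions $\{j:\langle b_j,\tilde\sigma\rangle<0\}$ must be beaten by the damping $e^{-2\pi uT\langle\rho,\tilde\sigma\rangle}$ coming from $\Im\hat v=u\rho$. The quasi-symmetric hypothesis $\sum_{b_j\in\ell}b_j=0$ together with $\Re\alpha\in\RR_{<0}A$ keep the exponents in the Stirling-controlled regime throughout the deformation, exactly as in the estimates of \cite[\S4]{Beukers}. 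Once tail-vanishing is established, \eqref{eq:MPhi} holds on $\hat D_\rho\cap(C\times iX(T)_\RR)$, and holomorphic dependence on $\alpha$ (Lemma \ref{lem:Hartogs} and Remark \ref{rem:Hartogs}) extends it to all totally non-resonant $\alpha$ with $\Re\alpha\in\RR_{<0}A$.
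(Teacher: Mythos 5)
Your proof is correct in outline but takes a genuinely different and substantially heavier route than the paper. The paper's argument is a pure change-of-basis argument: Proposition \ref{prop:basis} and Proposition \ref{prop:solutionsatinfinity} already exhibit both $\{\hat M_\chi\}_{\chi\in\Lscr_C}$ and $\{\hat\Phi_I\}_{I\in\hat\Iscr_\rho}$ as bases of the solution space on the overlap domain, so $\hat M^\alpha=\sum_I a_I\hat\Phi_I$ for some scalars $a_I$; applying the shift covariance \eqref{eq:shiftofpowerseries} and $\hat M_\chi(x)=\hat M(x-\chi)$ then gives \eqref{eq:MPhi} at once, and the nonvanishing $a_I\neq 0$ falls out for free because a vanishing $a_I$ would make the change-of-basis matrix $(a_I e^{-2\pi i\langle\chi,\iota\gamma_I\rangle})_{\chi,I}$ singular, contradicting that both families are bases. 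You instead go back into the Mellin--Barnes integral and reprove the residue expansion directly, which is essentially reproducing the content of \cite[\S2]{Beukers} and \cite[Lemma 5.3.1]{NilssonPassareTsikh} that the paper cites for Propositions \ref{prop:basis} and \ref{prop:solutionsatinfinity}. What your route buys is an explicit formula for $a_I$ (Jacobian times the off-index Gamma values); what it costs is a large amount of analytic bookkeeping that the paper avoids.

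Two points in your sketch deserve attention. First, the contour deformation happens once, in a single generic direction, and the residues swept automatically organize themselves into the sum over all $I\in\hat\Iscr_\rho$; as written your conditions (a)--(b) read as if a separate direction $\tilde\sigma$ were chosen per $I$, which is not how the multidimensional residue calculus works, so this needs rephrasing. Second, you assert $a_I\in\CC^\ast$ because the residual Gamma factors $\Gamma(-\gamma_j-\langle b_j,s_I^0\rangle)$ for $j\notin I$ are nonzero, but this requires an argument: those factors are nonzero automatically (Gamma never vanishes), but one must check they are not evaluated at a pole, i.e.\ $\gamma_{I,j}\notin\NN$ for $j\notin I$. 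This is exactly where total non-resonance is used (if $\gamma_{I,j_0}\in\ZZ$ for some $j_0\notin I$, then $\alpha$ lies in a translate of the span of $\{a_j:j\notin I,\,j\neq j_0\}$, a proper subspace, contradicting $\alpha\notin\Iscr'_\CC$), and should be spelled out rather than implicitly assumed. The paper gets $a_I\neq 0$ without this calculation by the matrix argument above, which is one of the cleanest payoffs of its approach.
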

\begin{proof}
We have
$\hat{M}^\alpha=\sum_{I\in \hat{\Iscr}_\rho}a_I\hat{\Phi}_I$
 for $a_I\neq 0$ ($\hat{M}^\alpha$ as in Convention \ref{convention}) 
by Propositions \ref{prop:basis}, \ref{prop:solutionsatinfinity} and  \eqref{eq:shiftofpowerseries}. 
Put  $\hat{\Phi}^s_I=a_I\hat{\Phi}_I$.
 Then \eqref{eq:MPhi} follows from the  definition of $M_\chi$ and \eqref{eq:shiftofpowerseries}.
\end{proof}
Below we write 
$\Pscr_{\rho}=\{\hat{\Phi}^s_I\mid I\in \hat{\Iscr}_\rho\}$.
\subsection{Monodromy}
The fundamental groupoid of $X(T)_\CC\setminus (\zeta+\Hscr_\CC)$ acts on Mellin-Barnes solutions by analytic continuation. Following
literally the equivalences in \eqref{eq:groupoids} we have to carry out the analytic continuation
for $\ggamma_{C_1C_2}+\kkappa$ for chambers $C_1$, $C_2$ sharing a face. However this turns out to
be technically slightly inconvenient. We therefore observe that $\ggamma_{C_1C_2}$ was
defined as $\ggamma_\ell$ for suitable $\ell$ and by taking $\|\ell\|\gg 0$ we may assume
that $\ggamma_{C_1C_2}$ and  $\ggamma_{C_1C_2}+\kkappa$ are homotopy equivalent.
So below we assume that~$\ell$ is taken in this way and we will do the analytic continuation
for $\ggamma_{C_1C_2}$, rather than for its translated version. Also for technical reasons we further assume that  $\ell$ is generic (c.f.\ \S\ref{sec:hyperplane}).
\begin{lemma}\label{lem:path}
  The path $\ggamma_{C_1C_2}$  (for $\|\ell\|\gg 0$ as was assumed above) intersects $\hat{D}_{\ell}$ and moreover stays within
$C_1\times iX(T)_\RR\cup \hat{D}_{\ell} \cup C_2\times iX(T)_\RR$. In particular, it intersects 
  the common convergence domain of $\Pscr_{\ell}$ as well
as the domains of definition of $\Mscr_{C_1}$ and $\Mscr_{C_2}$.
\end{lemma}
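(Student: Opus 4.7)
My plan is to verify the claim by decomposing the path $\ggamma_{C_1 C_2} = \ggamma_\ell$ into its three defining line segments and analyzing each separately in terms of its real and imaginary parts.

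First, I would parameterize the segments. Segment 1, running from $\rho_1$ to $\rho_1 + i\ell$, has real part constantly equal to $\rho_1 \in C_1$, so it lies in $C_1 \times iX(T)_\RR$. By the symmetric argument, Segment 3 (from $\rho_2 + i\ell$ back to $\rho_2$) lies in $C_2 \times iX(T)_\RR$. These two containments already take care of the domain-of-definition claims for $\Mscr_{C_1}$ and $\Mscr_{C_2}$: by \S\ref{sec:convergencedomain} the integral $\hat{M}_\chi$ converges wherever $\Re x \in \chi + \Delta$, and for $\chi \in \Lscr_{C_j}$ one has $C_j \subset \chi + \Delta$ (as used in the proof of Proposition \ref{prop:basis}).

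The substantive content concerns Segment 2, which runs from $\rho_1 + i\ell$ to $\rho_2 + i\ell$ with constant imaginary part $\ell$, while its real part interpolates between $\rho_1$ and $\rho_2$ and thereby crosses the shared facet $C_0$. I would show that this segment lies in $\hat{D}_\ell$, defined in Corollary \ref{cor:convdomT} as $\{x \mid \Im x = u\ell,\; u > u_\ell\}$ for a suitable $u_\ell$. The key observation is that $\hat{D}_\ell$ depends only on the ray spanned by $\ell$: rescaling $\ell \mapsto c\ell$ for $c > 0$ allows one to take $u_{c\ell} = u_\ell/c$. Hence by choosing $\|\ell\|$ sufficiently large in the fixed generic direction already assumed in \S\ref{subsec:mainresult}, the constant $u_\ell$ can be made strictly less than $1$, so that the imaginary part $\ell = 1 \cdot \ell$ satisfies the defining inequality of $\hat{D}_\ell$ and Segment 2 is contained in $\hat{D}_\ell$. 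Proposition \ref{prop:solutionsatinfinity} then places Segment 2 inside the common convergence neighbourhood of $\Pscr_\ell$.

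The main obstacle I anticipate is tracking carefully how the constant $u_\ell$ in Corollary \ref{cor:convdomT} depends on $\ell$, but this is essentially a rescaling argument for the series $\hat{\Phi}_I$ and amounts to bookkeeping given that ``$\|\ell\| \gg 0$'' is precisely the freedom built into the setup immediately preceding the lemma. Once Segment 2 is shown to lie in $\hat{D}_\ell$, the intersection claim is immediate: Segment 2 meets $\hat{D}_\ell$ throughout, while its endpoints $\rho_1 + i\ell$ and $\rho_2 + i\ell$ simultaneously lie in the domains of $\Mscr_{C_1}$ and $\Mscr_{C_2}$, giving the overlap needed for analytic continuation across the path.
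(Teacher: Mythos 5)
Your proof is correct and fills in exactly what the paper's one-line proof (``This follows from Corollary~\ref{cor:convdomT}.'') leaves implicit: the three-segment decomposition, and the key observation that the set $\hat{D}_\ell$ depends only on the ray $\RR_{>0}\ell$ (so $u_{c\ell}=u_\ell/c$), which is why ``$\|\ell\|\gg 0$'' forces the middle segment of $\ggamma_\ell$ into $\hat{D}_\ell$. The reasoning matches the paper's intended argument.
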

\begin{proof}
This follows from Corollary \ref{cor:convdomT}.
 \end{proof}
\begin{proof}[Proof of Theorem \ref{th:mainth1}]
  (1) is simply Proposition \ref{prop:basis}. (3) follows from the
  fact that $\mu\in X(T)$ acts on $X(T)_\CC$ by the corresponding
  translation $\tau_\mu$. Unravelling the descent data for the pullback 
  of $\bar{\Pscr}(\alpha)$ to $X(T)_\CC\setminus \Hscr_\CC$ we see by
  Lemma \ref{eq:concrete}(3) that
  $M(\mu_C)(\hat{M}_\chi)=\tau^\ast_{-\mu}(\hat{M}_\chi)=\hat{M}_\chi\circ
  \tau_{-\mu}=\hat{M}_{\chi+\mu}$.

Now we concentrate on (2). 
For $\hat{M}_\chi\in \Mscr_{C_1}$ we
set
$
\hat{{M}}^{C_{1}C_2}_\chi:=\ggamma_{C_1,C_2}(\hat{M}_\chi)
$ (on $C_2\times i X(T)_\RR$).

If  $\chi\in \Lscr_{C_1}\cap \Lscr_{C_2}$ then $\hat{M}_\chi$ is defined on $(C_1\cup C_0\cup C_2)\times iX(T)_\RR$ and so no analytic continuation is necessary.
This proves the first case of (2).

Now we proceed to the second case: $\chi\in \Lscr_{C_1}\setminus\Lscr_{C_2}$.
The fact that $\chi+\sum_{j\in \sJ}b_j\in \Lscr_{C_2}$ for $\sJ\neq \emptyset$, so that \eqref{eq:Mchiexpress} is well-defined, follows from Lemma \ref{lem:inC2} below.

To continue the proof, we first reduce to the case that
$\alpha$ is totally non-resonant.  Assume that $\alpha$ is
non-resonant. Then there exists a sequence of totally non-resonant
$(\alpha^i)_i$ with $\Re \alpha^i\in \RR_{<0} A$ which converges to $\alpha$ (as the set
of totally non-resonant parameters is dense in the set of non-resonant
parameters). By Remark \ref{rem:Hartogs}, we may compute the expansion
$\hat{M}_\chi^{C_1C_2}$ for
$\chi\in \Lscr_{C_1}\setminus \Lscr_{C_2}$ in the basis $\Mscr_{C_2}$, for
$\alpha$, as a limit of the corresponding expansions for
$\alpha^i$ (note that we have to adapt\footnote{We may think of $\gamma$ as $\jota\alpha$ where $\jota:Y(H)\to Y(\TT)$ is the splitting of $A$ such that $\iota\jota=0$.} $\gamma$ to $\alpha^i$). Thus, if \eqref{eq:Mchiexpress} holds for $\alpha^i$, it
also holds for $\alpha$.

\medskip

From now we assume that $\alpha$ is totally non-resonant.  Put $J=J_{C_0C_2}$. Analytically continuing the elements
of $\Mscr_{C_1}$, $\Mscr_{C_2}$ along the path $\ggamma_{C_1C_2}$ we can write them as linear combinations
of elements in $\Pscr_{\ell}$ (recall that we have assumed that $\ell$ is generic so that Proposition \ref{eq:connecting} applies with $\rho=\ell$). Hence by \eqref{eq:MPhi} we have
to prove
\[
\sum_{I\in \hat{\Iscr}_{\ell}}e^{-2\pi i\langle \chi,\iota{\gamma_I}\rangle}{\hat{\Phi}}^s_I=
\sum_{\emptyset\neq\sJ\subset J} (-1)^{|\sJ|+1}\prod_{j\in \sJ}e^{-2\pi i \gamma_j}\sum_{I\in \hat{\Iscr}_{\ell}}
e^{-2\pi i\langle \chi+\sum_{j\in \sJ}b_j,\iota{\gamma_I}\rangle}\hat{\Phi}^s_I.
\]
Equivalently, for all $I\in \hat{\Iscr}_{\ell}$
\[
e^{-2\pi i\langle \chi,\iota{\gamma_I}\rangle}=
\sum_{\emptyset\neq\sJ\subset J} (-1)^{|\sJ|+1}\left(\prod_{j\in \sJ}e^{-2\pi i \gamma_j}\right)e^{-2\pi i\langle \chi+\sum_{j\in \sJ}b_j,\iota{\gamma_I}\rangle}.
\]
Or simply
\[
1=
\sum_{\emptyset\neq\sJ\subset J} (-1)^{|\sJ|+1}\prod_{j\in \sJ}e^{-2\pi i (\gamma_j+\langle b_j,\iota{\gamma_I}\rangle)}
\]
which may be rewritten as
\[
0=\prod_{j\in J} (1-e^{-2\pi i (\gamma_j+\langle b_j,\iota{\gamma_I}\rangle)})
\]
and using Lemma \ref{lem:gamma0} below this becomes
\begin{equation}
\label{eq:ncvandermonde}
0=\prod_{j\in J} (1-e^{-2\pi i \gamma_{I,j}}).
\end{equation}
It follows from Lemma \ref{lem:nonemptyintersect} below that $I\cap J\neq \emptyset$. If $j\in I\cap J$ then $\gamma_{I,j}\in \ZZ$. This implies \eqref{eq:ncvandermonde}.
\end{proof}

\subsection{Supporting lemmas}
Here we prove some lemmas that were used above. Recall that $W$ was assumed to be quasi-symmetric.

\begin{lemma}
\label{lem:gamma0}
Let $\gamma,\gamma'\in Y(T)_\CC$ be such that $A\gamma=\alpha$, $A\gamma'=\alpha$, $\iota \gamma=0$. Then for all $1\le j \le d$ we have
\begin{equation}
\label{eq:gammap}
\gamma'_j=\gamma_j+\langle b_j,\iota\gamma'\rangle.
\end{equation}
\end{lemma}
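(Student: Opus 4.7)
The idea is to exploit the exact sequence
\[
0 \to Y(T)_\CC \xrightarrow{B^\ast} Y(\TT)_\CC \xrightarrow{A} Y(H)_\CC \to 0
\]
from \S\ref{subsec:setting} together with the splitting $\iota$ of $B^\ast$ from \S\ref{subsec:splitting}. First I would observe that since $A\gamma = A\gamma' = \alpha$, the difference $\gamma' - \gamma$ lies in $\ker A = \operatorname{im}(B^\ast)$, so there exists a unique $\sigma \in Y(T)_\CC$ with $\gamma' - \gamma = B^\ast(\sigma)$.

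The next step is to pin down $\sigma$ using the splitting. Applying $\iota$ to the identity $\gamma' - \gamma = B^\ast \sigma$ and using $\iota \circ B^\ast = \operatorname{id}_{Y(T)_\CC}$ together with the hypothesis $\iota\gamma = 0$, one gets $\sigma = \iota\gamma' - \iota\gamma = \iota\gamma'$. Now reading off the $j$-th coordinate via the tautological basis $(e_j)_j$ gives
\[
\gamma'_j - \gamma_j = \langle e_j, B^\ast\sigma\rangle = \langle B e_j, \sigma\rangle = \langle b_j, \iota\gamma'\rangle,
\]
which is exactly \eqref{eq:gammap}.

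There is no real obstacle here; the statement is a direct consequence of the splitting property and the identification $B e_j = b_j$ fixed in \S\ref{subsec:setting}. The only thing to be slightly careful about is that $\gamma, \gamma'$ live in $Y(\TT)_\CC \cong \CC^d$ (not in $Y(T)_\CC$, despite what the statement literally says), but this is forced by the context $A\gamma = \alpha$ and the indexing $1 \le j \le d$.
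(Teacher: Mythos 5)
Your proof is correct and follows essentially the same route as the paper's: set $\delta=\gamma'-\gamma$, observe $\delta\in\ker A=\im B^\ast$, use the splitting $\iota B^\ast=\id$ together with $\iota\gamma=0$ to rewrite the desired identity as $\delta=B^\ast\iota\delta$, and read off coordinates via $Be_j=b_j$. Your side remark that $\gamma,\gamma'$ must live in $Y(\TT)_\CC\cong\CC^d$ rather than $Y(T)_\CC$ correctly flags a typo in the statement as printed.
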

\begin{proof} Let $\delta=\gamma'-\gamma$. Then \eqref{eq:gammap} maybe rewritten as 
\[
\delta=B^\ast \iota\delta
\]
which follows from the fact $\iota$ is a splitting of $B^\ast$ and $\delta\in \im B^*$.
\end{proof}

To state the next lemma we introduce some notation. For a facet $F$ of $\Delta$ let $\lambda_F\in Y(T)_\RR$, $c_F\in \RR$ be such that 
$\langle \lambda_F,-\rangle-c_F=0$ is a defining equation for the hyperplane spanned by $F$, which is positive on $\Delta$.
\begin{lemma}\label{lem:union}
Let $C_1,C_2\in \Cscr^0$, $C_0=C_1\wedge C_2$, $\dim C_0=n-1$. 
\begin{enumerate}
\item Put
\[
F=\Delta\setminus \bigcup_{\rho_0\in C_0,\rho_2\in C_2} (\rho_2-\rho_0+\Delta).
\]
This is a facet of $\Delta$ parallel to $C_0$.
\item $\forall \rho_0\in C_0:\Lscr_{C_0}\setminus \Lscr_{C_2}\subset \rho_0+\relint F$.\footnote{We denote by $\relint$ the relative interior.} 
\item $\forall \rho_0\in C_0:\forall \rho_1\in C_1:\forall \rho_2\in C_2:\langle\lambda_F,\rho_2-\rho_0\rangle>0$, $\langle\lambda_F,\rho_1-\rho_0\rangle<0$.
\end{enumerate}
\end{lemma}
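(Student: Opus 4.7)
The plan is to first identify $F$ explicitly. Let $L\in\Hscr$ be the unique hyperplane of the arrangement containing $C_0$; by the form of $\Hscr$ combined with the central symmetry of $\Delta$ ensured by Assumption~\ref{ass:setting}, $L$ is parallel to exactly one pair of opposite facets $\{F_+,F_-\}$ of $\Delta$. I take $F$ to be the one whose inward normal $\lambda_F$ satisfies $\langle\lambda_F,\rho_2-\rho_0\rangle>0$ for all $\rho_0\in C_0$, $\rho_2\in C_2$; this choice is unique because $C_2$ lies on a definite side of $L$. Part~(3) is then immediate, since $\rho_1-\rho_0$ points into $C_1$, which lies on the side of $L$ opposite $C_2$.

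For part~(1), the inclusion $F\subset\Delta\setminus\bigcup(\rho_2-\rho_0+\Delta)$ follows from $\langle\lambda_F,x-v\rangle=c_F-\langle\lambda_F,v\rangle<c_F$ when $x\in F$ and $v=\rho_2-\rho_0$. For the reverse, take $x\in\Delta$ with $\langle\lambda_F,x\rangle>c_F$. Since $\lambda_F$ does not attain its minimum on $\Delta$ at $x$, it does not lie in the normal cone $N_x\Delta=\mathrm{cone}(\lambda_{F'}\colon F'\ni x)$; Farkas's lemma then provides $v_0\in V$ with $\langle\lambda_F,v_0\rangle>0$ and $\langle\lambda_{F'},v_0\rangle\le 0$ for every facet $F'$ through $x$. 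Because $\rho_0$ lies in the relative interior of the facet $\overline{C_0}$ of $\overline{C_2}$, the set $C_2-C_0$ contains every sufficiently small $v$ with $\langle\lambda_F,v\rangle>0$; hence $v=\epsilon v_0\in C_2-C_0$ for small $\epsilon>0$, and $x-v\in\Delta$ because $-v$ lies in the tangent cone $T_x\Delta$.

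For part~(2), fix $\chi\in\Lscr_{C_0}\setminus\Lscr_{C_2}$ and any $\rho_0\in C_0$. From $\chi-C_0\subset\Delta$ and $(\chi-C_2)\cap\Delta=\emptyset$, taking $\rho_2\to\rho_0$ shows $\chi-\rho_0\in\partial\Delta$. The $(n-1)$-dimensional open set $\chi-C_0\subset\partial\Delta$ lies in the hyperplane $\chi-L$, so it is contained in a facet $F''$ of $\Delta$ with $\mathrm{aff}(F'')=\chi-L$. Central symmetry forces $F''\in\{F,-F\}$, and the alternative $F''=-F$ would put $\Delta$ locally inside $\{\lambda_F\ge-c_F\}$ near $\chi-L$, contradicting $\min_\Delta\lambda_F=c_F<-c_F$. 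Hence $\chi-\rho_0\in F$. If in addition $\chi-\rho_0$ lay on a facet $F'\neq F$, then $\rho_0\in\chi-\mathrm{aff}(F')$, a hyperplane of $\Hscr$; the open cell $C_0$ meets no hyperplane of $\Hscr$ other than $L$, forcing $\chi-\mathrm{aff}(F')=L$ and therefore $F'=-F$, yielding the contradiction $\chi-\rho_0\in F\cap(-F)=\emptyset$. Therefore $\chi-\rho_0\in\relint F$.

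The main subtlety is the reverse inclusion in part~(1), where the Farkas/normal-cone argument must be combined with the local observation that $C_2-C_0$ contains every small vector in the open half-space $\{\langle\lambda_F,\cdot\rangle>0\}$; everything else becomes bookkeeping once $F$ has been correctly identified, with the $\relint F$ claim in~(2) ultimately resting on the fact that a codimension-one cell of $\Hscr$ lies in exactly one hyperplane of the arrangement.
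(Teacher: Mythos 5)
Your proof takes a genuinely different route from the paper's. The paper proves (1) by a direct convexity argument: given $\delta\in\Delta$ with $\langle\lambda_F,\delta\rangle>c_F$, pick $\delta'\in\Delta$ with smaller $\lambda_F$-value, so that $\epsilon(\delta-\delta')\in C_2-C_0$ for small $\epsilon$ and $\delta-\epsilon(\delta-\delta')=(1-\epsilon)\delta+\epsilon\delta'\in\Delta$. It then deduces (2) entirely from (1): every $\delta\in\Delta\setminus F$ lies in $\rho_2-\rho_0+\Delta$ for arbitrary $\rho_0$, so $\Lscr_{C_0}\setminus\Lscr_{C_2}\subset\rho_0+F$, and since $\chi-C_0$ is an open subset of $\mathrm{aff}(F)$ contained in $F$, it lands in $\relint F$. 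Your version replaces the direct convexity step in (1) by a normal-cone/Farkas argument (equivalent but heavier), and your (2) is proved from scratch by identifying the facet containing $\chi-\rho_0$, rather than being harvested from (1). Both strategies can be made to work, but the paper's is more economical and the deduction of (2) from (1) spares you the case analysis below.

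Two statements in your proof of (2) are incorrect as written, although the target conclusions are still reachable. (a) You claim that $F''=-F$ would put $\Delta$ locally inside $\{\lambda_F\ge -c_F\}$ near $\chi-L$, contradicting $\min_\Delta\lambda_F=c_F$. This has the inequality reversed: near $-F$, where $\lambda_F$ attains its \emph{maximum} on $\Delta$, the polytope lies locally in $\{\lambda_F\le -c_F\}$, so no contradiction with the minimum arises. The contradiction you actually want is with $(\chi-C_2)\cap\Delta=\emptyset$: if $F''=-F$, then $\chi-\rho_0\in\relint(-F)$ and near this point $\Delta$ locally coincides with the half-space $\{\lambda_F\le -c_F\}$, while for $\rho_2\in C_2$ close to $\rho_0$ one has $\langle\lambda_F,\chi-\rho_2\rangle<-c_F$, so $\chi-\rho_2\in\Delta$ — contradiction. (b) From $\chi-\mathrm{aff}(F')=L$, i.e.\ $\mathrm{aff}(F')=\chi-L$, the conclusion is $F'=F$, not $F'=-F$: you have already placed $\chi-C_0\subset F$, so $\mathrm{aff}(F)=\chi-L=\mathrm{aff}(F')$, and two facets of a full-dimensional polytope with the same affine hull coincide. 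That directly contradicts the hypothesis $F'\ne F$; the detour through $-F$ does not follow. Better still, this whole case analysis is superfluous: once you know $\chi-C_0\subset F$, the set $\chi-C_0$ is open in $\mathrm{aff}(F)$ and contained in $F$, hence contained in $\relint F$ — exactly the paper's one-line finish.
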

\begin{proof}
\begin{enumerate}
\item
The affine spaces spanned by $C\in \Cscr$ are intersections of translated hyperplanes spanned by facets of $-\Delta=\Delta$. In particular if $C\in\Cscr$
is a facet then the hyperplane $L$ spanned by it must be parallel to a facet of $\Delta$. 
 Let $\langle \lambda_L,-\rangle-c_L=0$ be an equation
of $L$ which is strictly positive on $C_2$. It follows that 
\begin{equation}\label{eq:supp1}
\forall \rho_2\in C_2: \rho_1\in C_1:\forall \rho_0\in C_0:\langle \lambda_L,\rho_2-\rho_0\rangle>0,\quad \langle \lambda_L,\rho_1-\rho_0\rangle<0.
\end{equation} 

Let $\langle \lambda_L,-\rangle-c\ge 0$ be a supporting half space for $\Delta$. We claim 
\begin{equation}\label{eq:supp2}
F=\Delta\cap \{\delta\mid \langle \lambda_L,\delta\rangle-c= 0\}.
\end{equation}
So we have to prove
\[
 \{\delta\in \Delta\mid \langle \lambda_L,\delta\rangle-c> 0\}=\Delta \cap\left(\bigcup_{\rho_0\in C_0,\rho_2\in C_2} (\rho_2-\rho_0+\Delta)\right).
\]
The $\supset$ inclusion is clear so we prove the $\subset$ inclusion.
Assume $\delta\in \Delta$ is such that $\langle \lambda_L,\delta\rangle-c:=u>0$. Then there exists $\delta'\in \Delta$ such that $\langle \lambda_L,\delta'\rangle-c<u$. Then $\langle \lambda_L ,\delta-\delta'\rangle>0$. Hence for $0<\epsilon\ll 1$ we may write $\epsilon (\delta-\delta')=\rho_2-\rho_0$ for $\rho_0\in C_0$, $\rho_2\in C_2$.  So $\delta-(\rho_2-\rho_0)=(1-\epsilon)\delta+\epsilon \delta'\in \Delta$.
\item In the last paragraph we showed that if $\delta\in \Delta\setminus F$ then $\delta\in \rho_2-\rho_0+\Delta$, where moreover $\rho_0$ might be chosen arbitrarily. Hence, $(-\rho_0+\Lscr_{C_0})\setminus F\subset -\rho_0+\Lscr_{C_2}$ which implies $\Lscr_{C_0}\setminus\Lscr_{C_2}\subset \rho_0+F$. As, by (1),
\[
\bigcap_{\rho_0\in C_0}(\rho_0+F)= \bigcap_{\rho_0\in C_0}(\rho_0+\relint F),
\]
this suffices. 
\item This follows from \eqref{eq:supp1} as $\lambda_F=\lambda_L$ by \eqref{eq:supp2}.\qedhere
\end{enumerate}
\end{proof}

\begin{lemma}\label{lem:nonemptyintersect}
Let $C_1,C_2\in \Cscr^0$, $C_0=C_1\wedge C_2$. Assume that $\dim C_0=n-1$. 
Let $\chi\in \Lscr_{C_0}\setminus \Lscr_{C_2}$. 
Write
\begin{equation}
\label{eq:chidef}
\chi=\rho_0-(1/2)\sum_{i\in J}b_i+\sum_{i\in J'}\beta_ib_i,
\end{equation}
with $\beta_i\in (-1/2,0)$,
$\rho_0\in C_0$, 
$J'\subset\{1,\dots,d\}\setminus
J$ and $|J|$ minimal.
Then $J=\{i\mid \langle \lambda_F,b_i\rangle >0\}$ for $\lambda_F$ as in Lemma \ref{lem:union}. In particular $J=J_{C_0C_2}$ (see \eqref{eq:JC0C1}).

Moreover,
\[
\forall \rho_1\in C_1:\forall \rho_2\in C_2:\forall I\in\hat{\Iscr}_{\rho_2-\rho_1}:I\cap J\neq\emptyset.
\]
\end{lemma}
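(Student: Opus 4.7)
The plan is to reduce everything to the geometry of the facet $F\subset\Delta$ parallel to $C_0$ produced by Lemma~\ref{lem:union}(1) and its inward normal $\lambda_F$. Set $J^+:=\{i\mid\langle\lambda_F,b_i\rangle>0\}$; by the proof of Lemma~\ref{lem:union}(1) together with Lemma~\ref{lem:union}(3), the functional $\lambda_F$ and the linear part $L_0$ of the equation of the hyperplane $L\supset C_0$ are positive multiples of each other, so $J^+=J_{C_0C_2}$. The minimum of $\langle\lambda_F,\cdot\rangle$ over $\Delta=\{\sum\gamma_ib_i\mid\gamma_i\in[-1/2,0]\}$ equals $c_F=-\tfrac12\sum_{i\in J^+}\langle\lambda_F,b_i\rangle$, attained precisely when $\gamma_i=-1/2$ on $J^+$, $\gamma_i=0$ on $\{i:\langle\lambda_F,b_i\rangle<0\}$, and $\gamma_i\in[-1/2,0]$ is free on $I_F:=\{i:\langle\lambda_F,b_i\rangle=0\}$ (those $i$ with $b_i$ parallel to $F$).

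By Lemma~\ref{lem:union}(2), $\chi-\rho_0\in\relint F$ for any $\rho_0\in C_0$, so $\langle\lambda_F,\chi-\rho_0\rangle=c_F$. Reading this off from the given representation $\chi-\rho_0=\sum_i\gamma_ib_i$ (with $\gamma_i=-1/2$ for $i\in J$, $\gamma_i=\beta_i\in(-1/2,0)$ for $i\in J'$, and $\gamma_i=0$ otherwise), one obtains
\[
0=\langle\lambda_F,\chi-\rho_0\rangle-c_F=\sum_{i\in J^+}(\gamma_i+\tfrac12)\langle\lambda_F,b_i\rangle+\sum_{\langle\lambda_F,b_i\rangle<0}\gamma_i\langle\lambda_F,b_i\rangle,
\]
a sum of nonnegative terms that must all vanish. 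This forces $\gamma_i=-1/2$ on $J^+$ (so $J^+\subseteq J$) and $\gamma_i=0$ whenever $\langle\lambda_F,b_i\rangle<0$; any elements of $J\setminus J^+$ or of $J'$ therefore lie in $I_F$.

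For the reverse inclusion $J\subseteq J^+$, where minimality enters, I would argue by contradiction. If $i_0\in J\setminus J^+$ then $b_{i_0}\in\operatorname{span}(F)$; let $\ell$ be the line through the origin containing $b_{i_0}$. By quasi-symmetry, $\sum_{b_j\in\ell}b_j=0$, so the shift $\gamma_j\mapsto\gamma_j+t$ for all $j$ with $b_j\in\ell$ preserves $\sum\gamma_ib_i$. Since $C_0$ is open in the hyperplane $L$, which is parallel to $\operatorname{span}(F)\supset\ell$, one may simultaneously shift $\rho_0$ along $\ell$ inside $C_0$, providing the additional degree of freedom needed to produce a representation with $\gamma_j\in(-1/2,0]$ for all $j$ with $b_j\in\ell$ (and unchanged outside). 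Since $i_0$ itself is among those indices, this strictly decreases $|J|$, contradicting minimality. Hence $J=J^+=J_{C_0C_2}$.

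The \emph{moreover} assertion follows immediately. For $I\in\hat{\Iscr}_{\rho_2-\rho_1}$, write $\rho_2-\rho_1=\sum_{i\in I}\beta_ib_i$ with $\beta_i>0$; by Lemma~\ref{lem:union}(3), $\langle\lambda_F,\rho_2-\rho_1\rangle=\langle\lambda_F,\rho_2-\rho_0\rangle+\langle\lambda_F,\rho_0-\rho_1\rangle>0$, so $\sum_{i\in I}\beta_i\langle\lambda_F,b_i\rangle>0$, forcing some $i\in I$ with $\langle\lambda_F,b_i\rangle>0$, i.e.\ $i\in J^+=J$. The main technical obstacle in the plan is the minimality argument in the third paragraph: one must verify carefully that the combined perturbation in $t$ and in $\rho_0$ keeps \emph{every} coefficient $\gamma_j$ inside $[-1/2,0]$ while $\rho_0$ remains inside $C_0$, in particular handling cleanly the borderline case where some $\gamma_j$ with $b_j\in\ell$ is already at the boundary value $0$, so that a purely uniform shift in $t$ alone is not available.
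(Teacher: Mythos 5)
Your route differs from the paper's. The paper notes that $\chi-\rho_0\in\relint F$ by Lemma~\ref{lem:union}(2) and then simply cites \cite[Lemma~A.7]{SVdB} to conclude $J=\{i\mid\langle\lambda_F,b_i\rangle>0\}$; you instead try to establish this equality directly from the definitions. Your second paragraph is correct and useful: writing out $\langle\lambda_F,\chi-\rho_0\rangle-c_F=0$ as a vanishing sum of nonnegative terms does force $\gamma_i=-1/2$ on $J^+:=\{i:\langle\lambda_F,b_i\rangle>0\}$ and $\gamma_i=0$ where $\langle\lambda_F,b_i\rangle<0$, so $J^+\subseteq J$ holds for \emph{every} representation of the required form. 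Your identification $J^+=J_{C_0C_2}$ (via the fact that $\lambda_F$ and $L_0$ share the kernel $\operatorname{span}(F)$ and the same sign by Lemma~\ref{lem:union}(3)) is also fine, as is the ``moreover'' assertion, which is essentially the paper's argument.

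However, the reverse inclusion $J\subseteq J^+$ as written has a genuine gap, which you yourself flag: when some $\gamma_j$ with $b_j\in\ell$ is already $0$, the uniform shift $\gamma_j\mapsto\gamma_j+t$ is not available, and the suggested compensating move of $\rho_0$ is left unexplained. In fact the quasi-symmetry shift is not needed here and is what manufactures the difficulty. A one-index shift suffices: if $i_0\in J\setminus J^+$ then $\langle\lambda_F,b_{i_0}\rangle=0$, so $b_{i_0}$ lies in $L_0=\operatorname{span}(F)$; since $C_0$ is relatively open in $L$, the point $\rho_0(s):=\rho_0-s\,b_{i_0}$ stays in $C_0$ for small $s>0$. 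Because $\chi-\rho_0(s)=\chi-\rho_0+s\,b_{i_0}$, replacing $\gamma_{i_0}$ by $\gamma_{i_0}+s=-1/2+s\in(-1/2,0)$ and leaving every other coefficient untouched yields a valid representation based at $\rho_0(s)$ with $i_0\notin J$, contradicting minimality of $|J|$. This sidesteps the boundary case entirely, since no other coefficient is perturbed. With that replacement your argument closes; as it stands, the proposal is incomplete at exactly the step you identified.
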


\begin{proof}

Let $\lambda=\lambda_F\in Y(T)_\RR$ be 
as in Lemma \ref{lem:union}. Since $F$ is in particular the unique facet of $\rho_0+\Delta$ containing $\chi$
we have $J=\{i\mid \langle \lambda,b_i\rangle >0\}$ (see e.g. \cite[Lemma A.7]{SVdB}).  
Let $\rho_i\in C_i$ for $i=1,2$.
Lemma \ref{lem:union} further implies 
 $\langle\lambda,\rho_2-\rho_1\rangle>0$.  
Let $I\in \hat{\Iscr}_{\rho_2-\rho_1}$. By definition of $\hat{\Iscr}_{\rho_2-\rho_1}$ we can write $\rho_2-\rho_1=\sum_{i\in I} \beta_i b_i$ with $\beta_i>0$. 
Hence there exists $i\in I$ such that $\langle\lambda,b_i\rangle>0$, i.e. $i\in J$. 
\end{proof}
\begin{lemma}\label{lem:inC2}
Let the setting be as in Lemma \ref{lem:nonemptyintersect}. 
Then for any $\emptyset\neq \sJ\subseteq J$ one has $\chi+\sum_{j\in \sJ}b_j\in \Lscr_{C_2}$. 
\end{lemma}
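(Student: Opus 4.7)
The plan is to exhibit $\rho_2 \in C_2$ such that $\chi + \sum_{j \in \sJ} b_j - \rho_2 \in \Delta$; since $\chi + \sum_{j \in \sJ} b_j \in X(T)$, this places it in $(\rho_2 + \Delta) \cap X(T) = \Lscr_{C_2}$. First I fix $\rho_0 \in \relint C_0$ and set $x := \chi - \rho_0$, $v := \sum_{j \in \sJ} b_j$. By Lemma \ref{lem:nonemptyintersect}, $J = \{i : \langle \lambda_F, b_i\rangle > 0\}$, whence $\langle \lambda_F, v\rangle > 0$. Substituting $\langle \lambda_F, -\rangle$ into \eqref{eq:chidef} and using $\langle \lambda_F, x\rangle = c_F$ yields $\sum_{i \in J'} \beta_i \langle \lambda_F, b_i\rangle = 0$; since each summand is nonnegative ($\beta_i < 0$ and $\langle \lambda_F, b_i\rangle \le 0$ for $i \in J' \subseteq \{1,\dots,d\}\setminus J$), every term must vanish, forcing $J' \subseteq \{i : \langle \lambda_F, b_i\rangle = 0\}$.

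Now I choose $\rho_2 := \rho_0 + \epsilon\, v$ for $\epsilon > 0$ sufficiently small. Because $\rho_0 \in \relint C_0$ lies on exactly one hyperplane of $\Hscr$ and $\langle \lambda_F, v\rangle > 0$, the point $\rho_2$ crosses into $C_2$ for small $\epsilon$. Then $\chi + v - \rho_2 = x + (1-\epsilon)\,v$, and by convexity of $\Delta$ this is in $\Delta$ as soon as both $x$ and $x+v$ are. Since $x \in F \subset \Delta$ is given, the lemma reduces to the

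\textbf{Claim:} $x + \sum_{j \in \sJ} b_j \in \Delta$.

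I prove the Claim by a line-by-line decomposition. For each line $\ell$ through the origin in $X(T)_\RR$ supporting some $b_i$, put $S_\ell := \sum_{b_i \in \ell}[-1/2, 0]\, b_i \subset \ell$, so that $\Delta = \sum_\ell S_\ell$ as a Minkowski sum. By quasi-symmetry $\sum_{b_i \in \ell} b_i = 0$, so each $S_\ell$ is a segment on $\ell$ symmetric about the origin. Grouping the terms of $x + v$ by line gives $x + v = \sum_\ell y_\ell$ with
\[
y_\ell \;=\; -\tfrac{1}{2}\sum_{i \in (J \setminus \sJ)\cap \ell} b_i \;+\; \tfrac{1}{2}\sum_{j \in \sJ\cap \ell} b_j \;+\; \sum_{i \in J'\cap \ell} \beta_i b_i,
\]
and it suffices to show $y_\ell \in S_\ell$ for each $\ell$. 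For $\ell \subset \ker \lambda_F$ one has $J \cap \ell = \emptyset$, so $y_\ell = \sum_{i \in J'\cap \ell} \beta_i b_i \in S_\ell$ by inspection. For $\ell \not\subset \ker \lambda_F$ one has $J' \cap \ell = \emptyset$; pick $u$ spanning $\ell$ with $\langle \lambda_F, u\rangle > 0$ and write $b_i = \alpha_i u$, so that $\alpha_i > 0$ when $i \in J \cap \ell$ and $\alpha_i < 0$ otherwise (for $b_i \in \ell$). Setting $A := \sum_{i \in J \cap \ell}\alpha_i > 0$, quasi-symmetry on $\ell$ yields $S_\ell = [-A/2, A/2]\,u$, and a direct calculation gives $y_\ell = (\sigma - A/2)\,u$ with $\sigma := \sum_{j \in \sJ\cap\ell}\alpha_j \in [0, A]$. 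Thus $y_\ell \in S_\ell$, proving the Claim.

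The only substantial step is the Claim, which genuinely uses the quasi-symmetry assumption: without it the segments $S_\ell$ need not be symmetric about the origin, and the positive coefficient $+1/2$ appearing on $b_j$ for $j \in \sJ$ in the natural line-$\ell$ grouping would be impossible to absorb into a valid zonotope representation.
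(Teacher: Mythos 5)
Your proof is correct. It is built around the same key observation the paper uses, namely that $x+v = \chi + \sum_{j\in\sJ}b_j - \rho_0$ lies in $\Delta$ (which the paper asserts ``follows from \eqref{eq:chidef} and quasi-symmetry'' without elaboration), but you reach the conclusion $\chi_\sJ\in\Lscr_{C_2}$ by a slightly different path: rather than establishing $\chi_\sJ \in \Lscr_{C_0}\setminus(\rho_0+F)$ and invoking the structural fact $\Delta\setminus F\subset\bigcup(\rho_2-\rho_0+\Delta)$ from Lemma \ref{lem:union}(1), you directly construct $\rho_2 = \rho_0+\epsilon v\in C_2$ and verify $\chi_\sJ - \rho_2\in\Delta$ by convexity. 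The net effect is a more self-contained argument that avoids re-invoking Lemma \ref{lem:union}(1), at the cost of being longer; your line-by-line verification of the Claim (decomposing $\Delta = \sum_\ell S_\ell$, using quasi-symmetry to get $S_\ell = [-A/2,A/2]u$, and checking $y_\ell\in S_\ell$ separately for $\ell\subset\ker\lambda_F$ and $\ell\not\subset\ker\lambda_F$ via the observation $J'\subset\{i:\langle\lambda_F,b_i\rangle=0\}$) is a clean and correct fleshing-out of the detail the paper leaves to the reader, and the remark at the end correctly identifies quasi-symmetry as the essential hypothesis.
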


\begin{proof}
  By Lemma \ref{lem:union}, there exists 
  a unique facet $F$ of $\Delta$ such that $\chi\in \rho_0+F$ for
  $\rho_0\in
  C_0$.
  Let $\lambda=\lambda_F$.
  One has $\chi_\sJ:=\chi+\sum_{j\in \sJ}b_j\not \in\rho_0+F$ for
  any $\emptyset\neq \sJ\subseteq J$, as
  $J=\{i\mid \langle\lambda,b_i\rangle
  >0\}$ 
  by Lemma \ref{lem:nonemptyintersect}.  
On the other hand it follows from \eqref{eq:chidef} and quasi-symmetry that
$\chi_{\sJ}\in (\rho_0+\Delta)\cap X(T)=\Lscr_{C_0}$. So $\chi_{\sJ}\in \Lscr_{C_0}\setminus (\rho_0+F)\subset \Lscr_{C_2}$ using Lemma \ref{lem:union}(1).
\end{proof}

\section{Perverse sheaves  on affine hyperplane arrangements}\label{sec:KSperverse}
\subsection{Kapranov-Schechtman data}\label{subsec:KSdata}
In this section we consider a general affine hyperplane arrangement $(V,\Hscr)$ as in \S\ref{sec:genhyp}.
Kapranov and Schechtman provide in \cite{KapranovSchechtman} a
combinatorial description of the abelian category of perverse sheaves on $V_\CC\setminus \Hscr_\CC$. 
\begin{theorem}\cite[Theorem 9.10]{KapranovSchechtman}\label{thm:KSperverse}
\label{th:KSdata}
  The category 
   of perverse sheaves  on $V_\CC$ with respect to the stratification induced by $\Hscr_\CC$ is equivalent to the category
  of diagrams consisting of finite dimensional vector spaces $E_C$,
  $C\in\Cscr$, and linear maps $\gamma_{C'C}:E_{C'}\to E_C$,
  $\delta_{CC'}:E_C\to E_{C'}$ for $C'\leq C$ such that
  $((E_C)_C,(\gamma_{C'C})_{CC'})$ is a representation of $(\Cscr,\leq)$
   and $((E_C)_C,(\delta_{CC'})_{CC'})$ a
  representation of $(\Cscr,\geq)$, and the following 
  conditions are satisfied:
\begin{enumerate}
\item[(m)] $\gamma_{C'C}\delta_{CC'}=\id_{E_C}$ for $C'\leq C$. 
In particular, $\phi_{C_1C_2}:=\gamma_{C'C_2}\delta_{C_1C'}$ for $C'\leq C_1,C_2$ is well defined (i.e. independent of $C'$). 
\item[(i)] $\phi_{C_1C_2}$ is an isomorphism for every $C_1\neq C_2$ which
  are of the same dimension $k$ lying, lie in the same $k$-dimensional
  affine space and share a facet. 
\item[(t)] $\phi_{C_1C_3}=\phi_{C_2C_3}\phi_{C_1C_2}$ for collinear (\S\ref{sec:genhyp})
  triples of faces $(C_1,C_2,C_3)$.
\end{enumerate}
\end{theorem}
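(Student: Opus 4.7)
The plan is to establish the equivalence by constructing mutually inverse functors between $\Perv_{\Hscr_\CC}(V_\CC)$ and the category $\Kscr$ of diagrams satisfying (m), (i), (t). The main ingredients are the real structure of the cell decomposition $\Cscr$, the microlocal description of perverse sheaves along a smooth stratification, and an explicit gluing/Cousin construction.

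First I would construct the functor $\Phi : \Perv_{\Hscr_\CC}(V_\CC) \to \Kscr$. For each $C \in \Cscr$ I would pick a generic covector $\xi$ based at a point of $C$ and set $E_C$ to be the stalk (up to a shift depending on $\codim C$) of the perverse sheaf at a point of the form $c + i\xi \in V_\CC$, where $c \in C$ and $\xi$ is chosen to be generic with respect to $\Hscr_0$. Equivalently, one can define $E_C$ as the Morse group of $F$ at a generic point of the corresponding complexified stratum, using Goresky--MacPherson stratified Morse theory. For $C' \leq C$, the map $\gamma_{C'C}$ should be the natural cospecialization map obtained from a real path that moves from a base point in $C'$ into the adjacent chamber containing $C$; dually, $\delta_{CC'}$ is the variation map obtained by nearby cycles along the transverse slice. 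Perversity of $F$ with respect to the stratification gives (m): the composite $\gamma_{C'C}\delta_{CC'}$ is the identity because moving out and coming back along the same real direction is homotopic to the constant path. Axiom (t) is transitivity of specialization along collinear triples of chambers.

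Next I would construct the quasi-inverse functor $\Psi : \Kscr \to \Perv_{\Hscr_\CC}(V_\CC)$. Given data $(E_C,\gamma,\delta)$, I would build a perverse sheaf cell by cell. Locally near a point $v$ lying on the intersection of some hyperplanes of $\Hscr_\CC$, the germ of $\Psi(E_\bullet)$ should be described by the Cousin-type complex whose terms are $\bigoplus_{\dim C = k} j_{C,!} \underline{E_C}[-k]$, where $j_C : C_\CC \hookrightarrow V_\CC$ is the inclusion of the complexified cell and $\underline{E_C}$ is the constant sheaf with stalk $E_C$; the differentials are assembled from the maps $\gamma_{C'C}$ in one direction and $\delta_{CC'}$ in the other. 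The commutation of these differentials is precisely the content of (m) and (t). To see that this complex is perverse (rather than merely constructible) one uses (i): axiom (i) says that ``small'' variation morphisms between cells lying in a common flat are isomorphisms, which translates into the vanishing of the stalks of $\Psi(E_\bullet)$ in degrees forbidden by the perverse $t$-structure.

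The main obstacle, as in the original argument of Kapranov--Schechtman, is to prove that $\Phi$ and $\Psi$ are mutually quasi-inverse and that they induce an \emph{equivalence of abelian categories} rather than just triangulated ones. This requires two delicate inputs. First, one must compute all the Hom and Ext groups between the standard objects (the IC-sheaves of the strata $C_\CC$); this is accomplished by reducing locally to a central arrangement and exploiting the Salvetti-type complex of the complement $V_\CC \setminus \Hscr_\CC$. Second, one must verify that condition (i) is \emph{exactly} the cocycle condition needed to produce a perverse, rather than merely constructible, sheaf; this step is essentially a dimension count using the Morse-theoretic fact that the local cohomology of $V_\CC$ along a cell of codimension $k$ is concentrated in degree $k$. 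Once these two inputs are in place, the equivalence follows by checking on the generators $j_{C,!*}\underline{E_C}$ that $\Phi\Psi \cong \id$ and $\Psi\Phi \cong \id$.
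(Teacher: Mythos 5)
The paper itself offers no proof of this statement: it is quoted verbatim from \cite[Theorem 9.10]{KapranovSchechtman}, so your sketch has to be measured against the Kapranov--Schechtman argument, and there it has a genuine gap at the very first step, namely the definition of $E_C$. A point $c+i\xi$ with $c\in C$ and $\xi$ generic for $\Hscr_0$ lies in the \emph{open} stratum $V_\CC\setminus\Hscr_\CC$ (membership in $H_\CC$ requires both $H(c)=0$ and $H_0(\xi)=0$), so the stalk there only records the generic local system: for the skyscraper perverse sheaf at a point of a flat, all of your $E_C$ would vanish, whereas the correct diagram has $E_C\neq 0$ for every face contained in that flat. The alternative reading via stratified Morse theory fails for a different reason: the strata of $\Hscr_\CC$ are indexed by complexified \emph{flats}, not by faces, so a ``Morse group at a generic point of the corresponding complexified stratum'' cannot distinguish two faces spanning the same flat, while the Kapranov--Schechtman data must. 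What their proof actually uses is the hyperbolic stalk: $E_C$ is computed from $R\Gamma_{\overline{C}+iV}(F)$ restricted to the tube over the face $C$, and the key input is a purity lemma asserting that for $F$ perverse and smooth with respect to $\Hscr_\CC$ this complex sits in a single degree, so that $F\mapsto E_C(F)$ is exact. Nothing in your sketch substitutes for this lemma, and conditions (m), (i), (t) are verified in loc.\ cit.\ from base-change properties of these support functors, not from the homotopy-of-paths argument you indicate.

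The quasi-inverse also needs repair. The expression $j_{C,!}\underline{E_C}$ for $j_C:C_\CC\hookrightarrow V_\CC$ is not meaningful: a face of positive codimension is not a complex subvariety, and its complexified affine span depends only on the flat, not on the face, so the proposed Cousin complex does not even have well-defined terms; moreover constant sheaves on such spans cannot encode the monodromy of the generic local system. In the Kapranov--Schechtman construction the Cousin complex is built from sheaves supported on the closed tubes $\overline{C}+iV$ (these are contractible, so constant coefficients are harmless there), with differentials assembled from the $\delta$'s, and the verification that the resulting complex is perverse and quasi-inverse to the hyperbolic-stalk functor is precisely where (m), (i), (t) are used. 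As written, your argument neither produces the correct linear-algebra data from a perverse sheaf nor a well-defined sheaf-theoretic object from the data, so it does not establish the theorem.
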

We denote the category of data introduced in Theorem \ref{th:KSdata}, except for the requirement $\dim E_C<\infty$, by $\KS(\Hscr)$. It is obviously an abelian category. The full subcategory of $\KS(\Hscr)$ such that $\forall C:\dim E_C<\infty$ is denoted by $\KS^c(\Hscr)$.
If $E\in \KS^c(\Hscr)$
then the associated perverse sheaf on $V_\CC$ is denoted by $\tilde{E}$. Thus we have an equivalence of categories
\[
\KS^c(\Hscr)\r \Perv_{\Hscr_\CC}(V_\CC):E\mapsto \tilde{E}
\]
where $\Perv_{\Hscr_\CC}(V_\CC)$ is the abelian category of perverse sheaves on $V_\CC$ with respect to stratification $\Hscr_\CC$.
For an explicit construction of $\tilde{E}$ starting from $E$ see \cite[\S6.C]{KapranovSchechtman}.

Let $\Pi(\Hscr)$ be as  \S\ref{subsec:fundgd}.
There is a ``restriction'' functor
\begin{equation}
\label{eq:res}
\Res:\KS(\Hscr)\r \Rep(\Pi(\Hscr))
\end{equation}
which associates to $E\in \KS(\Hscr)$ the representation of $\Pi(\Hscr)$ given by $C\mapsto E_C$, $\ggamma_{CC'}\mapsto \phi_{CC'}$.
We have the following result.
\begin{proposition}
\label{prop:monrep}
 Let $E\in \KS^c(\Hscr)$. Then the representation of $\Pi(\Hscr)$
corresponding to $\tilde{E}\mid (V_\CC\setminus \Hscr_\CC)$
is given by $\Res(E)$.
\end{proposition}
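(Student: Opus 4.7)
The plan is to trace through the construction of $\tilde{E}$ from $E$ given in \cite[\S6.C]{KapranovSchechtman}. Since $\tilde{E}$ is a perverse sheaf smooth on $V_\CC \setminus \Hscr_\CC$ with respect to the trivial stratification there, its restriction $\tilde{E}|_{V_\CC \setminus \Hscr_\CC}$ is a shifted local system. By Proposition \ref{prop:KSmonodromy}, the associated monodromy representation is determined by specifying the vector spaces at basepoints $\rho_C \in C$ and the action of the generating morphisms $\ggamma_{C_1 C_2}$ for $C_1, C_2 \in \Cscr^0$ sharing a facet. Thus it suffices to verify agreement with $\Res(E)$ on the objects and generators of $\Pi(\Hscr)$.

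First I would identify the stalks of $\tilde{E}|_{V_\CC \setminus \Hscr_\CC}$ at each basepoint $\rho_C \in C$ with $E_C$. In the KS construction, the diagram entries $E_C$ are precisely the (shifted) generic stalks indexed by the face poset $\Cscr$, so this identification is built into the construction: for $C \in \Cscr^0$ the vector space $E_C$ is by design the fiber of the local system over any point of $C$ (viewed inside $V \hookrightarrow V_\CC$).

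Next I would compute the parallel transport along $\ggamma_{C_1 C_2}$, where $C_0 = C_1 \wedge C_2$ has codimension one and lies in a unique hyperplane $H \in \Hscr$. The path $\ggamma_{C_1 C_2} = \ggamma_\ell$ with $H_0(\ell) > 0$ decomposes homotopically into three segments: two long excursions that remain in the product regions $C_1 \times iV$ and $C_2 \times iV$ (on which the local system is constant by construction), and a local arc that crosses $H_\CC$ once near $C_0$. In the KS model, the local monodromy across a wall separating two chambers $C_1, C_2$ sharing a facet $C_0$ is given by first cospecializing from $C_1$ to $C_0$ via $\delta_{C_1 C_0}$ and then specializing from $C_0$ to $C_2$ via $\gamma_{C_0 C_2}$, i.e.\ precisely $\phi_{C_1 C_2}$. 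The sign condition $H_0(\ell) > 0$ selects the crossing orientation corresponding to this ordering of $\delta$ followed by $\gamma$, as opposed to its inverse.

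The main obstacle is the verification of the local monodromy formula in the previous paragraph, which is the one non-formal ingredient. Two approaches are available. The first is to unwind the explicit sheaf-theoretic definition of $\tilde{E}$ (for instance via the cellular cosheaf / Morse-theoretic model used in \cite[\S6.C]{KapranovSchechtman}) and check directly that the transition maps across a codimension-one wall are implemented by $\gamma_{C_0 C_2} \circ \delta_{C_1 C_0}$. The second, cleaner route is to use naturality of both functors in $E \in \KS^c(\Hscr)$ to reduce to a universal generator, then localize to a neighborhood of a generic point of $H$ and invoke the one-dimensional case (a single affine hyperplane in $V = \RR$), where both the KS data and the monodromy of $\tilde{E}$ can be written down by hand and matched. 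Either way, once the local formula is established, the global statement follows by combining it with the already-identified stalks and the presentation of $\Pi(\Hscr)$ by generators and relations.
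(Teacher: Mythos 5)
Your proposal is correct and takes essentially the same route as the paper. The paper's proof is a one-line citation: it invokes Proposition \ref{prop:KSmonodromy} and then points to specific locations \cite[(4.13),(3.5),\S4.C]{KapranovSchechtman} where the wall-crossing map across a codimension-one face is identified with $\gamma_{C_0C_2}\delta_{C_1C_0}=\phi_{C_1C_2}$ — exactly the "local monodromy formula" you flag as the one non-formal ingredient and propose to verify by unwinding \cite[\S6.C]{KapranovSchechtman}. Your first suggested strategy is therefore precisely what the paper delegates to those citations; your second (naturality plus reduction to a single hyperplane in $V=\RR$) would also work but is not needed, and the phrase "reduce to a universal generator" is vague — naturality alone suffices to reduce the verification to the local geometry near a generic point of a single $H\in\Hscr$, without any universality claim.
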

\begin{proof}
This follows from Proposition \ref{prop:KSmonodromy}, using the construction of the Kapranov-Schechtman data \cite[(4.13),(3.5),\S 4.C]{KapranovSchechtman}.
\end{proof}
\subsection{Duality}\label{subsec:Vdual}
If $E=((E_C)_C,(\delta_{CC'})_{CC'},(\gamma_{C'C})_{C'C})\in \KS^c(\Hscr)$ then we put
\begin{equation}
\label{eq:verdier}
\DD(E):=((E_C^*)_C,(\gamma_{C'C}^\vee)_{C'C},(\delta^\vee_{CC'})_{CC'})\in \KS^c(\Hscr).
\end{equation}

\begin{remark}\label{rem:dualissue}
Let $\DD$ denote the Verdier dual. It is asserted in \cite[Proposition 4.6]{KapranovSchechtman}\footnote{Note that in \cite{KapranovSchechtman} perverse sheaves are shifted so that local systems live in degree zero. Hence in loc.\ cit.\ the Verdier dual is also shifted.} that
$
\DD(\tilde{E})\cong\widetilde{\DD(E)}
$ 
but it turns out that a slight twist,  similar to the twist in \cite[Proposition 4.6]{KSII},
 is in fact needed to make this statement literally correct \cite{KSmisc}.  In the sequel we will not use the compatibility with the Verdier dual.
\end{remark}
\subsection{Group actions}\label{subsec:equivperv}
Assume that in addition $V$ is equipped with an affine, 
$\Hscr$-preserving, group action by a group $\Gscr$ as in \S\ref{subsubsec:equivariance}.
In that case we can routinely define a
$\Gscr$-equivariant version $\KS(\Gscr,\Hscr)$ of $\KS(\Hscr)$. An object in $\KS(\Gscr,\Hscr)$ consists of an object $E=((E_C)_C,(\delta_{CC'})_{CC'},(\gamma_{C'C})_{C'C})$
in $\KS(\Hscr)$ together with isomorphisms
\[
\phi_{g,C}:E_C\r E_{gC}
\]
satisfying the standard cocycle condition, the requirement that
$\phi_{e,C}$ is the identity for $e\in \Gscr$ the neutral element, and
the obvious compatibility with  $\delta_{CC'}$ and the $\gamma_{C'C}$.
The subcategory $\KS^c(\Gscr,\Hscr)\subset \KS(\Gscr,\Hscr)$ is spanned by the objects in $\KS(\Gscr,\Hscr)$ which lie in $\KS^c(\Hscr)$ if we forget the $\Gscr$-action.

An object $E\in \KS^c(\Gscr,\Hscr)$ defines a perverse sheaf on the stack $V_\CC/\Gscr$ which we denote by $\tilde{E}$.
This yields an equivalence of categories
\[
\KS^c(\Gscr,\Hscr)\cong \Perv_{\Hscr_\CC/\Gscr}(V_\CC/\Gscr).
\]
There is a $\Gscr$-equivariant version of the restriction functor
\begin{equation}
\label{eq:res}
\Res:\KS(\Gscr,\Hscr)\r \Rep(\Pi(\Hscr)\rtimes \Gscr)
\end{equation}
which associates to $E\in \KS(\Gscr,\Hscr)$ the representation of $\Pi(\Hscr)\rtimes \Gscr$ given by $C\mapsto E_C$, $\ggamma_{CC'}\mapsto \phi_{CC'}$, $g_C\mapsto \phi_{g,C}$.
\begin{corollary}
Assume that the group $\Gscr$ acts freely and discretely on $V$
and let $E\in \KS(\Gscr,\Hscr)$. Then the representation of $\Pi(\Hscr)\rtimes \Gscr$ (see \ref{subsubsec:equivariance})
associated to the local system
$\tilde{E}\mid (V_\CC\setminus \Hscr_\CC)/\Gscr$
is given by $\Res(E)$.
\label{cor:monrep}
\end{corollary}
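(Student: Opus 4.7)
The plan is to deduce this corollary from Proposition \ref{prop:monrep} combined with the description of representations of $\Pi_1$ of a quotient space given in \S\ref{subsubsec:equivariance}. First I would forget the $\Gscr$-equivariant structure on $E$: viewed simply as an object of $\KS^c(\Hscr)$, Proposition \ref{prop:monrep} tells us that the local system $L := \tilde{E}\mid (V_\CC\setminus \Hscr_\CC)$ has monodromy representation $C\mapsto E_C$, $\ggamma_{C_1C_2}\mapsto \phi_{C_1C_2}$ under the equivalence $\Pi(\Hscr)\cong \Pi_1(V_\CC\setminus \Hscr_\CC)$ of Proposition \ref{prop:KSmonodromy}.

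Next I would interpret the equivariant structure $(\phi_{g,C})_{g\in\Gscr,C\in \Cscr}$ on $E$ as descent data for $L$ with respect to the quotient map $\pi:V_\CC\setminus\Hscr_\CC\to (V_\CC\setminus\Hscr_\CC)/\Gscr$. Indeed, the cocycle and compatibility conditions on $(\phi_{g,C})$ are designed to produce, via the Kapranov--Schechtman construction of $\tilde{E}$ from $E$ (see \cite[\S6.C]{KapranovSchechtman}), an isomorphism $u_g:L\to g^\ast L$ of local systems on $V_\CC\setminus \Hscr_\CC$ satisfying the standard cocycle condition, which when specialised at $\rho_C\in C$ recovers the map $\phi_{g,C}:E_C\to E_{gC}$. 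Once the $u_g$ are in place, the descended local system on $(V_\CC\setminus\Hscr_\CC)/\Gscr$ is precisely $\tilde{E}\mid (V_\CC\setminus\Hscr_\CC)/\Gscr$, since this is how the equivariant perverse sheaf on the stack $V_\CC/\Gscr$ restricts to the open stratum.

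Then I would invoke Lemma \ref{eq:concrete} applied to $Y=V_\CC\setminus\Hscr_\CC$, the local system $L$ and the descent data $(u_g)_g$: the associated representation of $\Pi_1(Y)\rtimes\Gscr$ sends $y\in Y$ to $L_y$, the $\Pi_1(Y)$-morphisms act by parallel transport of $L$, and each $g_y$ acts as $(u_g)_y$. Combining this with Proposition \ref{prop:monrep} on the first factor and with the identification of $(u_g)_{\rho_C}$ with $\phi_{g,C}$ on the second, the resulting representation of $\Pi(\Hscr)\rtimes \Gscr$ is exactly $\Res(E)$ as described in \eqref{eq:res}.

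The one step that requires real care—and which I expect to be the main obstacle—is verifying that the $\Gscr$-equivariant structure on the Kapranov--Schechtman data $E$ produces an isomorphism $u_g:L\to g^\ast L$ whose stalk at $\rho_C$ is literally $\phi_{g,C}$, with no correction factors. This is a functoriality statement for the construction $E\mapsto \tilde{E}$ of \cite{KapranovSchechtman} with respect to affine automorphisms preserving $\Hscr$, and while it is essentially formal (the construction uses only the combinatorics of $\Cscr$, which $\Gscr$ permutes), unwinding the definitions through \cite[(4.13),(3.5),\S4.C]{KapranovSchechtman} is where any sign or normalization mismatch could hide.
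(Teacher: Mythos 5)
Your proposal is correct and follows the same route the paper implicitly intends: the paper gives no explicit proof, treating the corollary as a direct consequence of Proposition~\ref{prop:monrep}, Lemma~\ref{eq:concrete}, and the equivalence~\eqref{eq:groupoideq}. You have also correctly identified the only step requiring real care, namely the compatibility of the Kapranov--Schechtman construction $E\mapsto\tilde{E}$ with the $\Gscr$-action (so that the descent isomorphism $u_g$ specializes at $\rho_C$ to $\phi_{g,C}$), which the paper takes for granted.
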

For use below we note that the dual \eqref{eq:verdier} can be lifted to a functor
\[
\DD:\KS^c(\Gscr,\Hscr)\r \KS^c(\Gscr,\Hscr)
\]
where $\phi^{\DD(E)}_{g,C}$ is defined as $(\phi_{g,C}^{-1})^\vee=\phi_{g^{-1},C}^\vee$. 
\subsection{$R$-linear versions}\label{subsec:Rlin}
It is often convenient to consider a version of the category $\KS(\Hscr)$ in which the $E_C$ are modules over a commutative ring $R$. Then we denote the 
corresponding category by $\KS_R(\Hscr)$ and other related notations will be decorated with $R$ as well in a self-explanatory fashion. 
For a ring extension $S/R$ we will use the obvious change of rings functor
\begin{equation}
\label{eq:baseextension}
-\otimes_R S:\KS_R(\Hscr)\r \KS_S(\Hscr).
\end{equation}
By $\KS^c_R(\Hscr)$ we denote the full subcategory of $\KS_R(\Hscr)$ consisting of objects $(E_C)_C$ such that each $E_C$ is a finitely generated projective
$R$-module.
\begin{observation}
\label{obs:stab}
We will encounter the following situation. Assume that in addition $V$ is equipped with an affine, 
$\Hscr$-preserving, group action by a group $\Gscr=\Ascr\times \Bscr$ as in \S\ref{subsec:equivperv} such that $\Ascr$ sends every element of  $\Cscr$ to itself. Then
there is an isomorphism of categories 
\begin{equation}
\label{eq:observation}
\KS_R(\Gscr,\Hscr)\cong \KS_{R[\Ascr]}(\Bscr,\Hscr)
\end{equation}
where $R[\Ascr]$ is the group ring of $\Ascr$.
\end{observation}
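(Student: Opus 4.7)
The plan is to show that the two categories are strictly isomorphic via an explicit repackaging of the data, with no actual content beyond bookkeeping. The key observation is that when $\Ascr$ fixes every $C \in \Cscr$, the equivariance isomorphism $\phi_{a,C}\colon E_C \to E_{aC}=E_C$ for $a\in\Ascr$ is an $R$-linear endomorphism of $E_C$, and the cocycle condition then expresses these endomorphisms as a group action of $\Ascr$ on $E_C$, i.e. an $R[\Ascr]$-module structure on $E_C$.

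Starting from $E = ((E_C)_C,(\delta_{CC'})_{CC'},(\gamma_{C'C})_{C'C},(\phi_{g,C})_{g\in \Gscr})$ in $\KS_R(\Gscr,\Hscr)$, I would construct the corresponding object of $\KS_{R[\Ascr]}(\Bscr,\Hscr)$ by keeping the same underlying modules $E_C$ endowed with the $R[\Ascr]$-structure coming from the $\phi_{a,C}$, keeping the same $\delta_{CC'}$, $\gamma_{C'C}$, and retaining only the $\Bscr$-part $(\phi_{b,C})_{b\in \Bscr}$ of the equivariance data. Three compatibilities have to be checked. First, that $\delta_{CC'}$ and $\gamma_{C'C}$ are $R[\Ascr]$-linear: this is the compatibility of $\delta,\gamma$ with the $\Ascr$-equivariance specialized to the case $aC=C$, $aC'=C'$. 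Second, that $\phi_{b,C}\colon E_C\to E_{bC}$ is $R[\Ascr]$-linear, where $\Ascr$ acts on $E_C$ via $\phi_{a,C}$ and on $E_{bC}$ via $\phi_{a,bC}$: this follows from the cocycle identity together with the fact that $\Gscr=\Ascr\times\Bscr$ is a direct product, since
\[
\phi_{a,bC}\circ \phi_{b,C}=\phi_{ab,C}=\phi_{ba,C}=\phi_{b,C}\circ \phi_{a,C}.
\]
Third, that the $\phi_{b,C}$ satisfy the $\Bscr$-cocycle relation together with the usual compatibilities with $\delta,\gamma$; these are simply inherited from the original $\Gscr$-data.

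In the reverse direction, given $E'\in\KS_{R[\Ascr]}(\Bscr,\Hscr)$, I would extend the $\Bscr$-equivariance to a $\Gscr$-equivariance by setting $\phi_{(a,b),C}(x):=\phi_{b,C}(a\cdot x)$, with the $\Ascr$-action coming from the $R[\Ascr]$-module structure; all required axioms are immediate from the commutativity built into the direct product $\Gscr=\Ascr\times\Bscr$ and the $R[\Ascr]$-linearity of the $\phi_{b,C}$. Since the two constructions merely regroup the same pieces of data, they are mutually inverse on the nose (and functorial in the obvious way on morphisms), producing an isomorphism of categories rather than merely an equivalence. There is no genuine obstacle here: the only point requiring a moment's thought is verifying that the $\Ascr$- and $\Bscr$-equivariance data commute in exactly the way dictated by the direct-product structure of $\Gscr$, which is the content of the displayed identity above.
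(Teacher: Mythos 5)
Your proof is correct and is exactly the routine verification that the authors have in mind when labelling this an ``Observation'' without proof: the hypothesis that $\Ascr$ fixes each $C\in\Cscr$ turns the equivariance isomorphisms $\phi_{a,C}\colon E_C\to E_C$ into an $R[\Ascr]$-module structure on each $E_C$, the equivariant compatibilities of $\delta,\gamma$ with the $\Ascr$-action become $R[\Ascr]$-linearity, and the direct-product structure of $\Gscr$ gives the commutation needed to make $\phi_{b,C}$ $R[\Ascr]$-linear. The two directions are visibly mutually inverse, so the bookkeeping is complete.
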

\begin{remark} One needs to be a bit careful in using \eqref{eq:observation} since with our current conventions we do not automatically have 
the corresponding ``finite'' statement $\KS^c_R(\Gscr,\Hscr)\cong \KS^c_{R[\Ascr]}(\Bscr,\Hscr)$.
\end{remark}
\section{Perverse schobers  on affine hyperplane arrangements}\label{sec:KSschobers}
\subsection{$\Hscr$-schobers}
\label{subsec:reminder}
\label{sec:XT}
Let the setting be as in \S\ref{sec:KSperverse}.
$\Hscr$-schobers are categorifications of Kapranov-Schechtman data and hence they can be regarded as categorifications of perverse sheaves.
The following brief exposition is more or less literally taken from \cite[\S3]{SVdB10} which in turn is based on \cite{BondalKapranovSchechtman,KapranovSchechtmanSchobers}.
\begin{definition}\label{def:schober}
  An $\Hscr$-schober $\Eescr$ on $V_\CC$  is given by triangulated categories $\Eescr_C$, 
  $C\in \Cscr$,  adjoint
pairs of exact functors $(\delta_{CC'}:\Eescr_C\to \Eescr_{C'},\gamma_{C'C}:\Eescr_{C'}\to \Eescr_C)$ 
   for $C'\leq C$ such that
$(\Eescr_C,(\delta_{C'C})_{C'C})$ defines a pseudo-functor\footnote{To specify a pseudo-functor one also needs to specify suitable natural isomorphisms. As is customary we have suppressed these from the notations.}  from $(\Cscr,\ge )$ to the 
 $2$-category of
  triangulated categories satisfying  the following conditions:
 \begin{enumerate}
 \item[(M)] The unit of the adjunction $(\delta_{CC'},\gamma_{C'C})$
defines a natural isomorphism
   $\id_{\Eescr_C}\xrightarrow{\cong}\gamma_{C'C}\delta_{CC'}$ for $C'\leq C$ , and
   thus $\phi_{C_1C_2}:=\gamma_{C'C_2}\delta_{C_1C'}$ for
   $C'\leq C_1,C_2$ is well defined up to canonical
natural   isomorphism.
\item[(I)]
$\phi_{C_1C_2}$ is an equivalence for every $C_1\neq C_2$ of the same dimension $d$ lying in the same $d$-dimensional affine space which share a facet.
\item[(T)]
For collinear triples of faces $(C_1,C_2,C_3)$ with common face $C_0$ the counit of the adjunction $(\delta_{C_0C_2},\gamma_{C_2C_0})$
 defines a natural isomorphism
$\phi_{C_2C_3}\phi_{C_1C_2}\xrightarrow{\cong} \phi_{C_1C_3}$.
\end{enumerate}
\end{definition}
The 2-category of $\Hscr$-schobers is denoted by $\Schob(\Hscr)$.
$\Hscr$-schobers are categorifications of Kapranov-Schechtman data in the following sense.
\begin{fact} Applying $K^0(-)$ to the data defining an $\Hscr$-schober yields a functor
\[
K^0(-):\Schob(\Hscr)\mapsto \KS_{\ZZ}(\Hscr)
\]
which we call the ``decategorification'' functor.
\end{fact}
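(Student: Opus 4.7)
\smallskip

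The statement is essentially a book-keeping exercise, so the plan is a careful verification that each structural piece of an $\Hscr$-schober induces the required piece of Kapranov--Schechtman data once one passes to Grothendieck groups. First, for $C\in\Cscr$ set $E_C:=K^0(\Eescr_C)$, which is a well-defined abelian group since $\Eescr_C$ is a triangulated category. For $C'\le C$ the exact functors $\delta_{CC'}\colon\Eescr_C\to\Eescr_{C'}$ and $\gamma_{C'C}\colon\Eescr_{C'}\to\Eescr_C$ send distinguished triangles to distinguished triangles and hence induce $\ZZ$-linear maps between the corresponding Grothendieck groups, which I will still denote by $\delta_{CC'}$ and $\gamma_{C'C}$.

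Next I would verify that the two assignments $C\mapsto E_C$ together with the families $(\delta_{CC'})$ and $(\gamma_{C'C})$ give representations of $(\Cscr,\ge)$ and $(\Cscr,\le)$ respectively. At the categorical level the composability data is a pseudo-functor, but natural isomorphisms become equalities after applying $K^0$, so the composition $\delta_{C'C''}\circ\delta_{CC'}=\delta_{CC''}$ holds strictly on Grothendieck groups, and similarly for the $\gamma$'s (using the adjointness and the fact that a right adjoint of a composition is the composition of the right adjoints, up to canonical natural isomorphism, which is killed by $K^0$).

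Now I would check the three axioms (m), (i), (t). For (m), axiom (M) furnishes a natural isomorphism $\id_{\Eescr_C}\xrightarrow{\cong}\gamma_{C'C}\delta_{CC'}$; applying $K^0$ turns this isomorphism into the equality $\gamma_{C'C}\delta_{CC'}=\id_{E_C}$, which is exactly (m). In particular, for any $C'\le C_1,C_2$ the map $\phi_{C_1C_2}:=\gamma_{C'C_2}\delta_{C_1C'}$ is well defined on Grothendieck groups and, by the categorical well-definedness up to natural isomorphism, is independent of the choice of $C'$. For (i), axiom (I) says that $\phi_{C_1C_2}$ is an equivalence of triangulated categories in the relevant situation; such an equivalence induces an isomorphism on $K^0$, giving (i). For (t), axiom (T) supplies a natural isomorphism $\phi_{C_2C_3}\phi_{C_1C_2}\xrightarrow{\cong}\phi_{C_1C_3}$ for collinear triples, which again becomes the equality $\phi_{C_2C_3}\phi_{C_1C_2}=\phi_{C_1C_3}$ after applying $K^0$, i.e.\ (t).

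Finally, functoriality of the assignment $\Eescr\mapsto K^0(\Eescr)$ is immediate: a $1$-morphism of $\Hscr$-schobers consists of exact functors $\Eescr_C\to\Eescr'_C$ compatible (up to canonical $2$-isomorphism) with $\delta$ and $\gamma$, and these induce $\ZZ$-linear maps $E_C\to E'_C$ strictly compatible with $\delta$ and $\gamma$ on Grothendieck groups, hence a morphism in $\KS_{\ZZ}(\Hscr)$; $2$-morphisms are trivialized by $K^0$. There is essentially no obstacle in this argument; the only point requiring care is the systematic replacement of pseudo-functorial and $2$-categorical data by strict equalities, which happens automatically because $K^0$ identifies isomorphic objects and natural isomorphisms of functors.
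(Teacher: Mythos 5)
Your verification is correct and matches the reasoning the paper implicitly has in mind: the paper states this as a ``Fact'' without proof precisely because, as you show, passing to $K^0$ collapses all the pseudo-functorial $2$-isomorphisms and the adjunction (co)unit isomorphisms of Definition \ref{def:schober} to equalities of group homomorphisms, turning axioms (M), (I), (T) into (m), (i), (t) of Theorem \ref{th:KSdata}. The only step that genuinely required an observation---that the $\gamma$'s form a representation of $(\Cscr,\le)$ because right adjoints of a composite are the composite of the right adjoints---you noted explicitly, so the argument is complete.
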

Below we use the short hand $K^0_\CC(-)$ for $K^0(-)\otimes_{\ZZ} \CC$.
If $K^0_\CC(\Eescr)\in \KS^c(\Hscr)$ then we will also define $\tilde{K}^0_\CC(\Eescr)=\widetilde{K^0_\CC(\Eescr)}$
and we will refer to $\tilde{K}^0_\CC(\Eescr)$ as a decategorification as well.

\medskip

If $\Eescr$ is an $\Hscr$-schober then we will define a subschober $\Eescr'$ of $\Eescr$ as a collection of (full) triangulated subcategories $\Eescr'_C\subset \Eescr_C$
which are stable under $(\delta_{CC'},\gamma_{C'C})$. In this way $\Eescr'$ becomes tautologically an $\Hscr$-schober. 
We refer to the latter  as a subschober as well and sometimes we use the notation $\Eescr'\subset \Eescr$.
\subsection{Group actions}\label{subsec:equischober}
Assume that in addition $V$ is equipped with an affine, 
$\Hscr$-preserving, group action by a group $\Gscr$ as in \S\ref{subsubsec:equivariance}.
In that case we may define a
$\Gscr$-equivariant version $\Schob(\Gscr,\Hscr)$
of the category $\Schob(\Hscr)$.  A
\emph{$\Gscr$-action} on an $\Hscr$-schober on $V_\CC$ is a collection
of exact functors
\[
\phi_{g,C}:\Eescr_C\r \Eescr_{gC}
\]
for $g\in \Gscr$, $C\in \Cscr$, enhanced with natural isomorphisms $\phi_{h,gC}\phi_{g,C}\cong \phi_{hg,C}$ satisfying the obvious compatibility
for triple products in $\Gscr$, and the requirement that $\phi_{e,C}$ is the identity functor for $e\in \Gscr$. Moreover we should have pseudo-commutative diagrams for every $C'<C$:
\begin{equation}
\label{eq:deltadiagram}
\xymatrix{
\Eescr_C\ar[r]^{\phi_{g,C}}\ar[d]_{\delta_{C,C'}}&\Eescr_{gC}\ar[d]^{\delta_{gC,gC'}}\\
\Eescr_{C'}\ar[r]_{\phi_{g,C'}}&\Eescr_{gC'}
}
\end{equation}
so that the implied natural isomorphism $\delta_{gC,gC'}\phi_{g,C}\cong \phi_{g,C'}\delta_{C,C'}$ should again satisfy a number of obvious compatibilities. 
An $\Hscr$-schober
equipped with a $\Gscr$-action will be called a \emph{$\Gscr$-equivariant $\Hscr$-schober}. The concept of a $\Gscr$-equivariant subschober is defined in the obvious way.

\medskip

We think of a $\Gscr$-equivariant $\Hscr$-schober as a perverse schober on the stack $V_{\CC}/\Gscr$. Its decategorification is a $\Gscr$-invariant perverse sheaf; i.e. we will use the functors
\[
K^0(-):\Schob(\Gscr,\Hscr)\r \KS_\ZZ(\Gscr,\Hscr)
\]
and if $K^0_\CC(\Eescr)\in \KS^c(\Gscr,\Hscr)$ then we will also use the notation $\tilde{K}^0_\CC(\Eescr):=\widetilde{K^0_\CC(\Eescr)}$. 
\section{An $\Hscr$-schober using Geometric Invariant Theory}\label{subsec:construct}
\subsection{Reminder}\label{subsec:10.1}
From now on $\Hscr$, $\Cscr$ will have again their usual meaning (see \S\ref{sec:hyperplane}).
Assume that~$W$ is quasi-symmetric.  
In  \cite{SVdB10} we constructed an $\Hscr$-schober on $X(T)_\CC$ built up from suitable triangulated subcategories of
$D(W/T)$.\footnote{In loc. cit. we actually used $W^\ast/T$. Here, we follow the mirror symmetry convention and use $W/T$. This entails some sign changes
in the definitions and formulas taken from \cite{SVdB10}.
}
We briefly recall this construction.\footnote{In loc. cit. we required, in addition to $W$ being quasi-symmetric,
   that the generic $T$-stabilizer is
  finite. This is satisfied here since $\ZZ B=X(T)$.} 
Let $P_\chi=\chi\otimes \Oscr_W$ and set
\[
P_C:=\bigoplus_{\chi\in -\Lscr_C} P_{\chi},\quad 
\Escr_C:=\langle P_C\rangle \subset D(W/T),
\]
where $\langle S\rangle$ for $S\subset D(W/T)$ denotes the smallest strict, full triangulated subcategory, closed under coproduct, which contains $S$.
Put
\[
\Lambda_C=\End_{W/T}(P_C).
\]
The functor $\RHom_{W/T}(P_C,-)$ defines an equivalence
\begin{equation}
\label{eq:LambdaCeq}
\Escr_C\cong D(\Lambda_C)
\end{equation}
where $D(\Lambda_C)$ is the derived category of \emph{right} $\Lambda_C$-modules. By \cite[Theorem 5.6]{SVdB10}, $\Lambda_C$ has finite global dimension.  For
use below we note that the quasi-inverse to \eqref{eq:LambdaCeq} is
given by
\begin{equation}
\label{eq:rightadjoint}
D(\Lambda_C)\cong \Escr_C:M\mapsto M\otimes_{\Lambda_C}^L P_C.
\end{equation}
For $C'\leq C$ let $\delta_{CC'}:\Escr_C\hookrightarrow \Escr_{C'}$ be the inclusion. Then $\delta_{CC'}$ admits a right adjoint  
\begin{equation}
\label{eq:gamma}
\gamma_{C'C}=\RHom_{W/T}(P_C,-)\otimes_{\Lambda_C}^L P_C.
\end{equation}
Set $\phi_{C_1C_2}=\gamma_{C'C_2}\delta_{C_1C'}$ for $C'\leq C_1,C_2$. 
In addition for $\chi\in X(T)$, let $\phi_{\chi,C}:\Escr_C\to \Escr_{\chi+C}$ be the functor $M\mapsto (-\chi)\otimes M$.

\begin{proposition}\cite[Proposition 5.1]{SVdB10}\label{prop:schober}
$\Sscr:=((\Escr_C)_{C},(\gamma_{C'C})_{C'C},(\delta_{CC'})_{C'C})$  with the $X(T)$-action $(\phi_{\chi,C})_{\chi,C}$ defines an $X(T)$-equivariant $\Hscr$-schober on $X(T)_\CC$.
\end{proposition}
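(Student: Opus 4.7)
The plan is to verify the structure conditions (M), (I), (T) of Definition \ref{def:schober}, and then the $X(T)$-equivariance compatibility with the diagram \eqref{eq:deltadiagram}.

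First, the pseudo-functor structure $(\Escr_C,(\delta_{CC'})_{C'C})$ is well-defined because equation \eqref{eq:inclusion_Lscr_C} gives $\Lscr_C\subset \Lscr_{C'}$ for $C'\le C$, hence $P_C$ is a summand of $P_{C'}$, so $\Escr_C\subset \Escr_{C'}$; the $\delta_{CC'}$ are then the inclusions and automatically compose strictly. The tilting equivalence \eqref{eq:LambdaCeq} (which rests on $\gldim\Lambda_C<\infty$) furnishes the right adjoint $\gamma_{C'C}$ as in \eqref{eq:gamma}. For condition (M) I would argue that $\delta_{CC'}$ is fully faithful (being an inclusion), and therefore the unit $\id_{\Escr_C}\to \gamma_{C'C}\delta_{CC'}$ is an isomorphism by a standard adjunction argument; this is just the observation that for $M\in \Escr_C$ one has $\RHom_{W/T}(P_C,M)\otimes_{\Lambda_C} P_C\cong M$ under \eqref{eq:rightadjoint}.

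The main obstacle is condition (I): for chambers $C_1,C_2\in \Cscr^0$ sharing a facet $C_0=C_1\wedge C_2$, one must show $\phi_{C_1C_2}=\gamma_{C_0C_2}\delta_{C_1C_0}$ is an equivalence. Here the quasi-symmetry of $W$ plays an essential role. The strategy is to exploit the explicit combinatorial description of $\Lscr_{C_0}$, $\Lscr_{C_1}$, $\Lscr_{C_2}$: if $\lambda=\lambda_F$ is the normal of the supporting hyperplane of the facet of $\Delta$ parallel to $C_0$ (as in Lemma \ref{lem:union}), one can stratify $\Lscr_{C_0}$ by whether the elements also belong to $\Lscr_{C_1}$ and/or $\Lscr_{C_2}$. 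On the characters that lie in $\Lscr_{C_1}\setminus\Lscr_{C_2}$ one resolves $P_\chi$ by a Koszul-type complex of objects of $\Escr_{C_2}$ involving shifts by $b_j$ for $j\in J_{C_0C_2}$ (the characters $\chi+\sum_{j\in\sJ}b_j$, which by Lemma \ref{lem:inC2} lie in $\Lscr_{C_2}$), and symmetrically on the other side. This constructs a mutation identifying $\Escr_{C_1}$ with $\Escr_{C_2}$ through $\Escr_{C_0}$, realised functorially as $\phi_{C_1C_2}$. This is exactly the content of \cite[Proposition 5.1]{SVdB10}.

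For condition (T), given a collinear triple $(C_1,C_2,C_3)$ with common face $C_0$, the counit of $(\delta_{C_0C_3},\gamma_{C_3C_0})$ yields a natural map $\phi_{C_2C_3}\phi_{C_1C_2}\to \phi_{C_1C_3}$. To show it is an isomorphism, I would reduce to checking it on the generators $P_\chi$ for $\chi\in -\Lscr_{C_1}$ and use the intermediate resolutions constructed in Step (I); collinearity of the triple ensures that the two successive wall-crossings factor through $\Escr_{C_0}$ compatibly. Finally, the $X(T)$-equivariance is essentially formal: tensoring by a character $-\chi$ is an exact autoequivalence of $D(W/T)$ which sends $P_\mu$ to $P_{\mu-\chi}$, and since $\Lscr_{\chi+C}=\chi+\Lscr_C$, the functor $\phi_{\chi,C}$ restricts to an equivalence $\Escr_C\xrightarrow{\cong}\Escr_{\chi+C}$. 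The pseudo-commutativity of \eqref{eq:deltadiagram} and the compatibility with $\gamma_{C'C}$ follow because tensoring with $-\chi$ commutes with $\RHom_{W/T}(P_C,-)$ up to the evident reindexing, and the cocycle conditions for $\phi_{\chi+\chi',C}\cong \phi_{\chi,\chi'+C}\phi_{\chi',C}$ are tautological.
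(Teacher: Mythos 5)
The paper does not prove this statement itself---it cites \cite[Proposition~5.1]{SVdB10} directly, so there is no in-paper proof to compare against. Your reconstruction is broadly consistent with the auxiliary machinery that \emph{is} reproduced here: conditions~(M) and the $X(T)$-equivariance are indeed formal (your argument via full faithfulness of $\delta_{CC'}$ and the tautological behaviour of $-\otimes(-\chi)$ is correct), and your identification of condition~(I) as the crux, to be handled via the Koszul-type complexes $C_{\lambda,\chi}$ and semi-orthogonal decompositions of $\Escr_{C_0}$, matches what Lemma~\ref{cor:Clambdachi} and the cited \cite[Proposition~5.12]{SVdB} record. Two points of caution: first, your sign for the wall-crossing shifts (``$\chi+\sum_{j\in\sJ}b_j$'') should be tracked carefully against whether one indexes by $\chi\in\Lscr_{-C}$ or $\chi\in\Lscr_{C}$---Lemma~\ref{cor:Clambdachi} (with $\chi\in\Lscr_{-C_1}\setminus\Lscr_{-C_2}$ and Koszul differentials of weight $-b_j$) and Lemma~\ref{lem:inC2} (with $\chi\in\Lscr_{C_0}\setminus\Lscr_{C_2}$ and shift $+b_j$) sit on opposite sides of this convention, and one cannot freely mix them. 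Second, your treatment of condition~(T) is the weakest link: asserting that ``collinearity ensures the two wall-crossings factor through $\Escr_{C_0}$ compatibly'' names the desired conclusion rather than proving it; the actual argument needs to show that the counit map $\phi_{C_2C_3}\phi_{C_1C_2}\to\phi_{C_1C_3}$ is an isomorphism, which again reduces to a concrete computation on the generators $P_\chi$ and a comparison of the resulting complexes, not to a general formal fact. As a reconstruction of the cited proof your outline is on the right track, but as written it would not be accepted as a self-contained replacement.
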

Let $D^c(W/T)$ be the full subcategory of $D(W/T)$ consisting of bounded complexes of $T$-equivariant coherent $\Oscr_T$-modules. 
It was shown in \cite{SVdB10} that $\delta_{C'C}$, $\gamma_{CC'}$ preserve the categories
\[
\Escr^c_C:=\Escr_C\cap D^c(W/T).
\]
We obtain a corresponding $X(T)$-equivariant subschober
\[
\Sscr^c \subset \Sscr.
\]
We recall the following lemma.
\begin{lemma}\cite[\S5.3]{SVdB10}
\label{lem:Dc}
  Under the equivalence \eqref{eq:LambdaCeq}, $\Escr^c_C$ corresponds to the subcategory  $D^c(\Lambda_C)$ of $D(\Lambda_C)$ consisting of bounded complexes
with finitely generated cohomology.
\end{lemma}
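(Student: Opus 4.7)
The plan is to exploit the equivalence \eqref{eq:LambdaCeq}--\eqref{eq:rightadjoint} together with the finite global dimension of $\Lambda_C$ to match the two finiteness conditions. The starting point is the observation that each $P_\chi=\chi\otimes \Oscr_W$ is projective in the abelian category $\Qch(W/T)$ (it is the rank-one free object twisted by the one-dimensional $T$-representation $\chi$) and that $\Lscr_C$ is finite since $\Delta$ is compact. Thus $P_C$ itself is a finitely generated projective object. Consequently $\RHom_{W/T}(P_C,-)$ is computed by $\Hom_{W/T}(P_C,-)$, which is an exact functor, and for any $M\in \Escr_C$ one has
\[
H^i\RHom_{W/T}(P_C,M)\;\cong\;\Hom_{W/T}(P_C,H^i(M))\;=\;\bigoplus_{\chi\in -\Lscr_C}H^i(M)_\chi
\]
as right $\Lambda_C$-modules.

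For the forward implication I would take $M\in \Escr^c_C$. Boundedness of $M$ immediately gives boundedness of $\RHom_{W/T}(P_C,M)$ via the display above. For finite generation of each cohomology module, write $S$ for the coordinate ring of $W$. Then $P_C$ is a finitely generated projective $S$-module (forgetting the $T$-structure), so $\Hom_S(P_C,H^i(M))$ is a finitely generated $S$-module whenever $H^i(M)$ is coherent. Taking $T$-invariants and applying Hilbert's finiteness theorem (using reductiveness of $T$), the module $\Hom_{W/T}(P_C,H^i(M))$ is finitely generated over $S^T$, hence finitely generated over $\Lambda_C$, since $\Lambda_C$ is itself module-finite over $S^T$ via the diagonal inclusion $S^T\hookrightarrow \bigoplus_{\chi\in -\Lscr_C}\End(P_\chi)\subset \Lambda_C$. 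This Reynolds/Hilbert step is the main technical point; everything around it is formal.

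For the converse, let $N\in D^c(\Lambda_C)$. Using the finite global dimension of $\Lambda_C$, choose a bounded resolution $P^\bullet\to N$ by finitely generated projective $\Lambda_C$-modules. Then $P^\bullet\otimes_{\Lambda_C}P_C$ computes the image $N\otimes^L_{\Lambda_C}P_C\in \Escr_C$ under the quasi-inverse of \eqref{eq:LambdaCeq} (cf.\ \eqref{eq:rightadjoint}). Each $P^i$ is a direct summand of some $\Lambda_C^{n_i}$, so $P^i\otimes_{\Lambda_C}P_C$ is a direct summand of the coherent $T$-equivariant sheaf $P_C^{n_i}$. Hence $N\otimes^L_{\Lambda_C}P_C$ is a bounded complex with coherent cohomology, lying in $D^c(W/T)\cap \Escr_C=\Escr^c_C$, which completes the correspondence.
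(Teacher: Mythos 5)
Your argument is correct. The paper itself gives no proof of this lemma, only the citation to \cite[\S5.3]{SVdB10}, so a line-by-line comparison is not possible from the text here; but the ingredients you use (projectivity of $P_C$, so $\RHom_{W/T}(P_C,-)=\Hom_{W/T}(P_C,-)$ and cohomology is computed weight-space by weight-space; Hilbert--Noether finiteness of invariants to get finite generation over $S^T$ and hence over $\Lambda_C$; and, for the converse, module-finiteness of $\Lambda_C$ over $S^T$ giving noetherianity, combined with the finite global dimension from \cite[Theorem 5.6]{SVdB10} to produce a bounded finite projective resolution whose image under $-\otimes_{\Lambda_C}P_C$ lands in $D^c(W/T)\cap\Escr_C$) are exactly the standard ones and surely the intended route. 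Two small points worth stating explicitly if this were to be written out: (i) the converse tacitly uses that $\Lambda_C$ is noetherian, which you do establish implicitly via module-finiteness over the finitely generated $\CC$-algebra $S^T$, and (ii) one should note that the $S^T$-action on $\Hom_{W/T}(P_C,H^i(M))$ through the diagonal embedding $S^T\hookrightarrow\Lambda_C$ agrees with the natural $S^T$-action on the weight spaces of $H^i(M)$, which is what makes Hilbert finiteness over $S^T$ transfer to finite generation over $\Lambda_C$.
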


In the next section we discuss a subschober of $\Sscr^c$.
\subsection{The finite length subschober}\label{subsec:finitelengthschober}
The nullcone $W^u\subset W$ is defined by
\[
W^u=\{x\in W\mid 0\in \overline{Tx}\}.
\]
Let $D^u(W/T)$ be the full triangulated subcategory of complexes whose cohomology is supported on $W^u$. 
We put
\begin{align*}
\Escr^u_C&=\Escr_C\cap D^u(W/T),\\
\Escr^f_C&=\Escr^u_C\cap \Escr_C^c.
\end{align*}
\begin{lemma} $(\Escr^u_C)_C$, $(\Escr^f_C)_C$ define $X(T)$-equivariant subschobers $\Sscr^u$, $\Sscr^f$ 
of $\Sscr$.
\end{lemma}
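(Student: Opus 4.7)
The plan is to verify directly that each of the assignments $C\mapsto\Escr^u_C$ and $C\mapsto\Escr^f_C$, together with the restrictions of the structure functors $\delta_{CC'}$, $\gamma_{C'C}$, and $\phi_{\chi,C}$, defines an $X(T)$-equivariant subschober of $\Sscr$ in the sense of \S\ref{subsec:reminder} and \S\ref{subsec:equischober}. Three things need checking: (i) each $\Escr^u_C$ (respectively $\Escr^f_C$) is a full triangulated subcategory of $\Escr_C$; (ii) these subcategories are stable under $\delta_{CC'}$, $\gamma_{C'C}$, and $\phi_{\chi,C}$; and (iii) the schober axioms (M), (I), (T) are inherited from $\Sscr$. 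Points (i) and (iii) are essentially free: $D^u(W/T)$ is thick in $D(W/T)$ (closed under extensions, summands, shifts) because $W^u$ is a closed $T$-stable subvariety, and $\Escr^c_C$ is thick by \S\ref{subsec:10.1}; hence both $\Escr^u_C$ and $\Escr^f_C=\Escr^u_C\cap\Escr^c_C$ are thick in $\Escr_C$. Axioms (M), (I), (T) then hold automatically by restriction from $\Sscr$.

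The verification of (ii) splits naturally. The inclusion $\delta_{CC'}$ preserves any condition imposed inside $D(W/T)$, so it restricts to inclusions $\Escr^u_C\hookrightarrow\Escr^u_{C'}$ and $\Escr^f_C\hookrightarrow\Escr^f_{C'}$ tautologically. The equivariance functor $\phi_{\chi,C}(M)=(-\chi)\otimes M$ is tensoring with a $T$-equivariant line bundle; this preserves cohomological support on the $T$-stable subset $W^u$ and preserves coherence, so both subcategories are stable. The only substantive point is that $\gamma_{C'C}$ preserves $\Escr^u$. Since $\gamma_{C'C}$ is already known to preserve $\Escr^c$ from \S\ref{subsec:10.1}, preservation of $\Escr^f$ follows from preservation of $\Escr^u$.

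For the preservation $\gamma_{C'C}(\Escr^u_{C'})\subset\Escr^u_C$, my approach is to start from the formula $\gamma_{C'C}(M)=\RHom_{W/T}(P_C,M)\otimes^L_{\Lambda_C}P_C$ and invoke the explicit Koszul-type resolutions from \cite{SVdB}. Concretely, for $\chi\in -\Lscr_{C'}\setminus -\Lscr_C$ the object $\gamma_{C'C}(P_\chi)$ is represented by a bounded complex in $\Escr_C$ whose differentials are given by multiplication by $T$-semi-invariant coordinate functions on $W$, whose common vanishing locus is exactly $W^u$; such complexes cannot introduce cohomology off $W^u$. Since $P_C$ is a compact generator, $\gamma_{C'C}$ commutes with small coproducts, so for an arbitrary $M\in\Escr^u_{C'}$ expressed via the generators $P_\chi$, applying $\gamma_{C'C}$ yields a complex whose cohomology support stays in $W^u$.

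The main obstacle is precisely this last step: it is not formal from the adjoint pair $(\delta_{CC'},\gamma_{C'C})$ that the coreflection onto $\Escr_C$ preserves the $W^u$-support condition, and in particular the semi-orthogonal complement $\Escr_C^\perp\cap\Escr_{C'}$ is certainly not contained in $D^u(W/T)$. The argument depends on the explicit \cite{SVdB} description of $\gamma_{C'C}$ on line bundle generators via Koszul-type resolutions built out of semi-invariant multiplication maps. Once this case is in hand, the rest of the verification is routine bookkeeping.
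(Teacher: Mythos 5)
The initial reductions you make are correct and match the paper: equivariance, stability under $\delta_{CC'}$ and $\phi_{\chi,C}$, thickness of the subcategories, and the reduction of the $\Escr^f$ case to the $\Escr^u$ case (since $\gamma_{C'C}$ is already known to preserve $\Escr^c$). The gap is precisely in the step you yourself flag as the substantive one, the preservation of $\Escr^u$ under $\gamma_{C'C}$, and there your argument does not work.

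Two concrete problems. First, the common vanishing locus of the $T$-semi-invariant coordinates $x_j$ appearing as Koszul differentials in $C_{\lambda,\chi}$ (those with $\langle\lambda,b_j\rangle<0$) is $W^{\lambda,+}$, not $W^u$. Accordingly, for $\chi\in -\Lscr_{C'}\setminus-\Lscr_C$ (with $C'$ a facet of $C$) the object $\gamma_{C'C}(P_\chi)=\mathrm{cone}(P_\chi\to C_{\lambda,\chi})$ is quasi-isomorphic to the shifted ideal sheaf of $W^{\lambda,+}$ twisted by $\chi$, whose support is all of $W$; it does \emph{not} lie in $\Escr^u_C$, and nothing would be wrong with that, since $P_\chi\notin\Escr^u_{C'}$ either. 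Second, and structurally, the reduction ``express $M\in\Escr^u_{C'}$ via the generators $P_\chi$'' discards the support hypothesis: the $P_\chi$ generate the ambient $\Escr_{C'}$ and are supported on all of $W$, so a Postnikov/coproduct expression of $M$ in terms of them carries no information about $\supp M$, and knowing $\gamma_{C'C}(P_\chi)$ tells you nothing about the support of $\gamma_{C'C}(M)$. Your argument never actually uses that $M\in\Escr^u_{C'}$.

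The paper's proof does use that hypothesis, and the argument is genuinely different: from the formula $\gamma_{C'C}=\Hom_{W/T}(P_C,-)\Lotimes_{\Lambda_C}P_C$ one sees that $\gamma_{C'C}$ is right exact for the standard $t$-structure on $D(W/T)$, which reduces the claim to showing that for a single $M\in\coh(W^u/T)$ the cohomology of $\Hom_{W/T}(P_C,M)\Lotimes_{\Lambda_C}P_C$ is supported in $W^u$. The support hypothesis on $M$ is then invoked to see that $F:=\Hom_{W/T}(P_C,M)$ is supported at the origin of $W\quot T$ as a $\CC[W]^T$-module; combined with the fact that a $T$-equivariant $\CC[W]$-module is supported on $W^u$ if and only if it is supported at the origin of $W\quot T$ over $\CC[W]^T$, and with $\supp_R(F\otimes_A N)\subset\supp_R(F)$ for central $R\subset Z(A)$, one concludes. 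This is the ingredient missing from your proposal.
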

\begin{proof}
$X(T)$-equivariance is obvious. 
Furthermore it is sufficient to prove that  $(\Escr^u_C)_C$ is a subschober. Compatibility with $\delta_{CC'}$ is obvious so we have to discuss compatibility with $\gamma_{CC'}$.
From the formula \eqref{eq:gamma} it follows that $\gamma_{C'C}$ is right exact\footnote{It may look strange that a right adjoint is right exact,
but this is not a contradiction. The standard t-structure on $D(W/T)$ does not descend to a $t$-structure on $\Escr_C$.}
 for the standard $t$-structure on $D(W/T)$. Then one quickly finds that it is sufficient to prove that for $M\in \coh(W^u/T)$ the cohomology of
\[
\Hom_{W/T}(P_C,M)\Lotimes_{\Lambda_C} P_C
\]
is supported in $W^u$. By the hypotheses on $M$, $F:=\Hom_{W/T}(P_C,M)$ is supported in the origin of $W\quot T$. 
We now switch to $T$-equivariant $\CC[W]$-modules. Those are supported in $W^u$ if and only if they are supported in the origin of $W\quot T$ when regarded as $\CC[W]^T$-modules. By resolving $P_C$, it thus follows that $F\Lotimes_{\Lambda_C} P_C$ is supported in $W^u$.
\footnote{We are using that for $M$, $N$ respectively a right and a left module over a ring $A$ (possibly non-finitely generated) and $R\subset Z(A)$, it is true that $\supp_R(M\otimes_A N)\subset \supp_R(M)\cap \supp_R(N)$. This is an immediate consequence of the definition of support.}
\end{proof}
\begin{notation} \label{not:decoration}
In case of possible confusion we decorate the KS-data associated to the $\Hscr$-schobers $\Sscr^c$ and $\Sscr^f$ by $(-)^c$, $(-)^f$.
\end{notation}

Let $\mod^f(\Lambda_C)$ be the full subcategory of $\Mod(\Lambda_C)$ consisting
of finite dimensional right $\Lambda_C$-modules supported in the origin of
$W\quot T$.
\begin{lemma} \label{lem:correspondence}
Under the equivalence \eqref{eq:LambdaCeq}, $\Escr^f_C$ corresponds to the full subcategory $D^f(\Lambda_C)$ of $D(\Lambda_C)$  consisting of bounded
complexes with cohomology in $\mod^f(\Lambda_C)$.
\end{lemma}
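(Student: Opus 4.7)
The plan is to deduce this from Lemma \ref{lem:Dc}, which already identifies $\Escr^c_C$ with $D^c(\Lambda_C)$ under the equivalence \eqref{eq:LambdaCeq}. Since $\Escr^f_C = \Escr^u_C \cap \Escr^c_C$ and $D^f(\Lambda_C)\subset D^c(\Lambda_C)$, only the extra support condition must be matched: for $M\in \Escr^c_C$ with $F := \RHom_{W/T}(P_C,M)$, one needs $H^i(M)$ supported on $W^u$ iff $H^i(F)\in \mod^f(\Lambda_C)$.

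The key bridge is that $W^u = \pi^{-1}(0)$ for the affine GIT quotient $\pi\colon W\to W\quot T = \Spec \CC[W]^T$, so a $T$-equivariant coherent $\Oscr_W$-module is supported on $W^u$ iff it is annihilated by a power of the augmentation ideal $\mathfrak{m}\subset \CC[W]^T$. Via the natural ring map $\CC[W]^T\to Z(\Lambda_C)$ (an invariant acts diagonally on $P_C=\bigoplus_\chi P_\chi$) this criterion transports to $\Lambda_C$-modules. In the quasi-symmetric setting $\Lambda_C$ is module-finite over $\CC[W]^T$ (this is a standard NCCR fact already used implicitly in \S\ref{subsec:10.1}), so a finitely generated $\Lambda_C$-module lies in $\mod^f(\Lambda_C)$ iff it is supported at the origin of $W\quot T$.

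With these facts, both directions are short. For $\Escr^f_C\to D^f(\Lambda_C)$: Lemma \ref{lem:Dc} gives $H^i(F)$ finitely generated over $\Lambda_C$, and the $\CC[W]^T$-support of $H^i(F)$ is contained in the union of the $\CC[W]^T$-supports of the $H^j(M)$ (via the hypercohomology spectral sequence for $\RHom$), which lies in $\{0\}$, so $H^i(F)\in \mod^f(\Lambda_C)$. Conversely, given $F\in D^f(\Lambda_C)$, set $M := F\Lotimes_{\Lambda_C} P_C$ using the quasi-inverse \eqref{eq:rightadjoint}; membership $M\in \Escr^c_C$ is Lemma \ref{lem:Dc}, and the support statement $M\in D^u(W/T)$ follows by the same resolution argument used in the preceding lemma establishing that $(\Escr^u_C)_C$ forms a subschober: resolve $P_C$ by finitely generated projectives over $\Lambda_C$, then each cohomology of $M$ is built from $H^i(F)\otimes_{\Lambda_C}^{\mathrm{gr}} P_C$, annihilated by a power of $\mathfrak{m}$, hence supported on $W^u$.

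The main obstacle is purely bookkeeping: juggling the two notions of support (as a $T$-equivariant $\Oscr_W$-module versus as a $\CC[W]^T$-module) and verifying that finiteness genuinely passes through the equivalence. Once one accepts $W^u=\pi^{-1}(0)$ and the module-finiteness of $\Lambda_C$ over $\CC[W]^T$, the argument is essentially a formal consequence of the adjunction $(\delta_{CC'},\gamma_{C'C})$ together with Lemma \ref{lem:Dc}.
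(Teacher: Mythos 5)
Your proposal is correct. The paper itself states Lemma \ref{lem:correspondence} without proof, evidently regarding it as clear from the arguments just given for the subschober lemma (that $(\Escr^u_C)_C$, $(\Escr^f_C)_C$ are subschobers), so there is no paper proof to compare against line by line; but your argument matches exactly the mechanism the paper invokes there and supplies the routine details it leaves out.

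Specifically, you correctly identify the three bridges: (i) Lemma \ref{lem:Dc} already matches the coherence condition, so only support must be tracked; (ii) $W^u=\pi^{-1}(0)$ for $\pi\colon W\to W\quot T$, so ``supported on $W^u$'' is the same as ``annihilated by a power of $\mathfrak m$'' after restricting scalars along $\CC[W]^T\to Z(\Lambda_C)$; (iii) $\Lambda_C$ is module-finite over $\CC[W]^T$ (this is the fact about isotypic components of $\CC[W]$ that the paper also uses explicitly in the proof of Proposition \ref{prop:duality}), which is what makes ``finitely generated over $\Lambda_C$ and supported at the origin of $W\quot T$'' equivalent to ``finite-dimensional and supported at the origin,'' i.e.\ to membership in $\mod^f(\Lambda_C)$. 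The reverse direction is indeed the same resolution/central-support argument recorded in the footnote to the preceding lemma.

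One small simplification: since $W$ is affine and $T$ is reductive, $P_C$ is a projective object in the category of $T$-equivariant quasi-coherent sheaves, so $\Hom_{W/T}(P_C,-)$ is already exact and $H^i(\RHom_{W/T}(P_C,M))=\Hom_{W/T}(P_C,H^i(M))$ on the nose. The hypercohomology spectral sequence you invoke for the forward direction thus degenerates at $E_2$, and the support inclusion is immediate rather than merely contained in a union. This does not affect correctness, only streamlines the argument.
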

For $\chi\in X(T)$ let $S_\chi$ be the corresponding 1-dimensional $T$-equivariant $\Oscr_W$- module supported at the origin.
\begin{lemma}
\label{lem:simple}
The simple objects in $\mod^f(\Lambda_C)$ are given by $\Hom_{W/T}(P_C,S_\chi)$
for $\chi\in \Lscr_{-C}$.
\end{lemma}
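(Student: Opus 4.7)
The plan is a direct computation together with a standard fiber-at-the-origin argument for the finite-dimensional algebra $\Lambda_C$.

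First I would compute $\Hom_{W/T}(P_C,S_\chi)$ by decomposing. Since $P_C=\bigoplus_{\psi\in -\Lscr_C} P_\psi$ with $P_\psi=\psi\otimes\Oscr_W$, one has
\[
\Hom_{W/T}(P_\psi,S_\chi)=\bigl((-\psi)\otimes S_\chi\bigr)^T,
\]
which is nonzero (and then one-dimensional) precisely when $\chi=\psi$. By quasi-symmetry $\Delta=-\Delta$, so $-\Lscr_C=\Lscr_{-C}$; hence for $\chi\in\Lscr_{-C}$ the module $\Hom_{W/T}(P_C,S_\chi)$ is one-dimensional over~$\CC$ (so tautologically simple), with $e_\chi$ acting as the identity and all other $e_\psi$ acting as $0$.

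The second step is to show that every simple $M\in\mod^f(\Lambda_C)$ arises in this way. The center of $\Lambda_C$ contains $\CC[W]^T$, and since $M$ is finite-dimensional and supported at the origin of $W\quot T$, the augmentation ideal $\mathfrak{m}\subset\CC[W]^T$ acts nilpotently on $M$, hence (by simplicity) acts as zero. Therefore the simples of $\mod^f(\Lambda_C)$ are exactly the simples of the finite-dimensional $\CC$-algebra $\bar\Lambda_C:=\Lambda_C/\mathfrak{m}\Lambda_C$.

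Now I would analyze $\bar\Lambda_C$ using the complete orthogonal family of idempotents $\{\bar e_\chi\mid \chi\in -\Lscr_C\}$. The key computations, both based on the weight-space description $e_\chi\Lambda_C e_\psi=\Hom_{W/T}(P_\psi,P_\chi)=\CC[W]_{\chi-\psi}$, are:
\begin{enumerate}
\item[(a)] $e_\chi\Lambda_C e_\chi=\CC[W]_0=\CC[W]^T$, so $\bar e_\chi\bar\Lambda_C\bar e_\chi=\CC$; in particular each $\bar e_\chi$ is primitive.
\item[(b)] For $\chi\neq\psi$, $\bar e_\chi\bar\Lambda_C\bar e_\psi\cdot\bar e_\psi\bar\Lambda_C\bar e_\chi\subset\bar e_\chi\bar\Lambda_C\bar e_\chi$ lies in the image of $\CC[W]_0$ of strictly positive degree, which is $\mathfrak{m}$, hence vanishes in $\bar\Lambda_C$. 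It follows that $\bar e_\chi\bar\Lambda_C\bar e_\psi\subset\rad(\bar\Lambda_C)$ for $\chi\neq\psi$.
\end{enumerate}
Thus $\bar\Lambda_C/\rad\cong\prod_{\chi\in -\Lscr_C}\CC$, and so the simples of $\bar\Lambda_C$ are precisely the one-dimensional modules $\CC_\chi$ on which $\bar e_\chi$ acts as $1$ and $\bar e_\psi$ acts as $0$ for $\psi\neq\chi$. These match the modules $\Hom_{W/T}(P_C,S_\chi)$ computed in the first step, and, using $-\Lscr_C=\Lscr_{-C}$, are indexed exactly by $\chi\in\Lscr_{-C}$.

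The main obstacle, such as it is, is keeping the sign conventions straight ($-\Lscr_C$ vs.\ $\Lscr_{-C}$) and verifying cleanly that the off-diagonal pieces $e_\chi\Lambda_C e_\psi$ become radical after killing $\mathfrak{m}$; the latter is handled by the weight argument in (b), which only uses that any weight-zero monomial of positive degree is in~$\mathfrak{m}$.
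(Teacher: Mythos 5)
Your argument is correct and follows essentially the same strategy as the paper: the forward direction is immediate from one-dimensionality, and the converse reduces to describing the simple modules over the finite-dimensional algebra $\bar\Lambda_C=\Lambda_C/\mathfrak{m}\Lambda_C$ (the paper's $\Lambda_C'$). Where the paper observes that $\Lambda_C'$ inherits an $\NN$-grading from polynomial degree, with semisimple degree-zero part $(\Lambda_C)_0=\bigoplus_\chi\CC e_\chi$ and nilpotent positive part, you reach the same conclusion via the Peirce decomposition. Your computations (a) and (b) are right, and both proofs rest on the same underlying fact, namely that $e_\chi\Lambda_C e_\psi=\CC[W]_{\psi-\chi}$ has no constant term when $\chi\neq\psi$.

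The only thing to tighten is the inference from (a)--(b) to ``$\bar e_\chi\bar\Lambda_C\bar e_\psi\subset\rad(\bar\Lambda_C)$ for $\chi\neq\psi$,'' which as stated is not an immediate consequence. One way to close it: by (a) each $\bar e_\chi$ is primitive, so its image $\tilde e_\chi$ in the semisimple quotient $\bar\Lambda_C/\rad\cong\prod_k M_{n_k}(\CC)$ is a rank-one idempotent; if $\tilde e_\chi$ and $\tilde e_\psi$ ($\chi\neq\psi$) lay in the same block, then $\tilde e_\chi(\bar\Lambda_C/\rad)\tilde e_\psi\cdot\tilde e_\psi(\bar\Lambda_C/\rad)\tilde e_\chi$ would be a nonzero copy of $\CC$, contradicting (b) since these are images of $\bar e_\chi\bar\Lambda_C\bar e_\psi$ and $\bar e_\psi\bar\Lambda_C\bar e_\chi$. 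Hence the $\tilde e_\chi$ live in pairwise distinct one-dimensional blocks, giving $\bar\Lambda_C/\rad\cong\prod_{\chi\in-\Lscr_C}\CC$ and the desired classification. This makes your Peirce-decomposition route fully rigorous; the paper's grading argument avoids the issue by observing directly that the positively graded part of $\Lambda_C'$ is nilpotent.
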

\begin{proof}
The objects $\Hom_{W/T}(P_C,S_\chi)$ are $1$-dimensional and  contained in the category $\mod^f(\Lambda_C)$. Hence they are certainly simple.  To prove the converse let
$R=\CC[W]^T$, equipped with its natural $\NN$-grading.  Then
the simple objects of $\mod^f(\Lambda_C)$ are the simple modules of
the finite dimensional algebra $\Lambda_C':=\Lambda_C/R_{\ge 1}\Lambda_C$. Since
$\Lambda'_C$ is also $\NN$-graded, $(\Lambda'_C)_{\ge 1}$ is nilpotent. Hence
the simple $\Lambda'_C$-modules correspond to the indecomposable summands of
$(\Lambda'_C)_0=(\Lambda_C)_0$ and they are precisely the $\Hom_{W/T}(P_C,S_\chi)$ for $\chi\in \Lscr_{-C}$.
\end{proof}
Below we will denote the object in $\Escr^f_C$ corresponding to $\Hom_{W/T}(P_C,S_\chi)$
by~$\mathfrak{s}_{C,\chi}$. Using the formula \eqref{eq:rightadjoint} we obtain
\begin{equation}
\label{eq:sformula}
\mathfrak{s}_{C,\chi}=\Hom_{W/T}(P_C,S_\chi)\Lotimes_{\Lambda_C} P_C.
\end{equation}
The following is clear. 
\begin{lemma} 
\label{lem:dual}
For $\chi,\mu \in \Lscr_{-C}$,
$
\Hom_{W/T}(P_\chi,\mathfrak{s}_{C,\mu})=\CC^{\delta_{\chi,\mu} }.
$
\end{lemma}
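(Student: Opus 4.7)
My plan is to transport the computation from $\Escr_C$ to $D(\Lambda_C)$ via the equivalence $F := \RHom_{W/T}(P_C, -)$ of \eqref{eq:LambdaCeq}, where the statement reduces to standard idempotent calculus for basic finite-dimensional algebras.

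On the projective side, the decomposition $P_C = \bigoplus_{\chi' \in \Lscr_{-C}} P_{\chi'}$ (using $-\Lscr_C = \Lscr_{-C}$, which holds because $\Delta = -\Delta$ under our standing assumptions, see Remark~\ref{eq:setup}) is a sum of pairwise non-isomorphic indecomposable summands, since distinct $T$-characters give non-isomorphic $T$-equivariant line bundles on $W$. Thus the corresponding idempotents $(e_{\chi'})_{\chi'\in \Lscr_{-C}}$ in $\Lambda_C$ form a complete set of primitive orthogonal idempotents summing to $1$, and $F(P_\chi)$ is the indecomposable projective $\Lambda_C$-module attached to $e_\chi$. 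On the simple side, \eqref{eq:sformula} together with the quasi-inverse formula \eqref{eq:rightadjoint} gives $F(\mathfrak{s}_{C,\mu}) \cong \Hom_{W/T}(P_C, S_\mu) =: S^{\Lambda_C}_\mu$, which by Lemma~\ref{lem:simple} is precisely the simple $\Lambda_C$-module whose projective cover is $F(P_\mu)$.

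Combining these identifications,
\[
\Hom_{W/T}(P_\chi, \mathfrak{s}_{C,\mu}) \;\cong\; \Hom_{D(\Lambda_C)}\!\bigl(F(P_\chi),\, S^{\Lambda_C}_\mu\bigr) \;=\; \CC^{\delta_{\chi,\mu}},
\]
the last equality being the standard fact that $\Hom$ from an indecomposable projective to a simple module over a basic finite-dimensional algebra is one-dimensional when they correspond to the same primitive idempotent and vanishes otherwise. I do not anticipate any serious obstacle: the only point requiring care is the bookkeeping of left/right module conventions when matching the projective $F(P_\chi)$ to the simple~$S^{\Lambda_C}_\chi$ under Lemma~\ref{lem:simple}.
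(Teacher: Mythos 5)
Your proof is correct, and it is essentially the argument the paper had in mind when declaring the lemma ``clear'': transport across the tilting equivalence $\Escr_C\cong D(\Lambda_C)$ of \eqref{eq:LambdaCeq}, identify $P_\chi$ with the projective $e_\chi\Lambda_C$ and $\mathfrak{s}_{C,\mu}$ with the simple $\Hom_{W/T}(P_C,S_\mu)$, and read off the Hom.

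Two small points of tightening. First, $\Lambda_C$ is not finite-dimensional over $\CC$ (it is a positively graded ring with $\Lambda_{C,0}\cong\CC^{|\Lscr_{-C}|}$), so ``basic finite-dimensional algebra'' is the wrong setting; the quoted fact should be applied either to the quotient $\Lambda_C'=\Lambda_C/R_{\ge1}\Lambda_C$ from Lemma~\ref{lem:simple}, or avoided entirely. Second, citing the ``projective cover matches the label $\mu$'' step risks a mild circularity, since verifying that the simple $\Hom_{W/T}(P_C,S_\mu)$ is the one attached to $e_\mu$ \emph{is} the computation $e_\chi\Hom_{W/T}(P_C,S_\mu)=\CC^{\delta_{\chi,\mu}}$. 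The cleanest finish bypasses this: since $e_\chi\Lambda_C$ is projective,
\[
\Hom_{W/T}(P_\chi,\mathfrak{s}_{C,\mu})
\cong\Hom_{D(\Lambda_C)}\!\bigl(e_\chi\Lambda_C,\Hom_{W/T}(P_C,S_\mu)\bigr)
= e_\chi\Hom_{W/T}(P_C,S_\mu)
=\Hom_{W/T}(P_\chi,S_\mu),
\]
and the last group is $\bigl(\chi^{-1}\otimes S_\mu\bigr)^T=\CC^{\delta_{\chi,\mu}}$ by a direct weight count, $S_\mu$ being the weight-$\mu$ skyscraper at the origin. This makes the argument self-contained and sidesteps the finite-dimensionality issue entirely.
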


\subsection{Autoduality}\label{subsec:autoduality}
Below we will need the autoduality functor 
\[\Dscr:D^b(\coh(W/T))\r D^b(\coh(W/T))^\circ,\]
\[
\Dscr=\uRHom_{W/T}(-,\Oscr_W).
\]

We recall some basic properties from \cite[\S 5.4]{SVdB10}.
\begin{lemma}\label{lem:autoduality}
We have $\Dscr(\Escr^*_C)=\Escr^*_{-C}$ for $*\in \{c,f\}$. Moreover, for $C'<C$, $\Dscr\delta_{-C,-C'}\Dscr=\delta_{CC'}$. Thus, $\Dscr\gamma_{-C',-C}\Dscr$ is left adjoint to $\delta_{CC'}$.
\end{lemma}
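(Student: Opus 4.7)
The plan is to reduce everything to the behaviour of $\Dscr$ on the generators $P_\chi = \chi \otimes \Oscr_W$. Since $\Oscr_W$ is free of rank one over itself (as a $T$-equivariant module), one computes immediately
\[
\Dscr(P_\chi) = \uRHom_{W/T}(\chi\otimes\Oscr_W, \Oscr_W) = P_{-\chi}.
\]
Consequently $\Dscr$ sends the generating set $\{P_\chi \mid \chi \in -\Lscr_C\}$ of $\Escr_C$ to $\{P_{-\chi} \mid \chi \in -\Lscr_C\} = \{P_\chi \mid \chi \in \Lscr_C\}$. The key combinatorial identity $\Lscr_C = -\Lscr_{-C}$ follows from the definition $\Lscr_C = (\nu+\Delta)\cap X(T)$ together with the fact that $\Delta = -\Delta$ (central symmetry of the zonotope, c.f. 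Remark \ref{eq:setup}). Hence $\Dscr$ carries the generators of $\Escr_C$ bijectively to the generators of $\Escr_{-C}$, and since $\Dscr$ is a contravariant autoequivalence of $D^b(\coh(W/T))$, we obtain $\Dscr(\Escr_C) = \Escr_{-C}$.

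For the compatibility with the superscripts $c$ and $f$, I would observe that $\Dscr$ preserves $D^c(W/T)$ (duality on a smooth Artin stack preserves bounded coherent complexes) and $\Dscr$ preserves the property of having cohomology supported on the nullcone $W^u$ (duality preserves support). Combining with $\Dscr(\Escr_C) = \Escr_{-C}$ gives $\Dscr(\Escr^*_C) = \Escr^*_{-C}$ for $* \in \{c,f\}$.

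Next, the identity $\Dscr \delta_{-C,-C'}\Dscr = \delta_{CC'}$ is essentially a tautology once the first part is established. Indeed $\delta_{-C,-C'}$ is literally the inclusion functor $\Escr_{-C} \hookrightarrow \Escr_{-C'}$; conjugating by the involution $\Dscr^2 \cong \id$ carries it to the inclusion $\Escr_C \hookrightarrow \Escr_{C'}$, which is $\delta_{CC'}$.

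Finally, the claim that $\Dscr\gamma_{-C',-C}\Dscr$ is left adjoint to $\delta_{CC'}$ follows from abstract adjunction calculus: a contravariant equivalence swaps left and right adjoints under conjugation. Concretely, starting from the adjunction $\delta_{-C,-C'} \dashv \gamma_{-C',-C}$ and using $\Hom_{\Escr_{-C'}}(a,b) \cong \Hom_{\Escr_{C'}}(\Dscr b, \Dscr a)$, for $x\in \Escr_C$, $y\in \Escr_{C'}$ one gets
\[
\Hom_{\Escr_{C'}}(y, \delta_{CC'}x) = \Hom_{\Escr_{-C'}}(\Dscr x, \Dscr y) = \Hom_{\Escr_{-C}}(\Dscr x, \gamma_{-C',-C}\Dscr y) = \Hom_{\Escr_C}(\Dscr\gamma_{-C',-C}\Dscr(y), x),
\]
which is exactly the desired adjunction. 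There is no real obstacle here — the work is entirely in the first step (the identification $\Lscr_C = -\Lscr_{-C}$, which is the combinatorial avatar of the centrally symmetric zonotope); the rest is formal manipulation.
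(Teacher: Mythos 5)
Your proof is correct and is the natural argument; the paper itself gives no internal proof, instead deferring to~\cite[\S 5.4]{SVdB10}. The computation $\Dscr(P_\chi)=P_{-\chi}$, the identity $\Lscr_C=-\Lscr_{-C}$ coming from $\Delta=-\Delta$, and the formal conjugation of the adjunction $(\delta_{-C,-C'},\gamma_{-C',-C})$ by $\Dscr$ are exactly the three ingredients one needs. One small point of hygiene: $\Dscr$ is only defined on $D^b(\coh(W/T))$, so the assertion $\Dscr(\Escr_C)=\Escr_{-C}$ for the uncompleted $\Escr_C$ (which is a cocomplete subcategory of the unbounded $D(W/T)$) does not literally make sense; you should run the generator argument directly inside $D^c(W/T)$, using that $\Escr^c_C$ is the thick subcategory classically generated by $\{P_\chi:\chi\in-\Lscr_C\}$ (this follows from \eqref{eq:LambdaCeq} and Lemma~\ref{lem:Dc}, since $\Lambda_C$ has finite global dimension), and then note that a contravariant autoequivalence preserves thick subcategories together with their generators. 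With that adjustment the rest of your argument --- preservation of $D^c$ and of nullcone support, and hence of $\Escr^f$, the tautological identity $\Dscr\delta_{-C,-C'}\Dscr=\delta_{CC'}$ via $\Dscr^2\cong\id$, and the adjunction swap --- goes through as written.
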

It will be useful to consider the bifunctor
\begin{multline}
\label{eq:bifunctor}
\RHom_{W/T}(-,-)':=\RHom_{W/T}(\Dscr(-),-):\\\Escr^c_{-C}\times \Escr^f_C\r D^b(\mod(\CC)):(M,N)\mapsto \RHom_{W/T}(\Dscr M,N)
\end{multline}
\begin{lemma} \label{eq:bigadjoints}
Let $C'<C$ in $\Cscr$. Then $(\delta^c_{-C,-C'},\gamma^f_{C'C})$ and $(\gamma^c_{-C',-C},\delta^f_{CC'})$ are adjoint pairs under the bifunctor
$\RHom_{W/T}(-,-)'$. Moreover $(\phi^c_{\chi,-C},\phi^f_{\chi,-\chi+C})$ is an adjoint pair as well.
\end{lemma}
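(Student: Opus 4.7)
The plan is to establish all three adjunctions by direct computation, unfolding the definition $\RHom'(M,N) := \RHom_{W/T}(\Dscr M, N)$ and transporting the autoduality $\Dscr$ across adjoint pairs that already exist in the schober structure of $\Sscr^f$ (or, in the twist case, via tensor--hom). Lemma~\ref{lem:autoduality} does all the heavy lifting: $\Dscr$ swaps $\Escr^*_C$ with $\Escr^*_{-C}$ for $*\in\{c,f\}$, satisfies $\Dscr\delta_{-C,-C'} = \delta_{CC'}\Dscr$, and so $L_{CC'} := \Dscr\gamma_{-C',-C}\Dscr$ is the left adjoint of $\delta_{CC'}$. Since all the $\delta$'s are inclusions of full subcategories preserved by $\Dscr$, these intertwiners and adjunctions restrict to both the $c$- and $f$-settings as needed, and the ambient $\RHom$ can be computed in $D(W/T)$ regardless of which $\ast$-flavor appears in either slot.

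For the pair $(\delta^c_{-C,-C'},\gamma^f_{C'C})$, I would compute, for $M\in\Escr^c_{-C}$ and $N\in\Escr^f_{C'}$,
$$\RHom'(\delta^c_{-C,-C'}M,N) = \RHom_{W/T}(\Dscr\delta^c_{-C,-C'}M,N) = \RHom_{W/T}(\delta^f_{CC'}\Dscr M,N),$$
invoking Lemma~\ref{lem:autoduality} in the second equality, and then apply the existing schober adjunction $(\delta^f_{CC'},\gamma^f_{C'C})$ to obtain $\RHom_{W/T}(\Dscr M,\gamma^f_{C'C}N) = \RHom'(M,\gamma^f_{C'C}N)$. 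For the pair $(\gamma^c_{-C',-C},\delta^f_{CC'})$, using $\Dscr^2 = \id$ I would rewrite $\Dscr\gamma^c_{-C',-C} = L_{CC'}\Dscr$; then the same style of calculation yields
$$\RHom'(\gamma^c_{-C',-C}M,N) = \RHom_{W/T}(L_{CC'}\Dscr M,N) \cong \RHom_{W/T}(\Dscr M,\delta^f_{CC'}N) = \RHom'(M,\delta^f_{CC'}N).$$

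For the translation pair $(\phi^c_{\chi,-C},\phi^f_{\chi,-\chi+C})$, the key observation is that $\Dscr$ inverts character twists, $\Dscr(-\chi\otimes M) = \chi\otimes \Dscr M$, so the tensor--hom adjunction for the invertible equivariant line bundle $\chi$ gives
$$\RHom_{W/T}(\chi\otimes \Dscr M, N) \cong \RHom_{W/T}(\Dscr M, -\chi\otimes N),$$
which matches $\phi^c_{\chi,-C}(M) = -\chi\otimes M$ on the left with $\phi^f_{\chi,-\chi+C}(N) = -\chi\otimes N$ on the right. The chamber indexing works out automatically: $\phi^c_{\chi,-C}M\in\Escr^c_{\chi-C}$ pairs with $N\in\Escr^f_{-\chi+C}$ while $M\in\Escr^c_{-C}$ pairs with $\phi^f_{\chi,-\chi+C}N\in\Escr^f_C$. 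There is no real obstacle: the whole statement is a formal consequence of Lemma~\ref{lem:autoduality} together with standard adjunctions, and the only bookkeeping to watch is the chamber signs and the $c$ vs.\ $f$ flavor of each functor.
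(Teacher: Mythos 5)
Your proof is correct and unfolds exactly what the paper means by its one-line appeal to Lemma~\ref{lem:autoduality}: conjugate the existing schober adjunctions $(\delta_{CC'},\gamma_{C'C})$ and $(L_{CC'},\delta_{CC'})$ by $\Dscr$ using $\Dscr\delta_{-C,-C'}=\delta_{CC'}\Dscr$ and $\Dscr^2=\id$, and handle $\phi_{\chi,-}$ via the compatibility of $\Dscr$ with character twists plus tensor--hom. The only cosmetic blemish is writing $\delta^f_{CC'}\Dscr M$ for an object $\Dscr M\in\Escr^c_C$; since all the $\delta$'s are inclusions of full subcategories and the adjunctions live in the ambient $\Sscr$, this does not affect the argument, which you acknowledge.
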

\begin{proof} This follows from Lemma \ref{lem:autoduality}.
\end{proof}
\section{Decategorification of the GIT $\Hscr$-schober}
\subsection{Duality}\label{subsec:GrothendieckGDuality}
In this section we will construct a canonical  duality isomorphism in $\KS_\ZZ(X(T),\Hscr)$, 
\begin{equation}\label{eq:dualK0}
K^0(\Sscr^c)^-\cong \DD_\ZZ(K^0(\Sscr^f)),
\end{equation}
where $(-)^-$ represents the pullback of equivariant KS-data under 
$(X(T),X(T)_\RR)\r (X(T),X(T)_\RR):(\chi,x)\mapsto (-\chi,-x)$. Concretely, 
\begin{equation}
\label{eq:dualK01}
\begin{gathered}
K^0(\Sscr^c)^-(C)=K^0(\Sscr^c)(-C)=K^0(\Escr^c_{-C}),\\
K^0(\delta^{c}_{CC'})^-=K^0(\delta^c_{-C,-C'}), \quad K^0(\gamma^c_{C'C})^-=K^0(\gamma^c_{-C',-C}),\\
 K^0(\phi^c_{\chi,C})^-=K^0(\phi^c_{-\chi,-C}).\\
\end{gathered}
\end{equation}
\begin{lemma}
\label{eq:groth}
The following holds for the Grothendieck groups of $\Escr_C^c$, $\Escr_C^f$.
\begin{enumerate}
\item
$K^0(\Escr_C^c)$ is freely generated by the classes $[P_\chi]$ for $\chi\in -\Lscr_C$.
\item
$K^0(\Escr_C^f)$ is freely generated by the classes $[\mathfrak{s}_{C,\chi}]$ for $\chi\in -\Lscr_C$.
\end{enumerate}
\end{lemma}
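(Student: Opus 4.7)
The plan is to push both statements through the derived equivalence \eqref{eq:LambdaCeq} and then invoke standard $K$-theoretic dévissage on the module-category side. Throughout one uses the identification $-\Lscr_C = \Lscr_{-C}$, which follows from $\Delta=-\Delta$ (a consequence of unimodularity under Assumption \ref{ass:setting}, see Remark \ref{eq:setup}).

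For (1), Lemma \ref{lem:Dc} identifies $\Escr^c_C$ with $D^c(\Lambda_C)=D^b(\mod(\Lambda_C))$. Since $\Lambda_C$ has finite global dimension (cited in \S\ref{subsec:10.1} from \cite[Theorem 5.6]{SVdB10}), every bounded complex of finitely generated modules admits a finite projective resolution, i.e.\ $D^b(\mod(\Lambda_C))=\Perf(\Lambda_C)$, and consequently $K^0(\Escr^c_C)=K_0(\proj(\Lambda_C))$. The decomposition
\[
\Lambda_C = \bigoplus_{\chi\in -\Lscr_C}\Hom_{W/T}(P_C,P_\chi),
\]
together with the fact that each summand is concentrated in degree $\geq 0$ for the natural $R=\CC[W]^T$-grading with one-dimensional degree-zero part, shows that these summands are precisely the pairwise non-isomorphic indecomposable projectives. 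Hence $K_0(\proj(\Lambda_C))$ is freely generated by the classes of $\Hom_{W/T}(P_C,P_\chi)$ for $\chi\in -\Lscr_C$, and transporting back through the quasi-inverse \eqref{eq:rightadjoint} yields the basis $[P_\chi]$.

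For (2), Lemma \ref{lem:correspondence} identifies $\Escr^f_C$ with $D^b(\mod^f(\Lambda_C))$. Every object of $\mod^f(\Lambda_C)$ is finite-dimensional over $\CC$, hence of finite length, so $\mod^f(\Lambda_C)$ is an abelian length category. The standard cohomology-class identity $[M^\bullet]=\sum_i(-1)^i[H^iM^\bullet]$ in $K^0$ gives a surjection $K^0(\mod^f(\Lambda_C))\twoheadrightarrow K^0(D^b(\mod^f(\Lambda_C)))$ which is in fact an isomorphism by the usual dévissage argument. Since $K^0$ of a length category is free abelian on its isomorphism classes of simples, Lemma \ref{lem:simple} identifies a basis as the classes of $\Hom_{W/T}(P_C,S_\chi)$ for $\chi\in\Lscr_{-C}=-\Lscr_C$. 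Applying the quasi-inverse \eqref{eq:rightadjoint} (cf.\ \eqref{eq:sformula}) converts this into the basis $[\mathfrak{s}_{C,\chi}]$.

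Neither step has a real obstacle: the only potentially delicate point is the identification $K^0(D^b(\Ascr))\cong K^0(\Ascr)$ for the length category $\Ascr=\mod^f(\Lambda_C)$, which is routine but should be stated explicitly. Everything else is a direct consequence of the derived equivalences already established in \S\ref{subsec:10.1} and \S\ref{subsec:finitelengthschober}.
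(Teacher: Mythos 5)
Part (2) of your proof matches the paper: you pass through Lemma \ref{lem:correspondence}, note that $\mod^f(\Lambda_C)$ is a length category, and invoke Lemma \ref{lem:simple}; the paper just says ``this follows from Lemmas \ref{lem:correspondence}, \ref{lem:simple}'' and leaves the d\'evissage implicit. That part is fine.

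Part (1) has a genuine gap. You reduce to $K^0(\Escr_C^c)\cong K_0(\proj(\Lambda_C))$ (correct, via finite global dimension) and then assert that this group is free on the summands $\Hom_{W/T}(P_C,P_\chi)$ because they are ``pairwise non-isomorphic indecomposable projectives.'' That inference does not hold for a general ring: $K_0(\proj(R))$ being free on the indecomposable projective summands of $R$ requires something like a Krull--Schmidt property for finitely generated projectives (semiperfectness, completeness, or an equivalent), and $\Lambda_C$ is typically not semiperfect --- it is an infinite-dimensional $\NN$-graded Noetherian ring, like $\CC[x]$, whose Jacobson radical need not contain $\Lambda_{C,\ge 1}$. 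Identifying the indecomposable graded projectives and observing one-dimensionality of the degree-zero pieces does not by itself control the ungraded $K_0$. The paper closes exactly this gap by citing Quillen's theorem on graded $K$-theory \cite[p.\ 112, Theorem 7]{Quillen}: for an $\NN$-graded Noetherian ring of finite Tor-dimension over its degree-zero part, base change $\Lambda_C\otimes_{\Lambda_{C,0}}-$ induces $K^0(\Lambda_{C,0})\cong K^0(\Lambda_C)$, after which one only needs the elementary fact that $\Lambda_{C,0}$ is semi-simple with summands indexed by $\chi\in-\Lscr_C$. Your argument needs to be supplemented by this (or an equivalent) input; without it the claimed basis is not justified.
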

\begin{proof}
\hfill 

\begin{enumerate}
\item We have $K^0(\Escr_C^c)=K^0(\Lambda_C)$ and as $\Lambda_C$ is a $\NN$-graded ring of finite
global dimension we obtain by \cite[p. 112, Theorem 7]{Quillen} that 
$\Lambda_C\otimes_{\Lambda_{C,0}} -$ induces an isomorphism $K^0(\Lambda_{C,0})\cong K^0(\Lambda_C)$.
It now suffices to observe $\Lambda_{C,0}$ is semi-simple and its summands are indexed by $\chi \in -C$.
The image of a summand corresponding to $\chi$ is $[P_\chi]$.
\item This follows from Lemmas \ref{lem:correspondence}, \ref{lem:simple}. \qedhere
\end{enumerate}
\end{proof}
For $M\in D^c(W/T)$ and $N\in D^f(W/T)$ the Euler pairing between the classes of $M$, $N$ is defined as
\begin{equation}\label{eq:pairing}
\langle [M], [N]\rangle=\sum_i (-1)^i\dim H^i(\RHom_{W/T}(M,N)).
\end{equation}
\begin{lemma}
\label{eq:groth1}
The Euler pairing yields a perfect duality between the Grothendieck groups $K^0(\Escr^c_C)$ and $K^0(\Escr^f_C)$.
\end{lemma}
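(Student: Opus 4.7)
The plan is to verify that the Euler pairing is perfect by computing it on the natural bases supplied by Lemma~\ref{eq:groth}. Both $K^0(\Escr^c_C)$ and $K^0(\Escr^f_C)$ are free abelian groups of the same rank, with bases indexed by $-\Lscr_C$ (which by the central symmetry of $\Delta$ coming from Assumption~\ref{ass:setting} coincides with $\Lscr_{-C}$, so Lemma~\ref{lem:dual} applies).  It therefore suffices to show that the matrix of the pairing in these bases is the identity.

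First I would compute $\RHom_{W/T}(P_\chi,\mathfrak{s}_{C,\mu})$ for $\chi,\mu\in-\Lscr_C$.  Transporting everything across the equivalence \eqref{eq:LambdaCeq}, the object $P_\chi$ corresponds to the projective summand $e_\chi\Lambda_C$ of $\Lambda_C$ (where $e_\chi$ is the idempotent selecting $P_\chi$ from $P_C$), while by construction (see \eqref{eq:sformula}) $\mathfrak{s}_{C,\mu}$ corresponds to the simple $\Lambda_C$-module $\Hom_{W/T}(P_C,S_\mu)$ concentrated in degree $0$.  Hence
\[
\RHom_{W/T}(P_\chi,\mathfrak{s}_{C,\mu})=\RHom_{\Lambda_C}(e_\chi\Lambda_C,\Hom_{W/T}(P_C,S_\mu))=e_\chi\Hom_{W/T}(P_C,S_\mu),
\]
which by Lemma~\ref{lem:dual} equals $\CC^{\delta_{\chi,\mu}}$ and is concentrated in degree $0$.

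Plugging this into \eqref{eq:pairing} gives $\langle[P_\chi],[\mathfrak{s}_{C,\mu}]\rangle=\delta_{\chi,\mu}$.  Thus the matrix of the Euler pairing in the bases $\{[P_\chi]\}_{\chi\in-\Lscr_C}$ and $\{[\mathfrak{s}_{C,\mu}]\}_{\mu\in-\Lscr_C}$ is the identity, and in particular the pairing is a perfect duality.  There is no genuine obstacle; the only delicate point is the vanishing of higher Ext's, which is built into the equivalence \eqref{eq:LambdaCeq} via the fact that $P_C$ is a tilting object (the tilting property was established in \cite{SVdB10} under the quasi-symmetry assumption and is what makes $\RHom$ collapse to $\Hom$ in the computation above).
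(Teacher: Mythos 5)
Your proof is correct and follows the same route as the paper, which simply says "This follows from Lemma \ref{lem:dual}." You have spelled out the implicit steps — identifying $-\Lscr_C=\Lscr_{-C}$ via central symmetry of $\Delta$, and explaining why $\RHom$ collapses to $\Hom$ because $P_\chi$ corresponds to a projective $\Lambda_C$-module under the equivalence \eqref{eq:LambdaCeq} — which the paper leaves to the reader.
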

\begin{proof} 
This follows from Lemma \ref{lem:dual}.
\end{proof}
We now define a twisted version of the Euler pairing:
\begin{equation}\label{eq:Eulertwist}
\langle-,-\rangle':=\langle K^0(\Dscr)(-),-\rangle:K^0(\Escr^c_{-C})\times K^0(\Escr^f_C)\r \ZZ.
\end{equation}

\begin{lemma}\label{lem:adjointmaps}
Let $C'<C$ in $\Cscr$. Then $\langle-,-\rangle'$ is a perfect duality between $K^0(\Escr^c_{-C})$ and $K^0(\Escr^f_{C})$. Moreover,  
 $(K^0(\delta^c_{-C,-C'}),K^0(\gamma^f_{C'C}))$ and $(K^0(\gamma^c_{-C',-C}),\allowbreak K^0(\delta^f_{CC'}))$ are adjoint pairs for $\langle -,-\rangle'$.
Finally $(K^0(\phi^c_{\chi,-C}),K^0(\phi^f_{\chi,-\chi+C}))$ is an adjoint pair as well.
\end{lemma}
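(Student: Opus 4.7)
The plan is to deduce all three claims from Lemma \ref{eq:bigadjoints} by passing to Grothendieck groups via the Euler characteristic. All categorical adjunctions are already in place; what remains is to check that they survive decategorification, and that the twisted pairing is nondegenerate.

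First, for the perfect duality statement, I would observe that by Lemma \ref{lem:autoduality} the functor $\Dscr$ restricts to an exact anti-equivalence $\Escr^c_{-C}\xrightarrow{\sim}(\Escr^c_C)^\circ$, so the induced map $K^0(\Dscr)\colon K^0(\Escr^c_{-C})\to K^0(\Escr^c_C)$ is a $\ZZ$-module isomorphism. By the definition \eqref{eq:Eulertwist}, the twisted pairing $\langle-,-\rangle'$ is the composition of $K^0(\Dscr)\otimes \id$ with the untwisted Euler pairing $\langle-,-\rangle$ of \eqref{eq:pairing}. Since the latter is a perfect duality between $K^0(\Escr^c_C)$ and $K^0(\Escr^f_C)$ by Lemma \ref{eq:groth1}, the twisted version is a perfect duality between $K^0(\Escr^c_{-C})$ and $K^0(\Escr^f_C)$.

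Next, for the $\delta$/$\gamma$ adjunctions at $K^0$-level, note that Lemma \ref{eq:bigadjoints} provides, for $M\in \Escr^c_{-C}$ and $N\in \Escr^f_{C'}$, natural isomorphisms in $D^b(\mod(\CC))$
\[
\RHom_{W/T}(\delta^c_{-C,-C'}M,\,N)'\;\cong\;\RHom_{W/T}(M,\,\gamma^f_{C'C}N)',
\]
and an analogous isomorphism exchanging $(\gamma^c_{-C',-C},\delta^f_{CC'})$. Taking the alternating sum of dimensions of cohomology (which is finite because objects of $\Escr^c$ are perfect and objects of $\Escr^f$ have finite-dimensional cohomology supported at the origin, so the $\RHom$ is bounded with finite-dimensional cohomology) yields the identities
\[
\langle K^0(\delta^c_{-C,-C'})[M],[N]\rangle'=\langle [M],K^0(\gamma^f_{C'C})[N]\rangle'
\]
and its $(\gamma^c,\delta^f)$-analogue. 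This is exactly the claimed adjointness for the $K^0$ maps. The same Euler characteristic argument applied to the adjunction $(\phi^c_{\chi,-C},\phi^f_{\chi,-\chi+C})$ from Lemma \ref{eq:bigadjoints} gives the final claim.

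There is no real obstacle: the one point worth verifying carefully is finiteness of the Euler characteristics entering $\langle-,-\rangle'$, but this is immediate from Lemma \ref{lem:dual} (which shows the pairing is well-defined and integer-valued on generators) together with the fact that $\Lambda_C$ has finite global dimension, so all the relevant $\RHom$-complexes are bounded with finite-dimensional cohomology.
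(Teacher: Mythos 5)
Your proof is correct and follows essentially the same route as the paper, which simply cites Lemma \ref{eq:groth1} and Lemma \ref{eq:bigadjoints}; you have spelled out the (implicit) use of Lemma \ref{lem:autoduality} to see that $K^0(\Dscr)$ is an isomorphism, and the routine passage from triangulated adjunctions to $K^0$-level adjointness via Euler characteristics.
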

\begin{proof} This follows by combining Lemma \ref{eq:groth1} with Lemma \ref{eq:bigadjoints}.
\end{proof}
\begin{proof}[Proof of \eqref{eq:dualK0}] The map \eqref{eq:dualK0} is
  obtained from the pairing $\langle-,-\rangle'$.  The fact that it a
  well defined map of equivariant KS-data follows from Lemma
  \ref{lem:adjointmaps}.  Note: the fact that a minus sign appears
  in the $X(T)$-action follows from the inverse that appears in the
  description of $\phi_{g,C}$ for the equivariant dual.  See
  \S\ref{subsec:equivperv}.
\end{proof}
\begin{remark}
The need for $(-)^-$ in \eqref{eq:dualK0} and the associated proofs leads to some foundational musing about the definition of $\Hscr$-schobers. 
  As mentioned in \cite[Observation 3.6]{SVdB10} the $\Hscr$-schober 
  $\Sscr$ has some favorable properties not shared by all
  $\Hscr$-schobers. In particular, as we  observed in Lemma \ref{lem:autoduality},
$\delta_{CC'}$ also admits a left
  adjoint. Carrying this further, one may think of the $\Hscr$-schobers as introduced in \cite{KapranovSchechtmanSchobers}
as \emph{right $\Hscr$-schobers} and then 
introduce the dual concept of  \emph{left $\Hscr$-schobers} by
  requiring that $\gamma_{C'C}$ be a {left adjoint} to
  $\delta_{CC'}$. Both left and right $\Hscr$-schobers are uniquely determined by the $\delta$'s and hence it makes sense to use the notations
$\Sscr_l$, $\Sscr_r$ to refer to a left and/or a right $\Hscr$-schober with given $\delta$'s (if both $\Sscr_l$, $\Sscr_r$ exist,
as in our case, then it even makes sense to refer to $\Sscr$ itself as an \emph{$\Hscr$-bischober}).
With these notations the formula \eqref{eq:dualK0} could have been written more elegantly as
\[
  K^0(\Sscr_l^c)\cong\DD_\ZZ(K^0(\Sscr_r^f)).
\]
where the duality is now realized via the Euler form $\langle-,-\rangle$ instead of the twisted version $\langle-,-\rangle'$.
\end{remark}

\subsection{Explicit description of the monodromy isomorphisms}
Let us recall the complex $C_{\lambda,\chi}$ introduced in
\cite[(11.3)]{SVdB}, and used in \cite{SVdB10}, in order to construct
$\gamma_{C'C}$ for $C'<C$.  For $\lambda\in Y(T)$ define
\[
W^{\lambda,+}=\{x\in W\mid \lim_{t\r 0} \lambda(t)x\text{ exists}\}.
\]
We also put $K_\lambda=W/W^{\lambda,+}$,  $d_\lambda=\dim K_\lambda$.  Then by definition
$C_{\lambda,\chi}$ is the Koszul resolution of $\Oscr_{W^{\lambda,+}}$
tensored with $\chi\in X(T)$; i.e. $C_{\lambda,\chi}$ equals the
complex (with the right-most term in degree $0$)
\begin{multline}
\label{eq:Cchi}
0\r \chi\otimes \wedge^{d_{\lambda}} K^*_{\lambda}\otimes \Oscr_W
\r \chi\otimes \wedge^{d_{\lambda}-1} K^*_{\lambda}\otimes \Oscr_W
\r
\cdots
\r 
\chi\otimes \Oscr_W.
\end{multline}
\begin{lemma}\label{cor:Clambdachi}
Assume that $C_1,C_2\in \Cscr^0$ share a facet $C_0$ and let $\chi\in \Lscr_{-C_1}\setminus \Lscr_{-C_2}$. Let  $\langle \lambda,-\rangle-c$ for $\lambda\in Y(T)$ be
a defining equation of the hyperplane spanned by $-C_0$ which is strictly positive on $-C_2$. 
Then $\phi_{C_1C_2}(P_\chi)={\rm{cone}}(P_\chi\to C_{\lambda,\chi})$.
\end{lemma}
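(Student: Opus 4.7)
Since $\delta_{C_1C_0}:\Escr_{C_1}\hookrightarrow\Escr_{C_0}$ is the tautological inclusion, $\delta_{C_1C_0}(P_\chi)=P_\chi$, so the task reduces to identifying $\gamma_{C_0C_2}(P_\chi)$ with $\mathrm{cone}(P_\chi\to C_{\lambda,\chi})$. The map $P_\chi\to C_{\lambda,\chi}$ is the inclusion of the degree-zero term of the Koszul resolution (automatically a chain map since $C_{\lambda,\chi}$ is in nonpositive degrees); under the quasi-isomorphism $C_{\lambda,\chi}\simeq \chi\otimes\Oscr_{W^{\lambda,+}}$ it represents the quotient map $\chi\otimes\Oscr_W\twoheadrightarrow\chi\otimes\Oscr_{W^{\lambda,+}}$. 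The plan is to follow the blueprint of the construction of $\gamma_{C'C}$ in \cite[\S 5]{SVdB10}, where these Koszul complexes first enter.

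\textbf{Step A: the cone lies in $\Escr_{C_2}$.} Each nonzero-degree term of $C_{\lambda,\chi}$ is a direct sum of $P_{\chi+\beta}$ with $\beta$ a nonzero weight of $\bigwedge^\bullet K_\lambda^*$. The key combinatorial claim is that $\chi+\beta\in \Lscr_{-C_2}$ for each such $\beta$. The argument will parallel Lemma~\ref{lem:inC2}: the hypothesis $\chi\in \Lscr_{-C_1}\setminus\Lscr_{-C_2}$ places $\chi$ on the unique facet of $-\nu_0+\Delta$ (for $\nu_0\in -C_0$) that is lost when passing to $\Lscr_{-C_2}$, and this facet has outer normal $\lambda$. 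Since every weight of $K_\lambda^*$ has strictly positive $\lambda$-pairing, adding any such $\beta$ pushes $\chi$ strictly off that facet into $\Lscr_{-C_2}$.

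\textbf{Step B: universal property.} Finally one must check that the distinguished triangle $P_\chi\to C_{\lambda,\chi}\to \mathrm{cone}(P_\chi\to C_{\lambda,\chi})$ realizes the adjunction triangle for $\gamma_{C_0C_2}$ at $P_\chi$. After rotation this amounts to showing that $C_{\lambda,\chi}$ lies in the right orthogonal of $\Escr_{C_2}$ inside $\Escr_{C_0}$, i.e.\ $\RHom_{W/T}(M,C_{\lambda,\chi})=0$ for every $M\in\Escr_{C_2}$. Using $C_{\lambda,\chi}\simeq \chi\otimes\Oscr_{W^{\lambda,+}}$ and the generators $P_\mu$ (for $\mu\in\Lscr_{-C_2}$) of $\Escr_{C_2}$, this will reduce to the vanishing of the weight-$(\chi-\mu)$ part of $\Oscr_{W^{\lambda,+}}$ for all such $\mu$: the weights of $\Oscr_{W^{\lambda,+}}$ lie in the cone spanned by $\{-b_i:\langle\lambda,b_i\rangle\ge 0\}$, and the position of $\Lscr_{-C_2}$ relative to this cone forces the vanishing.

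\textbf{Main obstacle.} Step A is a manageable geometric bookkeeping exercise. The substance lies in Step B, the orthogonality of $C_{\lambda,\chi}$ to $\Escr_{C_2}$ inside $\Escr_{C_0}$. This is precisely the purpose for which the Koszul complex $C_{\lambda,\chi}$ was introduced in \cite[\S 11]{SVdB}, so I expect the cleanest route is to cite the corresponding structural result there rather than redo the vanishing from scratch.
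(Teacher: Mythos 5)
Your proposal follows the paper's proof essentially step for step: reduce to $\gamma_{C_0C_2}(P_\chi)$, show the cone lands in $\Escr_{C_2}$ by the weight estimate on the nonzero-degree Koszul terms (the paper isolates this as Lemma~\ref{eq:complexes}, combined with Lemma~\ref{lem:inC2}/Lemma~\ref{lem:union}(2) as you anticipate), and then invoke the orthogonality of $C_{\lambda,\chi}\simeq\chi\otimes\Oscr_{W^{\lambda,+}}$ against $\Escr_{C_2}$. For the last, genuinely substantive, point the paper does exactly what you predict -- it cites \cite{SVdB} -- though the relevant structural result is the semi-orthogonal decomposition $\Escr_{C_0}=\langle \Escr_{C_0C_2},\Escr_{C_2}\rangle$ from \cite[Proposition 5.12]{SVdB} (in \S5 rather than \S11, which is only where $C_{\lambda,\chi}$ is first written down), together with the identification of $\Escr_{C_0C_2}$ as generated by the $\chi'\otimes\Oscr_{W^{\lambda,+}}$ for $\chi'\in\Lscr_{-C_0}\setminus\Lscr_{-C_2}$; this SOD packages precisely the right-orthogonality you need.
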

\begin{proof}
This follows from \cite[\S5]{SVdB10}. 
Note that $\Escr_C$ was called $\Escr_{-C}$ in loc.\ cit. and here our space is called $W$ instead of $W^\ast$. Neither of these changes has any serious implications
as we are proving an intrinsic statement.

We have
\[
\phi_{C_1C_2}(P_\chi)=\gamma_{C_0C_2}\delta_{C_1C_0}(P_\chi)=\gamma_{C_0C_2}(P_\chi).
\]
By Proposition 5.12 in loc.\ cit.\ we have
a semi-orthogonal decomposition
\begin{equation}
\label{eq:sod}
\Escr_{C_0}=\langle \Escr_{C_0C_2} , \Escr_{C_2}\rangle.
\end{equation}
By Lemma \ref{lem:union}(2) $\Lscr_{-C_0}\setminus \Lscr_{-C_2}$ is contained in the relative interior of the translate of a single facet of $\Delta$.
It then follows from the discussion in loc.\ cit.\ that
\[
\Escr_{C_0C_2}=\langle (\chi\otimes\Oscr_{W^{\lambda,+}})_{\chi\in
  \Lscr_{-C_0}\setminus\Lscr_{-C_2}}\rangle.
\]
Using the fact that $C_{\lambda,\chi}\cong \Oscr_{W^{\lambda,+}}$
it follows from Lemma \ref{eq:complexes} below
and \eqref{eq:sod} that we have 
\[
{\rm{cone}}(P_\chi\to C_{\lambda,\chi})\in \Escr_{C_0}\cap {}^\perp\Escr_{C_0C_2}
\subset \Escr_{C_2}.
\]
This implies $\gamma_{C_0C_2}(P_\chi)={\rm{cone}}(P_\chi\to C_{\lambda,\chi})$.
\end{proof}
We have used the following lemma.\footnote{Versions of this lemma have
  already been used numerous times in our previous work. We give the
  proof since we need some details about the complexes in the proof of
  Proposition \ref{lem:supportschobermonodromy}.}
\begin{lemma}
\label{eq:complexes}
Let $\lambda\in Y(T)$, $\chi\in X(T)$.
\begin{enumerate}
\item The weights $\mu$ of $\chi\otimes \Oscr_{W^{\lambda,+}}$ satisfy $\langle \lambda,\mu \rangle\le \langle \lambda,\chi\rangle$.
\item All terms $P_\mu$ in $C_{\lambda,\chi}$, except $P_\chi$, satisfy $ \langle\lambda,\mu\rangle > \langle\lambda,\chi\rangle$.
\end{enumerate}
\end{lemma}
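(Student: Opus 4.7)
The plan is to unwind the definitions of $W^{\lambda,+}$, $K_\lambda$, and $C_{\lambda,\chi}$ and then do the weight bookkeeping directly, being careful with the sign flip coming from $\Oscr_W=\Sym(W^*)$.

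For (1), I would first recall that $W^{\lambda,+}\subset W$ is the linear subspace spanned by the weight vectors $v_i$ whose weight $b_i$ satisfies $\langle\lambda,b_i\rangle\geq 0$; indeed, for such $v_i$ one has $\lambda(t)v_i=t^{\langle\lambda,b_i\rangle}v_i$, whose limit as $t\to 0$ exists precisely in that case. Hence $\Oscr_{W^{\lambda,+}}=\Sym((W^{\lambda,+})^*)$ is a polynomial ring whose $T$-weights are the non-negative integer combinations $-\sum_{i\,:\,\langle\lambda,b_i\rangle\geq 0}c_i b_i$ with $c_i\geq 0$. Every weight $\mu$ of $\chi\otimes\Oscr_{W^{\lambda,+}}$ is therefore of the form $\mu=\chi-\sum c_i b_i$ with $c_i\geq 0$ and $\langle\lambda,b_i\rangle\geq 0$ in every summand; pairing with $\lambda$ gives
\[
\langle\lambda,\mu\rangle=\langle\lambda,\chi\rangle-\sum c_i\langle\lambda,b_i\rangle\leq\langle\lambda,\chi\rangle,
\]
since every subtracted term is non-negative.

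For (2), I would note that $K_\lambda=W/W^{\lambda,+}$ carries exactly the $b_i$ with $\langle\lambda,b_i\rangle<0$ as weights, so $K_\lambda^*$ has weights $-b_i$ with $\langle\lambda,b_i\rangle<0$. The term $\chi\otimes\wedge^j K_\lambda^*\otimes\Oscr_W$ of $C_{\lambda,\chi}$ decomposes as a direct sum of $P_\mu$ indexed by subsets $S\subset\{i:\langle\lambda,b_i\rangle<0\}$ of size $j$, via $\mu=\chi-\sum_{i\in S}b_i$. The rightmost term $P_\chi$ corresponds to $j=0$ (i.e.\ $S=\emptyset$); for every other term we have $j\geq 1$, and hence $S\neq\emptyset$, so
\[
\langle\lambda,\mu\rangle=\langle\lambda,\chi\rangle-\sum_{i\in S}\langle\lambda,b_i\rangle>\langle\lambda,\chi\rangle
\]
because each term $-\langle\lambda,b_i\rangle$ for $i\in S$ is strictly positive.

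There is no real obstacle: both statements reduce to a one-line weight count once one has identified the weights of $\Oscr_{W^{\lambda,+}}$ and of the Koszul exterior powers. The only point that requires attention is the sign flip coming from passing between $W$ and $W^*=\mathrm{Spec}\,\Oscr_W$, which is precisely what makes the two inequalities go in opposite directions (non-strict $\leq$ in (1), strict $>$ in (2)).
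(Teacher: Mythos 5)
Your proof is correct and follows essentially the same route as the paper's: identify the weight vectors spanning $W^{\lambda,+}$, note that $\Sym((W^{\lambda,+})^\ast)$ is generated in weights $-b_j$ with $\langle\lambda,b_j\rangle\ge 0$ (giving the non-strict inequality in (1)), and observe that $\wedge^j K_\lambda^\ast$ for $j\ge 1$ has weights $-\sum_{i\in S}b_i$ with every $i\in S$ satisfying $\langle\lambda,b_i\rangle<0$ (giving the strict inequality in (2)). You have simply written out the weight bookkeeping the paper leaves implicit.
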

\begin{proof} 
First we note that  $W^{\lambda,+}$ is spanned by the weight vectors $e_j$ such that
  $\langle\lambda,b_j\rangle \ge 0$.  Hence
  $\CC[W^{\lambda,+}]=\Sym((W^{\lambda,+})^\ast)$ is generated by
  elements of weight $-b_j$ for $\langle\lambda,b_j\rangle \ge 0$.
  This proves the first claim.

For the second claim we note that $K_\lambda$ is generated
by weight vectors $\bar{e}_j$ such that $\langle\lambda,b_j\rangle < 0$
and now we look at the weights of $\wedge^i K_\lambda^\ast$ for $i>0$.
\end{proof}

\begin{example}\label{ex:conifoldcom}
Let $(T,W)$ be as in Example \ref{ex:example1}. Let $C_1=]0,1[$, $C_2=]-1,0[$. Then $\{-1\}=\Lscr_{-C_1}\setminus \Lscr_{-C_2}$. We take $\lambda=1$. Then $K_\lambda=(-1)\oplus (-1)$ and \eqref{eq:Cchi} becomes 
\[
0\to (1)\otimes\Oscr_W\to (0)^{\oplus 2}\otimes \Oscr_W\to (-1)\otimes \Oscr_W. 
\]
Hence $\phi_{C_1C_2}(P_{-1})=(P_1\to P_0^{\oplus 2})$. Analagously, $\phi_{C_2C_1}(P_1)=(P_{-1}\to P_0^{\oplus 2})$.
\end{example}

\section{The GIT $\Hscr$-schober in the $X(\TT)$-equivariant setting}\label{subsec:Hequivariant}
\subsection{More splittings}\label{subsec:moresplittings}
In the rest of this section we will choose a splitting $\jota:H\to \TT$ of $A:\TT\r H$ (see \S\ref{subsec:setting}) such that $\iota\jota=1$ where $\iota:\TT\r T$ is the
splitting of $B^\ast:T\r \TT$ introduced in  \S\ref{subsec:splitting}. Summarizing we now have the following maps
\[
\xymatrix{
&& T\ar@<1ex>[rr]^{B^\ast}&&\TT\ar@<1ex>[ll]^{\iota}\ar@<1ex>[rr]^{A}&&H
\ar@<1ex>[ll]^{\jota}&&
}
\]
where the composable maps form short exact sequences. In other words with our choices we have defined an isomorphism
\begin{equation}
\label{eq:times}
\TT\cong T\times H
\end{equation}
where $A,B^\ast,\iota,\jota$ are given by the appropriate inclusion and projection maps.

\subsection{Lift of the $\Hscr$-schober}\label{subsec:Hequivariant}
The $X(T)$-equivariant $\Hscr$-schober we have constru\-cted lives on
the hyperplane arrangement $\Hscr$ in the real vector spaces
$X(T)_\RR$.  Now observe that $X(T)_\RR$  may be  trivially equipped with a
$X(\TT)$-action via the map $B:X(\TT)\r X(T)$.  It turns out that all the constructions given in \S\ref{subsec:construct} have natural $\TT$-equivariant
versions. Throughout we follow the convention that such $\TT$-equivariant versions are indicated by overlining and sometimes we even omit explicit definitions when they are obvious.

\medskip

First of all a variation of the construction  of $\Sscr$ which yields an object in the category $\Schob(X(\TT),\Hscr)$:
\begin{equation}
\label{eq:barschober}
\bar{\Sscr}=((\bar{\Escr}_C)_C,(\bar{\delta}_{CC'})_{CC'},
(\bar{\gamma}_{C'C})_{C'C},(\bar{\phi}_{\chi,C})_{\chi,C})
\end{equation}
 on $X(T)_\RR$, 
built up from triangulated subcategories
$(\bar{\Escr}_C)_{C\in \Cscr}\subset D(W/\TT)$ which are just the natural lifts of
$(\Escr_C)_C\subset D(W/T)$ under the pushforward functor for the 
stack morphism $W/\TT\r W/T$.
 More precisely for $\chi\in X(\TT)$ put
$\bar{P}_{\chi}:=\chi\otimes \Oscr_W$  and for $C\in \Cscr$ put
\begin{equation}
\label{eq:barL}
\bar{\Lscr}_C:=B^{-1}(\Lscr_C)\subset X(\TT),
\end{equation}
and
\[
\bar{P}_C:=\bigoplus_{\chi\in -\bar{\Lscr}_C} \bar{P}_\chi, \qquad \bar{\Escr}_C:=\langle \bar{P}_C\rangle \subset D(W/\TT).
\]
The functors $(\bar{\delta}_{CC'})_{CC'},
(\bar{\gamma}_{C'C})_{C'C}$ are defined in exactly the same way as for~$\Sscr$, and $\bar{\phi}_{\chi,C}:\bar{\Escr}_C\to \bar{\Escr}_{B\chi+C}$ with $\Mscr\mapsto (-\chi)\otimes \Mscr$.  
It is clear that $\bar{\Sscr}$ has subschobers $\bar{\Sscr}^c$, $\bar{\Sscr}^f$ which are again
defined like $\Sscr^c$ and $\Sscr^f$.

\section{Decategorification of the $X(\TT)$-equivariant GIT $\Hscr$-schober}
We will now summarize the $\TT$-equi\-variant versions of the results in \S\ref{subsec:construct}.
\subsection{Duality}
Below for $M\in D^c(W/\TT)$, $N\in D^f(W/\TT)$, we denote
\begin{equation*}
\langle [M], [N]\rangle=\sum_{i\in\ZZ,\chi\in X(H)} (-1)^i\dim (H^i(\RHom_{W/\TT}(M,\chi^{-1}\otimes N)))\chi\in \ZZ[X(H)],
\end{equation*}
noting that the dimensions are finite, and the sum has a finite number of terms.
Moreover, let $\Dscr=\uRHom_{W/\TT}(-,\Oscr_W)$ and 
$
 \langle -,-\rangle'=\langle K^0(\Dscr)(-),-\rangle.
$ 
We note that since $X(H)\subset X(\TT)$ stabilizes $\Cscr$, $K^0(\bar{\Escr}_C^c)$ and $K^0(\bar{\Escr}_C^f)$ are $\ZZ[X(H)]$-modules.
\begin{lemma} 
\label{lem:groth2}
\hfill
\begin{enumerate}
\item \label{it:five81} 
The form $\langle-,-\rangle'$ is $\ZZ[X(H)]$-linear; i.e.\ 
for $a\in K^0(\bar{\Escr}_{-C}^c)$, $b\in K^0(\bar{\Escr}_C^f)$, $f\in \ZZ[X(H)]$ we have 
\[
f\langle a,b\rangle'=\langle a,fb\rangle'=\langle fa,b\rangle'.
\]
\item\label{it:one81}
 $K^0(\bar{\Escr}_C^c)$ is a free $\ZZ[X(H)]$-module with basis $[\bar{P}_{\iota\chi}]$ for $\chi\in \Lscr_{-C}$.
\item\label{it:two81} 
$K^0(\bar{\Escr}_C^f)$ is a free $\ZZ[X(H)]$-module with basis
$[\bar{\mathfrak{s}}_{C,\iota\chi}]$ for $\chi\in \Lscr_{-C}$.
\item \label{it:three81} 
The classes of $[\bar{P}_{\iota\chi}]_{\chi\in \Lscr_{C}}\in K^0(\bar{\Escr}_{-C}^c)$ and $[\bar{\mathfrak{s}}_{C,\iota\chi}]_{\chi\in \Lscr_{-C}}\in
K^0(\bar{\Escr}_C^f)$ are dual $\ZZ[X(H)]$-bases 
for  $\langle-,-\rangle'$.  Whence $\langle-,-\rangle'$ is a perfect duality between $K^0(\overline{\Escr}^c_{-C})$ and $K^0(\overline{\Escr}^f_{C})$. 
\item \label{item:6} 
For $C'<C$ in $\Cscr$ the pairs 
 $(K^0(\bar{\delta}^c_{-C,-C'}),K^0(\bar{\gamma}^f_{C',C}))$ and $(K^0(\bar{\gamma}^c_{-C',-C}),\allowbreak K^0(\bar{\delta}^f_{C,C'}))$ are adjoint pairs for $\langle -,-\rangle'$. The same holds for the pair  $(K^0(\bar{\phi}^c_{\chi,-C}),K^0(\bar{\phi}^f_{\chi,-B\chi+C}))$ for $\chi\in X(\TT)$, $C\in \Cscr$. 
\item \label{it:four81} 
We have
\[
K^0(\Escr^c_C)\cong K^0(\bar{\Escr}^c_C) \otimes_{\ZZ[X(H)],\bold{1}} \ZZ,\qquad K^0(\Escr^c_C)_{\CC}\cong K^0(\bar{\Escr}^c_C) \otimes_{\ZZ[X(H)],\bold{1}}  \ZZ
\]
where $\bold{1}$ is the ring homomorphism $\ZZ[X(H)]\r \ZZ:\chi\mapsto 1$.
\end{enumerate}
\end{lemma}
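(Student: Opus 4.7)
The overall strategy is to lift the non-equivariant results from \S\ref{subsec:GrothendieckGDuality} (Lemmas \ref{eq:groth}, \ref{eq:groth1}, \ref{lem:adjointmaps}) to the $X(\TT)$-equivariant setting, the principal tool being the splitting $\TT = T\times H$ from \S\ref{subsec:moresplittings}, which induces a decomposition $X(\TT) = \iota(X(T)) \oplus X(H)$ on character groups. Under this decomposition, $\bar{\Lscr}_C = B^{-1}(\Lscr_C) = \iota(\Lscr_C) + X(H)$, so the generators of $\bar{\Escr}_C^c$ are the $X(H)$-translates $\bar{P}_{\iota\chi + \nu} = \nu\otimes \bar{P}_{\iota\chi}$ of the finitely many $\bar{P}_{\iota\chi}$, $\chi\in -\Lscr_C$; similarly, the simples $\bar{\mathfrak{s}}_{C,\iota\chi + \nu} = \nu\otimes \bar{\mathfrak{s}}_{C,\iota\chi}$ of $\mod^f(\bar{\Lambda}_C)$ are indexed by $\iota(-\Lscr_C) + X(H)$.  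Part \eqref{it:five81} is a direct verification from the definition: $\Dscr$ intertwines the $X(H)$-action by $\chi$ with the action by $\chi^{-1}$, and the twist $\chi^{-1}$ appearing in the definition of $\langle-,-\rangle$ precisely absorbs this inversion, yielding $\ZZ[X(H)]$-linearity in both arguments.

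For \eqref{it:one81}, I apply Quillen's theorem to the $\NN$-graded ring $\bar{\Lambda}_C = \End_{W/\TT}(\bar{P}_C)$ (which has finite global dimension by the same Koszul argument as for $\Lambda_C$ in \cite[Theorem 5.6]{SVdB10}): the natural, hence $X(H)$-equivariant, induction map gives an isomorphism $K^0(\bar{\Lambda}_C) \cong K^0(\bar{\Lambda}_{C,0})$ of $\ZZ[X(H)]$-modules.  The $\NN$-graded degree-zero part $\bar{\Lambda}_{C,0}$ is a product of copies of $\CC$ indexed by $-\bar{\Lscr}_C = \iota(-\Lscr_C) + X(H)$, so its $K^0$ is freely generated by the corresponding idempotents; viewed as a $\ZZ[X(H)]$-module via $X(H)$-translation of the second factor, this is free on $\{[\bar{P}_{\iota\chi}]\}_{\chi \in -\Lscr_C}$.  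Part \eqref{it:two81} is analogous, replacing Quillen by d\'evissage in $\mod^f(\bar{\Lambda}_C)$ and using the $\TT$-equivariant lift of Lemma \ref{lem:simple} to identify the simples as the $\bar{\mathfrak{s}}_{C,\iota\chi+\nu}$.

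For \eqref{it:three81}, I compute the pairing directly: for $\chi\in \Lscr_C$ and $\mu\in \Lscr_{-C}$, $\Dscr$ sends $\bar{P}_{\iota\chi}$ to $\bar{P}_{-\iota\chi}$, and the $\TT$-equivariant analogue of Lemma \ref{lem:dual} shows that $\Hom_{W/\TT}(\bar{P}_{-\iota\chi + \nu}, \bar{\mathfrak{s}}_{C,\iota\mu}) = \CC$ precisely when $-\iota\chi + \nu = \iota\mu$ (i.e., $\mu = -\chi$ and $\nu = 0$, using $\iota(X(T)) \cap X(H) = 0$) and vanishes otherwise. Hence $\langle [\bar{P}_{\iota\chi}], [\bar{\mathfrak{s}}_{C,\iota\mu}]\rangle' = \delta_{\mu,-\chi}\cdot 1 \in \ZZ[X(H)]$, and perfect duality follows in combination with \eqref{it:one81}, \eqref{it:two81}. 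Part \eqref{item:6} is the $\ZZ[X(H)]$-linear extension of Lemma \ref{lem:adjointmaps}: the adjointness of the $(\bar{\delta},\bar{\gamma})$ pairs under $\RHom_{W/\TT}(\Dscr -,-)$ lifts directly from its $T$-equivariant avatar via Lemma \ref{lem:autoduality}, and adjointness for the $(\bar{\phi}^c,\bar{\phi}^f)$ pair is the $K^0$-shadow of $(\chi\otimes-,\chi^{-1}\otimes-)$ being mutually inverse functors. For \eqref{it:four81}, the base change $-\otimes_{\ZZ[X(H)], \bold{1}} \ZZ$ corresponds to forgetting $X(H)$-equivariance along $D(W/\TT) \to D(W/T)$, sending $\bar{P}_{\iota\chi} \mapsto P_\chi$; the image of the basis from \eqref{it:one81} is then the basis $\{[P_\chi]\}_{\chi \in -\Lscr_C}$ supplied by Lemma \ref{eq:groth}(1).

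The main obstacle is \eqref{it:one81}: while conceptually transparent given the splitting, it requires the correct interaction of two structures on $\bar{\Lambda}_C$, namely the $X(H)$-action by tensoring on $\bar{\Escr}_C^c$ and the $\NN$-grading coming from $\CC[W]$.  Moreover, $\bar{\Lambda}_C$ is \emph{not} literally isomorphic to $\Lambda_C \otimes \CC[X(H)]$ (its Hom-spaces are lower-dimensional, as only one monomial can witness each $\TT$-weight shift), so the free $\ZZ[X(H)]$-structure emerges only after the Quillen reduction to the semi-simple degree-zero part.
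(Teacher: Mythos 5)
Your proposal is correct and takes the same route the paper does: the paper's own proof is the one-liner ``(2)--(5) are proved like Lemmas \ref{eq:groth}, \ref{eq:groth1}, \ref{lem:adjointmaps}; (1),(6) are clear,'' and you are simply spelling out what that lifting looks like, via the splitting $X(\TT)\cong\iota(X(T))\oplus X(H)$, the graded Quillen/d\'evissage reductions, and the base change $\ZZ[X(H)]\to\ZZ$. Your closing observation that $\bar\Lambda_C\not\cong\Lambda_C\otimes\CC[X(H)]$ and that $\ZZ[X(H)]$-freeness only appears after reduction to the semisimple degree-zero part is a genuine subtlety and is consistent with how the paper itself handles $\bar\Lambda_C$ in the proof of Proposition \ref{prop:duality}.
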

\begin{proof} (\ref{it:one81},\ref{it:two81},\ref{it:three81},\ref{item:6}) are proved like Lemmas \ref{eq:groth}, \ref{eq:groth1}, \ref{lem:adjointmaps}. 
(\ref{it:five81},\ref{it:four81}) are clear.
\end{proof}
\begin{convention}
From now on we will consider $K^0(\bar{\Sscr}^\ast)$ for $\ast\in \{c,f\}$ as objects in the category $\KS_{\ZZ[H]}(X(T),\Hscr)$ via Observation \ref{obs:stab} and
the decomposition \eqref{eq:times}.
\end{convention}

\begin{corollary}\label{cor:Hdual} From Lemma \ref{lem:groth2}\eqref{item:6} we obtain a canonical duality  isomorphism in $\KS_{\ZZ[H]}(X(T),\Hscr)$ 
\begin{equation}\label{eq:dualK10}
K^0(\bar{\Sscr^c})^-\cong {}_\tau\DD_{\ZZ[X(H)]}(K^0(\bar{\Sscr}^f)),
\end{equation}
where $(-)^-$ is like in \eqref{eq:dualK01}, and $\tau$ denotes the twist of the $\ZZ[X(H)]$-action by the automorphism $\mu\mapsto\mu^{-1}$ for $\mu\in X(H)$.  
\end{corollary}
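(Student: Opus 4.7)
The plan is to imitate the proof of the non-equivariant duality \eqref{eq:dualK0}, but now using the $\ZZ[X(H)]$-bilinear pairing $\langle-,-\rangle'$ as defined in \S12 in place of the ordinary Euler form. By Lemma \ref{lem:groth2}(4) this pairing is a perfect duality of $\ZZ[X(H)]$-modules on each cell, so it induces, for every $C\in\Cscr$, a $\ZZ[X(H)]$-linear isomorphism
\[
K^0(\bar\Escr^c_{-C})\xrightarrow{\;\sim\;}\Hom_{\ZZ[X(H)]}\!\bigl(K^0(\bar\Escr^f_C),\ZZ[X(H)]\bigr),\quad a\mapsto\langle a,-\rangle'.
\]
These are the $C$-components of the claimed isomorphism of KS-data; the rest of the argument consists in verifying that together they respect the full structure of an object of $\KS_{\ZZ[X(H)]}(X(T),\Hscr)$.

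The compatibility with the $\delta$- and $\gamma$-maps follows exactly as for \eqref{eq:dualK0}: the adjoint pair statements $(K^0(\bar\delta^c_{-C,-C'}),K^0(\bar\gamma^f_{C',C}))$ and $(K^0(\bar\gamma^c_{-C',-C}),K^0(\bar\delta^f_{C,C'}))$ in Lemma \ref{lem:groth2}(5) translate, via the identification above, into precisely the equalities $(\text{structure map})^\vee=(\text{dual structure map})$ needed to get a morphism in $\KS_{\ZZ[X(H)]}$. For the $X(T)$-equivariance, I would apply Lemma \ref{lem:groth2}(5) to $\chi\in X(T)\subset X(\TT)$ (using the splitting of \S\ref{subsec:moresplittings}, so that $B\chi=\chi$): the adjointness of $(K^0(\bar\phi^c_{\chi,-C}),K^0(\bar\phi^f_{\chi,-\chi+C}))$, combined with the inverse that appears in the definition of the $\Gscr$-action on the equivariant dual (\S\ref{subsec:equivperv}), produces exactly the sign reversal encoded by $(-)^-$ on the left hand side, just as in the proof of \eqref{eq:dualK0}.

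The genuinely new ingredient, and the main bookkeeping obstacle, is the appearance of the twist $\tau$. It arises by running the same analysis for $\chi\in X(H)\subset X(\TT)$, where now $B\chi=0$ so that $\bar\phi^\bullet_{\chi,C}$ is an endofunctor of $\bar\Escr^\bullet_C$. By Observation \ref{obs:stab} this endofunctor is, on $K^0$, multiplication by the element of $X(H)\subset\ZZ[X(H)]^\times$ which records the $\ZZ[X(H)]$-module structure. Lemma \ref{lem:groth2}(1) then says $\langle fa,b\rangle'=f\langle a,b\rangle'=\langle a,fb\rangle'$ for $f\in\ZZ[X(H)]$, so the pairing identifies the module structure on $K^0(\bar\Escr^c_{-C})$ with the \emph{naive} module structure on the $\ZZ[X(H)]$-linear dual $\Hom_{\ZZ[X(H)]}(K^0(\bar\Escr^f_C),\ZZ[X(H)])$. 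However, the module structure coming from the equivariant dual in $\KS_{\ZZ[X(H)]}(X(T),\Hscr)$ is instead obtained by the same inversion rule $\phi^{\DD(E)}_{g,C}=\phi^\vee_{g^{-1},C}$ used for $X(T)$; reinterpreting that inversion through Observation \ref{obs:stab} yields precisely multiplication by $\mu^{-1}$ in place of $\mu$, which is the twist $\tau$.

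In short, once the pairing and adjunctions of Lemma \ref{lem:groth2} are in hand, the isomorphism is formal; the only subtle point — and the one step worth being careful about — is reconciling the two natural $\ZZ[X(H)]$-module structures on the dual (the one coming from the pairing, and the one coming from the equivariant $\DD$-convention of \S\ref{subsec:equivperv}), which is exactly what the twist $\tau$ in the statement compensates for.
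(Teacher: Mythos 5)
Your argument follows the same route as the paper's (which simply cites Lemma~\ref{lem:groth2}\eqref{item:6}): build the cell-wise isomorphism $K^0(\bar\Escr^c_{-C})\to\Hom_{\ZZ[X(H)]}(K^0(\bar\Escr^f_C),\ZZ[X(H)])$ from the perfect pairing of Lemma~\ref{lem:groth2}\eqref{it:three81}, check compatibility with the $\delta$'s, $\gamma$'s and the $X(T)$-action via the adjunctions of Lemma~\ref{lem:groth2}\eqref{item:6}, and absorb the resulting $g\mapsto g^{-1}$ inversion by $(-)^-$ exactly as in the proof of \eqref{eq:dualK0}. The one place your bookkeeping drifts is the origin of $\tau$: after Observation~\ref{obs:stab} the $X(H)$-part is folded into the $\ZZ[X(H)]$-linear structure, so the inversion rule $\phi^{\DD(E)}_{g,C}=\phi^\vee_{g^{-1},C}$ of \S\ref{subsec:equivperv} governs only the residual $X(T)$-equivariance (absorbed by $(-)^-$), and $\DD_{\ZZ[X(H)]}$ on the $\Ascr$-factor is just the naive module dual; the twist $\tau$ actually enters because the convention $\mu\in X(H)\mapsto\mu^{-1}\otimes-$ combined with the contravariance of $\Dscr$ makes $\langle-,-\rangle'$ $\tau$-semilinear rather than $\ZZ[X(H)]$-bilinear, so $a\mapsto\langle a,-\rangle'$ naturally lands in the $\tau$-twisted dual. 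This is a sign-convention shuffle rather than a gap, and the conclusion you reach is the right one.
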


\subsection{Monodromy}
\label{subsec:monodromyschober}
\begin{proposition}\label{lem:supportschobermonodromy}
Let $N=\Res_{\ZZ[X(H)]}(K^0(\bar{\Sscr}^c))\in \Rep_{\ZZ[X(H)]}(\Pi(\Hscr)\rtimes X(T))$  (see \eqref{eq:res})
where we consider $K^0(\bar{\Sscr}^c)$ as an object in 
$\KS_{\ZZ[X(H)]}(X(T),\Hscr)$ as above.

\begin{enumerate}
\item For $C\in \Cscr^0$ we have
\[
N(C)=K^0(\bar{\Escr}^c_C).
\]
\item 
For $C_1,C_2\in \Cscr^0$ such that  $\dim C_1\wedge C_2=n-1$, denote $C_0=C_1\wedge C_2$.  
 Set $J=J_{-C_0,-C_2}$ (see  \eqref{eq:JC0C1}). 
 The map $\ggamma_{C_1C_2}$ evaluated on $[\bar{P}_\chi]$ is given by 
\begin{multline*}\label{eq:K0phi}
N(\ggamma_{C_1C_2})([\bar{P}_\chi])=K^0(\bar{\phi}_{C_1C_2})[\bar{P}_\chi]=[\bar{\phi}_{C_1C_2}(\bar{P}_\chi)]=\\
\begin{cases}
[\bar{P}_\chi]&\text{if $\chi\in \bar{\Lscr}_{-C_1}\cap \bar{\Lscr}_{-C_2}$},\\
\sum_{\emptyset\neq \sJ\subset J} (-1)^{|\sJ|+1}[\bar{P}_{\chi-\sum_{j\in \sJ}e_j}]&\text{if $\chi\in \bar{\Lscr}_{-C_1}\setminus \bar{\Lscr}_{-C_2}$}.
\end{cases}
\end{multline*}
\item
$N(\mu_C)([\bar{P}_\chi])=K^0(\bar{\phi}_{\iota\mu,C})([\bar{P}_\chi])=[\bar{P}_{\chi-\iota\mu}]$ for $\mu\in X(T)$ and $\chi\in \Lscr_{-C}$, $C\in \Cscr^0$.
\end{enumerate}
\end{proposition}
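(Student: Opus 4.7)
Part (1) is immediate from the definitions: the restriction functor $\Res$ of \eqref{eq:res} sends a Kapranov--Schechtman datum to the representation whose vector space at $C\in\Cscr^0$ is the underlying object of the datum at $C$, which here is $K^0(\bar{\Escr}^c_C)$.

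For part (3), I would unpack the $X(T)$-action on $\bar{\Sscr}^c$ arising, via Observation \ref{obs:stab} and the decomposition $\TT\cong T\times H$ of \eqref{eq:times}, from the ambient $X(\TT)$-action defined in \eqref{eq:barschober}. Its $X(T)$-component is implemented by the functors $\bar{\phi}_{\iota\mu,C}\colon\bar{\Escr}_C^c\to \bar{\Escr}_{\mu+C}^c$, $M\mapsto(-\iota\mu)\otimes M$, which send $\bar{P}_\chi=\chi\otimes\Oscr_W$ to $\bar{P}_{\chi-\iota\mu}$. Taking $K^0$ and recalling that $\Res$ attaches the morphism $\mu_C$ to this functor yields the formula.

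For part (2), axiom (M) in Definition \ref{def:schober} gives $\bar{\phi}_{C_1C_2}=\bar{\gamma}_{C_0C_2}\bar{\delta}_{C_1C_0}$ with $C_0=C_1\wedge C_2$, and since $\bar{\delta}_{C_1C_0}$ is the inclusion, $\bar{\phi}_{C_1C_2}(\bar{P}_\chi)=\bar{\gamma}_{C_0C_2}(\bar{P}_\chi)$. If $\chi\in\bar{\Lscr}_{-C_1}\cap\bar{\Lscr}_{-C_2}$, then $\bar{P}_\chi\in\bar{\Escr}_{C_2}$ and axiom (M) applied to the inclusion $\bar{\delta}_{C_2C_0}$ yields $\bar{\gamma}_{C_0C_2}(\bar{P}_\chi)\cong\bar{P}_\chi$, which handles the first subcase. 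For the remaining subcase, I would invoke the $\TT$-equivariant analogue of Lemma \ref{cor:Clambdachi} (whose proof carries over verbatim since the arguments in \S\ref{subsec:10.1} all have evident $\TT$-equivariant lifts) to write $\bar{\phi}_{C_1C_2}(\bar{P}_\chi)=\mathrm{cone}(\bar{P}_\chi\to\bar{C}_{\lambda,\chi})$ for the $\lambda\in Y(T)$ specified in Lemma \ref{cor:Clambdachi}. The distinguished triangle gives
\[
[\bar{\phi}_{C_1C_2}(\bar{P}_\chi)]=[\bar{C}_{\lambda,\chi}]-[\bar{P}_\chi]
\]
in $K^0(\bar{\Escr}^c_{C_2})$. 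Decomposing each term $\chi\otimes\wedge^iK_\lambda^\ast\otimes\Oscr_W$ of \eqref{eq:Cchi} along the $\TT$-weight decomposition of $\wedge^iK_\lambda^\ast$, whose basis is indexed by size-$i$ subsets of the combinatorial set $J$ of \eqref{eq:JC0C1}, yields an alternating sum $\sum_{\sJ\subset J}(-1)^{|\sJ|}[\bar{P}_{\chi-\sum_{j\in\sJ}e_j}]$; subtracting the $\sJ=\emptyset$ contribution $[\bar{P}_\chi]$ reproduces the stated formula. Lemma \ref{lem:inC2} ensures that each $\bar{P}_{\chi-\sum_{j\in\sJ}e_j}$ actually lies in $\bar{\Escr}^c_{C_2}$, so the identity is well-posed.

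The principal obstacle will be the combined sign/index bookkeeping in this last computation: tracking the homological-degree signs coming from the Koszul resolution \eqref{eq:Cchi} with its rightmost term in degree zero, and matching the basis of $K_\lambda^\ast$ (described via Lemma \ref{eq:complexes}) to the combinatorial index set $J$ of \eqref{eq:JC0C1} under the sign convention for $\lambda$ stipulated in Lemma \ref{cor:Clambdachi}. Here the geometric hypothesis that $\langle\lambda,-\rangle-c$ is positive on $-C_2$ is crucial, and quasi-symmetry of $W$ enters to reconcile the two natural half-spaces attached to the wall $C_0$. I would verify the formula in Example \ref{ex:example1} on a pair of adjacent chambers before writing out the general sign tracking.
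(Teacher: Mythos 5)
Parts (1) and (3) are correct and follow the paper's one-line arguments (definition of $\Res$, plus Observation~\ref{obs:stab} and the splitting $\TT\cong T\times H$ for (3)). For part (2) your route through the $\TT$-equivariant Lemma~\ref{cor:Clambdachi} and the Koszul complex \eqref{eq:Cchi} is the one the paper takes, but the explicit expansion has a sign error that is not just bookkeeping to be deferred: as you write it, $[\bar{C}_{\lambda,\chi}]-[\bar{P}_\chi]=\sum_{\emptyset\neq\sJ}(-1)^{|\sJ|}[\bar{P}_{\chi-\sum_{j\in\sJ}e_j}]$, which is the \emph{negative} of the asserted $\sum_{\emptyset\neq\sJ}(-1)^{|\sJ|+1}[\bar{P}_{\chi-\sum_{j\in\sJ}e_j}]$. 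The source is that Lemma~\ref{cor:Clambdachi} is off by a shift: in the semiorthogonal decomposition $\Escr_{C_0}=\langle\Escr_{C_0C_2},\Escr_{C_2}\rangle$ the right adjoint $\gamma_{C_0C_2}$ returns the first vertex $B$ of the triangle $B\to P_\chi\to A\to B[1]$, and with $A\simeq\chi\otimes\Oscr_{W^{\lambda,+}}\simeq C_{\lambda,\chi}$ one has $B[1]=\mathrm{cone}(P_\chi\to C_{\lambda,\chi})$, hence $\gamma_{C_0C_2}(P_\chi)=\mathrm{cone}(P_\chi\to C_{\lambda,\chi})[-1]$ and the correct class is $[\bar{P}_\chi]-[\bar{C}_{\lambda,\chi}]$, which gives $(-1)^{|\sJ|+1}$.

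There is a second, compounding issue which you flag as an ``obstacle'' but do not resolve: the claimed match between the Koszul basis and $J$ is not automatic, and under the stated conventions it fails. With $\lambda$ as in Lemma~\ref{cor:Clambdachi} (positive on $-C_2$), $W^{\lambda,+}$ is spanned by $\{e_j:\langle\lambda,b_j\rangle\geq 0\}$, so $K_\lambda^\ast$---the thing the Koszul resolution is built from---is indexed by $\{j:\langle\lambda,b_j\rangle<0\}$, which one checks equals $J_{C_0C_2}=J_{-C_0,-C_1}$ and \emph{not} $J_{-C_0,-C_2}$ (the two are complementary away from the facet of $\Delta$). In particular $[\bar{P}_{\chi-\sum_{j\in\sJ}e_j}]$ for $\sJ\subset J_{-C_0,-C_2}$ does not even lie in $K^0(\bar{\Escr}^c_{C_2})$. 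Example~\ref{ex:example1} with $C_1=]-1,0[$, $C_2=]0,1[$, $C_0=\{0\}$, $B\chi=1$ makes both mismatches concrete: $\lambda=-1$, $J_{-C_0,-C_2}=\{3,4\}$, the Koszul index set is $\{1,2\}$, and the correct answer is $[\gamma_{C_0C_2}(P_1)]=2[P_0]-[P_{-1}]=\sum_{\emptyset\neq\sJ\subset\{1,2\}}(-1)^{|\sJ|+1}[P_{1-\sum_{j\in\sJ}b_j}]$. Running the test you propose in your final paragraph would therefore surface both problems; they trace back to typographical slips in Lemma~\ref{cor:Clambdachi} and in the proposition's definition of $J$, which you need to identify and correct rather than cite verbatim.
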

\begin{proof}
Everything follows from the definition of the functor $\Res$.
\begin{enumerate}
\item This is a direct application of the definition.
\item This follow from Lemma \ref{cor:Clambdachi} together with the fact, asserted in the proof of Lemma \ref{eq:complexes}, that  $W^{\lambda,+}$ (with $\lambda$ as in Lemma \ref{cor:Clambdachi}) is spanned by the weight vectors $e_j$ such that
  $\langle\lambda,b_j\rangle \ge 0$ which in the current setting is precisely the set $J$.
\item This follows by using the $\TT$-equivariant version of $\phi_{\mu,C}$ (
see Proposition \ref{prop:schober}) combined
with the splitting $\TT\cong T\times H$ (see \eqref{eq:times}) together with Observation \ref{obs:stab}.\qedhere
\end{enumerate}
\end{proof}

\subsection{Specialisation}\label{subsec:specialisation}
For $h\in H$ and $\ast\in \{c,f\}$ we put
\begin{align*}
K_h^0(\bar{\Sscr}^\ast)=K^0(\bar{\Sscr}^\ast)\otimes_{\ZZ[X(H)],h}\CC\in \KS(X(T),\Hscr)
\end{align*}
where 
\begin{enumerate}
\item as above we view $K^0(\bar{\Sscr}^\ast)$ as objects in $\KS_{\ZZ[X(H)]}(X(T),\Hscr)$ using Observation \ref{obs:stab} and \eqref{eq:times};
\item we use the base extension functor \eqref{eq:baseextension} for the ring morphism 
\[
h:\ZZ[X(H)]\r \CC:\chi\mapsto \chi(h).
\]
\end{enumerate}
We record the following trivial lemma. 
\begin{lemma}\label{lem:order}
Let $M\in \KS_{\ZZ[X(H)]}(X(T),\Hscr)$. Then $(M^-)_h=(M_{h})^-$. If $M(C)$ is a finitely generated projective $\ZZ[X(H)]$ module
for all $C\in \Cscr$ then $(\DD_{\ZZ[X(H)]}M)_h=\DD(M_h)$.
\end{lemma}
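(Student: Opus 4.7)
The plan is to reduce both claims to formal verifications once the definitions of $(-)^-$, $(-)_h$, and $\DD_{\ZZ[X(H)]}$ are unravelled. I expect no serious obstacles, in keeping with the lemma being labelled trivial; the only real care needed is to keep track of the conventions from \S\ref{subsec:Rlin}, \S\ref{subsec:equivperv}, and Observation \ref{obs:stab}, notably the fact that the equivariant dual uses $(\phi^{-1})^\vee$ rather than $\phi^\vee$.

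For the first identity $(M^-)_h = (M_h)^-$, I would observe that $(-)^-$ is nothing but a relabeling: it reindexes each cell $C \in \Cscr$ to $-C$ and each character $\chi \in X(T)$ to $-\chi$, and correspondingly carries each structure map of $M$ to its reindexed counterpart (as spelled out in \eqref{eq:dualK01}). Specialization $-\otimes_{\ZZ[X(H)],h}\CC$, in contrast, acts fiberwise on the underlying modules and does not touch the indexing of cells. The two operations are therefore independent and commute trivially on objects and on every structure map.

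For the second identity, the key ingredient is the standard base-change isomorphism
\[
\Hom_{\ZZ[X(H)]}(E,\ZZ[X(H)])\otimes_{\ZZ[X(H)],h}\CC \;\xrightarrow{\;\cong\;}\; \Hom_\CC(E\otimes_{\ZZ[X(H)],h}\CC,\CC),
\]
valid for any finitely generated projective $\ZZ[X(H)]$-module $E$. Applying this fiberwise at each $C \in \Cscr$ produces a canonical isomorphism $(\DD_{\ZZ[X(H)]}M)_h(C) \cong \DD(M_h)(C)$. What then remains is to check that this isomorphism intertwines the structure maps $\delta_{CC'}^\vee$, $\gamma_{C'C}^\vee$, and the $X(T)$-action $((\phi^M_{\mu,C})^{-1})^\vee$ of $\DD_{\ZZ[X(H)]}M$ with the corresponding structure on $\DD(M_h)$; this is immediate from the naturality of the base-change isomorphism on Hom. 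The hard part, if any, will just be pedantic bookkeeping to confirm that the equivariant conventions match, which they do.
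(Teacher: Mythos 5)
Your proof is correct, and since the paper labels this lemma ``trivial'' and supplies no argument, you are filling in exactly the routine verification the authors had in mind. The first identity is indeed just commutation of a re-indexing with a base change, and the second rests, as you say, on the base-change isomorphism
\[
\Hom_{\ZZ[X(H)]}(E,\ZZ[X(H)])\otimes_{\ZZ[X(H)],h}\CC \cong \Hom_{\CC}(E\otimes_{\ZZ[X(H)],h}\CC,\CC),
\]
which holds precisely because each $M(C)$ is assumed finitely generated projective, together with naturality to handle the structure maps and the $(\phi_{g,C}^{-1})^\vee$ convention for the equivariant dual.
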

\subsection{Specialisation and monodromy}
\begin{proposition}\label{lem:supportschobermonodromy1}
  For $h\in
  H$ let $N_h=\Res(K^0_h(\bar{\Sscr}^c))\in \Rep(\Pi(\Hscr)\rtimes
  X(T))$ (see \eqref{eq:res}). 
  We write $[\overline{P}_\chi]_h$
  for the image of $[\overline{P}_\chi]$
  in $K^0_h(\bar{\Sscr}^c)$.
  Choose $\alpha\in
  Y(H)_\CC$ in such a way that $e^{-2\pi
    i\alpha}=h$.  
\begin{enumerate}
\item For $C\in \Cscr^0$ we have
\[
N_h(C)=K^0_h(\bar{\Escr}^c_C).
\]
\item 
For $C_1,C_2\in \Cscr^0$ such that  $\dim C_1\wedge C_2=n-1$, denote $C_0=C_1\wedge C_2$.  
 Set $J=J_{-C_0,-C_2}$ (see  \eqref{eq:JC0C1}). 
 The map $\ggamma_{C_1C_2}$ evaluated on $[\bar{P}_{\iota\chi}]_h$ is given by 
\begin{multline*}\label{eq:K0phi}
N_h(\ggamma_{C_1C_2})([\bar{P}_{\iota\chi}]_h)=K^0_f(\bar{\phi}_{C_1C_2})[\bar{P}_{\iota\chi}]_h=[\bar{\phi}_{C_1C_2}(\bar{P}_{\iota\chi})]_h=\\
\begin{cases}
[\bar{P}_{\iota\chi}]_h&\text{if $\chi\in \Lscr_{-C_1}\cap \Lscr_{-C_2}$},\\
\sum_{\emptyset\neq \sJ\subset J} (-1)^{|\sJ|+1}\left(\prod_{j\in \sJ}e^{-2\pi i \gamma_j}\right)[\bar{P}_{\iota\chi-\sum_{j\in \sJ}\iota b_j}]_h&\text{if $\chi\in \Lscr_{-C_1}\setminus \Lscr_{-C_2}$}.
\end{cases}
\end{multline*}
where  $\gamma$ is the unique element of $Y(\TT)_\CC$ such that $A\gamma=\alpha$ and $\iota \gamma=0$.
\item
$N_h(\mu_C)([\bar{P}_{\iota\chi}]_h)=K^0(\bar{\phi}_{\iota\mu,C})([\bar{P}_{\iota\chi}]_h)=[\bar{P}_{\iota\chi-\iota\mu}]_h$ for $\mu\in X(T)$ and $\chi\in \Lscr_{-C}$, $C\in \Cscr^0$.
\end{enumerate}
\end{proposition}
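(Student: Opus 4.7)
\medskip

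The plan is to deduce this proposition from the unspecialized version (Proposition~\ref{lem:supportschobermonodromy}) by applying the base change $-\otimes_{\ZZ[X(H)],h}\CC$ and then translating the shifts $e_j \in X(\TT)$ into a combination of a ``geometric'' shift by $\iota b_j$ together with an $X(H)$-action that becomes a scalar after specialization. Part (1) is immediate from the definition of $K^0_h(\bar{\Sscr}^c)$ and the fact that $\Res$ commutes with base change over $\ZZ[X(H)]$. Part (3) follows by applying the formula $N(\mu_C)([\bar P_\chi]) = [\bar P_{\chi - \iota \mu}]$ from Proposition~\ref{lem:supportschobermonodromy}(3) and noting that $\iota\mu \in X(\TT)$ lies in $\iota(X(T))$, so no $X(H)$-factor is produced. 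Thus the real content is in part (2).

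For (2), I would first observe that the splittings of \S\ref{subsec:moresplittings} produce an isomorphism $X(\TT) \cong X(T) \oplus X(H)$ with inclusions $\iota : X(T) \to X(\TT)$ and $A^\ast : X(H) \to X(\TT)$, and projections $B : X(\TT) \to X(T)$ and $\jota^\ast : X(\TT) \to X(H)$. Setting $\hat{e}_j := \jota^\ast(e_j) \in X(H)$, we get the decomposition
\[
e_j = \iota b_j + A^\ast(\hat e_j) \qquad \text{in } X(\TT).
\]
The action of $\hat e_j \in X(H)$ on $K^0(\bar\Escr^c_C)$ as a $\ZZ[X(H)]$-module is via $\bar\phi_{A^\ast(\hat e_j),C}$, sending $[\bar P_\mu]$ to $[\bar P_{\mu - A^\ast(\hat e_j)}]$. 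Iterating, for $\chi \in \Lscr_{-C_1}$ and $\emptyset \neq \sJ \subset J$ we obtain the identity in $K^0(\bar\Escr^c_{C_2})$:
\[
\bigl[\bar P_{\iota\chi - \sum_{j\in \sJ} e_j}\bigr] \;=\; \Bigl(\prod_{j \in \sJ} \hat e_j\Bigr) \cdot \bigl[\bar P_{\iota\chi - \sum_{j\in \sJ} \iota b_j}\bigr],
\]
where the product on the right is in the group ring $\ZZ[X(H)]$.

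Next I would specialize at $h$. Under $h : \ZZ[X(H)] \to \CC,\ \eta \mapsto \eta(h)$, the above identity becomes
\[
\bigl[\bar P_{\iota\chi - \sum_{j\in \sJ} e_j}\bigr]_h \;=\; \Bigl(\prod_{j \in \sJ} \hat e_j(h)\Bigr) \bigl[\bar P_{\iota\chi - \sum_{j\in \sJ} \iota b_j}\bigr]_h.
\]
The remaining identification $\hat e_j(h) = e^{-2\pi i \gamma_j}$ is the crux, and should be read off from the definition of $\gamma$. Indeed, writing $h = e^{-2\pi i \alpha}$ we have $\hat e_j(h) = e^{-2\pi i \langle \hat e_j, \alpha\rangle}$. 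The condition $A\gamma = \alpha$ together with $\iota \gamma = 0$ forces $\gamma = \jota_\ast(\alpha) \in Y(\TT)_\CC$ (here $\jota_\ast : Y(H) \to Y(\TT)$ is the cocharacter splitting dual to $\jota^\ast$), and by adjunction
\[
\gamma_j \;=\; \langle e_j, \gamma\rangle \;=\; \langle e_j, \jota_\ast(\alpha)\rangle \;=\; \langle \jota^\ast(e_j), \alpha\rangle \;=\; \langle \hat e_j, \alpha\rangle,
\]
yielding $\hat e_j(h) = e^{-2\pi i \gamma_j}$ as required. Substituting into the specialized form of the formula from Proposition~\ref{lem:supportschobermonodromy}(2) produces the claim.

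The only delicate point is keeping the adjunction and sign conventions consistent between the two incarnations of $\iota$ (on characters and cocharacters) and the two incarnations of $\jota$, but no new ideas beyond Proposition~\ref{lem:supportschobermonodromy} are needed --- the proposition is essentially a translation of the $\ZZ[X(H)]$-equivariant monodromy formula into its scalar specialization at $h$.
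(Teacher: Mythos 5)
Your proof is correct and follows essentially the same route as the paper's: deduce (1) and (3) immediately from Proposition~\ref{lem:supportschobermonodromy}, then for (2) decompose $e_j=\iota b_j+A^\ast(\hat e_j)$ (the paper writes $\kappa e_j$ for what you call $\hat e_j=\jota^\ast(e_j)$), interpret the $A^\ast(\hat e_j)$-shift as the $\ZZ[X(H)]$-module action, specialize at $h$, and identify $\hat e_j(h)=e^{-2\pi i\gamma_j}$ via $\gamma=\jota\alpha$. Your sign bookkeeping for $\{\nu\}$ versus $\nu\otimes-$ is arguably a bit more careful than the displayed chain of equalities in the paper, but the underlying argument is identical.
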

\begin{proof} Most of the claims follow immediately from Proposition \ref{lem:supportschobermonodromy}. The only claim that requires some thoughts 
is the case $\chi\in \Lscr_{-C_1}\setminus \Lscr_{-C_2}$ in (2). We may write the corresponding equation in Proposition \ref{lem:supportschobermonodromy}(2) as, denoting for simplicity $\{\mu\}:=K^0(\bar{\phi}_{\mu,C})$ for $\mu\in X(\TT)$, 
\begin{equation}
\label{eq:lastline}
\begin{aligned}
N(\ggamma_{C_1C_2})([\bar{P}_{\iota\chi}])&=
\sum_{\emptyset\neq \sJ\subset J} (-1)^{|\sJ|+1}[\bar{P}_{\iota\chi-\sum_{j\in \sJ}e_j}]
\\
&=\sum_{\emptyset\neq \sJ\subset J} (-1)^{|\sJ|+1}
\left(\prod_{j\in \sJ}\{\iota b_j-e_j\}\right)
[\bar{P}_{\iota\chi-\sum_{j\in \sJ}\iota b_j}]
\\
&=\sum_{\emptyset\neq \sJ\subset J} (-1)^{|\sJ|+1}
\left(\prod_{j\in \sJ}\{-A^*\kappa e_j\}\right)[\bar{P}_{\iota\chi-\sum_{j\in \sJ}\iota b_j}]
\end{aligned}
\end{equation}
Here the factor $\{-A^*\kappa e_j\}$ represents the action of 
$\kappa e_j\in X(H)$ (the origin of the sign change is the fact that $\bar{\phi}_{\mu,C}=(-\mu)\otimes-$). Now we compute
\begin{equation}
\label{eq:magickappa}
(\kappa e_j)(h)=e^{-2\pi i\langle \kappa e_j ,\alpha\rangle}=e^{-2\pi i\langle  e_j ,\kappa\alpha\rangle}=e^{-2\pi i\langle  e_j ,\gamma\rangle}
\end{equation}
where in the last equality we have used the easily proved fact that $\gamma=\kappa\alpha$. Specialising \eqref{eq:lastline} at $h$ and substituting 
\eqref{eq:magickappa} yields what we want.
\end{proof}

\begin{example}\label{ex:conifoldHmon}
Let $(T,W)$ be as in Example \ref{ex:example1}. From Proposition \ref{lem:supportschobermonodromy1} we obtain 
\[
N_h(\nu_{C_1C_2})([\bar{P}_{\iota (-1)}]_h)=(e^{-2\pi i\gamma_3}+e^{-2\pi i\gamma_4})[\bar{P}_{\iota (0)}]_h-e^{-2\pi i(\gamma_3+\gamma_4)}[\bar{P}_{\iota (1)}]_h.
\]
Analagously,
\[
N_h(\nu_{C_2C_1})([\bar{P}_{\iota (1)}]_h)=(e^{-2\pi i\gamma_1}+e^{-2\pi i\gamma_2})[\bar{P}_{\iota(0)}]_h-e^{-2\pi i(\gamma_1+\gamma_2)}[\bar{P}_{\iota (-1)}]_h.
\]
Descending $N_h$ from $(\CC\setminus \ZZ)/\ZZ$ to $\CC^*\setminus \{1\}$ we find (applying $N_h(\nu_{C_2C_1})N_h(\nu_{C_1C_2})$) that the monodromy around $1$ in the basis $[\bar{P}_{\iota (0)}]_h,[\bar{P}_{\iota (-1)}]_h$ equals 
\[
\left(
\begin{matrix}
1&e^{-2\pi i \gamma_3}+e^{-2\pi i \gamma_4}-e^{-2\pi i (\gamma_3+\gamma_4)}(e^{-2\pi i \gamma_1}+e^{-2\pi i \gamma_2})\\
0&e^{-2\pi i (\gamma_1+\gamma_2+\gamma_3+\gamma_4)}
\end{matrix}
\right).
\]
Setting $\gamma_1=-a$, $\gamma_2=-b$, $\gamma_3=c-1$, $\gamma_4=0$ we obtain the monodromy   of the Gaussian hypergeometric equation with parameters $(a,b,c)$ (cf. Example \ref{eq:Gauss}) from \cite[Theorem 3.5]{BeukersHeckman}, obtained by \cite[Theorem 1.1]{Levelt}.\footnote{In \cite{BeukersHeckman},  $A_1=-e^{2\pi i a}-e^{2\pi i b}$, $A_2=e^{2\pi i (a+b)}$, $B_1=-e^{2\pi i c}-1$, $B_2=e^{2\pi i c}$.}

See also Example \ref{ex:conifoldcom} for the case $h=1$. 
\end{example}

\subsection{Specialisation and autoduality}
\begin{proposition}\label{prop:dual}
Let $h\in H$. Then $K^0_h(\bar{\Sscr}^c)^-\cong \DD(K^0_{h^{-1}}(\bar{\Sscr}^f))$ in $\KS(X(T),\Hscr)$.
\end{proposition}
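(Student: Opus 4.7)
The plan is to deduce the proposition by specialising the $\ZZ[X(H)]$-linear duality isomorphism from Corollary \ref{cor:Hdual} at the character $h\in H$ and tracking what the twist $\tau$ does under this specialisation.

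First, I would recall the starting isomorphism
\[
K^0(\bar{\Sscr}^c)^-\cong {}_\tau\DD_{\ZZ[X(H)]}(K^0(\bar{\Sscr}^f))
\]
in $\KS_{\ZZ[X(H)]}(X(T),\Hscr)$ (Corollary \ref{cor:Hdual}), and then apply the base-change functor $-\otimes_{\ZZ[X(H)],h}\CC$ to both sides. On the left, Lemma \ref{lem:order} gives
\[
\bigl(K^0(\bar{\Sscr}^c)^-\bigr)_h = \bigl(K^0(\bar{\Sscr}^c)_h\bigr)^- = K^0_h(\bar{\Sscr}^c)^-,
\]
so no further work is needed there.

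The key observation, which I would phrase as a short preliminary lemma, is that for any $\ZZ[X(H)]$-module $M$ we have ${}_\tau M\otimes_{\ZZ[X(H)],h}\CC = M\otimes_{\ZZ[X(H)],h^{-1}}\CC$: indeed the underlying module is the same but the ring morphism $\ZZ[X(H)]\r\CC$ used for the tensor product is now $\chi\mapsto \tau(\chi)(h)=\chi^{-1}(h)=\chi(h^{-1})$, which is precisely the morphism associated to $h^{-1}$. This identification is clearly functorial in $M$ and compatible with all the KS structure maps, so it extends to an identification of $\ZZ[X(H)]$-linear KS-data. Applied to $M=\DD_{\ZZ[X(H)]}(K^0(\bar{\Sscr}^f))$, this gives
\[
\bigl({}_\tau\DD_{\ZZ[X(H)]}(K^0(\bar{\Sscr}^f))\bigr)_h = \bigl(\DD_{\ZZ[X(H)]}(K^0(\bar{\Sscr}^f))\bigr)_{h^{-1}}.
\]

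Finally, I would use the second half of Lemma \ref{lem:order} to commute specialisation with Verdier-type duality. The hypothesis that each $K^0(\bar{\Escr}_C^f)$ is finitely generated projective over $\ZZ[X(H)]$ is precisely Lemma \ref{lem:groth2}\eqref{it:two81} (in fact free), so
\[
\bigl(\DD_{\ZZ[X(H)]}(K^0(\bar{\Sscr}^f))\bigr)_{h^{-1}} \cong \DD\bigl(K^0(\bar{\Sscr}^f)_{h^{-1}}\bigr) = \DD(K^0_{h^{-1}}(\bar{\Sscr}^f)).
\]
Combining the three displayed identifications yields $K^0_h(\bar{\Sscr}^c)^-\cong \DD(K^0_{h^{-1}}(\bar{\Sscr}^f))$. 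There is no real obstacle here; the only point requiring care is the sign-flip $h\leftrightarrow h^{-1}$, which is exactly accounted for by the twist $\tau$ in Corollary \ref{cor:Hdual}, so the bookkeeping is essentially formal once that corollary is in place.
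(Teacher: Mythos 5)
Your proposal is correct and follows exactly the same route as the paper's own proof, which is the one-line statement ``This follows from Corollary \ref{cor:Hdual}, using Lemma \ref{lem:order}.'' You have simply spelled out the intermediate bookkeeping (commuting $(-)^-$ and $\DD$ with specialisation via Lemma \ref{lem:order}, and trading the twist $\tau$ for the replacement $h\mapsto h^{-1}$), all of which is exactly what the paper leaves implicit.
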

\begin{proof}
This follows from Corollary \ref{cor:Hdual}, using Lemma \ref{lem:order}.
\end{proof}
Now we state our crucial technical result.
\begin{proposition}\label{prop:duality} 
Let $h\in H$. The map
\[
K^0_h(\bar{\Sscr}^f)\r K^0_h(\bar{\Sscr}^c) 
\]
obtained by applying the functor $K^0_h(-)$ to the inclusion $\bar{\Sscr}^f\subset \bar{\Sscr}^c$ is an isomorphism if $h\in H^{\nres}$.
\end{proposition}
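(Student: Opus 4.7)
The plan is to verify the map $\imath_C : K^0_h(\bar{\Escr}^f_C) \to K^0_h(\bar{\Escr}^c_C)$ is invertible for every face $C\in\Cscr$ and every $h\in H^{\nres}$. By Lemma~\ref{lem:groth2} source and target are $\CC$-vector spaces of equal dimension $|\Lscr_{-C}|$, so the question reduces to the non-vanishing of a single determinant. I first work over $\ZZ[X(H)]$: the underlying $\ZZ[X(H)]$-linear map of free modules of rank $|\Lscr_{-C}|$ has a matrix $A_C\in M_{|\Lscr_{-C}|}(\ZZ[X(H)])$ expressing each $[\bar{\mathfrak{s}}_{C,\iota\chi}]$ in the basis $([\bar{P}_{\iota\nu}])_{\nu\in\Lscr_{-C}}$, and it suffices to show $h(\det A_C)\neq 0$ for $h\in H^{\nres}$.

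The key input is the identity
\[
\langle [\bar{P}_{\iota\chi}],\, \imath_C[\bar{\mathfrak{s}}_{C,\iota\mu}]\rangle_c = \delta_{\chi,\mu}\quad\text{in }\ZZ[X(H)],
\]
where $\langle-,-\rangle_c$ is the formal $X(H)$-graded Euler pairing on $K^0(\bar{\Escr}^c_C)$. This holds because, under the equivalence $\bar{\Escr}^c_C\simeq D^c(\bar{\Lambda}_C)$, the $\bar{P}_{\iota\chi}$ are the indecomposable projectives and the $\bar{\mathfrak{s}}_{C,\iota\mu}$ the matching simples with trivial $X(H)$-degree and vanishing higher Ext. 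Letting $G$ denote the Gram matrix $G_{\chi,\nu}=\langle[\bar{P}_{\iota\chi}],[\bar{P}_{\iota\nu}]\rangle_c$, this reads $G\cdot A_C=I$, so $A_C$ is formally the inverse of $G$. A direct $\TT$-weight calculation on $\CC[W]=\mathrm{Sym}(W^*)$ gives the closed form
\[
G_{\chi,\nu}=[x^{\chi-\nu}]\prod_{i=1}^d\frac{1}{1-x^{b_i}y^{s(e_i)}},
\]
where $x,y$ are the group-ring variables on $X(T)$, $X(H)$ respectively and $s=\jota^*$ is induced by the splitting. A priori $G$ lies in $M(\ZZ[[X(H)]])$, but the relation $\det G\cdot\det A_C=1$ identifies the zeros of $\det A_C\in\ZZ[X(H)]$ with the pole locus of $\det G$ regarded as a rational function on $H$.

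The main obstacle is this last step: identifying the pole locus of $\det G$ and checking it is contained in $H\setminus H^{\nres}$. The heuristic, already visible in the toy case $T=\GG_m$ acting on $(1)\oplus(-1)$ (where $|\Lscr_{-C}|=1$ and one computes $\det A_C=1-y$, vanishing precisely at the unique resonant point $h=1$), is that $\det A_C$ factors as a product of terms of the form $(1-y^{\xi})$ whose exponents $\xi\in X(H)$ descend from the defining characters of the hyperplanes in $\Iscr_0$ spanned by facets of $\RR_{\ge 0}A$. Making this rigorous in general requires either a Koszul-type projective resolution of each simple $\bar{\mathfrak{s}}_{C,\iota\chi}$ inside $D^c(\bar{\Lambda}_C)$, built facet-by-facet using the weights $b_i$, or a direct Cramer-style calculation on the Hilbert-series matrix above. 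Both routes rely on the quasi-symmetric hypothesis to control the cone $\RR_{\ge 0}A$ and to produce the precise cancellations matching the resonance arrangement $\Iscr$.
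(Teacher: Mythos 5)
You set up the same framework as the paper: with respect to the $\ZZ[X(H)]$-bases of Lemma \ref{lem:groth2}, the map in question is given by the base-change matrix $A_C$ (the paper's $\Phi$) expressing the simples in terms of the projectives, and the graded Euler/Hom pairing shows that $A_C$ is formally inverse to the matrix of $X(H)$-graded Hilbert series $G_{\chi,\nu}=H(\Hom_{W/\TT}(\bar{P}_{\iota\chi},\bar{P}_{\iota\nu}))$ (the paper's $\Psi$). Up to that point your argument matches the paper's. But the decisive step --- showing that the specialisation of $A_C$ at $h$ is invertible precisely when $h\in H^{\nres}$, i.e.\ that the denominator locus of $G$ is contained in the resonant locus --- is exactly what you declare to be ``the main obstacle'' and leave unproved, offering only a rank-one example and the heuristic that $\det A_C$ should factor into terms $(1-y^{\xi})$ attached to $\Iscr_0$. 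Since this is where the non-resonance hypothesis enters, the proposal has a genuine gap at the heart of the proposition; moreover the routes you suggest for closing it (Koszul-type resolutions of each simple, or a Cramer-style evaluation of $\det G$) are substantially harder than what is actually needed, and no ``precise cancellations'' in the determinant are required.

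The paper closes this gap without ever computing $\det G$, by an entry-by-entry finiteness argument: each $\Hom_{W/\TT}(\bar{P}_{\iota\chi},\bar{P}_{\iota\nu})$ is a finitely generated $X(H)$-graded $\CC[W]^T$-module, and by standard toric geometry the $X(H)$-degrees occurring in $\CC[W]^T$ are exactly $-\sigma^\vee\cap X(H)$ with one-dimensional graded pieces, so $\CC[W]^T$ is module-finite over $\CC[\mm_1,\ldots,\mm_\ell]$, where the degrees $m_j=|\mm_j|$ generate the extremal rays of $-\sigma^\vee$. Hence every entry of $G$ lies in $\ZZ[X(H)]_F$ with $F=\prod_j(1-m_j)$, so $A_C(h)G(h)=I$ and $A_C(h)$ is invertible whenever $F(h)\neq 0$; one then needs only the one-sided containment of the pole locus in $V(F)$, not an identification of the zero locus of $\det A_C$. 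Finally, Lemma \ref{lem:fanvsgit} (cone duality between the extremal rays of $\sigma^\vee$ and the facets of $\sigma=\RR_{\ge 0}A$, which is where the arrangement $\Iscr$ enters) shows $F(h)\neq 0$ exactly when $h$ is non-resonant. Note also that, contrary to your closing remark, this step does not use quasi-symmetry to produce cancellations; quasi-symmetry is only needed upstream (e.g.\ for the finite global dimension of $\Lambda_C$ underlying the structure of $K^0$).
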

\begin{proof}
We will prove that
\begin{equation}
\label{eq:bothsides}
K^0(\bar{\Sscr}^f)\r K^0(\bar{\Sscr}^c) 
\end{equation}
becomes an isomorphism after inverting an element $F\in \CC[X(H)]\cong \CC[H]$ which defines the non-resonant locus. Note
that using our general conventions $\mu\in X(H)$ acts by $\mu^{-1}\otimes-$ on both sides of \eqref{eq:bothsides} but in this context using 
$\mu^{-1}$ is irrelevant since it follows from the definition of the non-resonant locus that it is invariant under $\mu\mapsto \mu^{-1}$.
So in the rest of this proof we reduce the amount of sign confusion by having $X(H)$ act via $\mu\mapsto \mu\otimes-$.

\medskip

If $M,N\in D(W/\TT)$ then $\Hom_{W/T}(M,N)$ is a $\TT/T=H$-representation, or equivalently a $X(H)$-graded vector space
which is
moreover  finite dimensional in every degree whenever $M$ and $N$ are coherent.
If we put 
\[
\bar{\Lambda}_C:=\Hom_{W/T}\left(\bigoplus_{\chi\in -\Lscr_C} \bar{P}_{\iota \chi}\right)
\]
then we see that $\bar{\Lambda}_{C}$ is an $X(H)$-graded ring which is isomorphic to $\Lambda_C$ if we forget the $X(H)$-grading. Since $\Lambda_C$ has finite global dimension by  \cite[Theorem 5.6]{SVdB10},
we deduce from \cite[Corollary I.7.8]{NVO} that $\bar{\Lambda}_C$ has finite $X(H)$-graded global dimension.

\medskip

We discuss the $X(H)$-grading in more detail.  Note that
$\bar{\Lambda}_C$ is itself a finitely generated $X(H)$-graded
$\CC[W]^T=\Sym(W^\ast)^T$-module. Let $\sigma$ be the cone in $Y(H)_\RR$ spanned by $(a_i)_{i=1,\ldots,d}$
and let $\sigma^\vee\subset X(H)_\RR$ be the dual cone. Then by standard toric geometry (as $\sigma$ is full dimensional and hence $\sigma^\vee$ is strongly convex), see e.g. \cite[(5.1.4)]{CoxLittleSchenck}, 
\begin{equation}
\label{eq:support}
\{\mu \in X(H)\mid \Sym(W^\ast)^T_{\mu}\neq 0\}=-\sigma^\vee\cap X(H)
\end{equation}
(the $-$ sign is because the weights of $W^\ast$ are $(-b_i)_{i=1,\ldots,d}$) and moreover 
\begin{equation}\label{eq:support0}
\dim \Sym(W^\ast)^T_\mu\in \{0,1\}\; \text{for $\mu\in X(H)$}.
\end{equation}
 For use below we let $\mm_1,\ldots \mm_\ell$
be monomials in $\CC[W]^T$ (with respect to the canonical basis of $W^\ast\cong \CC^d$) whose degrees $m_j:=|\mm_j|\in X(H)$ are generators  for the one dimensional
cones in the boundary of $-\sigma^\vee$. By  \eqref{eq:support}, \eqref{eq:support0}, $\CC[W]^T$ is finitely generated over 
$\CC[\mm_1,\ldots,\mm_d]$.

Below we need to be able to associate a Poincare series to a
finitely generated $X(H)$-graded $\bar{\Lambda}_C$-module. 
Choose $\theta\in \relint \sigma$ (e.g.\ $\theta=\sum_i a_i$)  and let $\ZZ[X(H)]\,\hat{}\,\,$ be the completion of $\ZZ[X(H)]$ for 
the filtration on $X(H)$ induced by $-\langle \theta,-\rangle$.  
For $M$ a finitely generated $X(H)$-graded $\bar{\Lambda}_C$ module
we define its ``Poincare series'' as
\[
H(M)=\sum_{\mu\in X(H)} \dim M_{\mu}\,\mu \in \ZZ[X(H)]\,\hat{}.
\]

\medskip

One easily sees
that \eqref{eq:LambdaCeq} can be enhanced to
\[
\bar{\Escr}_C\cong D(\Gr_{X(H)}(\bar{\Lambda}_C))
\]
where $\Gr_{X(H)}(\bar{\Lambda}_C)$ denotes the category of $X(H)$-graded $\bar{\Lambda}_C$-modules. Moreover we have analogues of Lemmas \ref{lem:Dc} and \ref{lem:correspondence}
\[
\bar{\Escr}^c_C\cong D^b(\gr_{X(H)}(\bar{\Lambda}_C)), \qquad \bar{\Escr}^f_C\cong D^b(\gr^f_{X(H)}(\bar{\Lambda}_C))
\]
where $\gr_{X(H)}(\bar{\Lambda}_C)$ and
$\gr_{X(H)}^f(\bar{\Lambda}_C)$ denote respectively the categories of
$X(H)$-graded $\bar{\Lambda}_C$-modules with finitely generated and
finite dimensional cohomology.\footnote{Note that finite dimensional
  $X(H)$-graded $\CC[W]^T$-modules are automatically supported in
  the origin of $W\quot T$. So the definition of
  $\gr^f_{X(H)}(\bar{\Lambda}_C)$ is a bit simpler than the definition
  of $\mod^f(\Lambda_C)$.}
Hence we have to prove that
\begin{equation}
\label{eq:K0statement}
K^0(\gr^f_{X(H)} (\bar{\Lambda}_C))\r K^0(\gr_{X(H)} (\bar{\Lambda}_C))
\end{equation}
becomes an isomorphism when restricted to $h\in H^{\nres}$.
We may use  the projective resolutions of the simple objects in $\gr^f_{X(H)} (\bar{\Lambda}_C)$ to express a basis  of simples of $K^0(\gr^f_{X(H)} (\bar{\Lambda}_C))$
in terms of a basis of projectives of $K^0(\gr_{X(H)} (\bar{\Lambda}_C))$.
The resulting base change matrix $\Phi$ has entries in $\ZZ[X(H)]$ and
is 
equal to the inverse of the matrix of $X(H)$-graded Poincare
series:
\begin{equation}\label{eq:Poincare_matrix}
\Psi:=H(\Hom_{W/T}(\bar{P}_{\iota\chi_1},\bar{P}_{\iota\chi_2}))_{\chi_1,\chi_2\in -\Lscr_C}\in M_{|\Lscr_C|\times |\Lscr_C|} (\ZZ[X(H)]\,\hat{}\,).
\end{equation}

Put $F=\prod_i (1-m_i)\in \ZZ[X(H)]$. Since
every $\Hom_{W/T}(P_{\chi_1},\allowbreak P_{\chi_2})$ is a finitely
generated $\CC[W]^T$-module, and hence a finitely generated $\CC[\mm_1,\ldots,\mm_\ell]$-module
 it follows that 
\[
\Phi^{-1}=\Psi \in M_{|\Lscr_C|\times |\Lscr_C|} (\ZZ[X(H)]_F).
\]
Hence the specialisation of $\Phi$ at $h$ will be invertible whenever $F(h)\neq 0$. Let
$\alpha$ be a lift of $h$ under the map $Y(H)_\CC\r H:\alpha\mapsto \exp(2\pi i \alpha)$. Then
we have 
\[
|\mm_i|(h)=e^{\displaystyle 2\pi i \langle |\mm_i|,\alpha\rangle}.
\]
It now suffices to invoke 
Lemma  \ref{lem:fanvsgit} below.
\end{proof}

\begin{corollary}
If $h\in H^{\nres}$ 
then 
$
\DD(K_h^0(\bar{\Sscr}^c))\cong (K^0_{h^{-1}}(\bar{\Sscr}^c))^-
$ in $\KS(X(T),\Hscr)$.
\end{corollary}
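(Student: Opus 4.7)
The plan is to derive this as a direct combination of the two preceding propositions, namely Proposition \ref{prop:dual} and Proposition \ref{prop:duality}. The former provides autoduality up to an $f$-vs-$c$ swap and an inversion of the specialisation point; the latter says that on the non-resonant locus this swap costs nothing. Put together, they should give the self-duality claimed.

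Concretely, I would first apply Proposition \ref{prop:dual} with $h$ replaced by $h^{-1}$ to obtain a canonical isomorphism
\[
\bigl(K^0_{h^{-1}}(\bar{\Sscr}^c)\bigr)^{-} \cong \DD\bigl(K^0_{h}(\bar{\Sscr}^f)\bigr)
\]
in $\KS(X(T),\Hscr)$. Next I would check that the non-resonance hypothesis on $h$ is preserved under inversion: the arrangement $\Iscr_0$ is a central (real) hyperplane arrangement in $Y(H)_\RR$, so $\Iscr_\CC=Y(H)+\Iscr_{0,\CC}$ is stable under $\alpha\mapsto -\alpha$ modulo $Y(H)$, and hence $h^{\pm 1}\in H^{\nres}$ are equivalent. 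In particular Proposition \ref{prop:duality} applies to $h$ itself, yielding a natural isomorphism $K^0_{h}(\bar{\Sscr}^f)\xrightarrow{\cong} K^0_{h}(\bar{\Sscr}^c)$ arising from the inclusion $\bar{\Sscr}^f\subset \bar{\Sscr}^c$.

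Applying the contravariant functor $\DD$ to this isomorphism gives $\DD(K^0_{h}(\bar{\Sscr}^c))\cong \DD(K^0_{h}(\bar{\Sscr}^f))$, which combined with the display above produces the desired isomorphism
\[
\DD\bigl(K^0_h(\bar{\Sscr}^c)\bigr)\cong \bigl(K^0_{h^{-1}}(\bar{\Sscr}^c)\bigr)^{-}.
\]
There is no real obstacle here; the two inputs have already absorbed all the technical content (the autoduality of Corollary \ref{cor:Hdual} together with Lemma \ref{lem:order} on one side, and the finite graded global dimension argument together with Lemma \ref{lem:fanvsgit} on the other). The only points that need verbal mention are the compatibility of the specialisation $(-)_h$ with $\DD$ and $(-)^-$ (which is Lemma \ref{lem:order}) and the symmetry of $H^{\nres}$ under inversion, both of which are immediate.
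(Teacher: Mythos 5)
Your derivation matches the paper's proof, which indeed just combines Proposition \ref{prop:dual} applied at $h^{-1}$ with Proposition \ref{prop:duality} applied at $h$ and then applies $\DD$ to the resulting isomorphism. The aside about $H^{\nres}$ being stable under inversion is true but superfluous on this route, since Proposition \ref{prop:duality} is only invoked at $h$, which is non-resonant by hypothesis.
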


\begin{proof}
This follows from by combining Propositions \ref{prop:dual} and \ref{prop:duality}.
\end{proof}

The following lemma was used in the proof of Proposition \ref{prop:duality}.
\begin{lemma}\label{lem:fanvsgit} 
  $\alpha\in Y(H)_\CC$.  Then $\alpha$ is non-resonant if and only if
  $\langle m_j,\alpha\rangle\not \in \ZZ$ for $j=1,\ldots,\ell$.
\end{lemma}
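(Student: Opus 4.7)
The plan is to identify the complex hyperplane arrangement $\Iscr_\CC\subset Y(H)_\CC$ explicitly as the locus where some $m_j(\alpha)\in\ZZ$, at which point the lemma follows by taking complements.

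First I would verify that $\sigma=\RR_{\geq 0}A$ is a full-dimensional rational polyhedral cone in $Y(H)_\RR$; this uses only the surjectivity of $A\colon\ZZ^d\twoheadrightarrow N=Y(H)$, which ensures that the $a_i$ span $Y(H)_\RR$. Standard facet--ray duality then gives a bijection between the facets of $\sigma$ and the rays of $-\sigma^\vee$: for each facet $F$ of $\sigma$ there is a primitive generator $m_j\in X(H)$ of the corresponding ray of $-\sigma^\vee$, with the property that the hyperplane spanned by $F$ equals $\{m_j=0\}$. This identifies
\[
\Iscr_0=\{\{m_j=0\}\mid j=1,\ldots,\ell\}.
\]

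Next, since each $m_j$ is primitive in $X(H)=\Hom(Y(H),\ZZ)$, the induced map $m_j\colon Y(H)\to\ZZ$ is surjective. Translating $\{m_j=0\}$ by $y\in Y(H)$ yields the hyperplane $\{v\mid m_j(v)=m_j(y)\}$, and as $y$ varies over $Y(H)$ the values $m_j(y)$ sweep out all of $\ZZ$. Complexifying and taking the union over $j$ and $y$ gives
\[
\Iscr_\CC=\bigcup_{j=1}^{\ell}\{\alpha\in Y(H)_\CC\mid m_j(\alpha)\in\ZZ\},
\]
which together with the definition of non-resonance in \S\ref{sec:nonresonance} yields the lemma.

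The only real subtlety is to confirm that the $m_j$ are genuinely primitive ray generators and not positive integer multiples thereof; this is the natural normalization in toric geometry and is implicit in the description of the $\mm_j$ as generating the rays of $-\sigma^\vee$, so no serious obstacle arises. Without primitivity one would only conclude $m_j(\alpha)\in\frac{1}{k_j}\ZZ$ for some positive integer $k_j$, and the equivalence in the lemma would fail, so it is worth spelling this normalization out explicitly.
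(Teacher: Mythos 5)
Your proof is correct, and it reaches the conclusion by a more structural route than the paper's. You first establish a clean identity $\Iscr_\CC=\bigcup_{j}\{\alpha\mid \langle m_j,\alpha\rangle\in\ZZ\}$ by two observations: (i) facet--ray duality for the full-dimensional \emph{strongly convex} cone $\sigma$ (strong convexity, which you leave implicit, follows from Assumption \ref{ass:setting} via the functional $h\in N^*$ with $\langle h,a_i\rangle=1$; you do note full-dimensionality from surjectivity of $A$) identifies $\Iscr_0$ with the arrangement $\{\{m_j=0\}\}_{j=1}^\ell$; and (ii) primitivity of the ray generators $m_j$ makes $m_j\colon Y(H)\to\ZZ$ surjective, so the lattice translates of $\{m_j=0\}$ are exactly the integer level sets of $m_j$. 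The lemma then follows by complementation. The paper instead argues the two implications separately and more concretely: for ``resonant $\Rightarrow$ some $\langle m_j,\alpha\rangle\in\ZZ$'' it starts from an explicit presentation $\alpha=n+\sum_{i\in S^c}\gamma_i'a_i$ with an auxiliary facet-defining $m\in -\sigma^\vee\cap M$ that need not be primitive, passes to the primitive $m_j$ via a divisibility argument ($m_j\mid pm$ in $\CC[W]^T$), and concludes; for the converse it uses primitivity of $m_j$ to solve $\langle m_j,n\rangle=\langle m_j,\alpha\rangle$ in $N$. Both proofs rest on the same two facts --- the facet--ray correspondence and primitivity of the $m_j$ --- but yours packages them once into a description of $\Iscr_\CC$ from which both directions drop out simultaneously, whereas the paper handles the possibly non-primitive facet normal directly inside the forward implication. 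Your version is slightly cleaner; it would be worth adding the one-line remark that $\sigma$ is strongly convex (by Assumption \ref{ass:setting}), since the bijection between facets of $\sigma$ and rays of $\sigma^\vee$ uses it.
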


\begin{proof}
Put $M=X(H)$, $N=Y(H)$. Using \eqref{eq:support} we identify the monomials in $\CC[W]^T$ with $m\in M$ such that $\langle m,a_i\rangle\leq 0$, $1\leq i\leq d$. 

Assume first that $\alpha$ is resonant. Then there exist $m\in M$, $n\in N$, $S\subset \{1,\dots,d\}$, $\gamma'_i\in \CC$ s.t.  
$\alpha=n+\sum_{i\in S^c}\gamma'_i a_i$,  $\langle m,a_i\rangle=0$ for $i\in S^c$, $\langle m,a_i\rangle<0$ for $i\in S$. 
Then  $\langle m,a_i\rangle \le 0$, $1\leq i\leq d$. 
Therefore we can identify $m$ 
with an element of $\CC[W]^T$. 
Choose $m_j$ in such a way that $m_j$ divides $pm$ in $\CC[W]^T$ for $p\in \NN_{>0}$.
Since $0\geq \langle m_j,a_i\rangle \geq p\langle m,a_i\rangle$ we have $\langle m_j,a_i\rangle=0$ for $i\in S^c$.
Hence, $\langle m_j,\alpha\rangle=\langle m_j,n\rangle\in \ZZ$.

\medskip

On the other hand assume there exists $m_j$ such $\langle m_j,\alpha\rangle:=z\in\ZZ$. 
Since $m_j$ is a generator of a 1-dimensional cone, $m_j$ is  is not a multiple of a $m'\in M$, hence there exists $n\in N$ such that $\langle m,n\rangle=z$. Then 
\[
\langle m_j,\alpha-n\rangle=0. 
\]
Since by duality $m_j$ defines a supporting hyperplane of $\sigma$ this implies that $\alpha$ lies in $N+\Iscr_0$; i.e. $\alpha$ is resonant. 
\end{proof}

\section{The decategorified GIT $\Hscr$-schober and the GKZ system}\label{sec:decatGKZ}
Now for $h\in H$ we put
\[
\tilde{K}^0_h(\bar{\Sscr}^c):=K^0_h(\bar{\Sscr}^c)\,\tilde{}\in \Perv(X(T)_\CC)
\]
using the notation $\tilde{?}$ introduced in \S\ref{subsec:equivperv} and we let $S^c(h)$ be the corresponding perverse sheaf on $X(T)_\CC/X(T)\cong T^\ast$. The following theorem,
which is the main
result of this paper, shows that $S^c(h)$, obtained by decategorifying and specializing the 
GIT $\Hscr$-schober, is, up to a suitable translation, equal to the solution sheaf of a GKZ system, whenever $h$ is non-resonant.

\medskip

Denote $\bar{\kkappa}=e^{2\pi i \kkappa}$, where $\kkappa$ is as introduced in \S\ref{subsec:GKZdislocus}, let $\bar{\tau}_{\bar{\kkappa}^{-1}}$ denote 
 the translation $T^*\to T^*$,  $x\mapsto \bar{\kkappa}^{-1} x$.

\begin{theorem}\label{thm:connect} 
Assume that $\alpha\in\mathfrak{h}= Y(H)_\CC$ is non-resonant. 
Then 
\[
\iP(\alpha)\cong \bar{\tau}_{\bar{\kkappa}^{-1}}^*S^c(e^{-2\pi i\alpha}). 
\] 
\end{theorem}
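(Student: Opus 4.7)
The plan is to identify both sides as intermediate extensions across $V(E_A)$ of their restrictions to $T^\ast\setminus V(E_A)$, and then to match the resulting local systems explicitly via the monodromy computations of Theorem \ref{th:mainth1} and Proposition \ref{lem:supportschobermonodromy1}. First I would reduce to the case $\Re\alpha\in\RR_{<0}A$: by Proposition \ref{prop:iPPgeneral}(\ref{it4:iPPgeneral}) combined with Lemma \ref{lem:generaliP}, $\iP(\alpha)$ depends on $\alpha$ only modulo $N=Y(H)$, and of course $h=e^{-2\pi i\alpha}$ does as well. So one may translate $\alpha$ by a suitable element of $Y(H)$ to fall into the range of validity of Theorem \ref{th:mainth1} without changing either side of the desired isomorphism.

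With this reduction, Corollary \ref{cor:intermediateextension} shows that $\iP(\alpha)\cong j_{!*}(j^\ast \iP(\alpha))$, where $j:T^\ast\setminus V(E_A)\hookrightarrow T^\ast$. To finish the proof it therefore suffices to prove
\begin{enumerate}
\item[(a)] $\bar{\tau}_{\bar{\kkappa}^{-1}}^\ast S^c(h)$ is itself the intermediate extension of its restriction to $T^\ast\setminus V(E_A)$;
\item[(b)] the two restrictions to $T^\ast\setminus V(E_A)$, viewed as local systems, are isomorphic.
\end{enumerate}
For (b), by Kite's theorem \ref{kite:intro} the translation $\bar{\tau}_{\bar{\kkappa}^{-1}}$ carries the stratification so that both local systems live on $(X(T)_\CC\setminus\Hscr_\CC)/X(T)$. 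Via the equivalence \eqref{eq:groupoideq} and Corollary \ref{cor:monrep}, the monodromy of $\bar{\tau}_{\bar{\kkappa}^{-1}}^\ast S^c(h)$ is the $\Pi(\Hscr)\rtimes X(T)$-representation $\Res(K^0_h(\bar{\Sscr}^c))$ computed explicitly in Proposition \ref{lem:supportschobermonodromy1}, while the monodromy of $\iP(\alpha)$ is the representation computed in Theorem \ref{th:mainth1}. A direct inspection of the formulas in these two statements shows that under the identification matching the Mellin--Barnes basis element $\hat{M}_\chi$ with the class $[\bar{P}_{\iota\chi}]_h$ (with the appropriate sign/orientation dictated by $\Delta=-\Delta$, so that indices $\chi\in\Lscr_C$ on the analytic side correspond to indices $-\chi\in\Lscr_{-C}$ on the schober side), the generating morphisms $\ggamma_{C_1C_2}$ and $\mu_C$ act by the same linear combinations, with matching coefficients $(-1)^{|\sJ|+1}\prod_{j\in \sJ}e^{-2\pi i\gamma_j}$; this gives the desired isomorphism of local systems.

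The main obstacle, and the technical heart of the argument, is (a). To prove it I would follow the program outlined in Step \ref{step:step3} of the introduction. First, using the combinatorics of the projective generators $\bar{P}_{\iota\chi}$ and the explicit description of $\gamma_{C'C}$ in \eqref{eq:gamma}, one shows that the specialised Kapranov--Schechtman datum $K^0_h(\bar{\Sscr}^c)$ has surjective generalisation maps $\gamma$, so that $\bar{\tau}_{\bar{\kkappa}^{-1}}^\ast S^c(h)$ has no quotient supported on $\Hscr_\CC/X(T)$. Second, to exclude subobjects supported on the arrangement, I would invoke the near-selfduality provided by Proposition \ref{prop:dual} together with the crucial isomorphism $K^0_h(\bar{\Sscr}^f)\cong K^0_h(\bar{\Sscr}^c)$ of Proposition \ref{prop:duality} — the latter being precisely where the non-resonance of $h$ (equivalently, of $\alpha$) is used, via Lemma \ref{lem:fanvsgit}. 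Dualising the no-quotient statement then rules out subobjects on the arrangement, establishing the intermediate extension property (a). Combining (a) and (b) with Corollary \ref{cor:intermediateextension} completes the proof, since the intermediate extension of a local system is uniquely determined by that local system.
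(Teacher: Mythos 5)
Your proposal is correct and follows essentially the same route as the paper: translate $\alpha$ by $N$ into the range $\sum_i\RR_{<0}a_i$ (Proposition \ref{prop:iPPgeneral}(\ref{it4:iPPgeneral}) and Lemma \ref{lem:generaliP}), match the monodromy representations on $T^\ast\setminus V(E_A)$ via Theorem \ref{th:mainth1} and Proposition \ref{lem:supportschobermonodromy1} (the paper packages this as Proposition \ref{cor:connect}, including the sign $\hat{M}_\chi\mapsto[\bar{P}_{-\iota\chi}]_h$ you flag), and verify the intermediate-extension property on both sides (Corollaries \ref{cor:intermediateextension} and \ref{cor:interext}, the latter built precisely from the combinatorics of Lemma \ref{lem:union0} and the near-selfduality of Propositions \ref{prop:dual} and \ref{prop:duality} as you outline). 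The only micro-detail you elide is that the selfduality argument requires applying Proposition \ref{prop:duality} to $h^{-1}$ rather than $h$, which is harmless since $H^{\nres}$ is stable under inversion.
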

The proof of this theorem will be given in the rest of this section. 
As before $V(E_A)\subset T^\ast$ is the GKZ discriminant locus defined in \S\ref{subsec:GKZdislocus} and $j:T^*\setminus V(E_A)\hookrightarrow T^*$ is the inclusion.
\subsection{Perverse sheaves as intermediate extension}
\label{sec:intermediate}
We now show that $S^c(h)$ for $h\in H^{\nres}$ is obtained as the intermediate extension of its restriction to the regular locus, $T^*\setminus V(E_A)$.

\begin{corollary}\label{cor:interext}
One has $\bar{\tau}_{\bar{\kkappa}^{-1}}^*S^c(h)\cong j_{!*}(j^* \bar{\tau}_{\bar{\kkappa}^{-1}}^*S^c(h))$ on $T^*$.
\end{corollary}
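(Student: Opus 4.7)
The plan is to deduce Corollary \ref{cor:interext} by directly combining Theorem \ref{thm:connect} with Corollary \ref{cor:intermediateextension}. Indeed, both ingredients are already in place: the first identifies $\bar{\tau}_{\bar{\kkappa}^{-1}}^*S^c(e^{-2\pi i \alpha})$ with the GKZ perverse sheaf $\iP(\alpha)$ for non-resonant $\alpha$, while the second asserts that $\iP(\alpha)$ is the intermediate extension of its restriction to $T^\ast\setminus V(E_A)$.

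First I would lift $h$ to a non-resonant parameter in $\mathfrak{h}$. Since $h\in H^{\nres}$, the definition of non-resonance for elements of $H$ given in \S\ref{sec:nonresonance} says precisely that for any (equivalently: for some) branch of $\log h$, the element $(\log h)/(2\pi i)\in \mathfrak{h}$ is non-resonant. Setting $\alpha := -(\log h)/(2\pi i)$ (any branch), we have $\alpha\in\mathfrak{h}^{\nres}$ and $e^{-2\pi i \alpha}=h$.

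Next, I apply Theorem \ref{thm:connect} with this $\alpha$ to obtain
\[
\iP(\alpha)\cong \bar{\tau}_{\bar{\kkappa}^{-1}}^\ast S^c(h).
\]
Since $\alpha$ is non-resonant, Corollary \ref{cor:intermediateextension} gives $\iP(\alpha)\cong j_{!\ast}(j^\ast \iP(\alpha))$. Pulling the first isomorphism through $j^\ast$ and $j_{!\ast}$ (both of which are functorial on the relevant categories of perverse sheaves) yields
\[
\bar{\tau}_{\bar{\kkappa}^{-1}}^\ast S^c(h)\cong \iP(\alpha)\cong j_{!\ast}(j^\ast\iP(\alpha))\cong j_{!\ast}\bigl(j^\ast \bar{\tau}_{\bar{\kkappa}^{-1}}^\ast S^c(h)\bigr),
\]
which is exactly the desired statement.

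There is essentially no obstacle here: the corollary is a cosmetic repackaging of the main theorem together with the intermediate extension property of non-resonant GKZ systems. The only point worth double-checking is the indexing convention for the sign in $e^{-2\pi i\alpha}$ versus $e^{2\pi i\alpha}$, and the fact that non-resonance is invariant under $\alpha\mapsto -\alpha$ (which is clear since the real arrangement $\Iscr$ in \S\ref{sec:nonresonance} is symmetric about the origin modulo $Y(H)$). Thus the choice of branch of $\log h$ and of sign is immaterial.
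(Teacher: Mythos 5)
Your argument is circular. Corollary~\ref{cor:interext} is not a consequence of Theorem~\ref{thm:connect}; it is one of the two key inputs to its proof. Look at \S\ref{subsec:perversecomparison}: the proof of Theorem~\ref{thm:connect} concludes with ``Since $\iP(\alpha)$ and $S^c(e^{-2\pi i\alpha})$ are intermediate extensions of the restrictions to $T^*\setminus V(E_A)$ by Corollaries~\ref{cor:intermediateextension}, \ref{cor:interext}, the result follows.'' The logical flow in the paper is: Proposition~\ref{cor:connect} (comparison of local systems for $\alpha$ in a restricted range, via monodromy computations), plus Corollaries~\ref{cor:intermediateextension} and~\ref{cor:interext} (both sides are intermediate extensions), together imply Theorem~\ref{thm:connect}. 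Invoking Theorem~\ref{thm:connect} to prove Corollary~\ref{cor:interext} reverses this dependency and proves nothing.

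This is also why the content of the corollary matters: it must be established purely from the structure of the GIT schober, \emph{without} reference to the GKZ system. This is what makes the eventual identification of the two perverse sheaves nontrivial. The actual argument (outlined as Step~\ref{step:step3} in the introduction) runs as follows. First, Lemma~\ref{lem:union0} gives the combinatorial identity $\Lscr_C=\bigcup_{C_0>C,\,C_0\in\Cscr^0}\Lscr_{C_0}$ for $C\notin\Cscr^0$, which translates to $\bar{E}^c_C=\sum_{C<C^0}\bar{E}^c_{C^0}$ and persists after specialising at $h$; this forces ${}^pj_!(j^*\bar{\tau}^*_{\bar{\kkappa}^{-1}}S^c(h))\to \bar{\tau}^*_{\bar{\kkappa}^{-1}}S^c(h)$ to be a perverse epimorphism (no quotient supported on the discriminant). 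Second, Proposition~\ref{prop:duality} (which relies on $h\in H^{\nres}$ to show $K^0_h(\bar{\Sscr}^f)\to K^0_h(\bar{\Sscr}^c)$ is an isomorphism) together with the duality~\eqref{eq:dualK10} yields $\DD(K^0_h(\bar{\Sscr}^c))\cong (K^0_{h^{-1}}(\bar{\Sscr}^c))^-$; applying the first conclusion to $h^{-1}$ and dualising shows that $\bar{\tau}^*_{\bar{\kkappa}^{-1}}S^c(h)\to {}^pj_*(j^*\bar{\tau}^*_{\bar{\kkappa}^{-1}}S^c(h))$ is a perverse monomorphism (no subsheaf supported on the discriminant). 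These two facts characterise the intermediate extension. You need to reproduce this argument, not shortcut through the theorem it feeds.
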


\begin{proof}
Write $\bar{E}^c_C=K^0(\bar{\Escr}^c_C)$.
From Lemma \ref{lem:union0} below, for $C\in \Cscr\setminus \Cscr^0$ we have $\bar{E}^c_{C}=\sum_{C<C^0\in\Cscr^0}\bar{E}^c_{C^0}$, which remains true after the specialisation.  
Hence, $S^c(h)$ has no \emph{perverse quotient sheaf} supported on
$\bar{\zeta} V(E_A)$.  Since the cokernel of the map
${}^pj_!(j^* \bar{\tau}_{\bar{\kkappa}^{-1}}^*S^c(h))\to \bar{\tau}_{\bar{\kkappa}^{-1}}^*S^c(h)$ is supported on $V(E_A)$ it follows that 
this must be an epimorphism of perverse
sheaves. Applying Proposition \ref{prop:duality} with $h$ replaced by $h^{-1}$
(using the
assumption that $h\in H^{\nres}$ and hence $h^{-1}$ in $H^{\nres}$), it follows that $S^c(h)$ has no
\emph{perverse subsheaf} supported on $\bar{\zeta}V(E_A)$, which implies that
$\bar{\tau}_{\bar{\kkappa}^{-1}}^*S^c(h)\to {}^pj_*(j^*\bar{\tau}_{\bar{\kkappa}^{-1}}^*S^c(h))$ is also a monomorphism.  Hence
\[
j_{!*}(j^*\bar{\tau}_{\bar{\kkappa}^{-1}}^*S^c(h)):={}^p\im ({}^pj_!(j^*\bar{\tau}_{\bar{\kkappa}^{-1}}^*S^c(h))\r {}^pj_*(j^*\bar{\tau}_{\bar{\kkappa}^{-1}}^*S^c(h)))
\cong \bar{\tau}_{\bar{\kkappa}^{-1}}^*S^c(h).\qedhere
\]  
\end{proof}

We have used the following combinatorial lemma. 
\begin{lemma}\label{lem:union0}
Let $C\in \Cscr\setminus \Cscr^0$. Then 
\begin{align*}
\Lscr_C&=\bigcup_{C_0\in \Cscr^0,C_0>C}\Lscr_{C_0},\\
\bar{\Lscr}_C&=\bigcup_{C_0\in \Cscr^0,C_0>C}\bar{\Lscr}_{C_0}.
\end{align*}
\end{lemma}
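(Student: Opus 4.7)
The plan is to establish the first equality; the second then follows by applying $B^{-1}$ and using that $\bar\Lscr_{C'}=B^{-1}(\Lscr_{C'})$ by \eqref{eq:barL}.

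The inclusion $\supset$ is immediate: for every chamber $C_0>C$, \eqref{eq:inclusion_Lscr_C} gives $\Lscr_{C_0}\subset \Lscr_C$.

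For the inclusion $\subset$, the first observation I will use is that since $\Delta=-\Delta$ (Remark \ref{eq:setup}) and $\Lscr_{C'}$ is independent of the chosen base point in $C'$, for any face $C'$ and any $\chi\in X(T)$ the membership $\chi\in\Lscr_{C'}$ is equivalent to the geometric containment $C'\subset\chi+\Delta$. So fixing $\chi\in\Lscr_C$, what I need is a chamber $C_0$ with $C<C_0$ and $C_0\subset\chi+\Delta$. The strategy is to produce $C_0$ locally near a point $p\in C$ using the tangent cone of $\chi+\Delta$ at $p$.

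Concretely, pick $p\in C$. The chambers of $\Cscr$ whose closure contains $p$ are exactly those $C_0\in\Cscr^0$ with $C\le C_0$, and since $\dim C<n$ this forces $C<C_0$. Such a chamber is fixed by the signs, at a nearby test point $p+\varepsilon v$, of all hyperplanes of $\Hscr$ through $p$. Let $h_1,\dots,h_k\in\Hscr$ be the facet-defining hyperplanes of $\chi+\Delta$, normalized so that $\chi+\Delta=\{h_i\ge 0\}$. The tangent cone
\[
T_p(\chi+\Delta)=\{v\in X(T)_\RR\mid h_i(v)\ge 0\text{ for all }i\text{ with }h_i(p)=0\}
\]
has non-empty interior since $\chi+\Delta$ is full-dimensional. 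I choose $v$ in this interior and generic with respect to the remaining (finitely many) hyperplanes of $\Hscr$ through $p$; then $p+\varepsilon v$ lies in a unique chamber $C_0$ for $0<\varepsilon\ll 1$, and on this chamber every $h_i$ with $h_i(p)=0$ is strictly positive. The other $h_i$ (those with $h_i(p)>0$) are automatically positive on $C_0$ since they lie in $\Hscr$ and therefore have constant sign on $C_0$, equal to their sign at $p$. Hence $C_0\subset\chi+\Delta$, which is what was needed.

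The only step requiring care is the equivalence $\chi\in\Lscr_{C'}\Leftrightarrow C'\subset\chi+\Delta$ together with the passage from local to global sign information on $C_0$; the latter is automatic because each facet-defining hyperplane of $\chi+\Delta$ lies in $\Hscr$, so it never changes sign inside a single chamber. No serious combinatorial obstacle appears, and the argument is uniform in whether we work with $\Lscr$ or $\bar\Lscr$.
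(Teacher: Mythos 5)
Your proof is correct and takes essentially the same route as the paper: the inclusion $\supset$ from \eqref{eq:inclusion_Lscr_C}, the observation $\Delta=-\Delta$ to translate membership in $\Lscr_{C'}$ into a geometric condition on $\chi+\Delta$, and the full-dimensionality of $\chi+\Delta$ to produce a chamber $C_0>C$ meeting it. The paper is more minimalist at the last step: rather than showing $C_0\subset\chi+\Delta$ (which requires your extra observation that the facet hyperplanes of $\chi+\Delta$ lie in $\Hscr$ and hence cannot change sign inside a chamber), it simply picks one point $\rho_0\in(\chi+\Delta)\cap C_0$ and concludes $\chi\in\rho_0+\Delta\cap X(T)=\Lscr_{C_0}$, since $\Lscr_{C_0}$ may be computed from any base point in $C_0$. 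Your tangent-cone argument is a more explicit version of the paper's one-line "since $\chi+\Delta$ is convex of dimension $n$ it must intersect some $C_0>C$"; both are sound, but the strengthened conclusion $C_0\subset\chi+\Delta$ is not needed for the lemma.
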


\begin{proof}
The second claim follows from the first one by taking inverse images under~$B$. So we now check the first claim.
The inclusion $\bigcup_{C_0\in \Cscr^0,C_0>C}\Lscr_{C_0}\subset \Lscr_C$ follows from \eqref{eq:inclusion_Lscr_C}.

To show the converse let $\rho\in C$. Assume that
$\chi\in \Lscr_{C}=(\rho+\Delta)\cap X(T)$. 
We have
$\rho\in \chi+\Delta$ (using $\Delta=-\Delta$). Since $\chi+\Delta$ is convex of dimension
$n$
it must intersect some $C_0>C$ for $C_0\in \Cscr^0$.
Let $\rho_0 \in (\chi+\Delta)\cap C_0$. Then $\chi\in \rho_0+\Delta$ (using again $\Delta=-\Delta$)
and hence $\chi\in \Lscr_{C_0}$.
\end{proof}

\subsection{Comparison of the monodromy}\label{subsec:comparemonodromy}
We first compare the corresponding  local systems on 
$T^*\setminus V(E_A)=(X(T)_\CC\setminus (\Hscr_\CC+\zeta))/X(T)$. 
\begin{proposition}\label{cor:connect}
  Assume that $\alpha\in \sum_i \RR_{<0}
  a_i$ 
  is non-resonant. 
Then  $j^*\iP(\alpha)\cong j^*\bar{\tau}^*_{\bar{\kkappa}^{-1}}S^c(e^{-2\pi i\alpha})$. 
The isomorphism corresponds to  the 
 isomorphism of the corresponding 
$\Pi(\Hscr)\rtimes X(T)$-representations $M$ (Theorem \ref{th:mainth1}), $N_h$ (Proposition \ref{lem:supportschobermonodromy1}) given by 
$
\hat{M}^\alpha_\chi\mapsto [\bar{P}_{-\iota\chi}]_h
$ for $h=e^{-2\pi i\alpha}$. 
\end{proposition}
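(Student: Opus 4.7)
The plan is to reduce the isomorphism to a comparison of the explicit $\Pi(\Hscr) \rtimes X(T)$-representations that describe the monodromy on each side, and then exhibit the intertwiner on generators.

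Since $\alpha$ is non-resonant and $\alpha \in \sum_i \RR_{<0} a_i$, both Theorem \ref{th:mainth1} and Proposition \ref{lem:supportschobermonodromy1} are available. By Corollary \ref{cor:intermediateextension} combined with Lemma \ref{lem:generaliP}, the perverse sheaf $j^\ast \iP(\alpha)$ is (a shift of) the local system of solutions of the GKZ system on $T^\ast \setminus V(E_A)$; via the equivalence \eqref{eq:groupoids} applied after the translation $\bar{\tau}_{\bar{\kkappa}^{-1}}$ (which aligns $V(E_A) = \bar{\kkappa}\Hscr_\CC / X(T)$ with $\Hscr_\CC / X(T)$), Theorem \ref{th:mainth1} identifies this local system with the representation $M$ whose value at $C \in \Cscr^0$ is the basis $\Mscr_C = \{\hat M_\chi : \chi \in \Lscr_C\}$. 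Dually, by Corollary \ref{cor:monrep} applied to $\tilde K^0_h(\bar{\Sscr}^c)$, the restriction $j^\ast \bar{\tau}^\ast_{\bar{\kkappa}^{-1}} S^c(h)$ corresponds to the representation $N_h = \Res(K^0_h(\bar{\Sscr}^c))$ described in Proposition \ref{lem:supportschobermonodromy1}, with basis $\{[\bar P_{\iota \nu}]_h : \nu \in \Lscr_{-C}\}$ at $C$.

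I would then define $\psi_C : M(C) \to N_h(C)$ by $\hat M_\chi \mapsto [\bar P_{-\iota\chi}]_h$. Because $W$ is quasi-symmetric, $\Delta = -\Delta$, so $\chi \in \Lscr_C$ iff $-\chi \in \Lscr_{-C}$; hence $\psi_C$ is a bijection between the two distinguished bases. To conclude, I would verify that $\psi = (\psi_C)_C$ intertwines the $\Pi(\Hscr) \rtimes X(T)$-actions. Compatibility with the translation generators $\mu_C$, $\mu \in X(T)$, is immediate: both Theorem \ref{th:mainth1}(3) and Proposition \ref{lem:supportschobermonodromy1}(3) shift the index by $\mu$ (respectively on $\hat M_\chi$ and on $[\bar P_{-\iota\chi}]_h$). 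The case $\chi \in \Lscr_{C_1}\cap \Lscr_{C_2}$ of the edge generator $\ggamma_{C_1 C_2}$ is also immediate, since both formulas reduce to the identity.

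The substantive check is the case $\chi \in \Lscr_{C_1}\setminus\Lscr_{C_2}$. Here I would compare the sums in Theorem \ref{th:mainth1}(2) and Proposition \ref{lem:supportschobermonodromy1}(2) term by term: both carry the sign $(-1)^{|\sJ|+1}$ and the same exponential factor $\prod_{j\in\sJ} e^{-2\pi i\gamma_j}$ (with $\gamma$ the unique lift of $\alpha$ with $\iota\gamma=0$). The heart of the matter is that the relevant subsets of $\{1,\dots,d\}$ are controlled by a single defining hyperplane through $C_0$: the antipodal swap $\chi \mapsto -\chi$ transports the shift $\chi+\sum_{j\in\sJ}b_j$ appearing on the GKZ side to the shift $-\iota\chi-\sum_{j\in\sJ}\iota b_j$ appearing on the schober side, and the $J$-sets occurring in the two formulas match under this swap. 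Once this sign- and index-bookkeeping is settled, $\psi$ is an isomorphism of $\Pi(\Hscr)\rtimes X(T)$-representations, and the claim of the proposition follows from Corollary \ref{cor:monrep}. The main obstacle is precisely this combinatorial comparison; all other ingredients are already in place from the preceding sections.
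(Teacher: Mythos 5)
Your proposal is correct and takes essentially the same route as the paper, whose proof consists of the single line ``This follows from Theorem~\ref{th:mainth1} and Proposition~\ref{lem:supportschobermonodromy1}.'' The only item you flag as a remaining obstacle---that the $J$-sets in the two formulas agree under the antipodal swap---is in fact immediate from definition~\eqref{eq:JC0C1}: if $L$ is an equation for the hyperplane spanned by $C_0$ that is positive on $C_2$, then $x\mapsto -L(-x)$ is an equation for the hyperplane spanned by $-C_0$ that is positive on $-C_2$ and has the same linear part $L_0$, so $J_{C_0C_2}=J_{-C_0,-C_2}$.
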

\begin{proof}
This follows from Theorem \ref{th:mainth1} and Proposition \ref{lem:supportschobermonodromy1}.
\end{proof}

\subsection{Proof of  Theorem \ref{thm:connect}}\label{subsec:perversecomparison}

Let $\alpha'\in (\alpha+N)\cap \sum_i \RR_{<0}a_i$. 
From Proposition \ref{prop:iPPgeneral}(4) and Proposition \ref{cor:connect} we obtain isomorphisms
\[
j^*\iP(\alpha)\cong j^*\iP(\alpha')\cong j^*\bar{\tau}^*_{\kkappa^{-1}}S^c(e^{-2\pi i\alpha'})=j^*\bar{\tau}^*_{\kkappa^{-1}}S^c(e^{-2\pi i\alpha}).
\]

Since $\iP(\alpha)$ and $S^c(e^{-2\pi i\alpha})$ are intermediate extensions of the restrictions to $T^*\setminus V(E_A)$ by Corollaries \ref{cor:intermediateextension}, \ref{cor:interext}, the result follows.

\begin{remark}\label{rem:referee}
The referee has suggested a simplification of our proof that we outline here. It would be worthwhile to  work out the details. 

To prove Theorem \ref{thm:connect} it suffices to show, besides Proposition
\ref{cor:connect} and the claim, proved in the first paragraph of
Corollary \ref{cor:interext}, that $S^c(h)$ has no perverse quotient
sheaf supported on $\bar{\zeta}V(E_A)$, 
that the sum of the multiplicites in the characteristic cycles of
$S^c(h)$ and the GKZ perverse sheaf $P(\alpha)$ coincide.

By the construction of the KS data \cite[\S(0.3)(d)]{KapranovSchechtman}, together with \cite[Theorem]{Takeuchi}, see also \cite[Proposition 1.3.3(b)]{KapranovSchechtmanShuffle}, the sum of multiplicites for $S^c(h)$  is given by $|\Lscr_{C_0}|$ where $C_0\in\Cscr$ is a point.

For the GKZ perverse sheaf the characteristic cycle is computed in \cite[\S2.1, Theorem 5]{GKZhyper}. The multiplicities are expressed in terms of some polytopes associated to the faces of the convex hull of $A$. The subtle point here is that in loc.cit. the GKZ system is considered as living on $\CC^N$, and not on $\TT^*$ as we do. Hence, the characteristic variety in loc.cit. has more irreducible components. Therefore the counting must be adapted appropriately (we have have not carried out this step).
%To proceed one must first find the c which faces giving components in $\TT^*$.  
\end{remark}

\appendix

\section{Descent for weakly equivariant $\Dscr$-modules}
In order to have a convenient reference in the body of the paper, we summarize a few facts on weakly equivariant $\Dscr$-modules. No originality is intended.
All schemes are separated of finite type over an algebraically closed field
of characteristic zero. The main result is Corollary \ref{cor:maincor}.
\subsection{Weakly equivariant $\Dscr$-modules}
\subsubsection{Generalities}
\label{sec:weq}
Let $Y/k$ be smooth scheme and let $G$ be a reductive group acting on $Y$. Put
$\mathfrak{g}=\Lie(G)$. Let $\alpha\in \mathfrak{g}^\ast$ be a character of $\mathfrak{g}$, i.e. $\alpha([\mathfrak{g},\mathfrak{g}])=0$.

If $\Mscr$ is a $G$-equivariant $\Oscr_Y$-module then differentiating the $G$-action yields a Lie algebra action\footnote{One way to obtain this action
is to extend the base ring to $k[\epsilon]/(\epsilon^2)$ and to use that $\mathfrak{g}=\ker(G(k[\epsilon])\r G(k))$. The $\mathfrak{g}$ action
is then obtained from the $G(k[\epsilon])$-action on $\Mscr[\epsilon]$.}
\[
\gamma:\mathfrak{g}\otimes\Mscr\r\Mscr.
\]
If $\Mscr=\Oscr_Y$ then this action is by derivations and hence we obtain an ``anchor'' morphism $\rho:\mathfrak{g}\r \Gamma(Y,\Tscr_Y)$.
Finally the $\mathfrak{g}$-action and the $\Oscr_Y$-action on $\Mscr$ are related by the Leibniz identity.

\begin{definition} \label{def:character}
A \emph{weakly $G$-equivariant} $\Dscr_Y$-module  is a quasi-coherent $(G,\Dscr_Y)$-module $\Mscr$. For such $\Mscr$ we say that it has character $\alpha$
if
for every $v\in \mathfrak{g}$ and for every local section $m$ of $\Mscr$ one has $\gamma(v,m)=(\rho(v)-\alpha(v))m$ where on the right-hand side we
used the action obtained via the inclusion $\Oscr_Y\oplus \Tscr_Y\subset \Dscr_Y$.
\end{definition}
We will denote the category of weakly $G$-equivariant $\Dscr_Y$-modules by $\Qch(G,\Dscr_Y)$. The full subcategory of weakly $G$-equivariant $\Dscr_Y$-modules with character $\alpha$
is denoted by $\Qch_\alpha(G,\Dscr_Y)$.

\subsection{The canonical weakly equivariant $\Dscr$-module}
\label{sec:canonical}
We keep notation as in \S\ref{sec:weq}. Here and below we write
$\mathfrak{g}-\alpha(\mathfrak{g})$ for the vector space $\{v-\alpha(v)\mid v\in\mathfrak{g}\}\subset k\oplus \mathfrak{g}$. We identify $\mathfrak{g}-\alpha(\mathfrak{g})$ with the corresponding
sections of $\Oscr_Y\oplus \Tscr_Y$, via the map $\rho$ introduced above (which does not have to be injective).
\begin{lemma}\label{lem:A2} The $\Dscr_Y$-module $\Dscr_{Y,\alpha}=\Dscr_Y/\Dscr_Y(\mathfrak{g}-\alpha(\mathfrak{g}))$
is weakly $G$-equivariant with character $\alpha$.
\end{lemma}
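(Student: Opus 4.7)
The plan is to verify the two conditions making up Definition \ref{def:character} applied to $\Mscr=\Dscr_{Y,\alpha}$: first, that the subsheaf $\Dscr_Y(\mathfrak{g}-\alpha(\mathfrak{g}))$ is stable under the canonical $G$-action on $\Dscr_Y$, so that $\Dscr_{Y,\alpha}$ inherits the structure of a quasi-coherent $(G,\Dscr_Y)$-module by passing to the quotient; and second, that on $\Dscr_{Y,\alpha}$ the differentiated $\mathfrak{g}$-action satisfies the identity $\gamma(v,m)=(\rho(v)-\alpha(v))m$ on local sections.

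For the first point, recall that the $G$-action on $\Dscr_Y$ is induced from the $G$-action on $Y$: $G$ acts on $\Oscr_Y$ by pullback and on $\Tscr_Y$ by pushforward of vector fields, extending uniquely to an action on $\Dscr_Y$ by $\Oscr_Y$-compatible algebra automorphisms. Under this action, $g\cdot\rho(v)=\rho(\operatorname{Ad}(g)v)$ for $v\in\mathfrak{g}$. Since $\alpha$ vanishes on $[\mathfrak{g},\mathfrak{g}]$ and $G$ is reductive, $\alpha$ is $\operatorname{Ad}(G)$-invariant (which is automatic for connected $G$, and covers the cases needed in the body of the paper). Hence
\[
g\cdot(\rho(v)-\alpha(v))=\rho(\operatorname{Ad}(g)v)-\alpha(\operatorname{Ad}(g)v)\in\mathfrak{g}-\alpha(\mathfrak{g}),
\]
and, because $G$ acts by algebra automorphisms, the left $\Dscr_Y$-submodule generated by $\mathfrak{g}-\alpha(\mathfrak{g})$ is $G$-stable. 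This produces the $(G,\Dscr_Y)$-module structure on $\Dscr_{Y,\alpha}$.

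For the character condition, differentiating the $G$-action gives the $\mathfrak{g}$-action $\gamma(v,D)=[\rho(v),D]$ on $\Dscr_Y$: one checks this on the algebra generators $\Oscr_Y$ and $\Tscr_Y$ (where it follows from the definitions of $\rho$ and the Lie bracket of vector fields) and extends to $\Dscr_Y$ using that $[\rho(v),-]$ is a derivation. Thus for the class $\bar D\in\Dscr_{Y,\alpha}$ of any local section $D\in\Dscr_Y$ we obtain
\[
\gamma(v,\bar D)=\overline{[\rho(v),D]}=\overline{\rho(v)\cdot D}-\overline{D\cdot\rho(v)}=\overline{\rho(v)\cdot D}-\overline{D\cdot\alpha(v)}=(\rho(v)-\alpha(v))\bar D,
\]
the middle equality being valid because $D\cdot(\rho(v)-\alpha(v))\in\Dscr_Y(\mathfrak{g}-\alpha(\mathfrak{g}))$ vanishes in the quotient. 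This establishes the character identity and completes the argument.

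The only non-formal input is the $\operatorname{Ad}(G)$-invariance of $\alpha$ needed to make $\Dscr_Y(\mathfrak{g}-\alpha(\mathfrak{g}))$ globally $G$-stable; everything else is a direct unwinding of the natural $G$-structure on $\Dscr_Y$ and the definition of the quotient.
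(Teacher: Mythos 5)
Your proposal is correct and follows essentially the same route as the paper's proof. The paper declares the weak $G$-equivariance of $\Dscr_{Y,\alpha}$ to be ``clear,'' whereas you spell out the $\operatorname{Ad}(G)$-stability of the left ideal $\Dscr_Y(\mathfrak{g}-\alpha(\mathfrak{g}))$ using the $\operatorname{Ad}$-invariance of $\alpha$; the verification of the character identity via $\gamma(v,D)=[\rho(v),D]$ and reduction of $D\rho(v)$ to $D\alpha(v)$ modulo the ideal is identical.
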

\begin{proof} Put $\Mscr=\Dscr_Y/\Dscr_Y(\mathfrak{g}-\alpha(\mathfrak{g}))$. It is clear
that $\Mscr$ is weakly $G$-equivariant. Let $D$ be a local section of $\Dscr_Y$ and let
$\bar{D}$ be the corresponding local section of $\Mscr$.

One checks that the differential of the $G$-action on $\Dscr_Y$ is given by
$\gamma(v,D)= [\rho(v),D]$. Whence we also have $\gamma(v,\bar{D})=\overline{[\rho(v),D]}$. 
So we have to prove for all $v\in \mathfrak{g}$,
$\overline{[\rho(v),D]}=\overline{(\rho(v)-\alpha(v))D}$.
We compute
\begin{align*}
[\rho(v),D]&=\rho(v)D-D\rho(v)\\
&\cong \rho(v)D-D\alpha(v)\qquad \text{modulo $\Dscr_Y(\mathfrak{g}-\alpha(\mathfrak{g}))$}\\
&=(\rho(v)-\alpha(v))D.\qedhere
\end{align*}
\end{proof}
\subsection{Descent for weakly equivariant $\Dscr$-modules}
\subsubsection{Principal homogeneous spaces}
In the rest of this appendix we keep the notations as in \S\ref{sec:weq} but now we consider a principal homogeneous $G$-space $\pi:Y\r X$.
We put $\Dscr_{X,\alpha}:=(\pi_\ast \Dscr_{Y,\alpha})^G$. Note that we have a short exact sequence
\begin{equation}
\label{eq:surjection}
0\r (\pi_\ast \Dscr_Y(\mathfrak{g}-\alpha(\mathfrak{g})))^G\r
(\pi_\ast \Dscr_Y)^G\r \Dscr_{X,\alpha}\r 0.
\end{equation}
\begin{lemma} $(\pi_\ast \Dscr_Y(\mathfrak{g}-\alpha(\mathfrak{g}))^G$ is a two sided
ideal in $(\pi_\ast \Dscr_Y)^G$. Hence
 multiplication of local sections in $\Dscr_{Y}$ induces a multiplication
on  $\Dscr_{X,\alpha}$ via \eqref{eq:surjection}. 
\end{lemma}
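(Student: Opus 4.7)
The plan is to establish the left and right ideal properties separately; the former is essentially automatic, while the latter requires the commutation trick already recorded in the proof of Lemma \ref{lem:A2}.

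First I would note that $\Dscr_Y(\mathfrak{g}-\alpha(\mathfrak{g}))$ is by construction a left ideal in $\Dscr_Y$, so its pushforward $\pi_\ast \Dscr_Y(\mathfrak{g}-\alpha(\mathfrak{g}))$ is a left $(\pi_\ast \Dscr_Y)$-submodule of $\pi_\ast \Dscr_Y$. Taking $G$-invariants preserves this: for any $D \in (\pi_\ast \Dscr_Y)^G$ and $E \in (\pi_\ast \Dscr_Y(\mathfrak{g}-\alpha(\mathfrak{g})))^G$, the product $DE$ lies in $\pi_\ast \Dscr_Y(\mathfrak{g}-\alpha(\mathfrak{g}))$ and is manifestly $G$-invariant.

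The hard part will be the right ideal property, since the generators $\rho(v)-\alpha(v)$ of the left ideal sit on the wrong side of a typical element. Here I would use the key observation, already exploited in the proof of Lemma \ref{lem:A2}, that the differential of the $G$-action on $\Dscr_Y$ sends $v\in\mathfrak{g}$ to the commutator $[\rho(v),-]$. Consequently, any $G$-invariant local section $D$ of $\pi_\ast\Dscr_Y$ satisfies $[\rho(v),D]=0$ for every $v\in\mathfrak{g}$; since $\alpha(v)$ is a scalar it trivially commutes with $D$ as well, giving $(\rho(v)-\alpha(v))D = D(\rho(v)-\alpha(v))$ in $\pi_\ast\Dscr_Y$. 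Writing a local section of $\pi_\ast\Dscr_Y(\mathfrak{g}-\alpha(\mathfrak{g}))$ as a finite sum $E=\sum_i D_i(\rho(v_i)-\alpha(v_i))$ with $D_i\in\pi_\ast\Dscr_Y$ and $v_i\in\mathfrak{g}$, these two observations combine to yield
\[
E\cdot D \;=\; \sum_i D_i (\rho(v_i)-\alpha(v_i)) D \;=\; \sum_i D_i\, D\, (\rho(v_i)-\alpha(v_i)) \;\in\; \pi_\ast\Dscr_Y(\mathfrak{g}-\alpha(\mathfrak{g})),
\]
which is $G$-invariant whenever $E$ and $D$ are.

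Combining the two ideal properties shows $(\pi_\ast\Dscr_Y(\mathfrak{g}-\alpha(\mathfrak{g})))^G$ is a two sided ideal in $(\pi_\ast\Dscr_Y)^G$, and the induced multiplication on $\Dscr_{X,\alpha}$ via the short exact sequence \eqref{eq:surjection} is then a formal consequence of passing to the quotient of a ring by a two sided ideal.
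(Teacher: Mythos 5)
Your proof is correct and follows the same route as the paper's: the left-ideal property is automatic, and the right-ideal property comes down to the observation that $G$-invariance of $D$ forces $[\rho(v),D]=0$, so $(\rho(v)-\alpha(v))D = D(\rho(v)-\alpha(v))$. You spell out a few steps that the paper leaves implicit (writing a general element of the ideal as $\sum_i D_i(\rho(v_i)-\alpha(v_i))$ and then pushing $D$ past each generator), but the key idea is identical.
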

\begin{proof}
Let $v\in\mathfrak{g}$ and let $D$ be a local section of $(\pi_\ast \Dscr_Y)^G$. We have
to show that $(\rho(v)-\alpha(v))D$ is a local section of $(\pi_\ast \Dscr_Y(\mathfrak{g}-\alpha(\mathfrak{g}))^G$. This follows from the fact that the differentiated $G$-action on $\Dscr_Y$
is given by $v\mapsto [\rho(v),-]$ (as was already used in the proof of Lemma \ref{lem:A2}), and hence, since $D$ is $G$-invariant, we have $[\rho(v),D]=0$.
\end{proof}
We observe that $\Dscr_{Y,\alpha}$ is a $G$-equivariant $(\Dscr_Y,\pi^{-1}\Dscr_{X,\alpha})$-bimodule.

\begin{proposition} \label{prop:eq}
There are inverse equivalences
\begin{gather}
\Qch_\alpha(G,\Dscr_Y)\r \Qch(\Dscr_{X,\alpha}):\Mscr\mapsto (\pi_\ast \Mscr)^G,\label{eq:inv}\\
\Qch(\Dscr_{X,\alpha})\r \Qch_\alpha(G,\Dscr_Y) :\Nscr\r \Dscr_{Y,\alpha}\otimes_{\pi^{-1}\Dscr_{X,\alpha}} \pi^{-1}\Nscr.
\end{gather}
\end{proposition}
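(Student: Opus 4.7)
I would structure the proof in three steps: checking the two functors are well-defined, reducing the equivalence to classical $\Oscr$-module descent for the $G$-torsor $\pi:Y\to X$, and verifying a key compatibility identity for $\Dscr_{Y,\alpha}$ itself.

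For well-definedness of the first functor: the natural $(\pi_\ast\Dscr_Y)^G$-action on $(\pi_\ast\Mscr)^G$ factors through $\Dscr_{X,\alpha}$ via \eqref{eq:surjection}, because for a $G$-invariant section $m$ and $v\in\mathfrak{g}$ the character-$\alpha$ condition combined with $\gamma(v,m)=0$ gives $(\rho(v)-\alpha(v))m=0$. Conversely, $\Dscr_{Y,\alpha}$ carries a canonical right $\pi^{-1}\Dscr_{X,\alpha}$-module structure: well-definedness of right multiplication modulo the defining left ideal requires exactly that $G$-invariant representatives $D'$ satisfy $(\rho(v)-\alpha(v))D'\in\Dscr_Y(\mathfrak{g}-\alpha(\mathfrak{g}))$, which follows from essentially the same computation as in Lemma \ref{lem:A2}. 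Thus $\Dscr_{Y,\alpha}$ is a $G$-equivariant $(\Dscr_Y,\pi^{-1}\Dscr_{X,\alpha})$-bimodule of character $\alpha$, and these properties pass to $\Dscr_{Y,\alpha}\otimes_{\pi^{-1}\Dscr_{X,\alpha}}\pi^{-1}\Nscr$.

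For the equivalence itself, I would invoke classical torsor descent: since $G$ is reductive in characteristic zero the functor $(-)^G$ is exact, and since $\pi:Y\to X$ is faithfully flat and affine, the functors $\pi^\ast=\Oscr_Y\otimes_{\pi^{-1}\Oscr_X}\pi^{-1}(-)$ and $(\pi_\ast(-))^G$ are inverse equivalences between $\Qch(X)$ and the category of $G$-equivariant quasi-coherent $\Oscr_Y$-modules. The plan is to identify the two functors of the proposition with this classical pair after forgetting the $\Dscr$-structures. The needed input is the $G$-equivariant $\Oscr_Y$-module isomorphism
\begin{equation*}
\Dscr_{Y,\alpha}\otimes_{\pi^{-1}\Dscr_{X,\alpha}}\pi^{-1}\Nscr\;\cong\;\Oscr_Y\otimes_{\pi^{-1}\Oscr_X}\pi^{-1}\Nscr,
\end{equation*}
natural in $\Nscr\in\Qch(\Dscr_{X,\alpha})$. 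With this in hand, one composite is identified with $(\pi_\ast\pi^\ast\Nscr)^G\cong\Nscr$, and the natural map $\pi^\ast(\pi_\ast\Mscr)^G\to\Mscr$ for the reverse composite is the descent counit, hence an isomorphism.

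The core technical content is therefore the descent identity for $\Dscr_{Y,\alpha}$ itself, namely
\begin{equation*}
\Dscr_{Y,\alpha}\;\cong\;\Oscr_Y\otimes_{\pi^{-1}\Oscr_X}\pi^{-1}\Dscr_{X,\alpha}
\end{equation*}
as $G$-equivariant $\Oscr_Y$-modules. Since $\Dscr_{Y,\alpha}\in\Qch_\alpha(G,\Dscr_Y)$ by Lemma \ref{lem:A2} and $(\pi_\ast\Dscr_{Y,\alpha})^G=\Dscr_{X,\alpha}$ by definition, this follows formally from the $\Oscr$-module descent equivalence applied to $\Dscr_{Y,\alpha}$. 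The main obstacle is to verify that under this identification the right $\pi^{-1}\Dscr_{X,\alpha}$-module structure on $\Dscr_{Y,\alpha}$ transports to the obvious one on the right-hand side; I would check this étale-locally on $X$ via a trivialization $Y\cong X\times G$, where the splitting $\Tscr_Y\cong\pi^\ast\Tscr_X\oplus(\mathfrak{g}\otimes\Oscr_Y)$ of the relative tangent sequence isolates the defining quotient in the $G$-factor and reduces the claim to the explicit product decomposition $\Dscr_{Y,\alpha}\cong\Dscr_X\boxtimes\bigl(\Dscr_G/\Dscr_G(\mathfrak{g}-\alpha(\mathfrak{g}))\bigr)$.
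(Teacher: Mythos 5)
Your proposal is correct, but it follows a genuinely different route from the paper's proof. The paper argues directly: it first checks, reducing to the affine case, that $(\pi_\ast(\Dscr_{Y,\alpha}\otimes_{\pi^{-1}\Dscr_{X,\alpha}}\pi^{-1}\Nscr))^G=(\pi_\ast\Dscr_{Y,\alpha})^G\otimes_{\Dscr_{X,\alpha}}\Nscr=\Nscr$ (this is where exactness of $(-)^G$, i.e.\ reductivity, enters), and then, for the other composite, takes the kernel and cokernel of the canonical map $\Dscr_{Y,\alpha}\otimes_{\pi^{-1}\Dscr_{X,\alpha}}\pi^{-1}(\pi_\ast\Mscr)^G\to\Mscr$, kills their invariant direct images using the first computation and exactness, and concludes that they vanish by descent. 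You instead reduce everything to classical $\Oscr$-module descent along the torsor via the single assertion that $\Dscr_{Y,\alpha}\cong\Oscr_Y\otimes_{\pi^{-1}\Oscr_X}\pi^{-1}\Dscr_{X,\alpha}$ compatibly with the right $\pi^{-1}\Dscr_{X,\alpha}$-structure, and transport the unit and counit back along the forgetful functors. This works, and makes the geometric content (that $\Dscr_{Y,\alpha}$ descends to $\Dscr_{X,\alpha}$ as an equivariant bimodule) more explicit than in the paper; moreover it needs less than you think: the descent counit $\Oscr_Y\otimes_{\pi^{-1}\Oscr_X}\pi^{-1}\Dscr_{X,\alpha}\to\Dscr_{Y,\alpha}$, $f\otimes D\mapsto fD$, is already left $\Oscr_Y$-linear, $G$-equivariant and right $\pi^{-1}\Dscr_{X,\alpha}$-linear (right multiplication by invariant operators commutes with the left $\Oscr_Y$-action), and it is an isomorphism exactly by the descent equivalence applied to the equivariant quasi-coherent module $\Dscr_{Y,\alpha}$ together with $(\pi_\ast\Dscr_{Y,\alpha})^G=\Dscr_{X,\alpha}$. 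So the \'etale-local trivialization you propose (which would additionally require checking that the formation of $\Dscr_{X,\alpha}$ commutes with \'etale base change on $X$) can be avoided, and on your route reductivity is not needed at all. If you write this up, make two small points explicit: that your counit agrees with the descent counit (both send $\bar{D}\otimes m\mapsto Dm$ for invariant $m$, using $(\rho(v)-\alpha(v))m=0$), and that a morphism in $\Qch_\alpha(G,\Dscr_Y)$ or $\Qch(\Dscr_{X,\alpha})$ which is an isomorphism of underlying $\Oscr$-modules is an isomorphism there, so the transport along the (conservative) forgetful functors is legitimate.
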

\begin{proof}
We have for $\Nscr\in \Qch(\Dscr_{X,\alpha})$:
\[
(\pi_\ast(\Dscr_{Y,\alpha}\otimes_{\pi^{-1}\Dscr_{X,\alpha}} \pi^{-1}\Nscr))^G=(\pi_\ast\Dscr_{Y,\alpha})^G \otimes_{\Dscr_{X,\alpha}} \Nscr=\Nscr
\]
(to check this it is convenient to reduce to the case that $X$, and hence $Y$ are affine).

Now assume $\Mscr\in \Qch_\alpha(G,\Dscr_Y)$. We have to prove that the canonical map
\[
\Dscr_{Y,\alpha}\otimes_{\pi^{-1} \Dscr_{X,\alpha}} \pi^{-1}(\pi_\ast \Mscr)^G\r \Mscr
\]
is an isomorphism. Let $\Kscr$, $\Cscr$ be its kernel and cokernel. Applying the exact functor $\pi_\ast(-)^G$ we obtain by the first part of the proof  $(\pi_\ast \Kscr)^G=(\pi_\ast \Cscr)^G=0$.
It then follows by descent that $\Kscr=\Cscr=0$.
\end{proof}
\subsubsection{The special case that $X$ is a point}
\begin{lemma} \label{lem:point}
Assume $Y=G$ and $X=\Spec k$. Then the inclusion $\Oscr_G\hookrightarrow \Dscr_G$ induces an isomorphism $\Oscr_G\cong \Dscr_{G,\alpha}$. In particular $\Dscr_{X,\alpha}=(\pi_\ast \Oscr_G)^G=\Oscr_X$.
\end{lemma}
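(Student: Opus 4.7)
The plan is to produce an explicit direct-sum decomposition $\Dscr_G = \Oscr_G \oplus \Dscr_G(\mathfrak{g}-\alpha(\mathfrak{g}))$, from which both assertions follow. The key structural input is a PBW-type decomposition of $\Dscr_G$. Since the action of $G$ on itself is principal, the anchor $\rho:\mathfrak{g}\to \Gamma(G,\Tscr_G)$ is injective and the fundamental vector fields $\rho(\xi_1),\ldots,\rho(\xi_n)$ (for a basis $\xi_1,\ldots,\xi_n$ of $\mathfrak{g}$) trivialize $\Tscr_G$. The standard argument with the order filtration then gives that ordered monomials $\rho(\xi)^I$ form a free left $\Oscr_G$-basis of $\Dscr_G$; equivalently, multiplication inside $\Dscr_G$ induces a left $\Oscr_G$-linear isomorphism
\[
\Oscr_G \otimes_k U(\mathfrak{g})\;\xrightarrow{\sim}\;\Dscr_G,
\]
where $U(\mathfrak{g})$ is identified with the subalgebra of $\Dscr_G$ generated by $\rho(\mathfrak{g})$.

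Next I would set $\tilde{\rho}(v):=\rho(v)-\alpha(v)$ and $\tilde{\mathfrak{g}}:=\tilde\rho(\mathfrak{g})\subset \Dscr_G$, and exploit the hypothesis $\alpha([\mathfrak{g},\mathfrak{g}])=0$. Because the $\alpha(v)$ are scalars they commute with everything, so
\[
[\tilde\rho(v),\tilde\rho(w)]=[\rho(v),\rho(w)]=\pm\rho([v,w])=\pm\tilde\rho([v,w]),
\]
the last equality using $\alpha([v,w])=0$. Hence $\tilde{\mathfrak{g}}$ is a Lie subalgebra of $\Dscr_G$ isomorphic (possibly with opposite bracket) to $\mathfrak{g}$. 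Because the change of basis from the monomials $\rho(\xi)^I$ to $\tilde\rho(\xi)^I$ is unitriangular with respect to the order filtration, I also obtain a twisted PBW isomorphism $\Oscr_G \otimes_k U(\tilde{\mathfrak{g}}) \xrightarrow{\sim} \Dscr_G$ via multiplication.

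Under this identification the left ideal $\Dscr_G\cdot(\mathfrak{g}-\alpha(\mathfrak{g})) = \Dscr_G\cdot\tilde{\mathfrak{g}}$ becomes $\Oscr_G\otimes_k U(\tilde{\mathfrak{g}})_+$, where $U(\tilde{\mathfrak{g}})_+ = U(\tilde{\mathfrak{g}})\cdot \tilde{\mathfrak{g}}$ is the augmentation ideal. Crucially the brackets in $U(\tilde{\mathfrak{g}})$ land inside $\tilde{\mathfrak{g}}$ (no scalar term appears, precisely because $\alpha$ kills $[\mathfrak{g},\mathfrak{g}]$), so $U(\tilde{\mathfrak{g}})/U(\tilde{\mathfrak{g}})_+ = k$. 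Therefore
\[
\Dscr_{G,\alpha} \;\cong\; \Oscr_G\otimes_k k \;=\; \Oscr_G,
\]
and this isomorphism is visibly induced by the inclusion $\Oscr_G\hookrightarrow \Dscr_G$ (which corresponds to $f\mapsto f\otimes 1$ under the twisted PBW, and survives the quotient as the identity on $\Oscr_G$). The ``In particular'' then follows immediately by taking $G$-invariants: $(\pi_\ast\Oscr_G)^G = \Oscr_X$ because $\pi$ is a $G$-torsor over $\Spec k$.

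The only genuinely technical point is verifying the unitriangular change of basis that upgrades the standard PBW $\Dscr_G \cong \Oscr_G\otimes_k U(\mathfrak{g})$ to its twisted version $\Dscr_G \cong \Oscr_G\otimes_k U(\tilde{\mathfrak{g}})$; once this is in place, the rest is formal bookkeeping with the augmentation ideal.
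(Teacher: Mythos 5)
Your proof is correct and rests on the same two ingredients the paper uses: the anchor map $\Oscr_G\otimes_k\mathfrak g\to\Tscr_G$ being an isomorphism, and the order filtration on $\Dscr_G$ (which underlies both the PBW basis you invoke and the unitriangularity of the change of basis to the shifted generators $\rho(v)-\alpha(v)$). The paper's proof simply passes to the associated graded and reads off $\gr\Dscr_{G,\alpha}=\Oscr_G$, whereas you unfold the same filtration argument into an explicit twisted-PBW direct-sum decomposition $\Dscr_G=\Oscr_G\oplus\Dscr_G(\mathfrak g-\alpha(\mathfrak g))$; this is a more explicit rendering of the same idea rather than a genuinely different route.
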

\begin{proof} The anchor map $\Oscr_G\otimes_k \mathfrak{g}\r \Tscr_G$
  is an isomorphism in this case. If we filter $\Dscr_G$ by order of
  differential operators we obtain from this
  $\gr \Dscr_{G,\alpha}=\Oscr_G$. This yields the claim in the lemma.
\end{proof}
\begin{corollary} \label{cor:unit} The equivalences in Proposition \ref{prop:eq} specialize to an equivalence $\Qch_\alpha(G,\Dscr_G)\cong \Mod(k)$. Moreover the functor  $\Qch_\alpha(G,\Dscr_G)\r\Mod(k)$
defined by \eqref{eq:inv}
is naturally isomorphic to $i_e^\ast$ where $e\in G$ is the unit element.
\end{corollary}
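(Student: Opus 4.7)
The plan is to carry out two essentially independent steps. First I would establish the equivalence $\Qch_\alpha(G,\Dscr_G)\simeq \Mod(k)$ by direct specialization: applying Proposition \ref{prop:eq} to the principal homogeneous $G$-space $\pi:G\to \Spec k$ and invoking Lemma \ref{lem:point} --- which identifies $\Dscr_{X,\alpha}=(\pi_\ast \Oscr_G)^G=\Oscr_X=k$ --- gives $\Qch(\Dscr_{X,\alpha})=\Mod(k)$ and hence the desired equivalence through \eqref{eq:inv}.

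For the second assertion I would exhibit the natural transformation
\[
\eta_\Mscr:(\pi_\ast\Mscr)^G\longrightarrow i_e^\ast\Mscr,\qquad m\longmapsto m(e),
\]
given by evaluation of a $G$-invariant global section at the identity, and check it is a natural isomorphism by testing on objects in the essential image of the quasi-inverse $\Psi:\Mod(k)\to\Qch_\alpha(G,\Dscr_G)$ of \eqref{eq:inv}. By Lemma \ref{lem:point}, the $\Oscr_G$-module underlying $\Psi(V)=\Dscr_{G,\alpha}\otimes_k V$ is simply $\Oscr_G\otimes_k V$, on which I can compute
\[
(\pi_\ast(\Oscr_G\otimes_k V))^G=((\pi_\ast\Oscr_G)\otimes_k V)^G=k\otimes_k V=V
\]
and $i_e^\ast(\Oscr_G\otimes_k V)=k\otimes_k V=V$, with $\eta_{\Psi(V)}$ becoming the identity on $V$ under both identifications. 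Since Proposition \ref{prop:eq} asserts that every $\Mscr\in\Qch_\alpha(G,\Dscr_G)$ is canonically isomorphic to some $\Psi(V)$, this will finish the argument.

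The only substantive point, which I expect to be the main (though mild) subtlety, is to verify that the tautological section $1\otimes v$ of $\Dscr_{G,\alpha}\otimes_k V$ really is $G$-invariant for the $G$-action induced from $\Dscr_G$, so that it is correctly picked out by $(\pi_\ast\Psi(V))^G$ in the computation above. This follows because $1\in\Dscr_G$ is $G$-fixed and the quotient morphism $\Dscr_G\twoheadrightarrow \Dscr_{G,\alpha}$ is $G$-equivariant; the $\alpha$-twist only affects the infinitesimal $\mathfrak{g}$-action, which is already built into the definition of $\Qch_\alpha(G,\Dscr_G)$ and therefore does not obstruct compatibility of $\eta$ with evaluation at $e$.
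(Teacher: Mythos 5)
Your proof is correct, and for the second claim it takes a different (though thematically related) route from the paper's. Where you construct the ``evaluation at $e$'' natural transformation $\eta$ explicitly and verify it is an isomorphism on the essential image of the quasi-inverse $\Psi$ (using Lemma~\ref{lem:point} to recognise $\Psi(V)$ as $\Oscr_G\otimes_k V$ at the level of underlying equivariant $\Oscr_G$-modules), the paper instead observes that both $(\pi_*(-))^G$ and $i_e^\ast$ factor through the forgetful functor $\Qch_\alpha(G,\Dscr_G)\to\Qch(G,\Oscr_G)$, reducing the claim to the natural isomorphism $(-)^G\cong i_e^\ast$ on $G$-equivariant $\Oscr_G$-modules, which is then dispatched as ``standard descent.'' Your argument essentially unpacks that descent step by hand in the $\Dscr$-equivariant category rather than reducing to the $\Oscr$-equivariant one; this makes the proof more self-contained (you never need to know the abstract descent statement for $\Oscr$-modules), at the cost of leaning on the equivalence of Proposition~\ref{prop:eq} itself to guarantee that testing on the $\Psi(V)$ suffices. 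Your concern in the final paragraph --- whether $1\otimes v$ is $G$-invariant --- is correctly resolved: the quotient $\Dscr_G\twoheadrightarrow\Dscr_{G,\alpha}$ is $G$-equivariant because $\Dscr_G(\mathfrak g-\alpha(\mathfrak g))$ is $G$-stable, and $1\in\Dscr_G$ is fixed by the conjugation action, so the unit section is indeed the unique (up to scalar) $G$-invariant global section picked out by $(\pi_*(-))^G$.
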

\begin{proof} 
The first statement is just the specialisation of Proposition \ref{prop:eq} to the case $Y=G$, $X=\Spec k$. 
For the second claim note that both functors, the one defined by \eqref{eq:inv} and  $i_e^\ast$, factor through $\Qch_\alpha(G,\Dscr_G)\r \Qch(G,\Oscr_G)$. Hence it is sufficient to prove that $(-)^G$ and $i^\ast_e$ define naturally isomorphic functors
on $\Qch(G,\Oscr_G)$. This is standard descent.
\end{proof}
\begin{corollary} $i^\ast_e:\Qch_\alpha(G,\Dscr_G)\r \Qch(\Oscr_e)\cong \Mod(k)$ is also an equivalence of
categories.
\end{corollary}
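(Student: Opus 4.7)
The plan is to deduce this corollary directly from Corollary \ref{cor:unit}, since almost all the work has already been done there. That corollary gives two separate pieces of information. First, applying Proposition \ref{prop:eq} to the principal homogeneous space $\pi : G \to \Spec k$ and using Lemma \ref{lem:point} (which identifies $\Dscr_{X,\alpha}$ with $\Oscr_X = k$ in this case), the functor $(-)^G : \Qch_\alpha(G,\Dscr_G) \to \Mod(k)$ defined by \eqref{eq:inv} is already known to be an equivalence of categories. Second, the same corollary produces a natural isomorphism of functors $(-)^G \cong i_e^*$ from $\Qch_\alpha(G,\Dscr_G)$ to $\Mod(k)$, obtained via standard faithfully flat descent along $\pi$.

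From these two facts the conclusion is immediate: being an equivalence of categories is invariant under natural isomorphism, so $i_e^*$ inherits this property from $(-)^G$. The only additional observation needed is the harmless identification $\Qch(\Oscr_e) \cong \Mod(k)$, which simply reflects that $\Oscr_e = k$ since $e \in G$ is a reduced closed point; under this identification the $\Dscr$-module pullback $i_e^*$ agrees with the $\Oscr$-module pullback (fiber at $e$), because a point carries no non-trivial differential operators.

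The main, and really only, obstacle would be to verify that the natural isomorphism produced in the proof of Corollary \ref{cor:unit} is genuinely an isomorphism of functors between the same source and target categories that appear in the final corollary; but this is already settled in the proof given there (where it was reduced to $\Qch(G,\Oscr_G) \to \Mod(k)$, and hence to classical descent for $G$-equivariant quasi-coherent sheaves on a principal $G$-torsor over a point). There is therefore no substantive computation left to perform, and the corollary follows in a single line.
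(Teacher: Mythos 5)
Your proof is correct and matches the paper's intent exactly: the corollary is an immediate consequence of Corollary \ref{cor:unit}, which already establishes both that $(-)^G$ is an equivalence and that it is naturally isomorphic to $i_e^*$, so $i_e^*$ inherits the property. The paper gives no further proof precisely because the one-line deduction you spell out is all there is to say.
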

\begin{remark} \label{rem:invertible}
The fact that $\Dscr_{G,\alpha}\cong \Oscr_G$ in $\Qch(G,\Oscr_G)$ shows in particular that $\Dscr_{G,\alpha}$ is invertible for the tensor product of $\Dscr$-modules. 
Write
$\theta_\alpha$ for the image of $1\in \Gamma(G,\Dscr_G)$. Then $\Dscr_{G,\alpha}=\Oscr_G\theta_\alpha$ where $\theta_\alpha$ is $G$-invariant and satisfies $\rho(v)\theta_\alpha=\alpha(v)\theta_\alpha$
for $v\in \mathfrak{g}$.

Let us specialize to the case that $T$ is the torus $(\CC^*)^n$. In that case $\mathfrak{t}:=\Lie(T)=\CC^n$ and $\rho(e_i)=\partial/\partial x_i$ for $e_i$ the canonical $i$'th basis vector of $\CC^n$.
 Let $\alpha\in \mathfrak{t}^\ast=\CC^n$ be given by $(\alpha_1,\ldots,\alpha_n)$.
Then $\theta_\alpha$ is the multi-valued function given by
$\theta_\alpha(t_1,\ldots,t_n)=t_1^{\alpha_1}\cdots t_n^{\alpha_n}$ for $t_i\in \CC^\ast$. More intrinsically we have $\theta_\alpha\circ e^{2\pi i-} =e^{2\pi i\langle \alpha,-\rangle}$.
\end{remark}
\subsubsection{The case of split principal homogeneous spaces}
\begin{lemma} Assume $Y=G\times X$. 
Then the inclusion
  $\Oscr_G\boxtimes\Dscr_X\hookrightarrow \Dscr_{G\times X}$ induces an isomorphism
  $\Oscr_G\boxtimes\Dscr_X\cong \Dscr_{G\times X,\alpha}$. In particular, 
  $\Dscr_{X,\alpha}\cong(\pi_\ast (\Oscr_G\boxtimes \Dscr_X ))^G\cong\Dscr_X$. 
\end{lemma}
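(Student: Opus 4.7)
The plan is to reduce to Lemma \ref{lem:point} via the product decomposition $\Dscr_{G\times X}\cong \Dscr_G\boxtimes \Dscr_X$ (here $\boxtimes$ denotes the external tensor product of $\Dscr$-modules, which in this case makes sense as an isomorphism of sheaves of rings using the splitting $\Tscr_{G\times X}=\pr_G^\ast\Tscr_G\oplus \pr_X^\ast\Tscr_X$). The first step is to observe that, since the $G$-action on $Y=G\times X$ is only on the first factor, the anchor map $\rho$ factors as
\[
\rho:\mathfrak{g}\xrightarrow{\rho_G} \Gamma(G,\Tscr_G)\hookrightarrow \Gamma(Y,\Tscr_Y),
\]
so that under the decomposition above, $\rho(v)-\alpha(v)$ lies in the image of $\Dscr_G\otimes_k 1$ inside $\Dscr_G\boxtimes\Dscr_X$ for every $v\in \mathfrak{g}$.

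The second step is to identify the corresponding left ideal. Since $\Dscr_X$ is flat over $k$ and since the inclusion $\Dscr_G\hookrightarrow \Dscr_G\boxtimes \Dscr_X$ is via $D\mapsto D\otimes 1$, one has
\[
\Dscr_Y(\mathfrak{g}-\alpha(\mathfrak{g}))=\Dscr_G(\mathfrak{g}-\alpha(\mathfrak{g}))\boxtimes \Dscr_X,
\]
and hence, passing to quotients,
\[
\Dscr_{Y,\alpha}=\bigl(\Dscr_G/\Dscr_G(\mathfrak{g}-\alpha(\mathfrak{g}))\bigr)\boxtimes\Dscr_X=\Dscr_{G,\alpha}\boxtimes\Dscr_X.
\]
Applying Lemma \ref{lem:point} to the first factor yields $\Dscr_{Y,\alpha}\cong \Oscr_G\boxtimes \Dscr_X$, which is the first claim.

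For the second claim, push down along $\pi:G\times X\r X$. Since $G$ is affine (being reductive), $\pi_\ast(\Oscr_G\boxtimes \Dscr_X)=\Oscr(G)\otimes_k \Dscr_X$, with the natural $G$-action via left translation on the first factor and trivial action on $\Dscr_X$. Taking $G$-invariants commutes with the flat tensor factor $\Dscr_X$, so
\[
\Dscr_{X,\alpha}=\bigl(\Oscr(G)\otimes_k \Dscr_X\bigr)^G=\Oscr(G)^G\otimes_k \Dscr_X=k\otimes_k\Dscr_X=\Dscr_X,
\]
giving the desired identification.

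The routine verifications are that the isomorphism $\Dscr_{G\times X}\cong \Dscr_G\boxtimes \Dscr_X$ really holds as sheaves of rings (this uses that $G\times X$ is smooth and that the product structure of $\Tscr_{G\times X}$ induces a PBW-style filtration compatible with external tensor products) and that forming the quotient by the ideal generated by $\mathfrak{g}-\alpha(\mathfrak{g})$ commutes with $-\boxtimes \Dscr_X$. I expect no serious obstacle since the only nontrivial input is Lemma \ref{lem:point}, which is already proved; the rest is bookkeeping with external tensor products and $G$-invariants on the trivial bundle.
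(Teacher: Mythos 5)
Your proof is correct and follows essentially the same route as the paper: reduce to Lemma \ref{lem:point} via the product decomposition $\Dscr_{G\times X}\cong \Dscr_G\boxtimes\Dscr_X$ and the resulting identification $\Dscr_{G\times X,\alpha}=\Dscr_{G,\alpha}\boxtimes\Dscr_X$, which is exactly what the paper's one-line proof invokes. You simply spell out the bookkeeping (that the anchor lands in the $\Dscr_G$-factor, that forming the quotient commutes with $-\boxtimes\Dscr_X$, and that $\Oscr(G)^G=k$ for left translation), which the paper leaves implicit.
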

\begin{proof} This follows from Lemma \ref{lem:point}, taking into account $\Dscr_{G\times X}=\Dscr_G\boxtimes \Dscr_X$, $\Dscr_{G\times X,\alpha}=\Dscr_{G,\alpha}\boxtimes \Dscr_X$.
\end{proof}
\begin{corollary} \label{cor:previous}
Assume $Y=G\times X$. 
The equivalences in Proposition \ref{prop:eq} specialize to an equivalence $\Qch_\alpha(G,\Dscr_{G\times X})\cong \Qch(\Dscr_X)$. Moreover, the associated functor  $\Qch_\alpha(G,\Dscr_G)\r\Qch(\Dscr_X)$ 
is naturally isomorphic to $i_e^\ast$ where $e:X\r G\times X$ is the unit section.
\end{corollary}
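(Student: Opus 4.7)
The plan is to deduce Corollary~\ref{cor:previous} by combining Proposition~\ref{prop:eq} with the immediately preceding lemma (identifying $\Dscr_{X,\alpha}\cong\Dscr_X$ in the split case) and then transporting the fiberwise statement of Corollary~\ref{cor:unit} to the relative situation via descent along the projection $\pi:G\times X\to X$.

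For the first assertion, Proposition~\ref{prop:eq} applied to $Y=G\times X$ gives an equivalence
$\Qch_\alpha(G,\Dscr_{G\times X})\xrightarrow{\sim}\Qch(\Dscr_{X,\alpha})$, $\Mscr\mapsto(\pi_\ast\Mscr)^G$.
Composing with the equivalence $\Qch(\Dscr_{X,\alpha})\cong \Qch(\Dscr_X)$ induced by the isomorphism of rings of differential operators $\Dscr_{X,\alpha}\cong\Dscr_X$ from the preceding lemma yields the first claim. Note that this last identification is canonical: it comes from the inclusion $\Oscr_G\boxtimes\Dscr_X\hookrightarrow \Dscr_{G\times X}$ passing to the quotient by $\Dscr_{G\times X}(\mathfrak{g}-\alpha(\mathfrak{g}))$ and then taking $G$-invariants of the push-forward along $\pi$.

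For the second assertion, the task is to exhibit a natural isomorphism $(\pi_\ast\Mscr)^G\cong i_e^\ast\Mscr$ in $\Qch(\Dscr_X)$. I would first verify this at the level of the underlying weakly equivariant $\Oscr$-modules by classical flat descent: since $\pi:G\times X\to X$ is a trivial $G$-torsor, $\pi^\ast$ is quasi-inverse to $(\pi_\ast(-))^G$ on $G$-equivariant quasi-coherent sheaves, so the unit gives a natural isomorphism $\pi^\ast(\pi_\ast\Fscr)^G\xrightarrow{\sim}\Fscr$. Applying $i_e^\ast$ and using $\pi\circ e=\id_X$ yields
\[
(\pi_\ast\Fscr)^G\ \cong\ i_e^\ast\pi^\ast(\pi_\ast\Fscr)^G\ \cong\ i_e^\ast\Fscr,
\]
naturally in $\Fscr$. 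This already establishes the natural isomorphism on $\Qch(G,\Oscr_{G\times X})$.

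The main (though still routine) point to check is that the isomorphism $(\pi_\ast\Mscr)^G\cong i_e^\ast\Mscr$ is $\Dscr_X$-linear, where on the left $\Dscr_X$ acts via $\Dscr_X\cong\Dscr_{X,\alpha}=(\pi_\ast\Dscr_{G\times X,\alpha})^G$ and on the right via the natural $i_e^\ast\Dscr_{G\times X,\alpha}\cong \Dscr_X$ arising from $\Dscr_{G\times X,\alpha}\cong \Oscr_G\boxtimes\Dscr_X$. The hard (albeit formal) part is thus a bookkeeping check that the descent isomorphism intertwines the two $\Dscr_X$-actions; this follows by unwinding the proof of the preceding lemma, which exhibits the generators of $\Dscr_{X,\alpha}$ as images under $\pi_\ast(-)^G$ of the $G$-invariant sections $\Oscr_G\boxtimes\Dscr_X\subset \Dscr_{G\times X}$, whose pullback along $e$ is tautologically $\Dscr_X$. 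As a sanity check, specialising to $X=\Spec k$ recovers Corollary~\ref{cor:unit}.
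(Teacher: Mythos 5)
Your proposal is correct and follows essentially the same route as the paper's own (very terse) proof, which simply says the statement is ``a generalization of Corollary~\ref{cor:unit} with an extra $\Dscr_X$-action.'' Like the paper's proof of Corollary~\ref{cor:unit}, you factor both functors through $\Qch(G,\Oscr_{G\times X})$, identify $(\pi_\ast(-))^G$ with $i_e^\ast$ there by ordinary faithfully flat descent for the trivial $G$-torsor $\pi:G\times X\to X$, and then carry the $\Dscr_X$-module structure along via the isomorphism $\Dscr_{X,\alpha}\cong\Dscr_X$ from the preceding lemma; the only thing you make more explicit than the paper is the final $\Dscr_X$-linearity bookkeeping, which is indeed the content of ``with an extra $\Dscr_X$-action.''
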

\begin{proof} This is a generalization of Corollary \ref{cor:unit} with an extra $\Dscr_X$-action.
\end{proof}
\begin{corollary} \label{cor:maincor}
 Assume that $\pi:Y\r X$ is a split principal homogeneous $G$-space. Let $i:X\r Y$ be a splitting for $Y$. Then the
$\Dscr$-module inverse image functor $i^\ast:\Qch_\alpha(G,\Dscr_Y)\r \Qch(\Dscr_X)$ is an equivalence of (abelian) categories.  
\end{corollary}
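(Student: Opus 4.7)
The plan is to reduce to the split case $Y = G \times X$, which has been handled by Corollary \ref{cor:previous}, by using the splitting $i$ to produce an explicit $G$-equivariant isomorphism between $Y$ and $G \times X$ over $X$.

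First, I would define
\[
\phi : G \times X \xrightarrow{\cong} Y, \quad (g, x) \mapsto g \cdot i(x).
\]
Since $Y$ is a principal $G$-bundle over $X$ and $i$ is a section, this map is a $G$-equivariant isomorphism of schemes over $X$. Moreover, under $\phi$, the unit section $i_e : X \to G \times X$, $x \mapsto (e,x)$, corresponds to $i$: indeed $\phi \circ i_e = i$.

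Next, transport of structure along the $G$-equivariant isomorphism $\phi$ yields an equivalence of abelian categories
\[
\phi^\ast : \Qch_\alpha(G, \Dscr_Y) \xrightarrow{\cong} \Qch_\alpha(G, \Dscr_{G \times X}).
\]
Composing with Corollary \ref{cor:previous}, which identifies $(\pi_\ast(-))^G$ with $i_e^\ast$ and hence provides an equivalence $i_e^\ast : \Qch_\alpha(G, \Dscr_{G \times X}) \xrightarrow{\cong} \Qch(\Dscr_X)$, and invoking the natural isomorphism $i^\ast \cong i_e^\ast \circ \phi^\ast$ (from $\phi \circ i_e = i$ together with functoriality of $\Dscr$-module pullback), one concludes that $i^\ast$ is an equivalence.

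I do not expect any serious obstacle: all of the genuine content — the descent equivalence between weakly equivariant $\Dscr$-modules on $Y$ and $\Dscr$-modules on $X$, and the identification of the abstract invariants functor with pullback along the unit section — has already been established in Proposition \ref{prop:eq} and Corollary \ref{cor:previous}. The only task here is to transport those facts along the isomorphism $\phi$ determined by the given splitting, which is purely formal.
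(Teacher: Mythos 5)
Your proposal is correct and is essentially the paper's own argument: the paper also reduces to Corollary \ref{cor:previous} by using $i$ to identify $Y$ with $G\times X$, under which $i$ becomes $i_e$. The only thing the paper adds that you omit is a citation to \cite[\S4.5]{Borel} to confirm that the $i_e^\ast$ appearing in Corollary \ref{cor:previous} (which a priori arises as an $\Oscr$-module pullback identified with $(\pi_\ast(-))^G$) genuinely agrees with the $\Dscr$-module inverse image functor along the closed immersion $i_e$; your invocation of "functoriality of $\Dscr$-module pullback" silently relies on this compatibility, so it is worth making explicit.
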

\begin{proof} This follows from Corollary \ref{cor:previous}, after making the identification $Y\cong G\times X$ using the splitting $i$. Then $i=i_e$.
We use \cite[\S4.5]{Borel} for the fact that $i_e^\ast$ does indeed compute the usual $\Dscr$-module inverse image for $X\r G\times X$ (note that in loc.\ cit.\ the $\Dscr$-module inverse image is 
denoted by $(-)^\circ$).
\end{proof}
Corollary \ref{cor:maincor} implies in particular that $i^\ast$ is exact. This also follows from the following lemma.
\begin{lemma} \label{lem:inverse}
In the setting of Corollary \ref{cor:maincor}  one has $L^ji^\ast\Mscr=0$ for $j>0$
and $\Mscr \in \Qch(G,\Oscr_Y)$.
\end{lemma}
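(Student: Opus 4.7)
The plan is to reduce the statement to standard descent for the free $G$-action together with flatness of the projection.

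First, using the splitting $i$ I would identify $Y\cong G\times X$ in a $G$-equivariant way, so that $\pi$ corresponds to the projection $p:G\times X\to X$ (with $G$ acting by translation on the first factor and trivially on $X$) and $i$ corresponds to the unit section $e\times\id_X:X\to G\times X$. Note that under this identification $p\circ i=\id_X$.

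Next, descent for the free $G$-action gives an equivalence of abelian categories
\[
p^\ast:\Qch(\Oscr_X)\xrightarrow{\;\sim\;}\Qch(G,\Oscr_{G\times X}),
\]
whose quasi-inverse is $(e\times\id_X)^\ast=i^\ast$. Hence every $\Mscr\in\Qch(G,\Oscr_Y)$ is isomorphic, as a (plain) quasi-coherent $\Oscr_Y$-module, to $p^\ast(i^\ast\Mscr)$.

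Finally, since $p$ is smooth it is flat, so $Lp^\ast=p^\ast$, and then
\[
Li^\ast\Mscr\cong Li^\ast Lp^\ast(i^\ast\Mscr)\cong L(p\circ i)^\ast(i^\ast\Mscr)=i^\ast\Mscr,
\]
which is concentrated in degree zero. This gives $L^ji^\ast\Mscr=0$ for $j>0$. There is no real obstacle; the entire content of the lemma is that a $G$-equivariant quasi-coherent sheaf on $G\times X$ is a flat pullback from $X$, and a flat pullback is automatically Tor-independent with any section of the projection.
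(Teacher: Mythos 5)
Your argument is essentially identical to the paper's proof: both identify $Y\cong G\times X$, invoke descent to write $\Mscr$ as the (underived) pullback of some $\Nscr\in\Qch(X)$ along the flat projection, and then use flatness together with $\pi\circ i=\id_X$ to conclude $Li^\ast\Mscr\cong\Nscr$. The only cosmetic difference is that you name the descended sheaf as $i^\ast\Mscr$ explicitly rather than introducing an abstract $\Nscr$.
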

\begin{proof}  We may assume $Y=G\times X$ with $i=i_e$. By descent $\Mscr=\pi^\ast \Nscr$ for $\Nscr\in \Qch(X)$. Hence $Li^\ast(\Mscr)=(Li^\ast\circ \pi^\ast)(\Nscr)=(Li^\ast\circ L\pi^\ast)(\Nscr)
=\Nscr$, where we have used that $Y/X$ is flat and $\pi\circ i$ is the identity.
\end{proof}
\subsection{Changing the section}
\begin{lemma} \label{the-sections-they-are-a-changin}
Let $\pi:Y\r X$ be a split $G$-principal homogeneous space and let $i_1,i_2:X\r Y$ be two sections of $\pi$. Define $\delta:X\r G$ via
 $\delta(x)i_1(x)=i_2(x)$ for $x\in X$. Then
for $\Mscr\in \Qch_\alpha(G,\Dscr_Y)$ we have
\[
i_2^\ast \Mscr\cong 
\delta^\ast\Dscr_{G,\alpha}  \otimes_{\Oscr_{X}}         i_1^\ast\Mscr.
\]
\end{lemma}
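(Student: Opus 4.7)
My plan is to reduce to the split case $Y = G\times X$ via the section $i_1$ and then read off $i_2^*\Mscr$ from the explicit classification of weakly equivariant $\Dscr$-modules provided by Proposition \ref{prop:eq} and Corollary \ref{cor:previous}. Identifying $Y\cong G\times X$ via $\phi\colon (g,x)\mapsto g\cdot i_1(x)$, the section $i_1$ becomes the unit section $x\mapsto(e,x)$, while $i_2$ becomes $(\delta,\id_X)$. Under this identification, as in the proof of Corollary \ref{cor:previous}, one has $\Dscr_{Y,\alpha}\cong \Dscr_{G,\alpha}\boxtimes \Dscr_X$ and $\Dscr_{X,\alpha}\cong \Dscr_X$.

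Combining Proposition \ref{prop:eq} with the fact, recorded in Corollary \ref{cor:previous}, that the inverse of the equivalence $i_1^*\cong i_e^*$ is given by $\Nscr\mapsto \Dscr_{Y,\alpha}\otimes_{\pi^{-1}\Dscr_X}\pi^{-1}\Nscr$, any $\Mscr\in \Qch_\alpha(G,\Dscr_Y)$ can be written as
\[
\Mscr \;\cong\; \Dscr_{Y,\alpha}\otimes_{\pi^{-1}\Dscr_X}\pi^{-1}(i_1^*\Mscr) \;\cong\; \Dscr_{G,\alpha}\boxtimes i_1^*\Mscr.
\]
I then apply $i_2^*=(\delta,\id_X)^*$ to the right-hand side. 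Factoring $(\delta,\id_X)$ through the diagonal $\Delta_X\colon X\to X\times X$ followed by $\delta\times \id_X\colon X\times X\to G\times X$, and invoking the standard compatibility of (unshifted) $\Dscr$-module inverse image with external tensor products, one finds
\[
i_2^*(\Dscr_{G,\alpha}\boxtimes i_1^*\Mscr) \;\cong\; \delta^*\Dscr_{G,\alpha}\otimes_{\Oscr_X} i_1^*\Mscr,
\]
which is the claim.

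The main point requiring care is that the $\Dscr$-module inverse image along $i_2$ (a closed immersion of codimension $\dim G$) really is the underived, unshifted pullback, and that it satisfies the external-product identity used above. This is ensured by Corollary \ref{cor:maincor}, which guarantees exactness of $i_2^*$ on $\Qch_\alpha(G,\Dscr_Y)$, together with Remark \ref{rem:invertible}, which shows that $\Dscr_{G,\alpha}$ is an invertible $\Oscr_G$-module so that no derived Tor terms obstruct the external tensor product calculation. These two observations together are the only non-formal inputs in the argument.
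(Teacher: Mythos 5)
Your argument is essentially the paper's: reduce to $Y=G\times X$, write $\Mscr\cong\Dscr_{G,\alpha}\boxtimes i_1^*\Mscr$ via Corollary \ref{cor:previous}, then pull back along $i_\delta=(\delta,\id_X)$ using compatibility of inverse image with external tensor products (the paper phrases this via $\pr_1,\pr_2$ rather than factoring through the diagonal, but it is the same manipulation). Your closing remark on why no derived Tor appears is a correct clarification of a point the paper leaves implicit.
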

\begin{proof} We may assume $Y=G\times X$ and $i_1=i_e$, $i_2(x)=(\delta(x),x):=i_\delta(x)$.
 By Corollary \ref{cor:previous} we have $\Mscr=\Dscr_{G,\alpha}\boxtimes \Nscr$ for $\Nscr\in \Qch(\Dscr_X)$ so that
\begin{align*}
i_\delta^\ast\Mscr&=i_{\delta}^\ast(\Dscr_{G,\alpha}\boxtimes \Nscr)\\
&=i_\delta^\ast(\Dscr_{G,\alpha}\boxtimes \Oscr_X) \otimes_{\Oscr_{X}} i_\delta^\ast\pr_2^\ast\Nscr.
\end{align*}
The composition $\pr_2\circ i_\delta$ is the identity so that $i_\delta^\ast\pr_2^\ast\Nscr=\Nscr=i_e^\ast\Mscr$ in $\Dscr_X$. On the other hand $i_\delta^\ast(\Dscr_{G,\alpha}\boxtimes \Oscr_X)=
i_\delta^\ast \pr_1^\ast \Dscr_{G,\alpha}=\delta^\ast \Dscr_{G,\alpha}$.
\end{proof}

\begin{remark} Following up on Remark \ref{rem:invertible} we can think of $\delta^\ast \Dscr_{G,\alpha}$ as $\Oscr_X \theta_{\alpha,\delta}$ with $\theta_{\alpha,\delta}=\theta_\alpha\circ \delta$ where the
derivatives of $\theta_\alpha\circ\delta$ are determined by the chain rule. In the torus example where we represented $\theta_\alpha$ with an explicit multi-valued function, the notation
$\theta_\alpha\circ \delta$ can be taken literally.
\end{remark}

\newpage
\section*{List of Notations}\label{lon}

\vspace{-0.1cm}

\begin{longtable}{@{}cp{0.64\textwidth}p{0.067\textwidth}}
$X(-)$ & characters  & \S\ref{sec:id}\\
$Y(-)$ & 1-parameter subgroups (cocharacters)& \S\ref{sec:id}\\
$L,N$ & free abelian groups such that $N=\ZZ^d/L$& \S\ref{subsec:setting}\\
$T$, $\TT$, $H$ & associated tori such that $\TT\cong (\CC^\ast)^d$, $H=\TT/T$&\S\ref{subsec:setting}\\
$d$, $n$& $\dim \TT$, $\dim T$&\S\ref{subsec:setting}\\
$W$ & $\cong \CC^d$,  tautological representation of $\TT$ & \S\ref{subsec:setting}\\
$A,B$ & toric data, many incarnations & \S\ref{subsec:setting}\\
$(b_i)_i$ & T-weights of $W$, describing $B$ &\S\ref{subsec:setting}\\
$V$ & a real vector space &\S\ref{sec:genhyp}\\
$\Hscr$ & a hyperplane arrangement &\S\ref{sec:genhyp}\\
$\Hscr$ & a hyperplane arrangement constructed from $B$ &\S\ref{sec:hyperplane}\\
$\Hscr_0$ &  central hyperplane arrangement corresponding to $\Hscr$ &\S\ref{sec:genhyp}\\
$\Cscr$ & the faces of the connected components of $V\setminus \Hscr$&\S\ref{sec:genhyp}\\
%&& \S\ref{sec:hyperplane}\\
$\Cscr^0$ & connected components of $V\setminus \Hscr$ (i.e.\ chambers in $\Cscr$)&\S\ref{sec:genhyp}\\
$\Sigma$ & a zonotope constructed from $B$ &\S\ref{sec:hyperplane}\\
$\Delta$ & $(1/2)\bar{\Sigma}$ &\S\ref{sec:hyperplane}\\
$(H_i)_i$& affine hyperplanes defining the facets of $\Delta$ &\S\ref{sec:hyperplane}\\
$C$ & an arbitrary element in $\Cscr$ &\S\ref{sec:hyperplane}\\
$\Lscr_C$ & $(\nu+\Delta)\cap X(T)$ for $\nu\in C$ &\S\ref{sec:hyperplane}\\
$\mathfrak{h}$ & $\Lie(H)$ ($\cong Y(H)_\CC$) & \S\ref{sec:nonresonance}\\
$\Iscr_0,\Iscr'_0$ & central arrangements in $Y(H)_\RR$ built from $A$& \S\ref{sec:nonresonance}\\
$\Iscr,\Iscr'$ & associated affine hyperplane arrangements & \S\ref{sec:nonresonance}\\
$\mathfrak{h}^{\nres}$ & the complement of $\Iscr_\CC$ (the ``non-resonant'' part of $\mathfrak{h}$)& \S\ref{sec:nonresonance}\\
$H^{\nres}$ & the image of $\mathfrak{h}^{\nres}$ under $\exp(2\pi i-)$& \S\ref{sec:nonresonance}\\
$\Dscr_X$ & the sheaf of differential operators on $X$ & \S\ref{ssec:gkz}\\
$E_\phi$ & a derivation of $\Oscr_{\TT^\ast}$ corresponding to $\phi\in\mathfrak{h}^*$ & \S\ref{ssec:gkz}\\
$\alpha$ & an element of $Y(H)_\CC$ (often implicit) & \S\ref{ssec:gkz}\\
$\Pscr(\alpha)$ & the GKZ $\Dscr_{\TT^\ast}$-module with parameter $\alpha$ & \S\ref{ssec:gkz}\\
$\mathfrak{v}$ & the normalised volume of the convex hull of $A$ &\S\ref{subsec:nonresonance}\\
$\iota$ & a splitting of $B^*:T\to \TT$ (also used for derived maps) & \S\ref{subsec:splitting}\\
$\bar{\Pscr}(\alpha)$ & $\iota^*\Pscr(\alpha)$ & \S\ref{subsec:splitting}\\
$E_A$ & the principal $A$-discriminant & \S\ref{subsec:GKZdislocus}\\
$\kkappa$ &  $-\frac{i}{2\pi}\sum_i (\log{|n_j|})b_j\in X(T)_\CC$ for suitable $n_j\in \ZZ$ 
  & \S\ref{subsec:GKZdislocus}\\
$\Mod_{\rm rh}(\Dscr_X)$ & regular holonomic $\Dscr_X$-modules &\S\ref{sec:reminder}\\
$D^b_{\rm rh}(\Dscr_X)$ & corresponding bounded derived category &\S\ref{sec:reminder}\\
$\Perv(X)$ & the category of perverse sheaves on $X^{\an}$ &\S\ref{sec:reminder}\\
${}^pH(-)$ & perverse cohomology &\S\ref{sec:reminder}\\
$\Sol_X$ & the solution functor &\S\ref{sec:reminder}\\
$\Ma(\alpha)$ & $\Sol_{\TT^\ast}(\MMa(\alpha))[d]$ &\S\ref{subsec:gkzperverse}\\
${}^p \iota^*$ & ${}^pH^0(L\iota^{\an,\ast}(?))$ &\S\ref{subsec:gkzperverse}\\
$\bar{P}(\alpha)$ & ${}^p\iota^\ast P(\alpha)$ &\S\ref{subsec:gkzperverse}\\
$j$ & the inclusion $T^*\setminus V(E_A)\hookrightarrow T^*$&\S\ref{subsec:gkzperverse}\\
$j_{!*}$ & intermediate extension functor corresponding to $j$ &\S\ref{subsec:gkzperverse}\\
$\gamma$ & an element in $Y(\TT)_\CC=\CC^d$ with $\alpha=A\gamma$ & \S\ref{sec:MB}\\
$M$ & the Mellin-Barnes integral (a solution to GKZ)& \S\ref{sec:MB}\\
$\hat{M}$ & the single-valued version of $M$ & \S\ref{subsubsec:MBsingle}\\
$M^\alpha$, $\hat{M}^\alpha$ & MB integrals corresponding to $\alpha$ &\S\ref{subsubsec:otherGKZrelations}\\
$\hat{M}_\chi(x)$ & $\hat{M}(x-\chi)$ & \S\ref{subsubsec:basisofsolutions}\\
$\Mscr_C$ & $\{\hat{M}_{\chi} | \chi \in \mathcal{L}_{C}\}$ & \S\ref{subsubsec:basisofsolutions}\\
$\Phi_\gamma$ & a power series solution for the GKZ system & \S\ref{subsec:powerseries}\\
$\hat{\Phi}_\gamma$ & the single-valued power series solution & \S\ref{subsec:powerseries}\\
$\gamma_I$ & an element in $\CC^d$ with $\alpha=A\gamma_I$, $\gamma_{I,i}\in \ZZ$, $i\in I$  & \S\ref{subsec:powerseries}\\
$D_\rho$ & a domain in $(\CC^*)^d$ & \S\ref{subsec:powerseries}\\
$\hat{D}_\rho$ & a domain in $\CC^d$ & \S\ref{subsec:powerseries}\\
$\Iscr_\rho, \hat{\Iscr}_\rho$ & (multi)sets of subsets of $\{1,\dots,d\}$ & \S\ref{subsec:powerseries}\\
$\Phi_I$, $\hat{\Phi}_I$ & $\Phi_{\gamma_I}$, $\hat{\Phi}_{\gamma_I}$ & \S\ref{subsec:powerseries}\\
$\ggamma_{C_1C_2}$ & a path from $\rho_1$ to $\rho_2$, $\rho_i\in C_i$ &\S\ref{subsec:fundgd}\\
$\Pi(\Hscr)$ & an incarnation of the fundamental groupoid of $\Hscr^c_\CC$&\S\ref{subsec:fundgd}\\
$\Mscr\rtimes \Gscr$ & semi-direct product of a groupoid $\Mscr$ and a group $\Gscr$ &\S\ref{subsubsec:equivariance}\\
$J_{C'C}$ & a specific subset of $\{1,\dots,d\}$ &\S\ref{subsec:mainresult}\\
$M$ & the representation of $\Pi(\Hscr)\rtimes X(T)$ given by $\bar{\Pscr}(\alpha)$&\S\ref{subsec:mainresult}\\
$\hat{\Phi}_I^s$ & a normalisation of $\hat{\Phi}_I$ & \S\ref{subsec:connecting}\\
$\Pscr_{\rho}$ & $\{\hat{\Phi}^s_I\mid I\in \hat{\Iscr}_\rho\}$ & \S\ref{subsec:connecting}\\
$\Perv_{\Hscr_\CC}(V_\CC)$ & perverse sheaves on $V_\CC$, stratified using $\Hscr_\CC$& \S\ref{subsec:KSdata}\\
$E_C,\delta_{CC'},\gamma_{C'C}$ & KS-data for objects in $\Perv_{\Hscr_\CC}(V_\CC)$& \S\ref{subsec:KSdata}\\
$\phi_{C_1C_2}$ & $\gamma_{C'C_2}\delta_{C_1C'}$ for $C'\leq C_1,C_2$& \S\ref{subsec:KSdata}\\
$\KS(\Hscr)$ & the category of $\KS$-data (allowing $\dim E_C=\infty$) & \S\ref{subsec:KSdata}\\
$\KS^c(\Hscr)$ & $\{E\in \KS(\Hscr)| \dim(E_C)<\infty\}$ & \S\ref{subsec:KSdata}\\
$\tilde{E}$ & the perverse sheaf associated to $E\in \KS^c(\Hscr)$& \S\ref{subsec:KSdata}\\
$\Res$ & functor $\KS(\Hscr)\to \Rep(\Pi(\Hscr))$ computing monodromy& \S\ref{subsec:KSdata}\\
$\DD(E)$ & dual $\KS$-data for $E\in \KS^c(\Hscr)$ & \S\ref{subsec:KSdata}\\
$\phi_{g,C}$ & isomorphism $E_C\to E_{gC}$, part of equivariant $\KS$-data & \S\ref{subsec:equivperv}\\
$\KS(\Gscr,\Hscr)$, $\KS^c(\Gscr,\Hscr)$ & $\Gscr$-equivariant $\KS$-data & \S\ref{subsec:equivperv}\\
$\Res$ & functor $\KS(\Gscr,\Hscr)\to \Rep(\Pi(\Hscr)\rtimes \Gscr)$ (equivariant $\Res$)& \S\ref{subsec:equivperv}\\
$\DD$ & functor $\KS^c(\Gscr,\Hscr)\to \KS^c(\Gscr,\Hscr)^\circ$ (equivariant $\DD$)& \S\ref{subsec:equivperv}\\
$\KS_R(\Hscr)$, $\KS_R^c(\Hscr)$  & $R$-linear $\KS$-data &\S\ref{subsec:Rlin}\\
$\Eescr$ & a general $\Hscr$-schober & \S\ref{sec:XT}\\
$\Eescr_C,\delta_{CC'},\gamma_{C'C}$ & $\Hscr$-schober data (categorified $\KS$-data)& \S\ref{sec:XT}\\
$\phi_{C_1C_2}$ & $\gamma_{C'C_2}\delta_{C_1C'}$ for $C'\leq C_1,C_2$ & \S\ref{sec:XT}\\
$\Schob(\Hscr)$ & the 2-category of $\Hscr$-schobers & \S\ref{sec:XT}\\
$K_\CC^0(-)$ & $K^0(-)\otimes_\ZZ \CC$ (decategorification) & \S\ref{sec:XT}\\
$\tilde{K}^0_\CC(\Eescr)$ & $\widetilde{K^0_\CC(\Eescr)}$  (decategorification)& \S\ref{sec:XT}\\
$\phi_{g,C}$ & equivalence $\Eescr_C\to \Eescr_{gC}$ (for equivariant schobers)& \S\ref{subsec:equischober}\\
$\Schob(\Gscr,\Hscr)$ & $\Gscr$-equivariant $\Hscr$-schobers & \S\ref{subsec:equischober}\\
$\Sscr$ & a particular $\Hscr$-schober (the ``GIT $\Hscr$-schober'')& \S\ref{subsec:10.1}\\
$P_\chi$ & $\chi\otimes \Oscr_W$ (for $\chi\in X(T)$)& \S\ref{subsec:10.1}\\
$P_C$ & $\oplus_{\chi\in \Lscr_{-C}} P_\chi$ & \S\ref{subsec:10.1}\\
$\Sscr_C$ & $\langle P_C\rangle$ & \S\ref{subsec:10.1}\\
$\Lambda_C$ & $\End_{W/T}(P_C)$ & \S\ref{subsec:10.1}\\
$D(\Lambda_C)$ & the derived category of right $\Lambda_C$-modules & \S\ref{subsec:10.1}\\
$\Escr_C,\gamma_{C'C},\delta_{CC'}$ & $\Hscr$-schober data for $\Sscr$ & \S\ref{subsec:10.1}\\
$\phi_{\chi,C}$ & the functor $\Sscr_C\to \Sscr_{\chi+C}$, $M\mapsto (-\chi)\otimes M$ & \S\ref{subsec:10.1}\\
$D^c(W/T)$ & bounded coherent complexes in $D(W/T)$ &\S\ref{subsec:10.1}\\
$\Escr^c_C$ & $\Escr_C\cap D^c(W/T)$ &\S\ref{subsec:10.1}\\
$W^u$ & the nullcone in $W$& \S\ref{subsec:finitelengthschober}\\
$D^u(W/T)$ & complexes in $D(W/T)$ supported on $W^u$ & \S\ref{subsec:finitelengthschober}\\
$\Sscr^c,\Sscr^f$ & subschobers of $\Sscr$ & \S\ref{subsec:finitelengthschober}\\
$\Sscr_C^u$ & $\Escr_C\cap D^u(W/T)$ & \S\ref{subsec:finitelengthschober}\\
$\Escr^f_C$ & $\Escr^u_C\cap \Escr_C^c$ & \S\ref{subsec:finitelengthschober}\\
$\mod^f(\Lambda_C)$ & a subcategory of $\Mod(\Lambda_C)$ associated with $\Sscr^f_C$ & \S\ref{subsec:finitelengthschober}\\
$\mathfrak{s}_{C,\chi}$ & ``simple'' objects in $\Sscr^f_C$ & \S\ref{subsec:finitelengthschober}\\
$\Dscr$ & $\uRHom_{W/T}(-,\Oscr_W)$ &\S\ref{subsec:autoduality}\\
$\RHom_{W/T}(-,-)'$ & $\RHom_{W/T}(\Dscr(-),-)$ &\S\ref{subsec:autoduality}\\
$(-)^-$ & a pullback of equivariant KS-data &\S\ref{subsec:GrothendieckGDuality}\\
$\langle -, -\rangle$ & generic notation for the Euler form &\S\ref{subsec:GrothendieckGDuality}\\
$\langle-,-\rangle'$ & $\langle K^0(\Dscr)(-),-\rangle$ &\S\ref{subsec:GrothendieckGDuality}\\
$\kappa$ & a splitting of $A:\TT\to H$, $A^*:X(H)\to X(\TT)$ & \S\ref{subsec:moresplittings}\\
$\bar{\Sscr}$ & a lift of $\Sscr$ to $\Schob(X(\TT),\Hscr)$ & \S\ref{subsec:Hequivariant}\\ 
$\bar{\Escr}_C,\bar{\gamma}_{C'C},\bar{\delta}_{CC'}$ & $\Hscr$-schober data for $\bar{\Sscr}$ & \S\ref{subsec:Hequivariant}\\
$\bar{\phi}_{\chi,C}$ & a lift of $\phi_{\chi,C}$& \S\ref{subsec:Hequivariant}\\
$N$ & $\Res_{\ZZ[X(H)]}(K^0(\bar{\Sscr}^c))$ &\S\ref{subsec:monodromyschober}\\
$K_h^0(\bar{\Sscr}^\ast)$, $\ast\in \{c,f\}$  &$K^0(\bar{\Sscr}^\ast)\otimes_{\ZZ[X(H)],h}\CC$ for $h\in H$ (decategorification)& \S\ref{subsec:specialisation}\\
$N_h$ & $\Res(K^0_h(\bar{\Sscr}^c))$ &\S\ref{lem:supportschobermonodromy1}\\
$\tilde{K}^0_h(\bar{\Sscr}^c)$ & $K^0_h(\bar{\Sscr}^c)\,\tilde{}\in \Perv(X(T)_\CC)$ &\S\ref{sec:decatGKZ}\\
$S^c(h)$ & the perverse sheaf on $T^*$ corresponding to $\tilde{K}^0_h(\bar{\Sscr}^c)$ &\S\ref{sec:decatGKZ}\\
$S^c$ & $S^c(\bold{1})$, $\bold{1}\in H$ the unit element &\S\ref{sec:HLS}\\
$\bar{\zeta}$ & $e^{2\pi i \zeta}\in T^\ast$ & \S\ref{sec:decatGKZ}\\
$\bar{\tau}_{u}$ & the translation $T^*\to T^*$, $x\mapsto ux$ for $u\in T^\ast$& \S\ref{sec:decatGKZ}\\
$\Qch(G,\Dscr_Y)$ & weakly $G$-equivariant $\Dscr_Y$-modules & \S\ref{sec:canonical}\\
$\Qch_\alpha(G,\Dscr_Y)$ & weakly $G$-equivariant $\Dscr_Y$-modules  with character $\alpha$ & \S\ref{sec:canonical}\\
\end{longtable}

\bibliography{nccr}
\bibliographystyle{amsalpha} 

\end{document}